\def\ps@pprintTitle{%
     \let\@oddhead\@empty
     \let\@evenhead\@empty
     \def\@oddfoot{\footnotesize\itshape%
       \hfill}
     \let\@evenfoot\@oddfoot}
\journal{\relax}
\theoremstyle{definition}
\newtheorem{thm}{Theorem}[subsection]
\newtheorem{cor}[thm]{Corollary}
\newtheorem{lem}[thm]{Lemma}
\newtheorem{prop}[thm]{Proposition}
\newtheorem{rem}[thm]{Remark}
\numberwithin{equation}{subsection}
\newcounter{cnt}
\newcommand{\wt}{\mathrm{wt}}
\newcommand{\ev}{\mathrm{ev}}
\newcommand{\sh}{\mathrm{sh}}
\def\cal#1{\text{$\mathcal{#1}$}}
\def\lie#1{\text{$\mathfrak{#1}$}}
\def\tlie#1{\tilde{\mathfrak{#1}}}
\def\hlie#1{\hat{\mathfrak{#1}}}
\def\tprod_#1^#2{\text{\scriptsize$\prod\limits_{\text{\footnotesize$#1$}}^{\text{\footnotesize$#2$}}$}}
\def\opl_#1^#2{\text{\scriptsize$\bigoplus\limits_{\text{\footnotesize$#1$}}^{\text{\footnotesize$#2$}}$}}
\def\otm_#1^#2{\text{\scriptsize$\bigotimes\limits_{\text{\footnotesize$#1$}}^{\text{\footnotesize$#2$}}$}}
\def\tsum_#1^#2{\text{\scriptsize$\sum\limits_{\text{\footnotesize$#1$}}^{\text{\footnotesize$#2$}}$}}
\def\tbinom#1#2{\text{$\left(\begin{smallmatrix} #1\\#2\end{smallmatrix}\right)$}}
\def\ch{{\rm ch}}
\newcommand{\bb}[1]{\text{$\mathbb{#1}$}}
\renewcommand{\epsilon}{\varepsilon}
\newcommand{\gb}[1]{{\pmb{#1}}}
\DeclareMathOperator{\gr}{gr}
\def\hp{\hat{P}}
\DeclareMathOperator{\sobre}{\twoheadrightarrow}
\def\v{{\scalebox{.75}{$\vartheta$}}}
\def\wa{\widehat{\cal W}}
\begin{document}

\begin{frontmatter}
\title{On Demazure and local Weyl modules for affine hyperalgebras\tnoteref{fn1}}

\author[ict]{Angelo Bianchi}
\emailauthor{acbianchi@ime.unicamp.br}{A.~B.}

\author[ict]{Tiago Macedo}
\emailauthor{tmacedo@unifesp.br}{T.~M.}

\author[ime]{Adriano Moura}
\emailauthor{aamoura@ime.unicamp.br}{A.~M.}

\address[ime]{UNICAMP - IMECC, Campinas - SP - Brazil, 13083-859}
\address[ict]{UNIFESP - ICT, São José dos Campos - SP - Brazil, 12247-014}
\tnotetext[fn1]{Partially supported by FAPESP grants 2011/22322-4 (A.~B.) and 2009/05887-8 (T. M.), and CNPq grant 303667/2011-7 (A.~M.)}

\begin{abstract}
We establish the existence of Demazure flags for graded local Weyl modules for hyper current algebras in positive characteristic. If the underlying simple Lie algebra is simply laced, the flag has length one, i.e., the graded local Weyl modules are isomorphic to Demazure modules. This extends to the positive characteristic setting results of Chari-Loktev, Fourier-Littelmann, and Naoi for current algebras in characteristic zero. Using this result, we prove that the character of local Weyl modules for hyper loop algebras depend only on the highest weight, but not on the (algebraically closed) ground field, and deduce a tensor product factorization for them.
\end{abstract}
\end{frontmatter}


\section*{Introduction}

Let $\lie g$ be a semisimple finite-dimensional Lie algebra over the complex numbers and, given an algebraically closed field $\bb F$, let $G_\bb F$ be a connected, simply connected, semisimple algebraic group over $\bb F $of the same Lie type as $\lie g$. The category of finite-dimensional
$G_\bb F$-modules is equivalent to that of the hyperalgebra $U_\bb F(\lie g)$. The hyperalgebra is a Hopf
algebra obtained from the universal enveloping algebra of $\lie g$ by first choosing a certain integral form and then changing scalars to $\bb F$ (this process is often referred to as a reduction modulo $p$). If the characteristic of $\bb F$ is positive, the category of finite-dimensional $G_\bb F$-modules is not semisimple, and the modules obtained by reduction modulo $p$ of simple $\lie g$-modules - called Weyl modules - provide examples of indecomposable, reducible modules. The Weyl modules have several interesting properties which are independent of $\bb F$ such as: a description in terms of generators and relations, being the universal highest-weight modules of the category of finite-dimensional $G_\bb F$-modules, their characters are given by the Weyl character formula.

Consider now the loop algebra $\tlie g=\lie g\otimes\bb C[t,t^{-1}]$. The finite-dimensional representation theory of $\tlie g$ was initiated by Chari and Presley in \cite{cpnew}, where the simple modules were classified in terms of tensor products of evaluation modules. Differently from the category of finite-dimensional $\lie g$-modules, the category of finite-dimensional $\tlie g$-modules is not semisimple. Therefore, it is natural to ask if there is a notion analogue to that of Weyl modules for $\tlie g$. In \cite{CPweyl}, Chari and Presley proved that the simple finite-dimensional  $\tlie g$-modules are highest-weight in an appropriate sense and introduced the Weyl modules for $\tlie g$ in terms of generator and relations which are the natural analogues of the relations for the original Weyl modules. The highest-weight vector is now an eigenvector for the action of the loop algebra $\tlie h$ over the Cartan subalgebra $\lie h$ of $\lie g$. Because of this, it eventually became common practice to use the terms $\ell$-weight and highest-$\ell$-weight. In particular, it was shown in \cite{CPweyl} that the just introduced Weyl modules share a second property with their older relatives: they are the universal finite-dimensional highest-$\ell$-weight modules. These results were immediately quantized and, still in \cite{CPweyl}, the notion of Weyl modules for the quantum loop algebra $U_q(\tlie g)$ was introduced. Chari and Presley conjectured (and proved for $\lie g=\lie{sl}_2$) that the Weyl modules for $\tlie g$ were classical limits of quantum Weyl modules. Moreover, all Weyl modules for $\tlie g$ could be obtained as classical limits of quantum Weyl modules which are actually irreducible. This can be viewed as the analogue of the property that the original Weyl modules are obtained by reduction modulo $p$ from simple $\lie g$-modules.

Motivated by bringing the discussion of the last paragraph to the positive characteristic setting, it was initiated in \cite{JMhyper} the study of the finite-dimensional representation theory of the hyperalgebras associated to $\tlie g$, which we refer to as hyper loop algebras. Several basic properties of the underlying abelian category were established and, in particular, the notion of Weyl modules was introduced. Moreover, it was shown that certain Weyl modules for $\tlie g$ can be reduced modulo $p$. In analogy with the previous paragraphs, it is natural to conjecture that the reduction modulo $p$ of a Weyl module is again a Weyl module (the difference is that now we cannot restrict attention to Weyl modules which are irreducible since there are too few of these).

In the mean time, two partial proofs of Chari and Presley's conjecture appeared \cite{chlo:wfd,foli:weyldem}. Namely, it follows from a tensor product factorization of the Weyl modules for $\tlie g$ proved in \cite{CPweyl} together with the fact that the irreducible quantum Weyl modules are tensor products of fundamental modules, that it suffices to compute the dimension of graded analogues of Weyl modules for the current algebra $\lie g[t]=\lie g\otimes\bb C[t]$.
These graded analogues of Weyl modules were introduced in \cite{FL04} as a particular case of a class of modules (named local Weyl modules) for algebras of the form $\lie g\otimes A$, where $A$ is a commutative associative algebra (see also \cite{cfk,fkks} and references therein for more on the recent development of the representation theory of such algebras). For $\lie g$ of type $A$, the dimensions of the graded Weyl modules were computed in \cite{chlo:wfd} by explicitly exhibiting a vector space basis. As a consequence, it was observed that they are isomorphic to certain Demazure modules.
For a general simply laced Lie algebra, this isomorphism was proved in \cite{foli:weyldem} by using a certain presentation of Demazure modules by generators and relations as well as by studying fusion products. In particular, the dimension of the graded Weyl modules could be computed resulting in a proof of the conjecture. It was also shown in \cite{foli:weyldem} that such isomorphisms do not exist in general in the non simply laced case.
It was pointed out by Nakajima that the general case could be deduced by using global bases theory (this proof remains unpublished, but a brief sketch is given in the introduction of \cite{foli:weyldem}). The relation with Demazure modules in the non simply laced case was finally established in completely generality in \cite{naoi:weyldem} where it was shown that the graded Weyl modules for $\lie g[t]$ admit Demazure flags, i.e., filtrations whose quotients are Demazure modules. Such flags are actually obtained from results of Joseph \cite{joseph03,joseph06} (see also \cite{lit:contract}) on global bases for tensor products of Demazure modules. Therefore, in the non simply laced case, the relation between Weyl and Demazure modules is, so far, dependent on the theory of global bases, although in a different manner than Nakajima's proposed proof.

The goal of the present paper is to extend to the positive characteristic context the results of \cite{foli:weyldem,naoi:weyldem} and prove the conjecture of \cite{JMhyper} on the reduction modulo $p$ of Weyl modules for hyper loop algebras. Moreover, we also prove a tensor product factorization of Weyl modules  which is the hyperalgebraic analogue of that proved in \cite{CPweyl}. However, due to the extra technical difficulties which arise when dealing with hyperalgebras in positive characteristic, there are several differences in our proofs from those used in the characteristic zero setting.  For instance, the tensor product factorization was originally used to restrict the study to computing the dimension of the graded Weyl modules for current algebras. In the positive characteristic setting, we actually deduce the tensor product factorization from the computation of the dimension. Also, for proving the existence of the Demazure flags, some arguments  used in \cite{naoi:weyldem} do not admit a hyperalgebraic analogue. Our approach to overcome these issues actually makes use of the characteristic zero version of the same statements. We also use the fact proved in \cite{mathieu88,mathieu89} that the characters of Demazure modules do not depend on the ground field. Different presentations of Demazure modules in terms of generator and relations are needed for different parts of the argument. For $\lie g $ of type $G_2$, technical issues for proving one of these presentations require that we restrict ourselves to characteristic different than 2 and 3. Outside type $G_2$, there is no restriction in the characteristic of the ground field.

While this paper was being finished, Chari and Venkatesh released the preprint \cite{chaven} where several new ideas for studying Demazure, local Weyl modules, and Kirillov-Reshetikhin modules are introduced. In particular, several results of \cite{CPweyl,foli:weyldem,naoi:weyldem} are recovered and generalized. Moreover, new (and simpler) presentations in terms of generators and relations for Demazure modules are obtained. It will be interesting to study if the ideas and results of \cite{chaven} can be brought to the positive characteristic setting as well.

The paper is organized as follows. We start Section \ref{s:main} fixing the notation regarding finite and affine types Kac-Moody algebras and reviewing the construction of the hyperalgebras. Next, using generators and relations, we define the Weyl modules for hyper loop algebras, their graded analogues for hyper current algebras, and the subclass of the class of Demazure modules which is relevant for us. We then state our main result (Theorem \ref{t:isos}) and recall the precise statement \eqref{e:conj} of the conjecture in \cite{JMhyper}.
Theorem \ref{t:isos} is stated in 4 parts. Part \eqref{t:isosdg} states the isomorphism between graded Weyl modules and Demazure modules for simply laced $\lie g$. Part \eqref{t:isofil} states the existence of Demazure flags for graded Weyl modules. Part \eqref{t:isoslg} establishes an isomorphism between a given graded Weyl module and a twist of certain Weyl module for the hyper loop algebra. Finally, part \eqref{t:isostp}  is the aforementioned tensor product factorization.
In Section \ref{ss:morenot}, we fix some further notation and establish a few technical results needed in the proofs.

Section \ref{sec:rev.g} brings a review of the finite-dimensional representation theory of the finite type hyperalgebras while Section \ref{sec:fdweyl} gives a very brief account of the relevant results from \cite{JMhyper}. Section \ref{ss:grmhca} is concerned with the category of finite-dimensional graded modules for the hyper current algebras. The main results of this subsection are Theorem \ref{t:fdrhca}, where the basic properties of the category are established, and Corollary \ref{c:gweylfd} which states that the graded Weyl modules for $\lie g[t]$ admit integral forms. Assuming Theorem \ref{t:isos}\eqref{t:isofil}, we prove \eqref{e:conj} in Section \ref{ss:JM.conjecture}. The proof actually makes use of the characteristic zero version of all parts of Theorem \ref{t:isos} as well as \cite[Corollary A]{naoi:weyldem} (stated here as Proposition \ref{p:dimwbyfund}). In Section \ref{ss:Mrel}, we prove a second presentation of Demazure modules in terms of generator and relations. It basically replaces a highest-weight generator by a lowest-weight one. This is the presentation which allows us to use the results of \cite{mathieu88,mathieu89} on the independence of  the characters of Demazure modules on the ground field.

In the first 3 subsections of Section \ref{s:joseph} we collect the results of \cite{joseph03,joseph06} on crystal and global bases which we need to prove Theorem \ref{t:filtration} which is an integral analogue of \cite[Corollary 4.16]{naoi:weyldem} on the existence of higher level Demazure flags for Demazure modules when the underlying simple Lie algebra $\lie g$ is simply laced. We remark that the proof of Theorem \ref{t:filtration} is the only one where the theory of global bases is used. We further remark that, in order to prove Theorem \ref{t:isos}\eqref{t:isofil}, we only need the statement of  Theorem \ref{t:filtration} for $\lie g$ of type $A$. It may also be interesting to observe that the only other place quantum groups are being used here is in the proof of the characteristic zero version of Theorem \ref{t:isos}\eqref{t:isoslg} (see Lemmas 1 and 3 and equation (15) in \cite{foli:weyldem}).

Theorem \ref{t:isos} is the proved in Section \ref{s:proof}. In particular, in Section \ref{ss:relnsl}, we prove a positive characteristic analogue of \cite[Proposition 4.1]{naoi:weyldem} which is a third presentation of Demazure modules in terms of generator and relations in the case that $\lie g$ is not simply laced.  This is where the restriction on the characteristic of the ground field for type $G_2$ appears.  Parts \eqref{t:isofil} and \eqref{t:isoslg} of Theorem \ref{t:isos} are proved in Sections \ref{ss:isofil} and \ref{pf13b}, respectively. Finally, in Sections \ref{ss:tpprime} and \ref{ss:ptp}, we prove that the tensor product of finite-dimensional highest-$\ell$-weight modules for hyper loop algebras with relatively prime highest $\ell$-weights is itself a highest-$\ell$-weight module and deduce Theorem \ref{t:isos}\eqref{t:isostp}. As an application of Theorem \ref{t:isos}, we end the paper proving that the graded Weyl modules are fusion products of Weyl modules with ``smaller'' highest weights (Proposition \ref{p:fusion}).

\section{The main results}\label{s:main}

\subsection{Finite type data}\label{ss:ftd}

Let $\lie g$ be a finite-dimensional simple Lie algebra over \bb C with a fixed Cartan subalgebra $\lie h \subset \lie g$. The associated root system will be denoted by $R \subset \lie h^\ast$. We fix a simple system $\Delta=\{\alpha_i : i\in I\} \subset R$ and denote the corresponding set of positive roots by $R^+$. The Borel subalgebra associated to $R^+$ will be denoted by $\lie b^{+} \subset \lie g$ and the opposite Borel subalgebra will be denoted by $\lie b^{-} \subset \lie g$. We fix a Chevalley basis of the Lie algebra $\lie g$ consisting of $x_{\alpha}^\pm \in \lie g_{\pm \alpha}$, for each $\alpha \in R^+$, and $h_i \in \lie h$, for each $i \in I$. We also define $h_\alpha\in\lie h, \alpha\in R^+$, by $h_\alpha = [x_\alpha^+,x_\alpha^-]$ (in particular, $h_i=h_{\alpha_i}, i\in I$) and set $R^\vee = \left\{ h_\alpha \in \lie h : \alpha \in R \right\}$. We often simplify notation and write $x_i^\pm$ in place of $x_{\alpha_i}^\pm, i\in I$. Let $(\ ,\ )$ denote the invariant symmetric bilinear form on $\lie g$ such that $(h_\theta,h_\theta)=2$, where $\theta$ is the highest root of $\lie g$.
Let $\nu:\lie h\to\lie h^*$ be the linear isomorphism induced by $(\ ,\ )$ and keep denoting by $(\ ,\ )$ the nondegenerate bilinear form induced by $\nu$ on $\lie h^*$. Notice that
\begin{equation}\label{e:norm(,)}
(x_\alpha^+,x_\alpha^-)=\frac{2}{(\alpha,\alpha)}  \qquad\text{for all}\qquad \alpha\in R^+
\end{equation}
and
\begin{equation}\label{e:norm(,)'}
(\alpha,\alpha)= \begin{cases}
2, &\text{if $\alpha$ is long},\\
2/r^\vee, &\text{if $\alpha$ is short,}
\end{cases}
\end{equation}
where $r^\vee\in\{1,2,3\}$ is the lacing number of $\lie g$. For notational convenience, set
\begin{equation}
r^\vee_\alpha = \frac{2}{(\alpha,\alpha)} = \begin{cases}
1, &\text{if $\alpha$ is long},\\
r^\vee, &\text{if $\alpha$ is short.}
\end{cases}
\end{equation}

We shall need the following fact \cite[Section 4.2]{car:sLieG}. Given $\alpha\in R$, let $x_\alpha=x^\pm_{\pm \alpha}$ according to whether $\alpha\in\pm R^+$. For $\alpha,\beta\in R$ let $p=\max\{n:\beta-n\alpha\in R\}$. Then, there exists $\epsilon\in\{-1,1\}$ such that
\begin{equation}\label{e:cbstructure}
[x_\alpha,x_\beta] = \epsilon (p+1) x_{\alpha+\beta}.
\end{equation}

The weight lattice is defined as $P = \left\{ \lambda \in \lie h^\ast : \lambda (h_\alpha) \in \bb Z, \forall \ \alpha \in R \right\}$, the subset of dominant weights is $P^+ = \left\{ \lambda \in P : \lambda (h_\alpha) \in \bb N, \forall \ \alpha \in R^+ \right\}$, the coweight lattice is defined as $P^\vee = \left\{ h \in \lie h : \alpha(h) \in \bb Z, \forall \ \alpha \in R \right\}$, and the subset of dominant coweights is $P^{\vee +} = \left\{ h \in P^\vee : \alpha(h) \in \bb N, \forall \ \alpha \in R^+ \right\}$. The fundamental weights will be denoted by $\omega_i, i\in I$.
The root lattice of $\lie g$ will be denoted by $Q$ and we let $Q^+ = \mathbb Z_{\ge 0}R^+$. We consider the usual partial order on $\lie h^*$: $\mu\le\lambda$ if and only if $\lambda-\mu\in Q^+$. The Weyl group of $\lie g$, denoted $\mathcal W$, is the subgroup of ${\rm Aut}_\mathbb C(\lie h^*)$ generated by the simple reflections $s_i, i\in I$, defined by $s_i(\mu) = \mu - \mu(h_i)\alpha_i$ for all $\mu\in\lie h^*$. As usual, $w_0$ will denote the longest element in \cal W.

\subsection{Affine type data}\label{ss:atd}

Consider the loop algebra $\tlie g = \lie g \otimes \bb C [t, t^{-1}]$, with Lie bracket given by $[x \otimes t^r , y \otimes t^s] = [x, y] \otimes t^{r+s}$, for any $x, y \in \lie g$, $r,s \in \bb Z$. We identify $\lie g$ with the subalgebra $\lie g\otimes 1$ of $\tlie g$. The subalgebra $\lie g[t]=  \lie g \otimes \bb C [t]$ is the current algebra of $\lie g$. If $\lie a$ is a subalgebra of $\lie g$, let $\tlie a=\lie a\otimes \mathbb C[t,t^{-1}]$ and $\lie a[t] = \lie a \otimes \bb C [t]$. Let also $\lie a [t]_{\pm}:= \lie a \otimes (t^{\pm 1} \bb C [t^{\pm 1}])$. In particular, as vector spaces,
\begin{equation*}
\tlie g = \tlie n^-\oplus\tlie h\oplus\tlie n^+ \qquad\text{and}\qquad \lie g[t]=\lie n^-[t]\oplus\lie h[t]\oplus\lie n^+[t].
\end{equation*}

The affine Kac-Moody algebra $\hlie g$ is the 2-dimensional extension $\hlie g:=\tlie g \oplus  \bb C c \oplus \bb C d$  of $\tlie g$ with Lie bracket given by
\[[x \otimes t^r, y \otimes t^s] = [x, y] \otimes t^{r+s} + r\ \delta_{r,-s}\ (x, y)\ c, \qquad [c,\hlie g]=\{0\}, \qquad\text{and}\qquad [d,x\otimes t^r]=r\ x\otimes t^r \]
for any $x,y \in \lie g, r,s \in \bb Z$. Observe that, if $\hlie g'=[\hlie g,\hlie g]$ is the derived subalgebra of $\hlie g$, then $\hlie g' = \tlie g\oplus\mathbb Cc$, and we have a nonsplit short exact sequence of Lie algebras $0\to \mathbb Cc\to\hlie g'\to \tlie g\to0$.

Set $\hlie h' = \lie h\oplus\mathbb Cc.$
Notice that $\lie g, \lie g[t]$, and $\lie g[t]_\pm$ remain subalgebras of $\hlie g$. Set
\[\hlie h = \lie h \oplus \bb C c \oplus \bb C d, \qquad \hlie n^{\pm} = \lie n^{\pm} \oplus \lie g [t]_{\pm}, \qquad\text{and}\qquad \hlie b^{\pm} = \hlie n^{\pm} \oplus \hlie h.\]
The root system, positive root system, and set of simple roots associated to the triangular decomposition $\hlie g=\hlie n^-\oplus\hlie h\oplus\hlie n^+$ will be denoted by $\hat{R}, \hat R^+$ and $\hat{\Delta}$ respectively. Let $\hat I=I\sqcup\{0\}$ and $h_0=c-h_\theta$, so that $\{h_i:i\in\hat I\}\cup\{d\}$ is a basis of $\hlie h$. Identify $\lie h^*$ with the subspace $\{\lambda\in\hlie h^*:\lambda(c)=\lambda(d)=0\}$. Let also $\delta\in\hlie h^*$ be such that $\delta(d)=1$ and $\delta(h_i)=0$ for all $i\in\hat I$ and define $\alpha_0=\delta-\theta$. Then,  $\hat\Delta=\{\alpha_i:i\in\hat I\}$, $\hat R^+ = R^+\cup\{\alpha+r\delta: \alpha\in R\cup\{0\}, r\in\mathbb Z_{>0}\}$,  $\hlie g_{\alpha + r \delta} = \lie g_\alpha \otimes t^r$, if $\alpha \in R, r \in \bb Z$, and $\hlie g_{r \delta} = \lie h \otimes t^r$, if $r \in \bb Z \setminus \{0\}$. Observe that
\begin{equation}\label{e:rootlevel}
\alpha(c) = 0 \qquad\text{for all}\qquad \alpha\in\hat R.
\end{equation}
A root $\gamma \in \hat R$ is called real if $\gamma = (\alpha + r \delta)$ with $\alpha \in R, r \in \bb Z$, and imaginary if $\gamma = r \delta$ with $r \in \bb Z \setminus \{0\}$. Set $x_{\alpha,r}^\pm=x_\alpha^\pm\otimes t^r, h_{\alpha,r}=h_\alpha\otimes t^r, \alpha\in R^+,r\in\mathbb Z$. We often simplify notation and write $x_{i,r}^\pm$ and $h_{i,r}$ in place of $x_{\alpha_i,r}^\pm$ and $h_{\alpha_i,r},  i\in I, r\in\bb Z$. Observe that $\{x_{\alpha,r}^\pm, h_{i,r}:\alpha\in R^+,i\in I, r\in\mathbb Z\}$ is a basis of $\tlie g$. Given $\alpha\in R^+$ and $r\in\bb Z_{>0}$, set
$$x_{\pm\alpha+r\delta}^+ = x_{\alpha,r}^\pm, \qquad x_{\pm\alpha+r\delta}^- = x_{\alpha,-r}^\mp, \qquad\text{and}\qquad h_{\pm\alpha+r\delta} =  [x_{\pm\alpha+r\delta}^+,x_{\pm\alpha+r\delta}^-] = \pm h_\alpha + rr^\vee_\alpha c.$$

Define also $\Lambda_i\in\hlie h^*, i\in\hat I$, by the requirement $\Lambda_i(d)=0, \Lambda_i(h_j)=\delta_{ij}$ for all $i,j\in\hat I$.  Set $\hat P = \mathbb Z\delta \oplus ( \oplus_{i\in\hat I}^{}\ \mathbb Z\Lambda_i ), \hat P^+ = \mathbb Z\delta \oplus ( \oplus_{i\in\hat I}^{}\ \mathbb N\Lambda_i), \hat P'=\oplus_{i\in\hat I}^{}\ \mathbb Z\Lambda_i$, and $\hat P'^+=\hat P'\cap\hat P^+$. Notice that
\[ \Lambda_0 (h) = 0 \quad \text{iff} \quad h \in \lie h \oplus \bb C d
\qquad \text{and} \qquad
\Lambda_i - \omega_i = \omega_i(h_\theta)\Lambda_0 \quad\text{for all}\ i \in I.\]
Hence, $\hat{P} = \bb Z \Lambda_0 \oplus P \oplus \bb Z \delta$. Given $\Lambda\in\hat P$, the number $\Lambda(c)$ is called the level of $\Lambda$. By \eqref{e:rootlevel}, the level of $\Lambda$ depends only on its class modulo the root lattice $\hat Q$. Set also $\hat Q^+=\mathbb Z_{\ge 0}\hat R^+$ and let
$\widehat{\mathcal W}$ denote the  affine Weyl group, which is generated by the simple reflections $s_i,i\in\hat I$.
Finally, observe that $\{\Lambda_0, \delta\}\cup\Delta$ is a basis of $\hlie h^\ast$.

\subsection{Integral forms and hyperalgebras}\label{ss:Zforms}

We use the following notation. Given a $\mathbb Q$-algebra  $U$ with unity, an element $x\in U$, and $k\in\mathbb N$, set
\begin{equation*}
x^{(k)} = \frac{1}{k!} x^k \qquad\text{and}\qquad \binom{x}{k} = \frac{1}{k!} x(x-1)\cdots(x-k+1).
\end{equation*}
In the case $U=U(\tlie g)$, we also introduce elements $\Lambda_{x,\pm r}\in U(\tlie g), x\in\lie g,r\in\mathbb N$, by the following identity of power series in the variable $u$:
\begin{equation*}
\Lambda_x^\pm(u):= \sum_{r\ge 0} \Lambda_{x,\pm r} u^r = \exp\left(- \sum_{s>0} \frac{x\otimes t^{\pm s}}{s} u^s\right).
\end{equation*}
Most of the time we will work with such elements with $x=h_\alpha$ for some $\alpha\in R^+$. We then simplify notation and write $\Lambda^\pm_\alpha(u) = \Lambda^\pm_{h_\alpha}(u)$ and, if $\alpha=\alpha_i$ for some $i\in I$, we simply write $\Lambda_i^\pm(u)=\Lambda_{h_i}^\pm(u)$. To shorten notation, we also set $\Lambda_x(u)=\Lambda_x^+(u)$.

Consider the \bb Z-subalgebra $U_\bb Z(\hlie g')$ of $U(\hlie g')$ generated by the set $\{( x_{\alpha, r}^{\pm})^{(k)}:\alpha \in R^+, r \in \bb Z, k \in \bb N \}$. By \cite[Theorem 5.8]{garala}, it is a free $\mathbb Z$-submodule of $U(\hlie g')$ and satisfies $\bb C \otimes_\bb Z U_\bb Z (\hlie g') = U(\hlie g')$. In other words, $U_\bb Z(\hlie g')$ is an integral form of $U(\hlie g')$. Moreover, the image of $U_\mathbb Z(\hlie g')$ in $U(\tlie g)$ is an integral form of $U(\tlie g)$ denoted by $U_\mathbb Z(\tlie g)$.
For a Lie subalgebra $\lie a$ of $\hlie g'$ set
$$U_\mathbb Z(\lie a) = U(\lie a)\cap U_\mathbb Z(\hlie g')$$
and similarly for subalgebras of $\tlie g$. The subalgebra $U_\mathbb Z(\lie g)$ coincides with the $\mathbb Z$-subalgebra of $U(\hlie g)$ generated by $\{( x_{\alpha}^{\pm})^{(k)}:\alpha \in R^+, k \in \bb N \}$. The subalgebra $U_\bb Z (\lie n^\pm)$ of $U_\bb Z (\lie g)$ is generated, as \bb Z-subalgebra, by the set $\{ ( x_{\alpha}^{\pm} )^{(k)}:\alpha \in R^+, k \in \bb N \}$ while $U_\mathbb Z(\lie h)$ is generated, as a $\mathbb Z$-subalgebra, by $\left\{ \binom{h_{i}}{k}:i \in I, k \in \bb N\right\}$. Similarly, the subalgebra $U_\bb Z (\lie n^\pm[t])$ of $U_\bb Z (\lie g[t])$ is generated, as \bb Z-subalgebra, by the set $\{ ( x_{\alpha,r}^{\pm } )^{(k)}:\alpha \in R^+, k \in \bb N, r \in \bb Z_{\ge 0} \}$ while $U_\bb Z (\lie h[t]_+)$ is generated by $\left\{\Lambda_{i, r}:i \in I, r \in \bb Z_{>0} \right\}$. In fact, the latter is free commutative over the given set.
The PBW Theorem implies that multiplication establishes isomorphisms of $\mathbb Z$-modules
\begin{gather*}
U_\mathbb Z(\hlie g')\cong U_\mathbb Z(\hlie n^-)\otimes U_\mathbb Z(\hlie h')\otimes U_\mathbb Z(\hlie n^+), \\
U_\mathbb Z(\tlie g)\cong U_\mathbb Z(\tlie n^-)\otimes U_\mathbb Z(\tlie h)\otimes U_\mathbb Z(\tlie n^+) \text{ and } \\
U_\mathbb Z(\lie g[t])\cong U_\mathbb Z(\lie n^-[t])\otimes U_\mathbb Z(\lie h[t])\otimes U_\mathbb Z(\lie n^+[t]) .
\end{gather*}
Moreover, restricted to $U_\mathbb Z(\tlie h)$ this gives rise to an isomorphism of $\mathbb Z$-algebras
\begin{equation*}
U_\mathbb Z(\tlie h)\cong U_\mathbb Z(\lie h[t]_-)\otimes U_\mathbb Z(\lie h)\otimes U_\mathbb Z(\lie h[t]_+).
\end{equation*}
In general, it may not be true that $U_\bb Z(\lie a)$ is an integral form of $U(\lie a)$. However, if $\lie a$ has a basis consisting of real root vectors, an elementary use of the PBW Theorem implies that this is true.
We shall make use of algebras of this form later on.

Given a field  $\mathbb F$, define the $\mathbb F$-hyperalgebra of $\lie a$ by $U_\mathbb F(\lie a) =  \mathbb F\otimes_{\mathbb Z}U_\mathbb Z(\lie a)$, where $\lie a$ is any of the Lie algebras with \bb Z-forms defined above. Clearly, if the characteristic of $\mathbb F$ is zero, the algebra $U_\mathbb F(\tlie g)$ is naturally isomorphic to $U(\tlie g_\mathbb F)$ where $\tlie g_\mathbb F=  \mathbb F\otimes_\mathbb Z\tlie g_\mathbb Z$ and $\tlie g_\mathbb Z$ is the $\mathbb Z$-span of the Chevalley basis of $\tlie g$, and similarly for all algebras $\lie a$ we have considered. For fields of positive characteristic we just have an algebra homomorphism $U(\lie a_\mathbb F)\to U_\mathbb F(\lie a)$ which is neither injective nor surjective. We will keep denoting by $x$ the image of an element $x\in U_\mathbb Z(\lie a)$ in $U_\mathbb F(\lie a)$. Notice that we have $U_\mathbb F(\tlie g)=U_\mathbb F(\tlie n^-)U_\mathbb F(\tlie h)U_\mathbb F(\tlie n^+)$.

Given an algebraically closed field $\mathbb F$, let $\mathbb A$ be a Henselian discrete valuation ring of characteristic zero having $\mathbb F$ as its residue field. Set $U_\mathbb A(\lie a)=\mathbb A\otimes_\mathbb Z U_\mathbb Z(\lie a)$. Clearly $U_\mathbb F(\lie a)\cong \mathbb F\otimes_\mathbb A U_\mathbb A(\lie a)$. We shall also fix an algebraic closure $\bb K$ of the field of fractions of \bb A. For an explanation why we shall need to move from integral forms to $\bb A$-forms, see Remark \ref{r:Aforms} (and \cite[Section 4C]{JMhyper}). As mentioned in the introduction, we assume the characteristic of $\mathbb F$ is either zero or at least 5 if $\lie g$ is of type $G_2$.

Notice that the Hopf algebra structure of the universal enveloping algebras induce such structure on the hyperalgebras.
For any Hopf algebra $H$, denote by $H^0$ its augmentation ideal.

\subsection{$\ell$-weight lattice} \label{l-weight.lattice}

For a ring $A$, we shall denote by $A^\times$ its set of unities. Consider the set $\cal P_\mathbb F^+$ consisting of $|I|$-tuples $\gb\omega = (\gb\omega_i)_{i\in I}$, where $\gb\omega_i \in \mathbb F[u]$ and $\gb \omega_i (0) = 1$ for all $i \in I$. Endowed with coordinatewise polynomial multiplication, $\cal P_\bb F^+$ is a monoid. We denote by $\cal P_\mathbb F$ the multiplicative abelian group associated to $\cal P_\mathbb F^+$ which will be referred to as the $\ell$-weight lattice associated to $\lie g$. One can describe $\cal P_\bb F$ in another way. Given $\mu\in P$ and $a\in\mathbb F^\times$, let $\gb\omega_{\mu,a}$ be the element of $\cal P_\mathbb F$ defined as
\begin{equation*}
(\gb\omega_{\mu,a})_i(u) = (1-au)^{\mu(h_i)} \quad\text{for all}\quad i\in I.
\end{equation*}
If $\mu=\omega_i$ is a fundamental weight, we simplify notation and write $\gb\omega_{\omega_i,a} = \gb\omega_{i,a}$. We refer to $\gb\omega_{i,a}$ as a fundamental $\ell$-weight, for all $i \in I$ and $a \in \bb F^\times$. Notice that $\cal P_\mathbb F$ is the free abelian group on the set of fundamental $\ell$-weights. One defines $\cal P_\bb K$ in the obvious way. Let also $\cal P_\bb A^\times$ be the submonoid of $\cal P_\bb K^+$ generated by $\gb\omega_{i,a}, i\in I, a\in\bb A^\times$.

Let $\wt:\cal P_\mathbb F \to P$ be the unique group homomorphism such that $\wt(\gb\omega_{i,a}) = \omega_i$ for all $i\in I,a\in\mathbb F^\times$. Let also $\gb\omega\mapsto \gb\omega^-$ be the unique group automorphism of $\cal P_\mathbb F$ mapping $\gb\omega_{i,a}$ to $\gb\omega_{i,a^{-1}}$ for all $i\in I,a\in\mathbb F^\times$. For notational convenience we set $\gb\omega^+=\gb\omega$.

The abelian group $\cal P_\mathbb F$ can be identified with a subgroup of the monoid of $|I|$-tuples of formal power series with coefficients in $\mathbb F$ by identifying the rational function $(1-au)^{-1}$ with the corresponding geometric formal power series $\sum_{n\geq0} (au)^n$. This allows us to define an inclusion $\cal P_\mathbb F \hookrightarrow U_\mathbb F(\tlie h)^*$. Indeed, if $\gb\omega\in\cal P_\bb F$ is such that $\gb \omega^\pm_i(u) = \sum_{r\geq 0} \omega_{i,\pm r} u^r \in \cal P_\bb F$, set
\begin{gather*}
\gb\omega\left(\tbinom{h_i}{k}\right) = \tbinom{\wt(\gb\omega)(h_i)}{k}, \quad \gb\omega(\Lambda_{i,r})=\omega_{i,r}, \quad\text{for all}\quad i\in I,r,k\in\mathbb Z,k\ge 0,\\ \text{and}\quad
\gb\omega(xy)=\gb\omega(x)\gb\omega(y), \quad\text{for all}\quad x,y\in U_\mathbb F(\tlie h).
\end{gather*}

\subsection{Demazure and local Weyl modules} \label{sec:demweyl}

Given $\gb\omega\in\cal P_\mathbb F^+$, the local Weyl module $W_\mathbb F(\gb\omega)$ is the quotient of $U_\mathbb F(\tlie g)$ by the left ideal generated by
\begin{gather}
U_\mathbb F(\tlie n^+)^0, \quad h-\gb\omega(h), \quad (x_\alpha^-)^{(k)} \quad\text{for all}\quad h\in U_\mathbb F(\tlie h),\ \alpha\in R^+,\ k>\wt(\gb\omega)(h_\alpha).
\end{gather}
It is known that the local Weyl modules are finite-dimensional (cf. Theorem \ref{t:fdrhla} \eqref{t:wmfdim}).

For $\lambda\in P^+$, the graded local Weyl module $W_\mathbb F^c(\lambda)$ is the quotient of $U_\mathbb F(\lie g[t])$ by the left ideal $I_\bb F^c(\lambda)$ generated by
\begin{gather}\label{e:glWrel}
U_\mathbb F(\lie n^+[t])^0, \quad U_\mathbb F(\lie h [t]_+)^0, \quad h- \lambda(h), \quad (x_\alpha^-)^{(k)}, \quad \text{for all}\quad h\in U_\mathbb F(\lie h),\ \alpha\in R^+,\ k>\lambda(h_\alpha).
\end{gather}
Also, given $\ell \ge 0$, let $D_\mathbb F(\ell,\lambda)$ denote the quotient of $U_\mathbb F(\lie g[t])$ by the left ideal $I_\bb F(\ell, \lambda)$ generated by $I_\bb F^c(\lambda)$ together with
\begin{gather} \label{e:FLrel}
(x_{\alpha,s}^-)^{(k)} \quad \text{for all}\quad  \alpha\in R^+, s,k\in\mathbb Z_{\ge 0}, \ k>\ \max \{0, \lambda(h_\alpha) - s\ell r^\vee_\alpha \}.
\end{gather}
In particular, $D_\bb F (\ell, \lambda)$ is a quotient of $W_{\bb F}^{c} (\lambda)$.

The algebra $U_\bb F (\lie g [t])$ inherits a $\mathbb Z$-grading from the grading on the polynomial algebra $\bb C [t]$. The ideals $I_{\bb F}^{c} (\lambda)$ and $I_\bb F (\ell, \lambda)$ are clearly graded and, hence, the modules $W_{\bb F}^{c} (\lambda)$ and $D_\bb F (\ell, \lambda)$ are graded. If $V$ is a graded module, let $V[r]$ be its $r$-th graded piece. Given $m\in\mathbb Z$, let $\tau_m (V)$ be the $U_\mathbb F(\lie g[t])$-module such that $\tau_s(V)[r] = V[r-m]$ for all $r\in\mathbb Z$.  Set
$$D_\mathbb F(\ell,\lambda,m) = \tau_m(D_\mathbb F(\ell,\lambda)).$$

\begin{rem}{\label{ref:UF}}
Local Weyl modules were simply called Weyl modules in \cite{CPweyl}. Certain infinite-dimension\-al modules, which were called maximal integrable modules in \cite{CPweyl}, are now called
global Weyl modules. The modern names, local and global Weyl modules were
coined by Feigin and Loktev in \cite{FL04}, where they introduced these modules in the context of
generalized current algebras. We will not consider the global Weyl modules in this paper. We refer the reader to \cite{cfk,fkks,fms} and references therein for recent developments in the theory of global and local Weyl modules for (equivariant) map algebras. See also \cite{cha:ibma} for the initial steps in the study of the hyperalgebras of (equivariant) map algebras.
\end{rem}

We are ready to state the main theorem of the paper.

\begin{thm} \label{t:isos}
Let $\lambda \in P^+$.
\begin{enumerate}[(a)]

\item\label{t:isosdg} If $\lie g$ is simply laced, then $D_\mathbb F(1,\lambda)$ and $W_\mathbb F^c(\lambda)$ are isomorphic $U_\bb F (\lie g [t])$-modules.

\item\label{t:isofil} There exist $k\ge 1, m_j\in\bb Z_{\ge 0}$, and $\lambda_j\in P^+, j=1,\dots,k$, (independent of $\mathbb F$) such that the $U_\bb F (\lie g[t])$-module $W_\bb F^c (\lambda)$ admits a filtration $(0) = W_0 \subset W_1 \subset \cdots \subset W_{k-1} \subset W_k = W_\bb F^c (\lambda)$, with $$W_j/W_{j-1}\cong D_\bb F (1, \lambda_j,m_j).$$ 

\item\label{t:isoslg} For any $a\in\mathbb F^\times$, there exists an automorphism $\varphi_a$ of $U_\mathbb F(\lie g[t])$ such that the pull-back of $W_\mathbb F(\gb\omega_{\lambda, a})$ by $\varphi_a$ is isomorphic to $W_\mathbb F^c(\lambda)$.

\item\label{t:isostp} If $\gb\omega= \prod_{j=1}^m \gb\omega_{\lambda_j,a_j}$ for some $m\ge 0, \lambda_j\in P^+,a_j\in\mathbb F^\times, j=1,\dots,m$, with $a_i\ne a_j$ for $i\ne j$, then
\begin{equation*}
W_\mathbb F(\gb\omega) \cong
\otm_{j=1}^m W_\mathbb F(\gb\omega_{\lambda_j,a_j}).
\end{equation*}
\end{enumerate}
\end{thm}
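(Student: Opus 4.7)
The plan is to prove the four parts jointly, combining the characteristic zero analogues from \cite{CPweyl, foli:weyldem, naoi:weyldem} with reduction modulo $p$, and invoking Mathieu's theorem on the independence of Demazure characters from the ground field \cite{mathieu88, mathieu89}. Part \eqref{t:isofil} is the technical heart: once it is in hand, the other three are essentially dimension-counting consequences. Part \eqref{t:isosdg} is the degenerate ($k=1$) case of \eqref{t:isofil}, while \eqref{t:isoslg} and \eqref{t:isostp} use a translation automorphism and a primality-of-tensor-factors argument respectively.

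For parts \eqref{t:isosdg} and \eqref{t:isofil}, I would start from the integral form of the graded Weyl module supplied by Corollary \ref{c:gweylfd}: this yields a surjection $\bb F\otimes_\bb A W_\bb A^c(\lambda)\twoheadrightarrow W_\bb F^c(\lambda)$ and hence the upper bound $\dim W_\bb F^c(\lambda)\le\dim W_\mathbb C^c(\lambda)$. Naoi's characteristic zero result gives a Demazure flag on $W_\mathbb C^c(\lambda)$ with quotients $D_\mathbb C(1,\lambda_j,m_j)$; Theorem \ref{t:filtration} (the integral version, resting on Joseph's global basis theory) produces the analogous filtration of $W_\bb A^c(\lambda)$ over $\bb A$. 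Reducing modulo $p$, one obtains a filtration $0=W_0\subset\cdots\subset W_k=W_\bb F^c(\lambda)$ whose successive quotients $W_j/W_{j-1}$ surject onto $D_\bb F(1,\lambda_j,m_j)$. Since Mathieu's theorem gives $\dim D_\bb F(1,\lambda_j)=\dim D_\mathbb C(1,\lambda_j)$, the chain of inequalities $\sum_j\dim D_\bb F(1,\lambda_j,m_j)\le\dim W_\bb F^c(\lambda)\le\dim W_\mathbb C^c(\lambda)=\sum_j\dim D_\mathbb C(1,\lambda_j,m_j)$ collapses, forcing each surjection onto $D_\bb F(1,\lambda_j,m_j)$ to be an isomorphism and proving \eqref{t:isofil}. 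In the simply laced case, the characteristic zero filtration already has length one with $\lambda_1=\lambda$ and $m_1=0$, yielding \eqref{t:isosdg}.

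For part \eqref{t:isoslg}, I would define $\varphi_a$ to be the automorphism of $U_\bb F(\lie g[t])$ induced by the Lie algebra automorphism $x\otimes f(t)\mapsto x\otimes f(t+a)$ of $\lie g[t]$. Restricting $W_\bb F(\gb\omega_{\lambda,a})$ to $U_\bb F(\lie g[t])$ and pulling back along $\varphi_a$ shifts the action on the cyclic vector so that $U_\bb F(\lie h[t]_+)$ acts by its augmentation ideal while $U_\bb F(\lie h)$ still acts by $\lambda$; the defining relations of $W_\bb F^c(\lambda)$ are therefore satisfied, giving a surjection $W_\bb F^c(\lambda)\twoheadrightarrow\varphi_a^*(W_\bb F(\gb\omega_{\lambda,a}))$. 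To upgrade to an isomorphism, compare dimensions: $\dim W_\bb F^c(\lambda)=\dim W_\mathbb C^c(\lambda)$ by \eqref{t:isofil} and Mathieu, while an $\bb A$-form of the loop Weyl module yields $\dim W_\bb F(\gb\omega_{\lambda,a})\le\dim W_\mathbb C(\gb\omega_{\lambda,a})$, and the characteristic-zero identity $\dim W_\mathbb C(\gb\omega_{\lambda,a})=\dim W_\mathbb C^c(\lambda)$ from Fourier-Littelmann closes the loop.

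Finally, for part \eqref{t:isostp}, the strategy is to first establish (as a preliminary in Section \ref{ss:tpprime}) that a tensor product of finite-dimensional highest-$\ell$-weight modules for the hyper loop algebra with pairwise coprime highest $\ell$-weights is again highest-$\ell$-weight, with highest $\ell$-weight equal to the product. Applied to the tensor of cyclic generators of the $W_\bb F(\gb\omega_{\lambda_j,a_j})$, the universal property of $W_\bb F(\gb\omega)$ yields a surjection $W_\bb F(\gb\omega)\twoheadrightarrow\bigotimes_j W_\bb F(\gb\omega_{\lambda_j,a_j})$. Dimension equality then follows from \eqref{t:isoslg} applied to each factor and to $\gb\omega$ (after reducing to the pointwise-graded case), combined with the characteristic zero tensor product factorization of Chari and Presley \cite{CPweyl} transferred to $\bb F$ via Mathieu. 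The principal obstacle throughout this program is Theorem \ref{t:filtration}: establishing an integral analogue of Joseph's results on global bases for tensor products of Demazure modules is delicate and is the one step where quantum groups genuinely enter the argument.
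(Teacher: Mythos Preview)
Your dimension-counting for part \eqref{t:isofil} rests on the inequality $\dim W_\bb F^c(\lambda)\le\dim W_\bb C^c(\lambda)$, which you attribute to Corollary \ref{c:gweylfd}. But that corollary yields the opposite direction: the integral form $U_\bb Z(\lie g[t])v$ is free of rank $\dim W_\bb C^c(\lambda)$, and its base change to $\bb F$ is a \emph{quotient} of $W_\bb F^c(\lambda)$ (Remark \ref{r:gweylf}), so one only gets $\dim W_\bb F^c(\lambda)\ge\dim W_\bb C^c(\lambda)$. The upper bound you invoke is exactly Corollary \ref{c:indf}, which is a \emph{consequence} of part \eqref{t:isofil}, not an input. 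The paper closes the argument differently: besides the filtration of the reduced integral form (whose quotients are shown to be isomorphic to $D_\bb F(1,\lambda_j,m_j)$), it constructs a second filtration $\tilde W_\bb F^j$ \emph{directly inside} $W_\bb F^c(\lambda)$ and proves each $\tilde W_\bb F^j/\tilde W_\bb F^{j-1}$ is a \emph{quotient} of $D_\bb F(1,\lambda_j,m_j)$, giving the missing bound $\dim W_\bb F^c(\lambda)\le\sum_j\dim D_\bb F(1,\lambda_j,m_j)$. The same reversal appears in your sketch of \eqref{t:isoslg}: $\bb A$-forms yield $\dim W_\bb F(\gb\omega_{\lambda,a})\ge\dim W_\bb K(\gb\omega_{\lambda,\tilde a})$ (this is \eqref{e:conjknown}), and equality \eqref{e:conj} is again deduced from \eqref{t:isofil}.

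Second, Theorem \ref{t:filtration} is stated only for simply laced $\lie g$ and filters a Demazure module $D_\bb Z(\ell,\mu)$ by higher-level Demazure pieces; it does not directly produce an integral Demazure flag of $W_\bb A^c(\lambda)$ in the non-simply-laced case. The paper instead restricts to the type-$A$ subalgebra $\lie g_\sh$, applies Theorem \ref{t:filtration} to $U_\bb Z(\lie g_\sh[t])v\cong D_\bb Z(1,\bar\lambda)$ to obtain level-$r^\vee$ pieces, and then sets $W_\bb Z^j=U_\bb Z(\lie g[t])D_j$. Identifying the quotients $W_\bb F^j/W_\bb F^{j-1}$ with $D_\bb F(1,\lambda_j,m_j)$, and verifying the relations for the second filtration $\tilde W_\bb F^j$ above, both hinge on a reduced presentation of level-one Demazure modules (Proposition \ref{p:lessreldem}). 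That presentation is the technical core of Section \ref{ss:relnsl} and is where the characteristic restriction for type $G_2$ arises; your sketch omits it entirely. Relatedly, part \eqref{t:isosdg} is not obtained from \eqref{t:isofil} in the paper but proved first, by restricting to $U_\bb F(\lie{sl}_\alpha[t])$ and invoking the $\lie{sl}_2$ case from \cite{jm:weyl}; it is then used (via Lemma \ref{lem:long}) as an input to Proposition \ref{p:lessreldem}.
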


Assume the characteristic of $\mathbb F$ is zero. Then, part \eqref{t:isosdg} of this theorem was proved in \cite{CPweyl} for $\lie g=\lie{sl}_2$, in \cite{chlo:wfd} for type $A$, and in \cite{foli:weyldem} for types ADE.
Part  \eqref{t:isofil} was proved in \cite{naoi:weyldem}. Part \eqref{t:isoslg} for simply laced $\lie g$ was proved in \cite{foli:weyldem} using part \eqref{t:isosdg} (see Lemmas 1 and 3 and equation (15) of \cite{foli:weyldem}). The same proof works in the non simply laced case once part  \eqref{t:isofil} is established. The last part was proved in \cite{CPweyl}. We will make use of Theorem \ref{t:isos} in the characteristic zero setting for extending it to the positive characteristic context.
Both \cite{chlo:wfd} and \cite{foli:weyldem} use the $\lie{sl}_2$-case of part \eqref{t:isosdg} in the proofs. A characteristic-free proof of Theorem \ref{t:isos}\eqref{t:isosdg} for $\lie{sl}_2$ was given in \cite{jm:weyl}.

We will see in Section \ref{ss:Mrel} that the class of modules $D_\mathbb F(\ell,\lambda)$ form a subclass of the class of Demazure modules. In particular, it follows from \cite[Lemme 8]{mathieu88} that $\dim(D_\mathbb F(\ell,\lambda))$ depends only on $\ell$ and $\lambda$, but not on $\mathbb F$ (see also the Remark on page 56 of \cite{mathieu89} and references therein). Together with Theorem \ref{t:isos}\eqref{t:isofil}, this implies the following corollary.

\begin{cor}\label{c:indf}
For all $\lambda\in P^+$, we have $\dim W^c_\bb F(\lambda) = \dim W^c_\bb C(\lambda)$.\hfill\qedsymbol
\end{cor}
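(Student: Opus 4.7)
The plan is essentially to telescope the filtration from Theorem \ref{t:isos}\eqref{t:isofil} and invoke the field-independence of Demazure dimensions.

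First, I would apply Theorem \ref{t:isos}\eqref{t:isofil} to obtain, for each algebraically closed field $\mathbb F$ (including $\mathbb F = \mathbb C$), a filtration
\[(0) = W_0 \subset W_1 \subset \cdots \subset W_k = W_\bb F^c(\lambda)\]
with successive quotients $W_j/W_{j-1} \cong D_\bb F(1,\lambda_j,m_j)$. The crucial point furnished by that theorem is that the length $k$ and the data $(\lambda_j, m_j)$ are the \emph{same} integers and weights for every choice of $\mathbb F$. Consequently, taking dimensions along the filtration gives
\[\dim W_\bb F^c(\lambda) = \sum_{j=1}^{k} \dim D_\bb F(1,\lambda_j, m_j).\]

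Next, I would reduce each summand to a dimension known to be field-independent. The grading twist $\tau_{m_j}$ only relabels the graded pieces and does not affect the total dimension, so $\dim D_\bb F(1,\lambda_j,m_j) = \dim D_\bb F(1,\lambda_j)$. By the identification of $D_\bb F(\ell,\lambda)$ with a Demazure module (to be carried out in Section \ref{ss:Mrel}), one appeals to Mathieu's result (\cite[Lemme 8]{mathieu88}, as pointed out in the paragraph preceding the corollary) to conclude that $\dim D_\bb F(1,\lambda_j)$ depends only on $\lambda_j$, not on $\mathbb F$. Hence each summand equals $\dim D_\bb C(1,\lambda_j)$.

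Combining the two displays and comparing with the same computation performed for $\mathbb F = \mathbb C$ yields
\[\dim W_\bb F^c(\lambda) = \sum_{j=1}^k \dim D_\bb C(1,\lambda_j,m_j) = \dim W_\bb C^c(\lambda),\]
which is exactly the statement of Corollary \ref{c:indf}. There is no real obstacle here: both nontrivial inputs (field-independence of the filtration data, and field-independence of Demazure dimensions) have been isolated elsewhere, so the argument is a direct bookkeeping step. The only point requiring a small amount of care is noticing that $\tau_{m_j}$ preserves dimensions, which is immediate from its definition as a regrading of the underlying vector space.
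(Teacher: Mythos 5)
Your proof is correct and follows exactly the same route as the paper: sum the dimensions of the Demazure subquotients along the flag from Theorem \ref{t:isos}\eqref{t:isofil}, whose data $(k,\lambda_j,m_j)$ are independent of $\mathbb F$, and invoke Mathieu's field-independence of Demazure dimensions (\cite[Lemme 8]{mathieu88}) via the identification $D_\mathbb F(\ell,\lambda)\cong V_\mathbb F^\sigma(\Lambda)$ from Section \ref{ss:Mrel}. The paper treats this as immediate and gives no further argument, so your spelled-out bookkeeping is a faithful expansion of the intended proof.
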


As an application of this corollary, we will prove a conjecture of \cite{JMhyper} which we now recall.
The following theorem was proved in \cite{JMhyper}.

\begin{thm}\label{t:weylform}
Suppose $\gb\omega\in\mathcal P_\mathbb A^\times$ and let $\lambda=\wt(\gb\omega)$, $v$ the image of $1$ in $W_\mathbb K(\gb\omega)$, and $L_\mathbb A(\gb\omega) = U_\mathbb A(\tlie g)v$. Then, $L_\mathbb A(\gb\omega)$ is a free $\mathbb A$-module such that $\mathbb K\otimes_\mathbb A L_\mathbb A(\gb\omega)\cong W_\mathbb K(\gb\omega)$.\hfill\qedsymbol
\end{thm}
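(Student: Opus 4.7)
The plan is to reduce the statement to the assertion that $L_\mathbb A(\gb\omega)$ is finitely generated over $\mathbb A$. Since $\mathbb A$ is a discrete valuation ring, and $L_\mathbb A(\gb\omega)$ is torsion-free (being an $\mathbb A$-submodule of the $\mathbb K$-vector space $W_\mathbb K(\gb\omega)$), finite generation will imply freeness of finite rank. The natural map $\mathbb K \otimes_\mathbb A L_\mathbb A(\gb\omega) \to W_\mathbb K(\gb\omega)$, $a \otimes \ell \mapsto a\ell$, is then essentially automatic: it is surjective because $U_\mathbb K(\tlie g) = \mathbb K \otimes_\mathbb A U_\mathbb A(\tlie g)$ and $v$ generates $W_\mathbb K(\gb\omega)$ over $U_\mathbb K(\tlie g)$, and it is injective by flatness of $\mathbb K$ over $\mathbb A$ combined with the inclusion $L_\mathbb A(\gb\omega) \hookrightarrow W_\mathbb K(\gb\omega)$.

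The first reduction is to invoke the triangular decomposition $U_\mathbb A(\tlie g) \cong U_\mathbb A(\tlie n^-) \otimes U_\mathbb A(\tlie h) \otimes U_\mathbb A(\tlie n^+)$ together with the defining relations of $W_\mathbb K(\gb\omega)$. The relation $U_\mathbb K(\tlie n^+)^0 v = 0$ gives $U_\mathbb A(\tlie n^+) v = \mathbb A v$, and the hypothesis $\gb\omega \in \cal P_\mathbb A^\times$ is precisely designed so that $\gb\omega$ takes values in $\mathbb A$ on all of $U_\mathbb A(\tlie h)$, whence $U_\mathbb A(\tlie h) v = \mathbb A v$. Consequently $L_\mathbb A(\gb\omega) = U_\mathbb A(\tlie n^-) v$, and the problem is reduced to showing this is finitely generated over $\mathbb A$.

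To establish finite generation I would use that $W_\mathbb K(\gb\omega)$ is finite-dimensional (Theorem \ref{t:fdrhla}) and decompose $L_\mathbb A(\gb\omega) = \bigoplus_\mu (L_\mathbb A(\gb\omega))_\mu$ into its (finitely many) weight spaces. Each $(L_\mathbb A(\gb\omega))_\mu$ is an $\mathbb A$-submodule of a finite-dimensional $\mathbb K$-space, but torsion-free $\mathbb A$-submodules of $\mathbb K^n$ need not be finitely generated, so a uniform bound on ``denominators'' is needed. The approach is to exhibit an explicit finite $\mathbb A$-spanning set of ordered PBW monomials in the divided powers $(x^-_{\alpha,r})^{(k)}$ applied to $v$. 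This requires bounding both the loop-shifts $r$ and the divided-power exponents $k$ via integral Garland-type identities in $U_\mathbb A(\tlie g)$ which rewrite, modulo the defining left ideal of $W_\mathbb K(\gb\omega)$, any monomial with large $r$ or $k$ as an $\mathbb A$-linear combination of bounded monomials.

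The main obstacle will be establishing these Garland-type identities with coefficients genuinely in $\mathbb A$, not merely in $\mathbb K$. Their characteristic-zero counterparts in $U(\tlie g)$ are classical (going back to \cite{garala} and the computations of \cite{CPweyl}), but pushing them down to the integral form $U_\mathbb A(\tlie g)$ requires careful bookkeeping of every scalar that arises when one commutes a divided power past an imaginary root vector. The coefficients so produced are polynomials in the roots $a$ of the coordinates $\gb\omega_i$, and they lie in $\mathbb A$ precisely when $a \in \mathbb A^\times$---this is the structural reason for the hypothesis $\gb\omega \in \cal P_\mathbb A^\times$ and, as signalled by Remark \ref{r:Aforms}, for working with $\mathbb A$-forms rather than $\mathbb Z$-forms in the first place. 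Once the integral Garland identities are in hand, a standard straightening argument completes the proof.
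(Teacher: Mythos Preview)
The paper does not actually prove Theorem~\ref{t:weylform}: it is stated with a \qedsymbol{} and the surrounding text attributes it to \cite{JMhyper}. So there is no in-paper proof to compare against; the relevant comparison is with the argument in \cite{JMhyper} (specifically \cite[Theorem~3.11]{JMhyper}, whose adaptation the present paper invokes elsewhere, e.g.\ in Proposition~\ref{p:WZfg}).

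Your outline is consistent with that approach. The reduction to $L_\mathbb A(\gb\omega)=U_\mathbb A(\tlie n^-)v$ via the triangular decomposition and the observation that $\gb\omega\in\cal P_\mathbb A^\times$ forces $\gb\omega(U_\mathbb A(\tlie h))\subseteq\mathbb A$ are correct. Your diagnosis of the real work---producing a finite $\mathbb A$-spanning set by bounding both the loop indices $r$ and the divided-power exponents $k$ through integral Garland identities (Lemma~\ref{l:garland} here)---is exactly what the argument in \cite{JMhyper} does, and you have correctly located the role of the hypothesis $\gb\omega\in\cal P_\mathbb A^\times$: the rewriting of large-$r$ monomials in terms of small-$r$ ones picks up coefficients that are polynomials in the roots of the $\gb\omega_i$ \emph{and their inverses}, and these lie in $\mathbb A$ precisely when those roots are units. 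The only point I would flag is that the ``straightening'' step is not entirely routine: one must control, over $\mathbb A$, the structure constants arising when reordering products of divided powers $(x_{\alpha,r}^-)^{(k)}$ for different $(\alpha,r)$, and this is where one leans on the explicit integral PBW basis of $U_\mathbb Z(\tlie n^-)$ from \cite{garala,mitz}. With that in hand your sketch goes through.
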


Let $\gb\varpi$ be the image of $\gb\omega$ in $\cal P_\mathbb F$. It easily follows that $\mathbb F\otimes_\mathbb A L_\mathbb A(\gb\omega)$  is a quotient of $W_\mathbb F(\gb\varpi)$ and, hence,
\begin{equation}\label{e:conjknown}
\dim W_\mathbb K(\gb\omega) \le \dim W_\mathbb F(\gb\varpi).
\end{equation}
It was conjectured in \cite{JMhyper} that
\begin{equation}\label{e:conj}
\mathbb F\otimes_\mathbb A L_\mathbb A(\gb\omega) \cong W_\mathbb F(\gb\varpi).
\end{equation}
We will prove \eqref{e:conj} in Section \ref{ss:JM.conjecture}. In particular, it follows that
\begin{equation}\label{e:conjdim}
\dim W_\mathbb F(\gb\varpi) = \dim W_\mathbb C^c(\lambda).
\end{equation}

\begin{rem}\label{r:Aforms}
Theorem \ref{t:isos}\eqref{t:isostp} was also conjectured in \cite{JMhyper} and it is false if $\bb F$ were not algebraically closed (see \cite{jm:hlanac} in that case). Observe that, for all $\gb\varpi\in\mathcal P_\mathbb F^+$, there exists $\gb\omega\in\mathcal P_\mathbb A^\times$ such that $\gb\varpi$ is the image of $\gb\omega$ in $\cal P_\mathbb F$. This is the main reason for considering $\mathbb A$-forms instead of $\mathbb Z$-forms. The block decomposition of the categories of finite-dimensional representations of hyper loop algebras was established in \cite{JMhyper,jm:hlanac} assuming \eqref{e:conj} and Theorem \ref{t:isos}\eqref{t:isostp}. The proof of one part of \cite[Theorem 4.1]{bimo:htwisted} also relies on these two results. Therefore, by proving \eqref{e:conj} and Theorem \ref{t:isos}\eqref{t:isostp}, we confirm these results of \cite{bimo:htwisted,JMhyper,jm:hlanac}. A version of Theorem \ref{t:isos} for twisted affine Kac-Moody algebras was obtained in \cite{foku} in the characteristic zero setting. We will consider the characteristic-free twisted version of Theorem \ref{t:isos} elsewhere.
\end{rem}

\section{Further notation and technical lemmas}\label{ss:morenot}

\subsection{Some commutation relations}\label{ss:comut}

We begin recalling the following well-known relation in $U_\bb Z(\lie g)$
\begin{gather}\label{e:koslem}
(x_{\alpha}^+)^{(l)}(x_{\alpha}^-)^{(k)} = \sum_{m=0}^{\min\{k,l\}} (x_{\alpha}^-)^{(k-m)}\binom{h_\alpha-k-l+2m}{m}(x_{\alpha}^+)^{(l-m)} \quad\text{for all}\quad \alpha\in R^+, l,k\in\bb Z_{\ge 0}.
\end{gather}
Since for all $\alpha\in R^+, s\in\bb Z$, the span of $x_{\alpha,\pm s}^\pm, h_\alpha$ is a subalgebra isomorphic to $\lie{sl}_2$, we get the following relation in $U_\bb Z(\tlie g)$
\begin{gather}
(x_{\alpha,s}^+)^{(l)}(x_{\alpha,-s}^-)^{(k)} = \sum_{m=0}^{\min\{k,l\}} (x_{\alpha,-s}^-)^{(k-m)}\binom{h_\alpha-k-l+2m}{m}(x_{\alpha,s}^+)^{(l-m)}.
\end{gather}
Next, we consider the case when the grades of the elements in the left-hand-side is not symmetric.

Given $m>0$, consider the Lie algebra endomorphism $\tau_m$ of $\tlie g$ induced by the ring endomorphism of $\mathbb C[t,t^{-1}], t\mapsto t^m$. Notice that the restriction of $\tau_m$ to $\lie g[t]$ gives rise to an endomorphism of $\lie g[t]$. Moreover, denoting by $\tau_m$ its extension to an algebra endomorphism of $U(\tlie g)$, notice that $U_\mathbb Z(\lie a)$ is invariant under $\tau_m$ for $\lie a=\lie g, \lie n^\pm,\lie h, \tlie n^\pm, \tlie h, \lie n^\pm[t],\lie h[t],\lie h[t]_+$. In fact $\tau_m ((x_{\alpha,r}^\pm)^{(k)}) = (x_{\alpha,mr}^\pm)^{(k)}$ and $\tau_m (\Lambda_{\alpha, r})$ satisfies $\sum_{i\geq 0} \tau_m (\Lambda_{\alpha, r}) u^r = \exp \left( -\sum_{s \geq 1} \frac{h_{\alpha, ms}}{s} u^s \right)$ for all $r, m \in \bb Z$ and $\alpha \in R^+$. Consider the following power series:
\[
X^-_{\alpha,m,s}(u) = \tsum_{r=1}^\infty  x^-_{\alpha,m(r-1)+s}\ u^{r}
\qquad \text{and} \qquad
\Lambda_{\alpha,m}^\pm (u) = \tau_m(\Lambda_\alpha^\pm(u)).
\]

\begin{lem}\label{l:garland}
Let $\alpha \in R^+$, $k,l \ge 0, m>0, s\in\mathbb Z$. Then
$$\left(x^+_{\alpha,m-s}\right)^{(l)}\left(x^-_{\alpha,s}\right)^{(k)} = (-1)^l \left((X_{\alpha,m,s}^-(u))^{(k-l)}\Lambda_{\alpha,m}^+(u)\right)_k \mod U_\mathbb Z(\tlie g) U_\mathbb Z(\tlie n^+)^0,$$
where the subindex $k$ denotes the coefficient of $u^k$ of the above power series. Moreover, if $0\le s\le m$, the same holds modulo $U_\mathbb Z(\lie g[t]) U_\mathbb Z(\lie n^+[t])_\mathbb Z^0$.
\end{lem}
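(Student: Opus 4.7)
The plan is to reduce to $\widetilde{\mathfrak{sl}}_2$ and then prove the identity by induction on $l$ using generating-function techniques of Garland type. The Chevalley triple $\{x^+_\alpha, x^-_\alpha, h_\alpha\}$ spans an $\mathfrak{sl}_2$-subalgebra of $\lie g$, so $\widetilde{\mathfrak{sl}}_2^{(\alpha)} := \mathrm{span}_\mathbb C\{x^\pm_{\alpha,r}, h_{\alpha,r} : r \in \mathbb Z\}$ is a Lie subalgebra of $\tlie g$ isomorphic to the centerless loop algebra of $\mathfrak{sl}_2$, and its divided-power form embeds into $U_\mathbb Z(\tlie g)$. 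Both sides of the desired congruence lie in this subalgebra, and its intersection with $U_\mathbb Z(\tlie g)U_\mathbb Z(\tlie n^+)^0$ contains the left ideal of $U_\mathbb Z(\widetilde{\mathfrak{sl}}_2^{(\alpha)})$ generated by $\{(x^+_{\alpha,r})^{(j)} : r \in \mathbb Z,\ j \ge 1\}$. It therefore suffices to prove the identity inside $U_\mathbb Z(\widetilde{\mathfrak{sl}}_2^{(\alpha)})$.

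For $l = 0$, the statement is immediate: since $X^-_{\alpha,m,s}(u)$ has no constant term, the divided power $(X^-_{\alpha,m,s}(u))^{(k)}$ equals $(x^-_{\alpha,s})^{(k)} u^k + O(u^{k+1})$, and since $\Lambda^+_{\alpha,m}(u) = 1 + O(u)$, the coefficient of $u^k$ in $(X^-_{\alpha,m,s}(u))^{(k)} \Lambda^+_{\alpha,m}(u)$ is exactly $(x^-_{\alpha,s})^{(k)}$, matching the LHS. For the inductive step, write $(x^+_{\alpha,m-s})^{(l+1)}(x^-_{\alpha,s})^{(k)} = \tfrac{1}{l+1}\, x^+_{\alpha,m-s}\, (x^+_{\alpha,m-s})^{(l)}(x^-_{\alpha,s})^{(k)}$ and apply the inductive hypothesis. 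The central computation is the power-series commutator
\begin{equation*}
[x^+_{\alpha,m-s},\, X^-_{\alpha,m,s}(u)] \ =\ \sum_{r\ge 1} h_{\alpha,mr}\, u^r \ =\ -u\frac{d}{du}\log \Lambda^+_{\alpha,m}(u),
\end{equation*}
where the first equality follows from $[x^+_\alpha \otimes t^{m-s}, x^-_\alpha \otimes t^{m(r-1)+s}] = h_\alpha \otimes t^{mr}$ and the second is immediate from the definition of $\Lambda^+_{\alpha,m}$ as an exponential. Moreover, $[x^+_{\alpha,m-s}, h_{\alpha,mr}] = -2\, x^+_{\alpha, m(r+1)-s} \in U_\mathbb Z(\tlie n^+)^0$, so modulo the quotient ideal, $x^+_{\alpha,m-s}$ commutes with $\Lambda^+_{\alpha,m}(u)$. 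Commuting $x^+_{\alpha,m-s}$ to the right past $(X^-_{\alpha,m,s}(u))^{(k-l)} \Lambda^+_{\alpha,m}(u)$, discarding terms ending in an $x^+$-divided-power, and using the logarithmic-derivative identity above, one checks
\begin{equation*}
x^+_{\alpha,m-s} \cdot \bigl((X^-_{\alpha,m,s}(u))^{(k-l)}\Lambda^+_{\alpha,m}(u)\bigr)_k \ \equiv\ -(l+1)\,\bigl((X^-_{\alpha,m,s}(u))^{(k-l-1)}\Lambda^+_{\alpha,m}(u)\bigr)_k
\end{equation*}
modulo $U_\mathbb Z(\tlie g)U_\mathbb Z(\tlie n^+)^0$. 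Dividing by $l+1$ and absorbing the sign into $(-1)^l \mapsto (-1)^{l+1}$ yields the $(l+1)$-case, completing the induction.

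The ``moreover'' clause follows by inspection. When $0 \le s \le m$, every element occurring in the induction, namely $x^+_{\alpha,m-s}$, the $x^-_{\alpha,m(r-1)+s}$ for $r \ge 1$, and the $h_{\alpha,mr}$ for $r \ge 1$, as well as the commutators $x^+_{\alpha,m(r+1)-s}$ that get absorbed into the ideal, has non-negative $t$-degree. Hence the entire calculation takes place in $U_\mathbb Z(\lie g[t])$, and all error terms lie in $U_\mathbb Z(\lie g[t]) U_\mathbb Z(\lie n^+[t])^0$. The main obstacle will be the careful bookkeeping in the inductive step, since $X^-_{\alpha,m,s}(u)$ and $\Lambda^+_{\alpha,m}(u)$ do not commute in the power-series ring: producing the precise factor of $-(l+1)$ requires a combinatorial identity relating the divided-power expansion of $(X^-_{\alpha,m,s}(u))^{k-l-1}$ to the exponential form of $\Lambda^+_{\alpha,m}(u)$ via the logarithmic derivative, all while working with formal $u$-adic expansions with coefficients in a noncommutative ring.
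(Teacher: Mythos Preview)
Your approach is genuinely different from the paper's, and in its current form it has a real gap.

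\textbf{Comparison with the paper.} The paper does not redo any induction. It simply cites Garland's Lemma~7.5 for the special case $m=1,\ s=0$, and then transports that identity to arbitrary $(m,s)$ by applying the Lie-algebra endomorphism $\sigma_s\circ\tau_m$ of $\tlie{sl}_\alpha$, where $\tau_m$ is induced by $t\mapsto t^m$ and $\sigma_s$ sends $x^\pm_{\alpha,r}\mapsto x^\pm_{\alpha,r\mp s}$. These maps take $(x^+_{\alpha,1})^{(l)},(x^-_{\alpha,0})^{(k)},X^-_{\alpha,1,0}(u),\Lambda^+_{\alpha,1}(u)$ to $(x^+_{\alpha,m-s})^{(l)},(x^-_{\alpha,s})^{(k)},X^-_{\alpha,m,s}(u),\Lambda^+_{\alpha,m}(u)$, preserve $U_\mathbb Z(\tlie{sl}_\alpha)$, and send the right ideal generated by positive root vectors to itself, so the congruence transfers for free. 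The ``moreover'' clause is then clear because for $0\le s\le m$ all images land in $U_\mathbb Z(\lie g[t])$. This is two lines.

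\textbf{The gap in your argument.} Your induction on $l$ is essentially an attempt to reprove Garland's lemma from scratch, but the key step is asserted rather than proved. The displayed claim
\[
x^+_{\alpha,m-s}\cdot\bigl((X^-_{\alpha,m,s}(u))^{(k-l)}\Lambda^+_{\alpha,m}(u)\bigr)_k\ \equiv\ -(l+1)\,\bigl((X^-_{\alpha,m,s}(u))^{(k-l-1)}\Lambda^+_{\alpha,m}(u)\bigr)_k
\]
is precisely the nontrivial content of Garland's computation; it does not follow formally from the two facts you listed (the commutator $[x^+_{\alpha,m-s},X^-_{\alpha,m,s}(u)]=H(u)$ and $[x^+_{\alpha,m-s},\Lambda^+_{\alpha,m}(u)]\equiv 0$). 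When you push $x^+_{\alpha,m-s}$ through $(X^-)^{(k-l)}$ you produce terms of the form $(X^-)^{(a)}H(u)(X^-)^{(b)}$ with $a+b=k-l-1$, and $H(u)$ neither commutes with $X^-(u)$ nor with $\Lambda^+_{\alpha,m}(u)$; reorganising these into a single $(X^-)^{(k-l-1)}\Lambda^+_{\alpha,m}(u)$ with the exact scalar $-(l+1)$ is the whole combinatorial point, and you explicitly flag it as ``the main obstacle'' without doing it. Until that computation is carried out, the induction is incomplete. If you want a self-contained proof, you must actually reproduce Garland's bookkeeping here; otherwise, the paper's automorphism trick gets you the general $(m,s)$ case from the cited $m=1,s=0$ case with no further work.
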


\begin{proof}
The case $m=1, s=0$ was proved in \cite[Lemma 7.5]{garala} (cf. \cite[Equation (1-11)]{JMhyper}). Consider the Lie algebra endomorphism $\sigma_{s}: \tlie{sl}_\alpha \to \tlie{sl}_\alpha$ given by $x^\pm_{\alpha,r}\mapsto x^\pm_{\alpha,r\mp s}$. The first statement of the lemma is obtained from the case $m=1,s=0$ by applying $(\sigma_{s}\circ\tau_m)$. The second statement is then clear.
\end{proof}

Sometimes it will be convenient to work with a smaller set of generators for the hyperalgebras.

\begin{prop}[{\cite[Corollary 4.4.12]{mitz}}]\label{p:srv}
The ring $U_\mathbb Z(\hlie g')$ is generated by $(x_i^\pm)^{(k)}, i\in\hat I,k\ge 0$ and $U_\mathbb Z(\lie g)$ is generated by $(x_i^\pm)^{(k)}, i\in I,k\ge 0$.\hfill\qedsymbol
\end{prop}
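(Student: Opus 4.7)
The plan is to prove each statement by a PBW-style reduction combined with iterated commutators of divided powers, starting from the finite case and then lifting to the affine one. The main technical engine is that Chevalley-type structure constants have the form $\pm(p+1)$ (see \eqref{e:cbstructure}), so divided powers can be tracked integrally throughout the argument.

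For $U_\mathbb Z(\lie g)$, let $B$ denote the subalgebra generated by the $(x_i^\pm)^{(k)}$, $i\in I$, $k\ge 0$. By the PBW decomposition $U_\mathbb Z(\lie g) \cong U_\mathbb Z(\lie n^-) \otimes U_\mathbb Z(\lie h) \otimes U_\mathbb Z(\lie n^+)$, it suffices to show that $B$ contains $(x_\alpha^\pm)^{(k)}$ for every $\alpha \in R^+$ and $\binom{h_i}{k}$ for every $i\in I$. For the root-vector divided powers I would induct on the height of $\alpha$: writing $\alpha = \alpha_i + \beta$ with $\beta$ of smaller height, expand $[(x_i^+)^{(a)}, (x_\beta^+)^{(b)}]$ via the divided-power Leibniz rule and extract $(x_\alpha^+)^{(k)}$ with integer coefficient $\pm 1$ from the leading term, the remaining summands being products of strictly lower-height divided powers already in $B$ by induction. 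The Cartan binomials are then extracted from \eqref{e:koslem} specialized at $l=k$: the $m=k$ summand is precisely $\binom{h_i}{k}$, while for $m<k$ the summands are products of positive divided powers of $x_i^\pm$ (already in $B$) times $\binom{h_i-2k+2m}{m}$, which by a Vandermonde-type identity expands as a $\mathbb Z$-linear combination of $\binom{h_i}{j}$, $j\le m<k$, already in $B$ by induction on $k$.

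For $U_\mathbb Z(\hlie g')$, let $B'$ be the subalgebra generated by $(x_i^\pm)^{(k)}$, $i\in\hat I$. Then $U_\mathbb Z(\lie g) \subseteq B'$ by the previous step, and the PBW decomposition for $U_\mathbb Z(\hlie g')$ reduces the task to showing that $B'$ contains all $(x_{\alpha,r}^\pm)^{(k)}$ for $\alpha\in R^+, r\in\mathbb Z$, together with the divided powers in the imaginary-root directions. Using the identifications $x_{\alpha_0}^+ = x_{\theta,1}^-$ and $x_{\alpha_0}^- = x_{\theta,-1}^+$, iterated commutators of the $(x_i^\pm)^{(k)}$, $i\in I$, against $x_{\theta,\pm 1}^\mp$ descend in finite-root height to produce all $(x_{\alpha,\pm 1}^\mp)^{(k)}$; further commutators across grades then reach $(x_{\alpha,r}^\pm)^{(k)}$ for arbitrary $r\in\mathbb Z$, each inductive step being integrally controlled by \eqref{e:cbstructure} applied in $\hat R$. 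For the imaginary-root direction, I would invoke Lemma \ref{l:garland}: products of real-root divided powers at opposite grades expand, modulo $U_\mathbb Z(\tlie g)\,U_\mathbb Z(\tlie n^+)^0$, in terms whose $u$-coefficients present the entries of $\Lambda_{\alpha,m}^\pm(u)$ as polynomials in elements already in $B'$; solving these relations triangularly yields all $\Lambda_{i,r}$, $i\in I$, $r\in\mathbb Z\setminus\{0\}$.

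The main obstacle throughout is keeping the \emph{divided} structure under control, not merely the Lie bracket. A naive commutator of $x_{\alpha,r}^\pm$ with $x_{\beta,s}^\pm$ yields $x_{\alpha+\beta,r+s}^\pm$ with an integer coefficient of the form $\pm(p+1)$, but to upgrade this to a statement about $(x_{\alpha+\beta,r+s}^\pm)^{(k)}$ one needs the full combinatorial expansion of $[(x^\pm)^{(a)}, (y^\pm)^{(b)}]$ together with a careful identification of the leading divided-power term, which must appear with unit coefficient modulo the correction terms lying in $B'$ by earlier inductive stages. This is precisely the bookkeeping Mitzman systematizes; granted it, both statements follow by a disciplined induction on (affine) root height, with the imaginary roots handled last via Lemma \ref{l:garland}.
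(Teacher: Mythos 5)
The paper does not give a proof of Proposition~\ref{p:srv}; it cites \cite[Corollary~4.4.12]{mitz}, whose argument runs through the Chevalley/braid automorphisms $T_i$ of the integral form (which send $(x_\alpha^\pm)^{(k)}$ to $\pm(x_{s_i\alpha}^\pm)^{(k)}$), rather than through direct commutator manipulations. Your sketch is genuinely different in spirit, and in the form you have written it there is a real gap.

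The gap is in the sentence ``extract $(x_\alpha^+)^{(k)}$ with integer coefficient $\pm 1$ from the leading term.'' You write $\alpha=\alpha_i+\beta$ and commute $(x_i^+)^{(a)}$ against $(x_\beta^+)^{(b)}$. But by \eqref{e:cbstructure} the single bracket is $[x_i^+,x_\beta^+]=\pm(p+1)x_\alpha^+$ with $p=\max\{n:\beta-n\alpha_i\in R\}$, and $p+1$ need not be $1$. Tracking the ``all brackets once'' term of the divided-power Leibniz expansion, the leading contribution to the $k$-th divided power is $\pm(p+1)^k(x_\alpha^+)^{(k)}$, not $\pm(x_\alpha^+)^{(k)}$. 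Over $\mathbb Z$ this coefficient is not a unit. Concretely, in type $G_2$ take $\beta_0=2\alpha_1+\alpha_2$; its only decomposition as simple plus positive is $\alpha_1+(\alpha_1+\alpha_2)$, and there the string coefficient is $p+1=2$. The same issue already appears for $\theta=2\alpha_1+\alpha_2$ in $B_2$. Your own ``main obstacle'' paragraph acknowledges the $(p+1)$ factor but then simply asserts the leading term comes out with unit coefficient modulo correction terms, which is exactly what fails. A second, related problem is the induction: the correction terms in the Leibniz expansion involve divided powers of $x_{\beta+l\alpha_i}^+$ with $l>1$, i.e.\ roots of strictly \emph{larger} height than $\alpha$, so an induction ``on the height of $\alpha$'' does not close.

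Both problems are fixable, but the fix is a real extra idea. Go to the \emph{bottom} $\beta$ of the $\alpha_i$-string through the target root $\gamma=\beta+j\alpha_i$ (so $\beta-\alpha_i\notin R$, and one checks $\beta\in R^+$ when $\mathrm{ht}(\gamma)\ge 2$). Then $\mathrm{ad}(x_i^+)^{(l)}(x_\beta^+)=\pm x_{\beta+l\alpha_i}^+$ with unit coefficient for $0\le l\le q$, and the full Leibniz formula gives
$\mathrm{ad}(x_i^+)^{(jk)}\big((x_\beta^+)^{(k)}\big)=\pm(x_\gamma^+)^{(k)}+\sum \pm\prod_l(x_{\beta+l\alpha_i}^+)^{(m_l)}$,
where every cross term has all $m_l<k$. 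This does yield $(x_\gamma^+)^{(k)}\in B$, but only under a \emph{double} induction with the divided-power exponent $k$ as the primary variable (and height secondary), since the cross terms trade higher height for smaller $k$. The analogous care is needed in the affine step (where you also need $U_{\mathbb Z}(\tlie n^+)^0\subset B'$ and the $\tlie h$-part of $U_{\mathbb Z}(\tlie g)U_{\mathbb Z}(\tlie n^+)^0$ under control before invoking Lemma~\ref{l:garland}). The Cartan part of your argument via \eqref{e:koslem} and Vandermonde is fine once the root-vector part is in place. The braid-automorphism route of Mitzman avoids all of this bookkeeping, which is one reason it is the standard one.
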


\subsection{On certain automorphisms of hyper current algebras}\label{ss:autom}

Let $\lie a, \lie b$ be such that $U_\bb Z(\lie a)$ have been defined. Then, given a homomorphism of $\bb A$-algebras $f:U_\bb A(\lie a)\to U_\bb A(\lie b)$, we have an induced homomorphism $U_\bb F(\lie a)\to U_\bb F(\lie b)$. We will now use this procedure to define certain homomorphism between hyperalgebras.
As a rule, we shall use the same symbol to denote the induced homomorphism in the hyperalgebra level.

Recall that there exists a unique involutive Lie algebra automorphism $\psi$ of $\lie g$ such that $x_i^\pm\mapsto x_i^\mp$ and $h_i\mapsto -h_i$ for all $i\in I$. It admits a unique extension to an automorphism of $\lie g[t]$ such that $\psi(x\otimes f(t)) = \psi(x)\otimes f(t)$ for all $x\in\lie g, f\in\bb C[t]$.
Keep denoting by $\psi$ its extension to an automorphism of $U(\lie g[t])$.
In particular, it easily follows that
\begin{equation}\label{e:inv+-def}
\psi\left((x_{\alpha,r}^\pm)^{(k)}\right) = (x_{\alpha,r}^\mp)^{(k)} \qquad\text{for all}\quad \alpha\in R^+, r,k\ge 0.
\end{equation}
Since $U_\bb Z(\lie g[t])$ is generated by the elements $(x_{\alpha,r}^\pm)^{(k)}$, it follows that the restriction of $\psi$ to $U_\bb Z(\lie a)$ induces an automorphism of $U_\bb Z(\lie a)$, for $\lie a=\lie g,\lie h,\lie g[t], \lie h[t],\lie h[t]_+$.
Notice that we have an inclusion $P\hookrightarrow {\rm Hom}_\bb Z(U_\mathbb Z(\lie h),\bb Z)$ determined by
\begin{equation} \label{eq:P.in.U_Fh}
\mu\left(\tbinom{h_i}{k}\right) = \tbinom{\mu(h_i)}{k} \quad\text{and}\quad \mu(xy)=\mu(x)\mu(y) \quad\text{for all}\quad  i\in I,k\ge 0, x,y\in U_\mathbb Z(\lie h).
\end{equation}
Therefore,
\begin{equation}\label{e:inv+-w}
\mu\left(\psi\left(\tbinom{h_i}{k}\right) \right) = \tbinom{-\mu(h_i)}{k} \quad\text{for all}\quad  i\in I, k>0, \mu\in P.
\end{equation}

Suppose now that $\gamma$ is a Dynkin diagram automorphism of $\lie g$ and keep denoting by $\gamma$ the $\lie g$-automor\-phism determined by
$x_i^\pm \mapsto x^\pm_{\gamma(i)}, h_i\mapsto h_{\gamma(i)}, i\in I$. It admits a unique extension to an automorphism of $\lie g[t]$ such that $\gamma(x\otimes f(t)) = \gamma(x)\otimes f(t)$ for all $x\in\lie g, f\in\bb C[t]$. Keep denoting by $\gamma$ its extension to an automorphism of $U(\lie g[t])$.
Let $\gamma$ also denote the associated automorphism of $P$ determined by $\gamma(\omega_i)=\omega_{\gamma(i)},i\in I$. In particular, $\gamma (\alpha_i)=\alpha_{\gamma(i)}, i\in I$.  It then follows that, for each $\alpha\in R^+, k>0$, there exist $\epsilon_{\alpha,k}^\pm\in\{-1,1\}$ (depending on how the Chevalley basis was chosen) such that
\begin{equation}\label{e:ddan}
\gamma\left((x_{\alpha,r}^\pm)^{(k)}\right) = \epsilon_{\alpha,k}^\pm(x_{\gamma(\alpha),r}^\pm)^{(k)} \qquad\text{for all}\quad r\ge 0.
\end{equation}
This implies that the restriction of $\gamma$ to $U_\bb Z(\lie a)$ induces an automorphism of $U_\bb Z(\lie a)$, for any $\lie a$ in the set $\{\lie g,\lie n^\pm,\lie h,\lie g[t],\lie n^\pm[t]$, $\lie h[t],\lie h[t]_+\}$. It is also easy to see that
\begin{equation}\label{e:ddah}
\mu\left(\gamma\left(\tbinom{h_i}{k}\right) \right) = \tbinom{(\gamma^{-1}(\mu))(h_i)}{k} \quad\text{for all}\quad  i\in I, k>0, \mu\in P.
\end{equation}

We end this subsection constructing the automorphism mentioned in Theorem \ref{t:isos}\eqref{t:isoslg}. Thus, let $a\in\mathbb F, \tilde a\in\mathbb A$ such that the image of $\tilde a$ in $\mathbb F$ is $a$, and $\varphi_{\tilde a}$ the Lie algebra automorphism of  $\lie g[t]_\mathbb K$ given by $x\otimes t\mapsto x\otimes(t-\tilde a)$. Keep denoting by $\varphi_{\tilde a}$ the induced automorphism of $U_\bb K(\lie g[t])$ and observe that $\varphi_{\tilde a}$ is the identity on $U_\bb K(\lie g)$. One easily checks that
\begin{equation*}
\varphi_{\tilde a}\left((x_{\alpha,r}^\pm)^{(k)}\right) = \sum_{k_0 + \dots + k_r = k} \prod_{s=0}^r \tbinom{r}{s}^{k_s} (-\tilde a)^{k_s(r-s)} (x_{\alpha,s}^\pm )^{(k_s)}\in U_\bb A(\lie g[t]).
\end{equation*}
Hence, $\varphi_{\tilde a}$ induces an automorphism of $U_\bb A(\lie g[t])$. Notice that, in the hyperalgebra level, we have
\begin{equation}\label{e:t->t-a}
(x_{\alpha,r}^\pm)^{(k)} \mapsto  \sum_{k_0 + \dots + k_r = k} \prod_{s=0}^r \tbinom{r}{s}^{k_s} (-a)^{k_s(r-s)} (x_{\alpha,s}^\pm )^{(k_s)}.
\end{equation}
This justifies a change of notation from $\varphi_{\tilde a}$ to $\varphi_{a}$.

\subsection{Subalgebras of rank 1 and 2} \label{ss:saofr12}

For any $\alpha \in R^+$, consider the Lie subalgebra of \lie g generated by $x_{\alpha}^{\pm}$ which is isomorphic to $\lie{sl}_2$. Denote this subalgebra by $\lie{sl}_\alpha$. Consider also $\lie n_{\alpha}^{\pm} = \bb C x_{\alpha}^{\pm}$, $\lie h_{\alpha} = \bb C h_{\alpha}$ and $\lie b_{\alpha}^{\pm} = \bb C h_\alpha \oplus \bb C x_{\alpha}^{\pm}$. Notice that $U_\mathbb Z(\lie g)\cap U(\lie{sl}_\alpha)$ coincides with the $\mathbb Z$-subalgebra $U_\mathbb Z(\lie{sl}_\alpha)$ of $U(\lie g)$ generated by $(x_\alpha^\pm)^{(k)},k\ge 0$ (see details in \cite{macedo:PhD}).
This implies that $U_\mathbb Z(\lie g)\cap U(\lie{sl}_\alpha)$ is naturally isomorphic to $U_\mathbb Z(\lie{sl}_2)$ and, hence, the corresponding subalgebra $U_\mathbb F(\lie{sl}_\alpha)$ of $U_\mathbb F(\tlie g)$ is naturally isomorphic to $U_\mathbb F(\lie{sl}_2)$.
Similarly, for any $\alpha \in R^+, r\in\mathbb Z$, the Lie subalgebra $\lie{sl}_{\alpha,r}$ of $\tlie g$ generated by $x_{\alpha,\pm r}^{\pm}$ is isomorphic to $\lie{sl}_2$ and $U_\mathbb Z(\tlie g)\cap U(\lie{sl}_{\alpha,r})$ coincides with the $\mathbb Z$-subalgebra of $U(\tlie g)$ generated by $(x_{\alpha,\pm r}^\pm)^{(k)}, k\ge 0$. We shall denote the corresponding subalgebra of $U_\mathbb F(\tlie g)$ by $U_\mathbb F(\lie{sl}_{\alpha,r})$. We also consider the subalgebra $\tlie{sl}_\alpha$ of $\tlie g$ generated by $x_{\alpha,r}^\pm, r\in\mathbb Z$ and the subalgebra $\lie{sl}_\alpha[t]$ of $\lie g[t]$ generated by $x_{\alpha,r}^\pm, r\geq0$. The corresponding subalgebras $U_\mathbb F(\tlie{sl}_\alpha)$ and $U_\mathbb F(\lie{sl}_\alpha[t])$ of $U_\mathbb F(\tlie g)$ are naturally isomorphic to $U_\mathbb F(\tlie{sl}_2)$ and $U_\mathbb F(\lie{sl}_2[t])$.

We will also need to work with root subsystems of rank $2$.
Suppose $\alpha,\beta\in R^+$ form a simple system of a root subsystem $R'$ of rank $2$ and let $\lie t$ denote a simple Lie algebra of type $R'$. Denote by $\lie g_{\alpha,\beta}$ the subalgebra of $\lie g$ generated by $x_{\alpha}^\pm$ and $x_\beta^\pm$, which is isomorphic to $\lie t$. Notice that, for $r,s\in\mathbb Z$, the subalgebra $\lie g_{\alpha,\beta}^{r,s}$ of $\tlie g$ generated by $x_{\alpha,\pm r}^\pm$ and $x_{\beta,\pm s}^\pm$ is also isomorphic to $\lie t$.
Let $U'_\mathbb Z(\lie g_{\alpha,\beta})$ be the subalgebra of $U_\mathbb Z(\lie g)$ generated by $(x_\alpha^\pm)^{(k)}$, $(x_\beta^\pm)^{(k)}$, $k\ge 0$, and $U'_\mathbb Z(\lie g_{\alpha,\beta}^{r,s})$ the subalgebra of $U_\mathbb Z(\tlie g)$ generated by $(x_{\alpha,\pm r}^\pm)^{(k)}, (x_{\beta,\pm s}^\pm)^{(k)}$, $k\ge 0$.
Proposition \ref{p:srv} implies that $U'_\mathbb Z(\lie g_{\alpha,\beta})$ and $U'_\mathbb Z(\lie g_{\alpha,\beta}^{r,s})$ are naturally isomorphic to $U_\mathbb Z(\lie t)$. Recall that if $\lie a$ is a subalgebra of $U(\tlie g)$, then $U_\mathbb Z(\lie a)=U(\lie a)\cap U_\mathbb Z(\tlie g)$. As in the rank 1 case, we have:
\begin{equation}\label{e:r2subalg}
U'_\mathbb Z(\lie g_{\alpha,\beta})=U_\mathbb Z(\lie g_{\alpha,\beta})\qquad\text{and}\qquad U'_\mathbb Z(\lie g_{\alpha,\beta}^{r,s})=U_\mathbb Z(\lie g_{\alpha,\beta}^{r,s}).
\end{equation}
The details can be found in \cite{macedo:PhD}.
It follows from \eqref{e:r2subalg} that $U_\mathbb F(\lie g_{\alpha,\beta})=\bb F\otimes_\bb ZU_\mathbb Z(\lie g_{\alpha,\beta})\subseteq U_\mathbb F(\lie g)$ and $U_\mathbb F(\lie g_{\alpha,\beta}^{r,s})=\bb F\otimes_\bb ZU_\mathbb Z(\lie g_{\alpha,\beta}^{r,s})\subseteq U_\mathbb F(\tlie g)$ are isomorphic to $U_\mathbb F(\lie t)$.

\subsection{The algebra $\lie g_\sh$}\label{ss:gsh}

Another important subalgebra used in the proof of Theorem \ref{t:isos} is the subalgebra $\lie g_\sh$ generated by the root vectors associated to short simple roots.

Let $\Delta_\sh = \{\alpha\in\Delta: (\alpha,\alpha)<2\}$ denote the set of simple short roots. In particular, if $\lie g$ is simply laced, $\Delta_\sh=\emptyset$. Let $R^{+}_{\sh} = \bb Z \Delta_{\sh} \cap R^{+} $ and  $R_{\sh} = \bb Z \Delta_{\sh} \cap R$ (and notice that, if $\lie g$ is not simply laced, $R_\sh\ne\{\alpha\in R: (\alpha,\alpha)<2\}$). Set $I_\sh = \{i\in I:\alpha_i\in\Delta_\sh\}$ and define $P_{\sh} = \oplus_{i \in I_{\sh}} \bb Z \omega_i$ and $P^{+}_{\sh} = P_{\sh} \cap P^+$. Consider also the subalgebras $\lie h_{\sh} = \oplus_{i \in I_{\sh}} \bb C h_{i}, \lie b_{\sh}^{\pm} = \lie h_{\sh} \oplus \lie n^{\pm}_{\sh}$, where $\lie n^{\pm}_{\sh} = \oplus_{\pm \alpha \in R_{\sh}^{+}} \lie g_{\alpha}$, and $\lie g_{\sh} = \lie n_{\sh}^{-} \oplus \lie h_{\sh} \oplus \lie n_{\sh}^{+}$. Then, if $\Delta_\sh\ne\emptyset$, $\lie g_\sh$ is a simply laced Lie subalgebra of $\lie g$ with Cartan subalgebra $\lie h_\sh$ and $\Delta_\sh$ can be identified with the choice of simple roots associated to the given triangular decomposition. The subsets $Q_\sh, Q_\sh^+$, and the Weyl group $\mathcal W_\sh$ are defined in the obvious way.
The restriction of $(\ ,\ )$ to $\lie g_\sh$ is an invariant symmetric and nondegenerate bilinear form on $\lie g_\sh$, but the normalization is not the same as the one we fixed for $\lie g$. Indeed, $(\alpha,\alpha)=2/r^\vee$ for all $\alpha\in R_\sh$. The set $\{x_\alpha^\pm, h_i: \alpha\in R_\sh^+, i\in I_\sh\}$ is a Chevalley basis for $\lie g_\sh$.

Observe that $U_\mathbb Z(\lie g)\cap U(\lie g_\sh)$ coincides with the $\mathbb Z$-subalgebra of $U(\lie g)$ generated by $(x_\alpha^\pm)^{(k)}, \alpha\in\Delta_\sh$, and, hence, Proposition \ref{p:srv} implies that $U_\mathbb F(\lie g_\sh)$ can be naturally identified with a subalgebra of $U_\mathbb F(\lie g)$. Similar observation apply to $U_\mathbb Z(\lie a)$ for $\lie a=\lie n_\sh^\pm,\lie h_\sh$.

Consider the linear map $\lie h^*\to \lie h_\sh^*, \lambda\mapsto \bar\lambda$, given by restriction and let $i_\sh:\lie h^*_\sh\to \lie h^*$ be the linear map such that $i_{\sh} \left(\overline{\alpha}\right) = \alpha$ for all $\alpha \in \Delta_{\sh}$.
In particular, $\overline{i_\sh(\mu)} = \mu$ for all $\mu\in\lie h_\sh^*$. Given $\lambda\in P$, consider the function $\eta_\lambda: P_\sh\to P$ given by
\begin{equation} \label{defn:eta.lambda}
\eta_\lambda(\mu) = i_\sh(\mu) + \lambda - i_\sh\left(\overline\lambda\right).
\end{equation}

\begin{lem}\label{l:naoiweight}
If $\lambda\in P^+, \mu\in P_\sh^+$, and $\mu\le\overline\lambda$, then $\eta_\lambda(\mu)\in P^+$. \hfill \qed
\end{lem}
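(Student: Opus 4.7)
The plan is to verify dominance by checking the simple coroots, splitting into two cases depending on whether the index lies in $I_\sh$ or not. First I would rewrite $\eta_\lambda(\mu)$ in the more symmetric form
\[
\eta_\lambda(\mu) = \lambda + i_\sh(\mu-\overline\lambda),
\]
since the hypothesis $\mu\le\overline\lambda$ is a statement about the element $\overline\lambda-\mu\in Q_\sh^+$.

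The key elementary fact I will use is that for every $\nu\in\lie h_\sh^*$ and every $j\in I_\sh$, one has $i_\sh(\nu)(h_j)=\nu(h_j)$. This follows from linearity: write $\nu=\sum_{i\in I_\sh}c_i\,\overline{\alpha_i}$ in the basis $\{\overline{\alpha_i}:i\in I_\sh\}$ of $\lie h_\sh^*$, so by definition of $i_\sh$ we get $i_\sh(\nu)(h_j)=\sum c_i\alpha_i(h_j)$, and since $h_j\in\lie h_\sh$ we also have $\overline{\alpha_i}(h_j)=\alpha_i(h_j)$, giving $\nu(h_j)=\sum c_i\alpha_i(h_j)$ as well.

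With this in hand, for $j\in I_\sh$ I compute
\[
\eta_\lambda(\mu)(h_j)=\lambda(h_j)+(\mu-\overline\lambda)(h_j)=\lambda(h_j)+\mu(h_j)-\lambda(h_j)=\mu(h_j)\ge 0,
\]
using $\overline\lambda(h_j)=\lambda(h_j)$ (since $h_j\in\lie h_\sh$) and $\mu\in P_\sh^+$. For $j\notin I_\sh$, I write $\overline\lambda-\mu=\sum_{i\in I_\sh}n_i\,\overline{\alpha_i}$ with $n_i\in\bb Z_{\ge 0}$, so $i_\sh(\mu-\overline\lambda)=-\sum_{i\in I_\sh}n_i\alpha_i$ and therefore
\[
\eta_\lambda(\mu)(h_j)=\lambda(h_j)-\sum_{i\in I_\sh}n_i\alpha_i(h_j).
\]
Here $\lambda(h_j)\ge 0$ because $\lambda\in P^+$, and each $\alpha_i(h_j)\le 0$ because $i\in I_\sh$ and $j\notin I_\sh$ force $i\ne j$ (off-diagonal Cartan integers are nonpositive), so the whole expression is nonnegative. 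Integrality on each $h_j$ is clear from these formulas, so $\eta_\lambda(\mu)\in P^+$. There is no genuine obstacle; the only point one has to be careful about is not confusing the pairing of $\lie h_\sh^*$-elements with the restriction of the pairing on $\lie h^*$, which is exactly what the identity $i_\sh(\nu)(h_j)=\nu(h_j)$ for $j\in I_\sh$ handles.
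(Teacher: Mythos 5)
Your proof is correct and follows essentially the same route as the paper: expand $\overline\lambda-\mu$ in terms of $\overline{\alpha_i}$, $i\in I_\sh$, with nonnegative integer coefficients, apply $i_\sh$ to get $\eta_\lambda(\mu)=\lambda-\sum n_i\alpha_i$, and then check the two cases $j\in I_\sh$ (where $\eta_\lambda(\mu)(h_j)=\mu(h_j)\ge 0$) and $j\notin I_\sh$ (where $\alpha_i(h_j)\le 0$ pushes the value up from $\lambda(h_j)\ge 0$). The extra explicit justification of $i_\sh(\nu)(h_j)=\nu(h_j)$ for $j\in I_\sh$ is a welcome clarification but does not change the argument.
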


\begin{proof}
For each $i\in I_\sh$, let $m_i\in\bb Z_{\ge 0}$ such that $\mu = \overline\lambda - \sum_{i\in I_{\sh}}m_i\overline\alpha_i$. In particular, $\eta_\lambda(\mu) = \lambda - \sum_{i\in I_{\sh}}m_i\alpha_i$. Then, for $j\in I_\sh$, we have $\eta_\lambda(\mu)(h_j) = \mu(h_j)\ge 0$ while, for $j\in I\setminus I_\sh$, we have $\eta_\lambda(\mu)(h_j) = \lambda(h_j) - \sum_{i\in I_{\sh}}m_i\alpha_i(h_j)\ge\lambda(h_j)\ge 0$.
\end{proof}

The affine Kac-Moody algebra associated to $\lie g_\sh$ is naturally isomorphic to the subalgebra
\[ \hlie g_\sh := \lie g_\sh\otimes\mathbb C[t,t^{-1}]\oplus \mathbb Cc\oplus\mathbb Cd \]
of $\hlie g$ and, under this isomorphism, $\hlie h_\sh$ is identified with $\lie h_\sh \oplus \mathbb Cc\oplus\mathbb Cd$. The subalgebras $\lie g_\sh[t]$ and $\hlie n_\sh^\pm$, as well as $\hat P_\sh, \hat Q_\sh$, etc, are defined in the obvious way. Moreover, $U_\mathbb F(\tlie g_\sh)$ and $U_\mathbb F(\lie g_\sh[t])$ can be naturally identified with a subalgebra of $U_\mathbb F(\tlie g)$.

\section{Finite-dimensional modules}

\subsection{Modules for hyperalgebras} \label{sec:rev.g}

We now review the finite-dimensional representation theory of $U_\mathbb F(\lie g)$. If the characteristic of \bb F is zero, then $U_\bb F (\lie g) \cong U(\lie g_\bb F)$ and the results stated here can be found in \cite{hum:lie}. The literature for the positive characteristic setting is more often found in the context of algebraic groups, in which case $U_\bb F (\lie g)$ is known as the hyperalgebra or algebra of distributions of an algebraic group of the same Lie type as \lie g (cf. \cite[Part II]{jantzen03}). A more detailed review in the present context can be found in \cite[Section 2]{JMhyper}.

Let $V$ be a $U_\mathbb F(\lie g)$-module. A nonzero vector $v\in V$ is called a weight vector if there exists $\mu\in U_\mathbb F(\lie h)^*$ such that $hv=\mu(h)v$ for all $h\in U_\mathbb F(\lie h)$. The subspace consisting of weight vectors of weight $\mu$ is called weight space of weight $\mu$ and it will be denoted by $V_\mu$. If $V= \oplus_{\mu\in U_\mathbb F(\lie h)^*} V_\mu$, then $V$ is said to be a weight module. If $V_\mu\ne 0$, $\mu$ is said to be a weight of $V$ and $\wt(V) = \{\mu\in U_\mathbb F(\lie h)^*:V_\mu\ne 0\}$ is said to be the set of weights of $V$. Notice that the inclusion \eqref{eq:P.in.U_Fh} induces an inclusion $P\hookrightarrow U_\mathbb F(\lie h)^*$.
In particular, we can consider the partial order $\le$ on $U_\mathbb F(\lie h)^*$ given by $\mu\le\lambda$ if $\lambda-\mu\in Q^+$ and we have
\begin{equation}\label{e:xactonws}
(x_\alpha^\pm)^{(k)} V_\mu \subseteq V_{\mu\pm k\alpha}\quad\text{for all}\quad \alpha\in R^+, k>0,\mu\in U_\mathbb F(\lie h)^*.
\end{equation}
If $V$ is a weight-module with finite-dimensional weight spaces, its character is the function $\ch(V):U_\mathbb F(\lie h)^*\to \mathbb Z$ given by $\ch(V)(\mu)=\dim V_\mu$. As usual, if $V$ is finite-dimensional, $\ch(V)$ can be regarded as an element of the group ring $\mathbb Z[U_\mathbb F(\lie h)^*]$ where we denote the element corresponding to $\mu\in U_\mathbb F(\lie h)^*$ by $e^\mu$. By the inclusion \eqref{eq:P.in.U_Fh} the group ring $\mathbb Z[P]$ can be regarded as a subring of $\mathbb Z[U_\mathbb F(\lie h)^*]$ and, moreover, the action of $\cal W$ on $P$ induces an action of $\cal W$ on $\mathbb Z[P]$ by ring automorphisms where $w \cdot e^\mu = e^{w\mu}$.

If $v \in V$ is weight vector such that $(x_\alpha^+)^{(k)} v = 0$ for all $\alpha\in R^+, k>0$, then $v$ is said to be a highest-weight vector. If $V$ is generated by a highest-weight vector, then it is said to be a highest-weight module. Similarly, one defines the notions of lowest-weight vectors and modules by replacing $(x_\alpha^+)^{(k)}$ by $(x_\alpha^-)^{(k)}$.

\begin{thm} \label{t:rh}
Let $V$ be a $U_\mathbb F(\lie g)$-module.
\begin{enumerate}[(a)]
\item If $V$ is finite-dimensional, then $V$ is a weight-module, $\wt (V) \subseteq P$, and $\dim V_\mu = \dim V_{\sigma\mu}$ for all $\sigma\in\cal W, \mu \in U_\bb F(\lie h)^\ast$. In particular, $\ch(V)\in\mathbb Z[P]^\cal W$.
\item If $V$ is a highest-weight module of highest weight $\lambda$, then $\dim(V_{\lambda})=1$ and $V_{\mu}\ne 0$ only if $\mu\le \lambda$. Moreover, $V$  has a unique maximal proper submodule and, hence, also a unique irreducible quotient. In particular, $V$ is indecomposable.
\item{\label{t:rh.c}} For each $\lambda\in P^+$, the $U_\mathbb F(\lie g)$-module $W_\mathbb F(\lambda)$ given by the quotient of $U_\mathbb F(\lie g)$ by the left ideal $I_\mathbb F(\lambda)$ generated by
\begin{equation*}
U_\mathbb F (\lie n^+)^0, \quad h-\lambda(h) \quad \text{and} \quad (x_\alpha^-)^{(k)}, \quad\text{for all}\quad h\in U_\mathbb F(\lie h), \alpha\in R^+, k>\lambda(h_\alpha),
\end{equation*}
is nonzero and finite-dimensional. Moreover, every finite-dimensional highest-weight module of highest weight $\lambda$ is a quotient of $W_\mathbb F(\lambda)$.
\item If $V$ is finite-dimensional and irreducible, then there exists a unique $\lambda\in P^+$ such that $V$ is isomorphic to the irreducible quotient $V_\mathbb F(\lambda)$ of $W_\mathbb F(\lambda)$. If the characteristic of $\mathbb F$ is zero, then $W_\bb F(\lambda)$ is irreducible.
\item\label{t:chWg} For each $\lambda\in P^+$, $\ch(W_\mathbb F(\lambda))$ is given by the Weyl character formula. In particular, $\mu\in\wt(W_\mathbb F(\lambda))$ if, and only if, $\sigma\mu\le\lambda$ for all $\sigma\in\cal W$. Moreover, $W_\mathbb F(\lambda)$ is a lowest-weight module with lowest weight $w_0\lambda$.  \hfill \qed
\end{enumerate}
\end{thm}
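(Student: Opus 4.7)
The overall strategy is to reduce nearly every assertion either (i) to classical $\lie{sl}_2$-theory applied to the subalgebras $U_\mathbb F(\lie{sl}_{\alpha_i})$ defined in Section \ref{ss:saofr12}, or (ii) to the characteristic zero case via the integral form $W_\mathbb Z(\lambda)$ and the identification $U_\mathbb F(\lie g) = \mathbb F\otimes_\mathbb Z U_\mathbb Z(\lie g)$. The triangular decomposition of $U_\mathbb Z(\lie g)$ recorded in Section \ref{ss:Zforms} will be used throughout.

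For part (a), I would first establish the weight-space decomposition by showing that the commuting generators $\binom{h_i}{k}$ of $U_\mathbb F(\lie h)$ act diagonalizably with compatible integer eigenvalues on any finite-dimensional module; the integrality $\wt(V)\subseteq P$ is then forced by $\lie{sl}_2$-theory applied to each $U_\mathbb F(\lie{sl}_{\alpha_i})$, where eigenvalues of $h$ on finite-dimensional representations must be integers. The Weyl-group symmetry $\dim V_\mu=\dim V_{s_i\mu}$ is again the $\lie{sl}_2$ weight symmetry, and iterating over $i\in I$ generates $\mathcal W$. Part (b) is a standard PBW argument: highest-weight generation plus the decomposition from Section \ref{ss:Zforms} gives $V=U_\mathbb F(\lie n^-)v_\lambda$, forcing $V_\mu\ne0\Rightarrow\mu\le\lambda$ and $\dim V_\lambda=1$; any proper weight submodule must miss $V_\lambda$, so the sum of all proper submodules is proper and is the unique maximal one.

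For part (c), nonvanishing of $W_\mathbb F(\lambda)$ follows from the existence of the classical integral Weyl module $W_\mathbb Z(\lambda)\subseteq V_\mathbb C(\lambda)$, since the relations defining $W_\mathbb F(\lambda)$ are satisfied on the image of a highest-weight vector. Finite-dimensionality requires the key observation that the defining relations immediately give $(x_{\alpha_i}^-)^{(k)}v_\lambda=0$ for $k>\lambda(h_{\alpha_i})$; a Garland/Kostant-type iteration based on \eqref{e:koslem} then propagates this to conclude that each $U_\mathbb F(\lie{sl}_{\alpha_i})$ acts locally finitely on $W_\mathbb F(\lambda)$. Combined with the bound $\wt(W_\mathbb F(\lambda))\subseteq\lambda-Q^+$ and $\mathcal W$-stability from part (a), this gives a finite weight set and finite-dimensional weight spaces. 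Part (d) follows by extracting from any finite-dimensional irreducible $V$ a maximal weight $\lambda\in P^+$ (dominance comes from the $\lie{sl}_2$-theory for each $i$); the associated highest-weight vector generates $V$ and exhibits it as a quotient of $W_\mathbb F(\lambda)$. Irreducibility of $W_\bb C(\lambda)$ is the classical Cartan-Weyl statement.

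For part (e), I would exploit that $W_\mathbb Z(\lambda)$ is a free $\mathbb Z$-module compatible with the weight grading inherited from $V_\mathbb C(\lambda)$, so $\dim_\mathbb F W_\mathbb F(\lambda)_\mu = \dim_\mathbb C W_\mathbb C(\lambda)_\mu$; hence the Weyl character formula transfers verbatim from $\mathbb C$ to $\mathbb F$. The lowest-weight statement $W_\mathbb F(\lambda)=U_\mathbb F(\lie n^-)v_\lambda$ having lowest weight $w_0\lambda$ follows either from the character formula or from a direct argument using the Chevalley involution $\psi$ of Section \ref{ss:autom}. The main obstacle in this whole program is the diagonalizability claim underlying (a): in positive characteristic the algebra generated by the $\binom{h_i}{k}$ need not act semisimply on an arbitrary module, so one should first verify the weight decomposition for highest-weight modules $W_\mathbb F(\lambda)$ (where it follows from reduction from $W_\mathbb Z(\lambda)$), then extend to finite direct sums and their quotients, and finally observe that every finite-dimensional $U_\mathbb F(\lie g)$-module is such a quotient via its composition factors $V_\mathbb F(\lambda)$. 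This is precisely the route carried out in \cite[Part II]{jantzen03} and reviewed in \cite[Section 2]{JMhyper}, which I would cite for the delicate parts.
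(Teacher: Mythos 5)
The paper gives no proof of this theorem: it is presented as a review, with pointers to Humphreys for characteristic zero, to Jantzen's \emph{Representations of Algebraic Groups}, Part II, and to \cite[Section 2]{JMhyper} for the positive-characteristic setting, and the Remark immediately following the theorem explicitly warns that the known proofs of part~\eqref{t:chWg} require geometric input such as Kempf's Vanishing Theorem. Your reconstruction of parts (a)--(d) is reasonable, and you correctly flag the diagonalizability subtlety in (a); the cleanest repair, as you note, is indeed to appeal to Jantzen's equivalence with rational $G_\bb F$-modules rather than trying to prove semisimplicity of $U_\bb F(\lie h)$ directly. One minor circularity worth noting in (c): you invoke the $\cal W$-stability from part (a) to bound the weight set of $W_\bb F(\lambda)$, but (a) is stated only for modules already known to be finite-dimensional. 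The standard fix is to observe first that the defining relations make $W_\bb F(\lambda)$ integrable (each $U_\bb F(\lie{sl}_{\alpha})$ acts locally finitely), and integrable modules with weights in a fixed coset of $Q$ automatically have $\cal W$-stable weight sets regardless of dimension.

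The genuine gap is in part~\eqref{t:chWg}. You propose to deduce $\dim_\bb F W_\bb F(\lambda)_\mu = \dim_\bb C W_\bb C(\lambda)_\mu$ by transporting the admissible lattice $U_\bb Z(\lie g)v_\lambda\subseteq V_\bb C(\lambda)$ to $\bb F$, asserting the character formula ``transfers verbatim.'' This only gives one inequality. Since $U_\bb Z(\lie g)v_\lambda$ is a module satisfying the defining relations of $W_\bb F(\lambda)$, its reduction mod $p$ is a \emph{quotient} of $W_\bb F(\lambda)$, giving $\dim W_\bb F(\lambda)\ge\dim V_\bb C(\lambda)$. The reverse inequality is exactly the statement that $U_\bb Z(\lie g)/I_\bb Z(\lambda)$ is torsion-free of the expected rank, and that is precisely the content of the Weyl character formula for costandard modules in positive characteristic. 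No elementary reduction-mod-$p$ argument is known for this; it requires Kempf vanishing or an equivalent geometric input (Frobenius splitting, or Andersen's strong linkage plus a degree count). This is the one place where your outline quietly assumes what must be proved, and it is exactly the point the paper's Remark singles out.
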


\begin{rem}
The module $W_\mathbb F(\lambda)$ defined in Theorem \ref{t:rh} \eqref{t:rh.c} is called Weyl module (or costandard module) of highest weight $\lambda$. The known proofs of Theorem \ref{t:rh}\eqref{t:chWg} make use of geometric results such as Kempf's Vanishing Theorem.
\end{rem}

We shall need the following lemma in the proof of Lemma \ref{lem:g.naoi} below.

\begin{lem}\label{l:lowtohigh}
Let $V$ be a finite-dimensional $U_\bb F(\lie g)$-module, $\mu\in P$, and $\alpha\in R^+$. If $v\in V_\mu\setminus\{0\}$ is such that $(x_\alpha^-)^{(k)}v=0$ for all $k>0$, then $\mu(h_\alpha)\in\bb Z_{\le 0}$ and $(x_\alpha^+)^{(-\mu(h_\alpha))}v\ne 0$.\hfill\qedsymbol
\end{lem}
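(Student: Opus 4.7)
The argument reduces to a statement for finite-dimensional $U_\bb F(\lie{sl}_2)$-modules by restricting to the subalgebra $U_\bb F(\lie{sl}_\alpha)\cong U_\bb F(\lie{sl}_2)$ from Section~\ref{ss:saofr12}. First I would consider the submodule $M=U_\bb F(\lie{sl}_\alpha)v\subseteq V$, which is a finite-dimensional weight module for $\lie h_\alpha$ with $v\in M_{\mu(h_\alpha)}$ and $\mu(h_\alpha)\in\bb Z$. Next, let $\tilde\psi$ be the involutive algebra automorphism of $U_\bb F(\lie{sl}_\alpha)$ determined by $x_\alpha^\pm\mapsto x_\alpha^\mp$ and $h_\alpha\mapsto -h_\alpha$, and form the pullback $M^{\tilde\psi}$. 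The assumption $(x_\alpha^-)^{(k)}v=0$ for every $k>0$ becomes, in $M^{\tilde\psi}$, the condition $(x_\alpha^+)^{(k)}v=0$ for every $k>0$. Hence $v$ is a highest-weight vector of weight $-\mu(h_\alpha)$ in the finite-dimensional $U_\bb F(\lie{sl}_2)$-module $M^{\tilde\psi}$, which is generated by $v$. Applying Theorem~\ref{t:rh}(a),(b) to $M^{\tilde\psi}$ forces its highest weight to be dominant, i.e., $-\mu(h_\alpha)\in\bb Z_{\ge 0}$, giving the first claim.

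Set $n=-\mu(h_\alpha)$. To prove $(x_\alpha^+)^{(n)}v\ne 0$ in $V$, equivalently $(x_\alpha^-)^{(n)}v\ne 0$ in $M^{\tilde\psi}$, I would pass to the unique irreducible quotient $L$ of $M^{\tilde\psi}$, which by Theorem~\ref{t:rh}(b),(d) is isomorphic to $V_\bb F(n)$. The $s_\alpha$-invariance of weight multiplicities in Theorem~\ref{t:rh}(a) gives $\dim L_{-n}=\dim L_n\ge 1$. Since $M^{\tilde\psi}$ is generated by a highest-weight vector of weight $n$, a standard PBW argument shows that every weight space of $M^{\tilde\psi}$ is spanned by iterated lowering operators applied to $v$; in particular $(M^{\tilde\psi})_{-n}=\bb F(x_\alpha^-)^{(n)}v$. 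The nonvanishing of its image in $L$ then forces $(x_\alpha^-)^{(n)}v\ne 0$ already in $M^{\tilde\psi}$, which is the second claim.

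The step I expect to be most delicate is the identification $(M^{\tilde\psi})_{-n}=\bb F(x_\alpha^-)^{(n)}v$ together with the nonvanishing of its image in $L$, which combines the Weyl-invariance of characters from Theorem~\ref{t:rh}(a) with the PBW description of the weight spaces of a highest-weight module. Because the twist $\tilde\psi$ is internal to $U_\bb F(\lie{sl}_\alpha)$, no global involution of $U_\bb F(\lie g)$ is required, so no sign ambiguities coming from extending the Chevalley involution to non-simple roots need to be addressed.
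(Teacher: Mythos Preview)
Your argument is correct. Note that the paper does not actually supply a proof of this lemma: it is stated with a terminal \qedsymbol\ and left as a standard fact, so there is no ``paper's own proof'' to compare against.

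One small simplification: the passage through the irreducible quotient $L$ is not needed. Once you know $M^{\tilde\psi}$ is a finite-dimensional highest-weight $U_\bb F(\lie{sl}_2)$-module with highest weight $n=-\mu(h_\alpha)\ge 0$, Theorem~\ref{t:rh}(a) applied directly to $M^{\tilde\psi}$ already gives $\dim (M^{\tilde\psi})_{-n}=\dim (M^{\tilde\psi})_{n}=1$, and your PBW identification $(M^{\tilde\psi})_{-n}=\bb F(x_\alpha^-)^{(n)}v$ then forces $(x_\alpha^-)^{(n)}v\ne 0$ immediately. Equivalently, one may skip the twist $\tilde\psi$ altogether: $M=U_\bb F(\lie{sl}_\alpha)v$ is a finite-dimensional lowest-weight module with lowest weight $m=\mu(h_\alpha)$, so $M_{m+2k}=\bb F(x_\alpha^+)^{(k)}v$, and Weyl invariance of $\ch(M)$ gives $\dim M_{-m}=\dim M_m=1$, hence $(x_\alpha^+)^{(-m)}v\ne 0$.
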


\begin{rem}
In characteristic zero, it is well-known that the following stronger statement holds: if $v\in V_\mu\setminus\{0\}$ is such that $\mu(h_\alpha)\in\bb Z_{\le 0}$, then $(x_\alpha^+)^{(-\mu(h_\alpha))}v\ne 0$. In positive characteristic this stronger statement is not true for all finite-dimensional representations.
\end{rem}

The next lemma can be proved exactly as in \cite[Lemma 4.5]{naoi:weyldem}.

\begin{lem}\label{l:weyl-relhr}
Let $m_i\in\mathbb Z_{\ge 0}, i\in I, V$ a finite-dimensional $U_\bb F (\lie n^-)$-module and suppose $v \in V$ satisfies $(x_i^{-})^{(k)} v = 0$ for all $i\in I, k > m_i$. Then, given $\alpha\in R^+$, we have $(x_{\alpha}^{-})^{(k)} v = 0$ for all $k > \sum_{i\in I}n_i m_i$ where $n_i$ are such that $h_\alpha = \sum_{i\in I}n_i h_i$.\hfill\qedsymbol
\end{lem}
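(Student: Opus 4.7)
The plan is to induct on the height $h := \sum_{i \in I} n_i$ of $\alpha$. In the base case $h = 1$ one has $\alpha = \alpha_i$ for some $i$, $n_i = 1$, $n_j = 0$ for $j \ne i$, and the conclusion $(x_\alpha^-)^{(k)} v = 0$ for $k > m_i$ is exactly the hypothesis.

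For the inductive step, I would choose $i \in I$ such that $\gamma := \alpha - \alpha_i \in R^+$, which is possible because $\alpha$ is non-simple. Writing $\gamma = \sum_j n_j' \alpha_j$, one has $n_i' = n_i - 1$ and $n_j' = n_j$ for $j \ne i$; the inductive hypothesis then yields $(x_\gamma^-)^{(l)} v = 0$ for every $l > M := (n_i - 1) m_i + \sum_{j \ne i} n_j m_j$. By \eqref{e:cbstructure}, $[x_i^-, x_\gamma^-] = c \, x_\alpha^-$ for some nonzero integer $c$, and so the argument naturally localizes in the subalgebra of $U_\bb Z(\lie n^-)$ generated by $x_i^-$, $x_\gamma^-$, and the root vectors $x_\beta^-$ with $\mathrm{ht}(\beta) < \mathrm{ht}(\alpha)$.

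The key step is to produce an integral identity in $U_\bb Z(\lie n^-)$ expressing $(x_\alpha^-)^{(k)}$ as a $\bb Z$-linear combination of ordered products of divided powers $(x_i^-)^{(a)}$, $(x_\gamma^-)^{(b)}$, and $(x_\beta^-)^{(c)}$ with $\mathrm{ht}(\beta) < \mathrm{ht}(\alpha)$, enjoying the property that whenever $k > m_i + M$ each summand contains at least one factor whose exponent exceeds the corresponding annihilation bound (either $a > m_i$, or $b > M$, or the exponent of some $(x_\beta^-)^{(c)}$ exceeds $\sum_j n_j(\beta) m_j$, where $\beta = \sum_j n_j(\beta) \alpha_j$). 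Once such an identity is in hand, applying it to $v$ and invoking the hypothesis on the simple generators together with the inductive hypothesis applied to each $\beta$ of lower height forces every monomial to annihilate $v$; this yields $(x_\alpha^-)^{(k)} v = 0$ for $k > m_i + M = \sum_j n_j m_j$.

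The hard part will be the combinatorial bookkeeping in the construction of this integral identity, which is a careful rank-2 computation. The simply-laced case reduces cleanly to the Heisenberg-type formula $y^{(a)} z^{(b)} = \sum_{j \ge 0} z^{(b-j)} w^{(j)} y^{(a-j)}$, valid when $w = [y,z]$ with $[y,w] = [z,w] = 0$; the doubly- and triply-laced cases require tracking the contributions of intermediate roots and are carried out in detail in \cite[Lemma 4.5]{naoi:weyldem}. Since Naoi's construction is performed entirely at the level of the integral form $U_\bb Z(\lie n^-)$, his argument transfers verbatim to the hyperalgebra $U_\bb F(\lie n^-)$ in arbitrary characteristic, matching the claim in the statement.
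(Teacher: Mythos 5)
The proof is not correct as written: you have silently replaced the coroot expansion $h_\alpha = \sum_i n_i h_i$ of the statement by the root expansion $\alpha = \sum_i c_i \alpha_i$. These coincide only when $\lie g$ is simply laced. In general, writing $\alpha = \sum_i c_i \alpha_i$, one has $n_i = c_i\,(\alpha_i,\alpha_i)/(\alpha,\alpha)$, so for $\alpha$ short and $\alpha_i$ long the coroot coefficient is $n_i = r^\vee c_i > c_i$. Your sketch produces the bound $\sum_i c_i m_i$, which is \emph{smaller} than the $\sum_i n_i m_i$ required, i.e.\ you are asserting vanishing on a strictly larger range of $k$, and that stronger assertion is simply false.

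Here is a concrete counterexample. Take $\lie g$ of type $B_2$ with $\alpha_1$ long, $\alpha_2$ short, and $\alpha = \alpha_1 + \alpha_2$ (short). Then $h_\alpha = 2h_1 + h_2$, so the lemma's bound is $2m_1 + m_2$, while your sketch yields $m_1 + m_2$. Let $v$ be the highest weight vector of the $5$-dimensional Weyl module $W_\bb F(\omega_1)$. It satisfies $(x_1^-)^{(k)}v = 0$ for $k>1$ and $x_2^- v = 0$, so $m_1 = 1$, $m_2 = 0$. By $\lie{sl}_\alpha$-theory one has $(x_\alpha^-)^{(2)} v \ne 0$ (since $\omega_1(h_\alpha) = 2$ and $v$ is an $\lie{sl}_\alpha$-highest weight vector of that weight), in contradiction with your claimed vanishing for $k > m_1 + m_2 = 1$. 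The correct bound $2m_1 + m_2 = 2$ is sharp here.

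The inductive step is broken for the same reason. For $\gamma = \alpha - \alpha_i$, the relation $n_i' = n_i - 1$ holds for the \emph{root} coefficients but not for the \emph{coroot} coefficients: in the $B_2$ example with $i=1$, one has $\gamma = \alpha_2$ with $h_\gamma = h_2$, so $n_1(\gamma) = 0$ while $n_1(\alpha) - 1 = 1$. Moreover the Heisenberg-type commutation you invoke fails outside the simply-laced case: in $B_2$, $[x_{\alpha_2}^-, x_\alpha^-] = \pm 2x_{\alpha_1+2\alpha_2}^- \ne 0$, so $x_1^-,x_{\alpha_2}^-,x_\alpha^-$ do not generate a Heisenberg algebra. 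Since the paper needs this lemma precisely in the non-simply-laced setting (Lemmas \ref{lem:bcf.naoi} and \ref{lem:g.naoi}), this is not an edge case. The paper's own proof simply cites that Naoi's Lemma~4.5, being an identity in $U_\bb Z(\lie n^-)$, carries over, so your final sentence is fine; but the sketch you give before it misrepresents both what that argument proves and the bound it proves it with.
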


\subsection{Modules for hyper loop algebras}{\label{sec:fdweyl}}

We now recall some basic results about the category of finite-dimensional $U_\mathbb F(\tlie g)$-modules in the same spirit as section \ref{sec:rev.g}. The results of this subsection can be found in \cite[Section 3]{JMhyper} and references therein.

Given a $U_\mathbb F(\tlie g)$-module $V$ and $\xi\in U_\mathbb F(\tlie h)^*$, let
\begin{equation*}
V_\xi=\{v\in V: \text{ for all } x\in U_\mathbb F(\tlie h), \text{ there exists } k>0 \text{ such that } (x-\xi(x))^kv = 0\}.
\end{equation*}
We say that $V$ is an $\ell$-weight module if $V = \opl_{\gb\omega\in\cal P_\mathbb F}^{} V_\gb\omega$. In this case, regarding $V$ as a $U_\bb F (\lie g)$-module, we have
\begin{equation*}
V_\mu=\opl_{\substack{\gb\omega\in\cal P_\mathbb F:\\ \wt(\gb\omega)=\mu}}^{} V_\gb\omega \quad\text{for all}\quad \mu\in P \quad\text{and}\quad V=\opl_{\mu\in P}^{} V_\mu.
\end{equation*}
A nonzero element of $V_\gb\omega$ is said to be an $\ell$-weight vector of $\ell$-weight $\gb\omega$. An $\ell$-weight vector $v$ is said to be a highest-$\ell$-weight vector if $U_\mathbb F(\tlie h)v=\mathbb Fv$ and $(x_{\alpha,r}^+)^{(k)}v = 0$ for all $\alpha\in R^+$ and all $r,k\in\mathbb Z, k>0$. If $V$ is generated by a highest-$\ell$-weight vector of $\ell$-weight $\gb\omega$, $V$ is said to be a highest-$\ell$-weight module of highest $\ell$-weight $\gb\omega$.

\begin{thm}\label{t:fdrhla} Let $V$ be a $U_\mathbb F(\tlie g)$-module.
\begin{enumerate}[(a)]
\item If $V$ is finite-dimensional, then $V$ is an $\ell$-weight module. Moreover, if $V$ is finite-dimensional and irreducible, then $V$ is a highest-$\ell$-weight module whose highest $\ell$-weight lies in $\cal P_\mathbb F^+$.
\item If $V$ is a highest-$\ell$-weight module of highest $\ell$-weight $\gb\omega\in\cal P_\mathbb F^+$, then $\dim V_\gb\omega=1$ and $V_{\mu}\ne 0$ only if $\mu\le \wt(\gb\omega)$. Moreover, $V$  has a unique maximal proper submodule and, hence, also a unique irreducible quotient. In particular, $V$ is indecomposable.
\item\label{t:wmfdim} For each $\gb\omega\in \cal P_\mathbb F^+$, the local Weyl module $W_\mathbb F(\gb\omega)$ is nonzero and finite-dimensional. Moreover, every finite-dimensional highest-$\ell$-weight-module of highest $\ell$-weight $\gb\omega$ is a quotient of $W_\mathbb F(\gb\omega)$.
\item If $V$ is finite-dimensional and irreducible, then there exists a unique $\gb\omega\in \cal P_\bb F^+$ such that $V$ is isomorphic to the irreducible quotient $V_\mathbb F(\gb\omega)$ of $W_\mathbb F(\gb\omega)$.
\item For $\mu\in P$ and $\gb\omega\in\cal P_\mathbb F^+$, we have $\mu\in\wt(W_\mathbb F(\gb\omega))$ if and only if $\mu\in\wt(W_\mathbb F(\wt(\gb\omega)))$, i.e. $w\mu \leq \wt(\gb \omega)$, for all $w \in \cal W$.
\hfill\qedsymbol
\end{enumerate}
\end{thm}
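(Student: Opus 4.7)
The plan is to adapt the classical highest-weight machinery of Theorem \ref{t:rh} to the loop setting. The two new ingredients are the commutativity of $U_\mathbb F(\tlie h)$, which produces the $\ell$-weight decomposition in (a), and Garland's identity (Lemma \ref{l:garland}), which drives the finite-dimensionality needed in (c); the latter is the main technical obstacle.

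For part (a), I would use that $\tlie h$ is abelian to conclude $U_\mathbb F(\tlie h)$ is commutative, so that a finite-dimensional $V$ decomposes into generalized joint eigenspaces $V_\xi$ with $\xi \in U_\mathbb F(\tlie h)^\ast$. Packaging an eigenfunctional $\xi$ into an $\ell$-weight $\gb\omega \in \mathcal P_\mathbb F$ amounts to showing that the weight part $\xi|_{U_\mathbb F(\lie h)}$ lies in $P$ (which is Theorem \ref{t:rh}(a) applied to $V$ as a $U_\mathbb F(\lie g)$-module) and that the values $\xi(\Lambda_{i,\pm r})$ are Taylor coefficients of a rational function factoring as $\prod_j (1 - a_j u)^{\pm 1}$; this follows from the multiplicative relations defining the generating series $\Lambda_i^\pm(u)$ together with finite-dimensionality of $V$. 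For the irreducible case, a weight vector $v$ of maximal weight exists since $\wt(V) \subseteq P$ is finite, and by \eqref{e:xactonws} it is killed by every $(x_{\alpha,r}^+)^{(k)}$ with $k>0$; the finite-dimensional submodule $U_\mathbb F(\tlie h) v$ over the commutative algebra $U_\mathbb F(\tlie h)$ then contains a joint eigenvector, giving a highest-$\ell$-weight vector that generates $V$ by irreducibility. The membership $\gb\omega \in \mathcal P_\mathbb F^+$ reduces to $\wt(\gb\omega) \in P^+$, which is Theorem \ref{t:rh} applied to each $U_\mathbb F(\lie{sl}_\alpha) v$.

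Part (b) is routine: the triangular decomposition $U_\mathbb F(\tlie g) = U_\mathbb F(\tlie n^-) U_\mathbb F(\tlie h) U_\mathbb F(\tlie n^+)$ applied to a highest-$\ell$-weight generator $v$ gives $V = U_\mathbb F(\tlie n^-) v$, whence the weight bound $\mu \le \wt(\gb\omega)$ and $V_\gb\omega = \mathbb F v$ by degree considerations; the sum of all submodules missing $v$ is the unique maximal proper submodule. Part (c) is the main obstacle. Once finite-dimensionality of $W_\mathbb F(\gb\omega)$ is established, the universal property is automatic, since in any finite-dimensional highest-$\ell$-weight module the generator satisfies $(x_\alpha^-)^{(k)} v = 0$ for $k > \wt(\gb\omega)(h_\alpha)$ by Theorem \ref{t:rh}\eqref{t:rh.c} applied to the $U_\mathbb F(\lie g)$-submodule it generates. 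For finite-dimensionality, Lemma \ref{l:weyl-relhr} reduces the bounds $(x_\alpha^-)^{(k)} v = 0$ from $\alpha \in R^+$ to simple roots, which hold by the defining relations of $W_\mathbb F(\gb\omega)$. The delicate step is controlling $(x_{\alpha,r}^-)^{(k)} v$ for $r \neq 0$; my plan is to pick $l$ large enough that $(x^+_{\alpha,m-s})^{(l)} v = 0$ (possible since positively graded elements kill $v$) and $k > \wt(\gb\omega)(h_\alpha)$, then invoke Lemma \ref{l:garland} to extract a polynomial identity among the $(x_{\alpha,r}^-)^{(k)} v$ whose coefficients are prescribed by the known action of $\Lambda_\alpha^+(u)$ on $v$. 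An induction on grade then bounds each weight space of $W_\mathbb F(\gb\omega)$ by finitely many monomials in $U_\mathbb F(\tlie n^-)$, completing the argument.

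Parts (d) and (e) are then brief. Part (d) follows from (a)--(c) together with the one-dimensionality of $V_\gb\omega$, which also yields uniqueness of $\gb\omega$. For (e), the ``only if'' direction uses that $W_\mathbb F(\gb\omega)$ is finite-dimensional over $U_\mathbb F(\lie g)$ by (c), so by Theorem \ref{t:rh}(a) its weight set is $\mathcal W$-invariant and bounded above by $\wt(\gb\omega)$, which is precisely the description of $\wt(W_\mathbb F(\wt(\gb\omega)))$ given by the Weyl character formula in Theorem \ref{t:rh}\eqref{t:chWg}. For the reverse inclusion, the $U_\mathbb F(\lie g)$-submodule of $W_\mathbb F(\gb\omega)$ generated by the highest-$\ell$-weight vector $v$ is a quotient of $W_\mathbb F(\wt(\gb\omega))$ since $v$ satisfies its defining relations; compatibility of weight sets under this surjection, transported from the characteristic-zero case via an $\mathbb A$-form / reduction mod $p$ argument in the spirit of \cite{JMhyper}, then yields the required inclusion.
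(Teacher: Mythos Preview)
The paper does not actually prove this theorem; note the trailing qed symbol in the statement and the sentence opening Section \ref{sec:fdweyl}: ``The results of this subsection can be found in \cite[Section 3]{JMhyper} and references therein.'' Your outline is consistent with the strategy used in that reference: commutativity of $U_\mathbb F(\tlie h)$ for the $\ell$-weight decomposition in (a), Garland's relation (Lemma \ref{l:garland}) driving the finite-dimensionality of $W_\mathbb F(\gb\omega)$ in (c), and standard highest-weight bookkeeping for (b) and (d).

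One remark on your treatment of part (e). Knowing only that $U_\mathbb F(\lie g)v$ is a \emph{quotient} of $W_\mathbb F(\wt(\gb\omega))$ gives an inclusion of weight sets in the wrong direction, so the passage through the $\bb A$-form is genuinely needed and deserves to be spelled out rather than gestured at. A clean way to run it: pick $\gb\omega\in\cal P_\bb A^\times$ lifting $\gb\varpi$; by Theorem \ref{t:weylform} the weight spaces of $L_\bb A(\gb\omega)$ are free $\bb A$-modules of rank $\dim W_\bb K(\gb\omega)_\mu$, so $\wt(\bb F\otimes_\bb A L_\bb A(\gb\omega)) = \wt(W_\bb K(\gb\omega))$. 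The left-hand side is contained in $\wt(W_\bb F(\gb\varpi))$ since $\bb F\otimes_\bb A L_\bb A(\gb\omega)$ is a quotient of $W_\bb F(\gb\varpi)$, while the right-hand side contains $\wt(W_\bb K(\wt(\gb\omega)))=\wt(W_\bb F(\wt(\gb\omega)))$ because in characteristic zero $W_\bb K(\wt(\gb\omega))$ is irreducible and hence isomorphic to the $U_\bb K(\lie g)$-submodule generated by the highest-$\ell$-weight vector of $W_\bb K(\gb\omega)$ (the equality of weight sets across characteristics being Theorem \ref{t:rh}\eqref{t:chWg}). This is presumably what you intended, but as written your last sentence conflates the quotient map with an identification of weight sets.
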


\subsection{Graded modules for hyper current algebras}\label{ss:grmhca}

Recall the following elementary fact.

\begin{lem}\label{l:gotoF}
Let $A$ be a ring, $I \subset A$ a left ideal, $B=\mathbb F\otimes_\mathbb Z A$ an $\bb F$-algebra, and $J$ the image of $I$ in $B$, i.e. $J$ is the $\mathbb F$-span of $\{ (1\otimes a)\in B : a\in I\}$. Then $\mathbb F\otimes_\mathbb Z (A/I)$ is a left $B$-module, $J$ is a left ideal of $B$, and we have an isomorphism of left $B$-modules $B/J\cong \mathbb F\otimes_\mathbb Z (A/I)$.\hfill\qedsymbol
\end{lem}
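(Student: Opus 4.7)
The plan is to derive this from the right-exactness of the functor $\mathbb F\otimes_\mathbb Z(-)$ applied to the short exact sequence of abelian groups $0\to I\to A\to A/I\to 0$. First I would apply tensoring, obtaining an exact sequence of $\mathbb F$-vector spaces
\[ \mathbb F\otimes_\mathbb Z I \pra[\iota] \mathbb F\otimes_\mathbb Z A \pra \mathbb F\otimes_\mathbb Z(A/I)\pra 0, \]
where $\iota$ is induced by the inclusion $I\hookrightarrow A$. By the very definition of $J$, the image of $\iota$ in $B=\mathbb F\otimes_\mathbb Z A$ is precisely the $\mathbb F$-subspace $J$, so as $\mathbb F$-vector spaces we already get the isomorphism $B/J \cong \mathbb F\otimes_\mathbb Z(A/I)$.

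Next I would upgrade this to $B$-modules. The left $A$-action on $A/I$ induces a left $B$-action on $\mathbb F\otimes_\mathbb Z(A/I)$ via $(\lambda\otimes a)\cdot(\mu\otimes\overline{b})=\lambda\mu\otimes\overline{ab}$; this is well-defined because $I$ is a left ideal (so $aI\subseteq I$ for $a\in A$), and bilinearity over $\mathbb Z$ makes it descend from the product. For $J$ to be a left ideal of $B$, I would check that for $\lambda\otimes a\in B$ and $1\otimes c\in J$ (with $c\in I$), we have $(\lambda\otimes a)(1\otimes c)=\lambda\otimes ac\in J$ since $ac\in I$; by $\mathbb F$-linearity this extends to arbitrary elements of $J$. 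Hence $B/J$ inherits a left $B$-module structure.

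Finally I would verify that the canonical map $B\to \mathbb F\otimes_\mathbb Z(A/I)$ sending $\lambda\otimes a\mapsto \lambda\otimes\overline{a}$ is $B$-linear (immediate from the formulas above), is surjective, and has kernel exactly $J$ (already established), so it factors to a $B$-module isomorphism $B/J\iso \mathbb F\otimes_\mathbb Z(A/I)$.

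There is no real obstacle here: the statement is a direct instance of base change for quotients of rings and modules, and the only mild point requiring attention is to check that the $\mathbb F$-vector space identification coming from right-exactness is compatible with the $B$-module structures on both sides, which is a routine verification on pure tensors followed by $\mathbb F$-linear (and $\mathbb Z$-bilinear) extension.
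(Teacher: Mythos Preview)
Your proposal is correct. The paper itself does not supply a proof: the lemma is stated as an elementary fact and closed with a \qedsymbol. Your argument via right-exactness of $\mathbb F\otimes_\mathbb Z(-)$ together with the routine check of $B$-linearity is exactly the standard justification one would give.
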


We shall use Lemma \ref{l:gotoF} with $A$ being one of the integral forms so that $B$ is the corresponding hyperalgebra.

Given $\lambda\in P^+$, let $I_\bb Z^c(\lambda) \subset U_\bb Z (\lie g [t])$ be the left ideal generated by
\begin{gather*}
U_\mathbb Z (\lie n^+[t])^0, \quad U_\mathbb Z (\lie h [t]_+)^0, \quad h - \lambda(h), \quad (x_\alpha^-)^{(k)}, \quad \text{for all}\quad h\in U_\mathbb Z(\lie h),\ \alpha\in R^+,\ k>\lambda(h_\alpha),
\end{gather*}
and set
\[ W_\bb Z^c (\lambda) = U_\bb Z (\lie g [t])/ I_\bb Z^c (\lambda). \]
Similarly, if $\ell\ge 0$ is also given, let $I_\bb Z (\ell,\lambda)$ be the left ideal of $U_\mathbb Z (\lie g[t])$ generated by
\begin{gather*}
U_\mathbb Z (\lie n^+[t])^0, \quad U_\mathbb Z (\lie h [t]_+)^0, \quad h - \lambda(h), \quad (x_{\alpha,s}^-)^{(k)},\quad \text{for all}\quad h\in U_\mathbb Z (\lie h),\ \alpha\in R^+,\\ s,k\in\mathbb Z_{\ge 0}, \ k>\ \max \{0, \lambda(h_\alpha) - r^\vee_\alpha\ell s \}.
\end{gather*}
Then set \[ D_\bb Z (\ell, \lambda) = U_\bb Z (\lie g [t])/ I_\bb Z (\ell, \lambda). \]
Notice that $W^c_\mathbb Z (\lambda)$ and $D_\mathbb Z (\ell, \lambda)$ are weight modules.

Since the ideals defining $W^c_\mathbb F(\lambda)$ and $D_\mathbb F(\ell, \lambda)$ (cf. Subsection \ref{sec:demweyl}) are the images of $I^c_\mathbb Z(\lambda)$ and $I_\mathbb Z(\ell,\lambda)$ in $U_\mathbb F(\lie g[t])$, respectively, an application of Lemma \ref{l:gotoF} gives isomorphisms of $U_\bb F (\lie g [t])$-modules
\begin{equation*}
W^c_\mathbb F(\lambda) \cong \mathbb F \otimes_\mathbb Z W^c_\mathbb Z (\lambda) \quad\text{and}\quad D_\mathbb F(\ell, \lambda) \cong \mathbb F \otimes_\mathbb Z D_\mathbb Z (\ell, \lambda).
\end{equation*}

As before, $D_\mathbb Z (\ell, \lambda)$ is a quotient of $W^c_\mathbb Z (\lambda)$ for all $\lambda\in P^+$ and all $\ell>0$. We shall see next (Proposition \ref{p:WZfg}) that the latter is a finitely generated $\mathbb Z$-module and, hence, so is the former. Together with Corollary \ref{c:indf}, this implies that
\begin{equation}
D_\mathbb Z (\ell, \lambda) \ \text{ is a free } \mathbb Z\text{-module.}
\end{equation}

The proof of the next proposition is an adaptation of that of \cite[Theorem 3.11]{JMhyper}. The extra details can be found in \cite{macedo:PhD}.

\begin{prop} \label{p:WZfg}
For every $\lambda\in P^+$, the $U_\bb Z (\lie g[t])$-module $W^c_\mathbb Z(\lambda)$ is a finitely generated $\mathbb Z$-module.
\end{prop}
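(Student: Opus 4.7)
The plan is to adapt the proof of \cite[Theorem 3.11]{JMhyper}, where the analogous finite-generation statement is established for local Weyl modules of hyper loop algebras. Let $w_\lambda$ denote the image of $1$ in $W^c_\mathbb Z(\lambda)$. By the PBW-type isomorphism $U_\mathbb Z(\lie g[t]) \cong U_\mathbb Z(\lie n^-[t]) \otimes U_\mathbb Z(\lie h[t]) \otimes U_\mathbb Z(\lie n^+[t])$ together with the defining relations, one has $W^c_\mathbb Z(\lambda) = U_\mathbb Z(\lie n^-[t]) w_\lambda$, and the module decomposes into $U_\mathbb Z(\lie h)$-weight spaces supported on $\lambda - Q^+$. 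It therefore suffices to exhibit, for each $\alpha \in R^+$, relations which both (i) force the set of $\lie g$-weights of $W^c_\mathbb Z(\lambda)$ to be finite, and (ii) provide a uniform bound on the $t$-grade supporting a nonzero vector.

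The key technical input is Lemma~\ref{l:garland} in its $\lie g[t]$ form. Taking $s = 0$ in that lemma and applying the resulting identity to $w_\lambda$, I use two observations: first, $(x_\alpha^-)^{(k)} w_\lambda = 0$ whenever $k > \lambda(h_\alpha)$, which makes the left-hand side vanish for suitable choices of $l, k, m$; and second, for every $m > 0$ the coefficients of $u^r$ with $r \ge 1$ in $\Lambda_{\alpha,m}^+(u)$ lie in $U_\mathbb Z(\lie h[t]_+)^0$, so that $\Lambda_{\alpha,m}^+(u) w_\lambda = w_\lambda$ as a power-series identity in $W^c_\mathbb Z(\lambda)$. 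Combining these, one extracts relations of the form $\bigl(X_{\alpha,m,0}^-(u)^{(N)}\bigr)_j w_\lambda = 0$ for every $N > \lambda(h_\alpha)$ and all $j$ above a threshold depending on $l, k, m$. Unfolding $X_{\alpha,m,0}^-(u) = u \sum_{r \ge 0} x_{\alpha,mr}^- u^r$, each such identity rewrites monomials $(x_{\alpha,r}^-)^{(k)} w_\lambda$ with large $r$ or large $k$ as $\mathbb Z$-linear combinations of monomials with strictly smaller parameters.

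Propagating these reductions through arbitrary PBW monomials, via commutation in $U_\mathbb Z(\lie g[t])$ and induction on PBW length and on the $t$-grade, I conclude that every element of $W^c_\mathbb Z(\lambda)$ is annihilated by $(x_{\alpha,r}^\pm)^{(k)}$ for all sufficiently large $k$, and that there is an explicit upper bound on the highest $t$-grade supporting a nonzero vector. The resulting integrability of $W^c_\mathbb Z(\lambda)$ as a $U_\mathbb Z(\lie g)$-module forces its $\lie g$-weights to constitute a $\cal W$-invariant subset of $\lambda - Q^+$, hence a finite set; combined with the $t$-grade bound this yields only finitely many nonzero bigraded pieces, each spanned over $\mathbb Z$ by a finite collection of PBW monomials in $(x_{\alpha,r}^-)^{(k)}$ applied to $w_\lambda$.

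The main obstacle is the book-keeping in the last step: tracking how the Garland-type reductions interact with general PBW monomials and verifying that, together with the weight bound, they deliver a simultaneous bound on the $t$-grade. This is precisely where the detailed calculation of \cite[Theorem 3.11]{JMhyper} is carried out; the graded setting is in fact simpler thanks to the identity $\Lambda_{\alpha,m}^+(u) w_\lambda = w_\lambda$, and the additional details are supplied in \cite{macedo:PhD}.
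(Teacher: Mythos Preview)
Your proposal is correct and follows exactly the approach the paper indicates: the paper's own proof is nothing more than the sentence ``The proof of the next proposition is an adaptation of that of \cite[Theorem 3.11]{JMhyper}. The extra details can be found in \cite{macedo:PhD},'' and your sketch fleshes out precisely that adaptation, using Lemma~\ref{l:garland} and the simplification $\Lambda_{\alpha,m}^+(u)w_\lambda = w_\lambda$ afforded by the graded relation $U_\mathbb Z(\lie h[t]_+)^0 w_\lambda = 0$. One minor notational remark: the paper's (suppressed) proof evidently denotes the cyclic generator by $\v$ rather than $w_\lambda$, since Corollary~\ref{c:gweylfd} refers back to ``$\v$ as in the proof of Proposition~\ref{p:WZfg}''.
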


We now prove an analogue of Theorem \ref{t:weylform} for graded local Weyl modules.

\begin{cor}\label{c:gweylfd}
Let $\lambda\in P^+$ and $v$ be the image of $1$ in $W^c_\mathbb C(\lambda)$. Then $U_\mathbb Z(\lie g[t])v$ is a free $\mathbb Z$-module of rank $\dim(W^c_\mathbb C(\lambda))$. Moreover, $U_\mathbb Z(\lie g[t])v = \oplus_{\mu\in P} (U_\mathbb Z(\lie g[t])v\cap W^c_\mathbb C(\lambda)_\mu)$. In particular, $U_\mathbb Z(\lie g[t])v$ is an integral form for $W^c_\mathbb C(\lambda)$.
\end{cor}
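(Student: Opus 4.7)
The plan is to combine Proposition \ref{p:WZfg} with the elementary fact that a finitely generated torsion-free $\mathbb Z$-module is free. First I would identify $U_\mathbb Z(\lie g[t])v$ with a quotient of $W^c_\mathbb Z(\lambda)$: under the canonical isomorphism $W^c_\mathbb C(\lambda)\cong\mathbb C\otimes_\mathbb Z W^c_\mathbb Z(\lambda)$ supplied by Lemma \ref{l:gotoF}, the $U_\mathbb Z(\lie g[t])$-linear map $\varphi:W^c_\mathbb Z(\lambda)\to W^c_\mathbb C(\lambda)$, $x\mapsto 1\otimes x$, sends the canonical generator to $v$, so its image is exactly $U_\mathbb Z(\lie g[t])v$. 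By Proposition \ref{p:WZfg}, $W^c_\mathbb Z(\lambda)$ is finitely generated over $\mathbb Z$, hence so is $U_\mathbb Z(\lie g[t])v$; it is torsion-free because it embeds into the $\mathbb C$-vector space $W^c_\mathbb C(\lambda)$. The structure theorem for finitely generated modules over a PID then yields that $U_\mathbb Z(\lie g[t])v$ is a free $\mathbb Z$-module of finite rank.

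Next I would compute the rank. Since $\mathbb C$ is flat over $\mathbb Z$, the kernel of $\varphi$ is exactly the torsion submodule of $W^c_\mathbb Z(\lambda)$, so $U_\mathbb Z(\lie g[t])v\cong W^c_\mathbb Z(\lambda)/\mathrm{Tor}(W^c_\mathbb Z(\lambda))$; its rank therefore equals $\dim_\mathbb Q(\mathbb Q\otimes_\mathbb Z W^c_\mathbb Z(\lambda))=\dim_\mathbb C(\mathbb C\otimes_\mathbb Z W^c_\mathbb Z(\lambda))=\dim_\mathbb C W^c_\mathbb C(\lambda)$. Equivalently, any $\mathbb Z$-basis of $U_\mathbb Z(\lie g[t])v$ is $\mathbb C$-linearly independent inside $W^c_\mathbb C(\lambda)$ and spans it, because the $\mathbb C$-span of $U_\mathbb Z(\lie g[t])v$ is a $U_\mathbb C(\lie g[t])$-submodule containing $v$, hence equals all of $W^c_\mathbb C(\lambda)$. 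The same argument simultaneously identifies $\mathbb C\otimes_\mathbb Z U_\mathbb Z(\lie g[t])v$ with $W^c_\mathbb C(\lambda)$, which is precisely the statement that $U_\mathbb Z(\lie g[t])v$ is an integral form of $W^c_\mathbb C(\lambda)$.

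For the weight space decomposition, I would use that $W^c_\mathbb C(\lambda)$ is a weight module (the comment following the definition of $W^c_\mathbb Z(\lambda)$ in Subsection \ref{ss:grmhca}) with $v\in W^c_\mathbb C(\lambda)_\lambda$. Since $U_\mathbb Z(\lie g[t])$ is $\mathbb Z$-spanned by PBW-type monomials, each of which acts as a weight-homogeneous operator on any $U_\mathbb F(\lie h)$-weight module by \eqref{e:xactonws}, the submodule $U_\mathbb Z(\lie g[t])v$ is $\mathbb Z$-spanned by weight vectors of $W^c_\mathbb C(\lambda)$. Combined with the directness of $W^c_\mathbb C(\lambda)=\bigoplus_{\mu\in P}W^c_\mathbb C(\lambda)_\mu$, this yields the asserted decomposition $U_\mathbb Z(\lie g[t])v=\bigoplus_{\mu\in P}\bigl(U_\mathbb Z(\lie g[t])v\cap W^c_\mathbb C(\lambda)_\mu\bigr)$.

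The main point to get right is the interplay between $U_\mathbb Z(\lie g[t])v$ viewed as a $\mathbb Z$-submodule of $W^c_\mathbb C(\lambda)$ and as a surjective image of $W^c_\mathbb Z(\lambda)$; once finite generation (Proposition \ref{p:WZfg}) and torsion-freeness are in hand, everything else follows by PID bookkeeping and flatness of $\mathbb C$. I do not expect a serious obstacle beyond this, the statement being essentially a formal consequence of Proposition \ref{p:WZfg} together with the defining property $W^c_\mathbb C(\lambda)\cong\mathbb C\otimes_\mathbb Z W^c_\mathbb Z(\lambda)$.
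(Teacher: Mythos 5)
Your proof is correct and follows essentially the same path as the paper's: both rely on Proposition \ref{p:WZfg} for finite generation, the identification $W^c_\mathbb C(\lambda)\cong\mathbb C\otimes_\mathbb Z W^c_\mathbb Z(\lambda)$ from Lemma \ref{l:gotoF}, and the PID fact that a finitely generated torsion-free $\mathbb Z$-module is free. The only cosmetic difference is in computing the rank: the paper proves $\mathrm{rank}=\dim$ by a two-sided inequality (showing $L$ contains a $\mathbb C$-basis of $W^c_\mathbb C(\lambda)$ for $\ge$, and that $\mathbb C\otimes_\mathbb Z L$ is a quotient of $W^c_\mathbb C(\lambda)$ for $\le$), whereas you identify $L$ directly with $W^c_\mathbb Z(\lambda)/\mathrm{Tor}$ via the canonical map into $\mathbb C\otimes_\mathbb Z W^c_\mathbb Z(\lambda)$, which yields the rank in one step.
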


\begin{proof}
To simplify notation, set $L=U_\bb Z(\lie n^-)v$. Let also $\v$ be as in the proof of Proposition \ref{p:WZfg}. Since $v$ satisfies the relations satisfied by $\v$, it follows that there exists an epimorphism of $U_\bb Z(\lie g[t])$-modules $W_\bb Z(\lambda)\to L, \v\mapsto v$. Since $W_\bb Z(\lambda)$ is finitely generated, it follows that so is $L$. On the other hand, since $L \subseteq W_\bb C^c (\lambda)$, it is also torsion free and, hence, a free $\bb Z$-module of finite-rank. Since $U_\bb Z(\lie n^-)$ spans $U(\lie n^-)$ and $W_\mathbb C^c(\lambda)=U(\lie n^-)v$, it follows that $L$ contains a basis of $W_\mathbb C^c(\lambda)$. This implies that the rank of $L$ is at least $\dim (W^c_\mathbb C(\lambda))$. On the other hand, $\mathbb C\otimes_\mathbb Z L$ is a $\lie g[t]$-module generated by the vector $1\otimes v$ which satisfies the relations \eqref{e:glWrel}. Therefore, it is a quotient of $W^c_\mathbb C(\lambda)$. Since $\dim(\mathbb C\otimes_\mathbb Z L)={\rm rank}(L)$, the first and the last statements follow. The second statement is clear since $L$ is obviously a weight module.
\end{proof}

Consider the category $\cal G_\mathbb F$ of $\mathbb Z$-graded finite-dimensional representations of $U_\mathbb F(\lie g[t])$.
Recall the functors $\tau_m$ defined in the paragraph preceding Remark \ref{ref:UF}.
For each  $U_\bb F (\lie g)$-module $V$, let $\ev_0(V)$ be the module in $\mathcal G_\mathbb F$ obtained by extending the action of $U_\bb F(\lie g)$ to one of $U_\bb F(\lie g[t])$ on $V$ by setting $U_\bb F(\lie g [t]_+)V=0$. For $\lambda\in P^+,r\in\mathbb Z$, set $V_\mathbb F(\lambda,r)=\ev_r(V_\mathbb F(\lambda))$ where $\ev_r=\tau_r\circ\ev_0$.

\begin{thm}\label{t:fdrhca}
Let $\lambda\in P^+$
\begin{enumerate}[(a)]
\item If $V$ in $\cal G_\bb F$ is simple, then it is isomorphic to $V_\mathbb F(\lambda,r)$ for unique $(\lambda,r)\in P^+\times\bb Z$.
\item\label{t:gweylfd} $W^c_\mathbb F(\lambda)$ is finite-dimensional.
\item\label{t:gweylu} If $V$ is a graded finite-dimensional $U_\mathbb F(\lie g[t])$-module generated by a weight vector $v$ of weight $\lambda$ satisfying $U_\mathbb F(\lie n^+[t])^0v=U_\mathbb F(\lie h[t]_+)^0v=0$, then $V$ is a quotient of $W^c_\mathbb F(\lambda)$.
\end{enumerate}
\end{thm}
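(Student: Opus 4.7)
The plan is to dispatch the three parts separately, each reducing to a result already established in the excerpt.

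For part (a), I would argue that any simple $V\in\mathcal G_{\mathbb F}$ must be concentrated in a single degree. Let $r$ be maximal with $V[r]\ne 0$. Since $U_\mathbb F(\lie g)=U_\mathbb F(\lie g\otimes 1)$ preserves the grading and $U_\mathbb F(\lie g[t]_+)$ strictly raises it, the subspace $V[r]$ is stable under $U_\mathbb F(\lie g)$ and annihilated by $U_\mathbb F(\lie g[t]_+)^0$; hence it is a nonzero graded $U_\mathbb F(\lie g[t])$-submodule. Simplicity forces $V=V[r]$. In particular, any $U_\mathbb F(\lie g)$-submodule of $V$ is automatically homogeneous of degree $r$ and killed by $U_\mathbb F(\lie g[t]_+)^0$, so it is a graded $U_\mathbb F(\lie g[t])$-submodule; thus $V$ is simple over $U_\mathbb F(\lie g)$. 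Theorem \ref{t:rh} then identifies $V$ with $V_\mathbb F(\lambda)$ for a unique $\lambda\in P^+$, so $V\cong V_\mathbb F(\lambda,r)$. Uniqueness of $(\lambda,r)$ is clear: $r$ is recovered from the grading, and $\lambda$ from the isomorphism type of the underlying $U_\mathbb F(\lie g)$-module.

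Part (b) is immediate: via Lemma \ref{l:gotoF} we have $W^c_\mathbb F(\lambda)\cong\mathbb F\otimes_{\mathbb Z}W^c_\mathbb Z(\lambda)$, and Proposition \ref{p:WZfg} says that $W^c_\mathbb Z(\lambda)$ is finitely generated over $\mathbb Z$. Tensoring with $\mathbb F$ preserves finite generation, so $W^c_\mathbb F(\lambda)$ is finite-dimensional.

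For part (c), the hypotheses already supply $U_\mathbb F(\lie n^+[t])^0 v=U_\mathbb F(\lie h[t]_+)^0 v=0$ and $(h-\lambda(h))v=0$ for $h\in U_\mathbb F(\lie h)$, three of the four families of defining relations of $W^c_\mathbb F(\lambda)$. The remaining relation $(x_\alpha^-)^{(k)}v=0$ for $\alpha\in R^+$ and $k>\lambda(h_\alpha)$ is handled in two stages. First, for a simple root $\alpha_i$, the subspace $U_\mathbb F(\lie{sl}_{\alpha_i})v\subseteq V$ is finite-dimensional and, by $(x_i^+)^{(k)}v=0$ for $k>0$ (which follows from $U_\mathbb F(\lie n^+[t])^0 v=0$), it is a highest-weight module of highest weight $\lambda(h_i)$ for $U_\mathbb F(\lie{sl}_{\alpha_i})\cong U_\mathbb F(\lie{sl}_2)$. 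Theorem \ref{t:rh}\eqref{t:rh.c} applied to $\lie{sl}_2$ then realizes it as a quotient of the corresponding Weyl module, yielding $(x_i^-)^{(k)}v=0$ for all $k>\lambda(h_i)$. Second, Lemma \ref{l:weyl-relhr} upgrades this from simple roots to all positive roots: writing $h_\alpha=\sum_i n_i h_i$, we obtain $(x_\alpha^-)^{(k)}v=0$ for $k>\sum_i n_i\lambda(h_i)=\lambda(h_\alpha)$. With all defining relations verified, the universal property of $W^c_\mathbb F(\lambda)$ produces a surjection $W^c_\mathbb F(\lambda)\sobre V$.

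The only point requiring some care is part (a), where one must argue that $V$ collapses to a single degree; the rest of the proof is almost bookkeeping, transcribing the $U_\mathbb F(\lie g)$-theory of Theorem \ref{t:rh} and Lemma \ref{l:weyl-relhr} into the graded current-algebra setting.
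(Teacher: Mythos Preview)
Your proof is correct and follows essentially the same route as the paper. The only notable difference is in part (c): the paper applies Theorem~\ref{t:rh}\eqref{t:rh.c} directly to the full $U_\mathbb F(\lie g)$-submodule $U_\mathbb F(\lie g)v$, which immediately yields $(x_\alpha^-)^{(k)}v=0$ for all $\alpha\in R^+$ and $k>\lambda(h_\alpha)$, whereas you first establish the simple-root case via $U_\mathbb F(\lie{sl}_{\alpha_i})$ and then invoke Lemma~\ref{l:weyl-relhr} to pass to arbitrary positive roots. Your detour is harmless but unnecessary, since Theorem~\ref{t:rh}\eqref{t:rh.c} already handles all positive roots at once.
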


\begin{proof}
To prove part (a), 
suppose $V \in \cal G_\bb F$ is simple. If $V[r], V[s] \neq 0$ for $s < r \in \bb Z$, $(\oplus_{k\geq r} V [k])$ would be a proper submodule of $V$, contradicting the fact that it is simple. Thus there must exist a unique $r \in \bb Z$ such that $V[r] \neq 0$. Since $U_\bb F (\lie g[t]_+)$ changes degrees, $V = V[r]$ must be a simple $U_\bb F (\lie g)$-module. This shows that $V \cong V_\bb F (\lambda,r)$ for some $\lambda \in P^+, r\in \bb Z$.

To prove part \eqref{t:gweylfd}, observe that $W^c_\mathbb F(\lambda) \cong \bb F \otimes_\bb Z W_\bb Z^c (\lambda)$ (cf. Lemma \ref{l:gotoF}). Thus the dimension of $W^c_\mathbb F(\lambda)$ must be at most the number of generators of $W^c_\bb Z(\lambda)$, which is proved to be finite in Proposition \ref{p:WZfg}.

To prove part \eqref{t:gweylu}, observe that the $U_\mathbb F(\lie g)$-submodule $V' = U_\mathbb F(\lie g) v \subseteq V$ is a finite-dimensional highest-weight module of highest weight $\lambda$. Thus, by Theorem \ref{t:rh}\eqref{t:rh.c}, $V'$ is a quotient of $W_\mathbb F(\lambda)$. The statement follows by comparing the defining relations of $V$ and $W^c_\mathbb F(\lambda)$.
\end{proof}

\begin{rem} \label{r:gweylf}
Denote by $v$ the image of $1$ in $W_\mathbb F^c(\lambda)$. From the defining relations \eqref{e:glWrel} it follows that $\mathbb F\otimes_\mathbb Z U_\mathbb Z(\lie g[t])v$ is a quotient of $W_\mathbb F^c(\lambda)$. It follows from Theorem \ref{t:isos}\eqref{t:isofil} that $\mathbb F\otimes_\mathbb Z U_\mathbb Z(\lie g[t])v \cong W^c_\mathbb F(\lambda)$ for all $\lambda\in P^+$ (cf. Section \ref{ss:JM.conjecture} below). Moreover, since $\mathbb F\otimes_\mathbb Z W_\bb Z(\lambda) \cong W^c_\mathbb F(\lambda)$, Theorem \ref{t:isos}\eqref{t:isofil} also implies that $W_\bb Z(\lambda)$ is free.
\end{rem}

\subsection{Proof of \eqref{e:conj}} \label{ss:JM.conjecture}

The argument of the proof will use Corollary \ref{c:indf}, the characteristic zero version of parts \eqref{t:isoslg} and \eqref{t:isostp} of Theorem \ref{t:isos}, and the following proposition.

\begin{prop}{\cite[Corollary A]{naoi:weyldem}}\label{p:dimwbyfund}
Let $\lambda\in P^+$. Then, $\dim W_\bb C^c(\lambda) = \prod_{i\in I} (\dim W_\bb C^c(\omega_i))^{\lambda(h_i)}$.\hfill\qedsymbol
\end{prop}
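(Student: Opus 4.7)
The plan is to derive the product formula from the characteristic zero versions of parts~\eqref{t:isoslg} and \eqref{t:isostp} of Theorem~\ref{t:isos}, together with the Chari--Pressley fact (from \cite{CPweyl}) that the dimension of the loop Weyl module $W_\bb C(\gb\omega')$ depends only on its $P$-weight $\wt(\gb\omega')$. The first ingredient is that, for any $\mu\in P^+$ and $a\in\bb C^\times$, part~\eqref{t:isoslg} produces an isomorphism of $U_\bb C(\lie g[t])$-modules between $W^c_\bb C(\mu)$ and the $\varphi_a$-twist of $W_\bb C(\gb\omega_{\mu,a})$; in particular the two have equal dimension, so
\[ \dim W^c_\bb C(\mu)=\dim W_\bb C(\gb\omega_{\mu,a}) \qquad\text{for every } \mu\in P^+,\ a\in\bb C^\times. \]

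The second step is to pick pairwise distinct scalars $a_{i,k}\in\bb C^\times$ indexed by $i\in I$ and $1\le k\le \lambda(h_i)$, and to form
\[ \gb\omega \;=\; \prod_{i\in I}\prod_{k=1}^{\lambda(h_i)} \gb\omega_{\omega_i, a_{i,k}} \;\in\; \cal P_\bb C^+, \qquad \wt(\gb\omega)=\lambda. \]
Since the highest $\ell$-weights of the distinct factors are pairwise relatively prime, part~\eqref{t:isostp} (Chari--Pressley in characteristic zero) yields the tensor product factorization
\[ W_\bb C(\gb\omega)\cong \bigotimes_{i\in I}\bigotimes_{k=1}^{\lambda(h_i)} W_\bb C(\gb\omega_{\omega_i,a_{i,k}}), \]
and combining with the first step gives
\[ \dim W_\bb C(\gb\omega)=\prod_{i\in I}\bigl(\dim W^c_\bb C(\omega_i)\bigr)^{\lambda(h_i)}. \]

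To conclude, since $\gb\omega$ and $\gb\omega_{\lambda,1}$ both lie in $\cal P_\bb C^+$ with common $P$-weight $\lambda$, the Chari--Pressley invariance yields $\dim W_\bb C(\gb\omega)=\dim W_\bb C(\gb\omega_{\lambda,1})$, and the first step with $\mu=\lambda$ identifies the right-hand side with $\dim W^c_\bb C(\lambda)$. Chaining the three identifications produces the claim. The main obstacle is the Chari--Pressley invariance itself, which is a nontrivial input of characteristic-zero nature: it cannot be read off directly from \eqref{t:isoslg} and \eqref{t:isostp} alone (indeed, trying to compare, e.g., $\gb\omega_{2\omega_i,a}$ with $\gb\omega_{\omega_i,a_1}\gb\omega_{\omega_i,a_2}$ via \eqref{t:isostp} would be circular with the case $\lambda=2\omega_i$ of our target formula), and its classical justification proceeds either through a specialization argument for Weyl modules along a curve of $\ell$-weights or, following \cite{CPweyl}, through a classical limit of the corresponding quantum loop Weyl modules. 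Once that invariance is accepted, the three-line derivation above is essentially immediate.
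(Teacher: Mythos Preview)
The paper does not supply its own proof of this proposition; the \qedsymbol\ marks it as a direct citation of \cite[Corollary~A]{naoi:weyldem}, where it is obtained as a consequence of Naoi's Demazure flag theorem (the characteristic-zero case of Theorem~\ref{t:isos}\eqref{t:isofil}).

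Your reduction to the ``Chari--Pressley invariance'' is formally correct, but that invariance is the gap, and you have not closed it. The statement that $\dim W_\bb C(\gb\omega)$ depends only on $\wt(\gb\omega)$ is \emph{not} a theorem of \cite{CPweyl} except for $\lie g=\lie{sl}_2$; for general $\lie g$ it is exactly the Chari--Pressley conjecture, whose proof in the non-simply-laced case goes through Naoi's Demazure flag theorem---the very result from which \cite{naoi:weyldem} deduces its Corollary~A. Your two suggested justifications do not escape this: a specialization argument along a curve of $\ell$-weights yields only the inequality $\dim W_\bb C(\gb\omega_{\lambda,a})\ge \dim W_\bb C(\gb\omega_{\mathrm{gen}})$ by upper semicontinuity of fiber dimension, not equality; and the quantum classical-limit route is precisely the conjecture that was open until the Chari--Loktev/Fourier--Littelmann/Naoi chain of papers. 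Note also that the characteristic-zero version of part~\eqref{t:isoslg} you invoke already rests, in the non-simply-laced case, on Naoi's work (as the paper remarks after the statement of Theorem~\ref{t:isos}). So your derivation does not provide an independent proof: it repackages the same Demazure-flag input that \cite{naoi:weyldem} uses, while leaving the crucial equality unjustified.
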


We shall also need the following general construction. Given a $\bb Z_{s\ge 0}$-filtered $U_\bb F(\lie g[t])$-module $W$, we can consider the associated graded $U_\mathbb F(\lie g[t])$-module ${\rm gr}(W) = \oplus_{s\geq0} W_s/W_{s-1}$ which obviously has the same dimension as $W$. Suppose now that $W$ is any cyclic $U_\mathbb F(\lie g[t])$-module and fix a generator $w$. Then, the $\bb Z$-grading on $U_\mathbb F(\lie g[t])$ induces a filtration on $W$. Namely, set $w$ to have degree zero and define the $s$-th filtered piece of $W$ by $W_s = F^sU_\mathbb F(\lie g[t]) w$ where $F^s U_\mathbb F(\lie g[t]) = \oplus_{r \leq s} U_\mathbb F(\lie g[t]) [r]$. Then, ${\rm gr}(W)$ is cyclic since it is generated by the image of $w$ in $\gr(W)$.

Recall the notation fixed for \eqref{e:conj}: $\gb\omega\in\mathcal P_\mathbb A^\times$, $\lambda=\wt(\gb\omega)$, $\gb\varpi$ is the image of $\gb\omega$ in $\cal P_\mathbb F$. Also recall that, using \eqref{e:conjknown},  \eqref{e:conj} will be proved if we show that
$$\dim W_\bb F (\gb \varpi) \leq \dim W_\bb K (\gb \omega).$$

Fix $w\in W_\mathbb F(\gb\varpi)_\lambda\setminus\{0\}$. Not only $w$ generates $W_\mathbb F(\gb\varpi)$ as a $U_\bb F (\tlie g)$-module, but it also follows from the proof of \cite[Theorem 3.11]{JMhyper} (with a correction incorporated in the proof of \cite[Theorem 3.7]{jm:hlanac}) that $U_\mathbb F(\lie n^-[t])w = W_\mathbb F(\gb\varpi)$. Hence, we can apply the general construction reviewed above to $W_\mathbb F(\gb\varpi)$. Set $V=\gr(W_\mathbb F(\gb\varpi))$ and denote the image of $w$ in $V$ by $v$. The module $V$ is finite-dimensional and $v$ is a highest-weight vector of weight $\lambda$ satisfying $U_\mathbb F(\lie h[t]_+)^0v=0$ (the latter follows since $\dim(V_\lambda) =1$, $V$ is graded, and $U_\mathbb F(\lie h[t])$ is commutative). Hence, $v$ satisfies the defining relations \eqref{e:glWrel} of $W_\mathbb F^c(\lambda)$. In particular, we get that
$$\dim W_\mathbb F(\gb\varpi)\le \dim W_\mathbb F^c(\lambda).$$
Since $\dim W_\mathbb F^c(\lambda)=\dim W_\mathbb K^c(\lambda)$ by Corollary \ref{c:indf}, it now suffices to show that
$$\dim W_\mathbb K^c(\lambda) = \dim W_\mathbb K(\gb\omega).$$
For proving this, consider the decomposition of $\gb\omega$ of the form
$$\gb\omega = \prod_{j=1}^m \gb\omega_{\lambda_j,a_j} \quad\text{for some}\quad m\ge 0,\ a_j \in \bb K^\times,\ a_i\ne a_j\ \ \text{for}\ \ i\ne j,\ \lambda_j\in P^+ \quad\text{such that}\quad \lambda=\sum_{j=1}^m\lambda_j.$$
By Theorem \ref{t:isos}\eqref{t:isostp} (in characteristic zero) $W_\mathbb K(\gb\omega)\cong \otimes_{j=1}^m W_\mathbb K(\gb\omega_{\lambda_j,a_j})$. Theorem \ref{t:isos}\eqref{t:isoslg} (in characteristic zero) implies that $\dim W_\mathbb K(\gb\omega_{\lambda_j,a_j}) =  \dim W_\mathbb K^c(\lambda_j)$. Hence,
\begin{align*}
\dim W_\mathbb K(\gb\omega) =\prod_{j=1}^m \dim W^c_\bb K(\lambda_j) = \prod_{j=1}^m\prod_{i\in I} \dim W^c_\bb K(\omega_i)^{\lambda_j(h_i)} = \prod_{i\in I} W^c_\bb K(\omega_i)^{\lambda(h_i)} = \dim W_\bb K^c(\lambda).
\end{align*}
Here, the second and last equality follow from Proposition \ref{p:dimwbyfund} and the others are clear. This completes the proof of \eqref{e:conj}.

Notice that all equalities of dimensions proved here actually imply the corresponding equalities of characters. In particular, it follows that
\begin{equation}\label{e:chWeyl}
\ch (W_\bb F(\gb\varpi)) = \prod_{i\in I} (\ch (W_\bb C^c(\omega_i)))^{\wt(\gb\varpi)(h_i)} \quad\text{for all}\quad \gb\varpi\in\cal P_\bb F^+.
\end{equation}

\subsection{Joseph-Mathieu-Polo relations for Demazure modules} \label{ss:Mrel}

We now explain the reason why we call the module $D_\mathbb F(\ell,\lambda)$ by Demazure modules. We begin with the following lemma. Let $\gamma$ be the Dynkin diagram automorphism of $\lie g$ induced by $w_0$ and recall from Section \ref{ss:autom} that it induces an automorphism of $U_\bb F(\lie g[t])$ also denoted by $\gamma$.

\begin{lem}\label{l:Dem*}
Let $\lambda\in P^+, \ell\ge 0$, and set $\lambda^*=-w_0\lambda$. Let $W$ be the pull-back of $D_\mathbb F(\ell,\lambda^*)$ by $\gamma$. Then, $D_\mathbb F(\ell,\lambda)\cong W$.
\end{lem}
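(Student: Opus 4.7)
The plan is to show that the algebra automorphism $\gamma$ of $U_\bb F(\lie g[t])$ carries the defining left ideal of $D_\bb F(\ell,\lambda)$ precisely onto that of $D_\bb F(\ell,\lambda^*)$; the induced $\bb F$-linear bijection will then automatically be an isomorphism of $U_\bb F(\lie g[t])$-modules from $D_\bb F(\ell,\lambda)$ to the pull-back $W$. Since $-w_0$ is an involution of $P$ permuting the simple roots, $\gamma$ is an involution, both on $P$ and on $U_\bb F(\lie g[t])$, and one has $\gamma(\lambda)=\lambda^*$ for all $\lambda\in P$. By \eqref{e:ddah} this gives the key identity $\lambda^*(\gamma(h))=\lambda(h)$ for every $h\in U_\bb F(\lie h)$. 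Moreover $\gamma$ preserves root lengths, hence $r^\vee_{\gamma(\alpha)}=r^\vee_\alpha$, and by \eqref{e:ddan} it sends $(x_{\alpha,s}^\pm)^{(k)}$ to a unit scalar multiple of $(x_{\gamma(\alpha),s}^\pm)^{(k)}$. In particular $\gamma$ restricts to automorphisms of $U_\bb F(\lie n^\pm[t])$, $U_\bb F(\lie h)$ and $U_\bb F(\lie h[t]_+)$, and it preserves the augmentation ideals of the last three.

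The second step is to verify family by family that $\gamma$ carries each generator of $I_\bb F(\ell,\lambda)$ into $I_\bb F(\ell,\lambda^*)$. For the generators in $U_\bb F(\lie n^+[t])^0$ and $U_\bb F(\lie h[t]_+)^0$ this is immediate from the first step. For $h-\lambda(h)$ with $h\in U_\bb F(\lie h)$, one writes $\gamma(h-\lambda(h))=\gamma(h)-\lambda(h)=\gamma(h)-\lambda^*(\gamma(h))$, which lies in $I_\bb F(\ell,\lambda^*)$. For the divided-power relations, applying $\gamma$ to $(x_{\alpha,s}^-)^{(k)}$ with $k>\max\{0,\lambda(h_\alpha)-s\ell r^\vee_\alpha\}$ yields a unit multiple of $(x_{\gamma(\alpha),s}^-)^{(k)}$, and the length- and weight-identities above convert the bound into $k>\max\{0,\lambda^*(h_{\gamma(\alpha)})-s\ell r^\vee_{\gamma(\alpha)}\}$, i.e.\ precisely the defining relation of $I_\bb F(\ell,\lambda^*)$ at the root $\gamma(\alpha)$. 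The non-loop generators $(x_\alpha^-)^{(k)}$ with $k>\lambda(h_\alpha)$ are the $s=0$ case. Hence $\gamma(I_\bb F(\ell,\lambda))\subseteq I_\bb F(\ell,\lambda^*)$; running the same argument starting from $\lambda^*$ and using $\gamma^2=\id$ (together with $(\lambda^*)^*=\lambda$) gives the reverse inclusion.

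Finally, the equality $\gamma(I_\bb F(\ell,\lambda))=I_\bb F(\ell,\lambda^*)$ means that $\gamma$ descends to an $\bb F$-linear bijection $\bar\gamma\colon D_\bb F(\ell,\lambda)\to D_\bb F(\ell,\lambda^*)$ satisfying $\bar\gamma(xv)=\gamma(x)\bar\gamma(v)$, and by the very definition of the pull-back this is the same data as an isomorphism of $U_\bb F(\lie g[t])$-modules $D_\bb F(\ell,\lambda)\to W$. I do not anticipate any real obstacle: the only points requiring care are the identity $\lambda^*\circ\gamma=\lambda$ on Cartan elements and the invariance of the Demazure bound in \eqref{e:FLrel} under $\alpha\mapsto\gamma(\alpha)$, both of which are recorded in Section \ref{ss:autom}.
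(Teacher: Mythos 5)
Your proof is correct and rests on the same core observation as the paper's: that $\gamma$ carries each family of defining relations of $I_\bb F(\ell,\lambda)$ into those of $I_\bb F(\ell,\lambda^*)$, using \eqref{e:ddah}, \eqref{e:ddan}, the identity $\gamma(h_\alpha)=h_{\gamma(\alpha)}$, and the invariance $r^\vee_{\gamma(\alpha)}=r^\vee_\alpha$. The only packaging difference is that you upgrade this to the ideal equality $\gamma(I_\bb F(\ell,\lambda))=I_\bb F(\ell,\lambda^*)$ and descend $\gamma$ directly to a module isomorphism onto the pull-back, whereas the paper verifies that the cyclic generator of $W$ satisfies the relations of $D_\bb F(\ell,\lambda)$ to obtain an epimorphism $D_\bb F(\ell,\lambda)\twoheadrightarrow W$, then runs the symmetric argument and invokes finite-dimensionality; your version is marginally cleaner since it sidesteps the finite-dimensionality step.
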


\begin{proof}
Let $v\in D_\mathbb F(\ell,\lambda^*)_{\lambda^*}\setminus\{0\}$. By \eqref{e:glWrel} and \eqref{e:FLrel} we have
\begin{gather*}
U_\mathbb F(\lie n^+[t])^0v = U_\mathbb F(\lie h [t]_+)^0v=0, \qquad hv= \lambda^*(h)v, \qquad (x_{\alpha,s}^-)^{(k)}v=0,
\end{gather*}
for all $h\in U_\mathbb F(\lie h), \alpha\in R^+, s,k\in\mathbb Z_{\ge 0}, k>\ \max \{0, \lambda^*(h_\alpha) - s\ell r^\vee_\alpha \}$.
Denote by $w$ the vector $v$ regarded as an element of $W$. Evidently, $W=U_\bb F(\lie g[t])w$.
Since $\gamma$ restricts to automorphisms of $U_\bb F(\lie n^+[t])$ and of $U_\bb F(\lie h[t]_+)$, it follows that $U_\mathbb F(\lie n^+[t])^0w = U_\mathbb F(\lie h [t]_+)^0w=0$, while \eqref{e:ddah} implies that $w\in W_\lambda$. Finally, \eqref{e:ddan} and \eqref{e:ddah} together imply that
\begin{gather*}
(x_{\alpha,s}^-)^{(k)}w=0 \qquad\text{for all}\qquad \alpha\in R^+, s,k\in\mathbb Z_{\ge 0}, k>\ \max \{0, \lambda(h_\alpha) - s\ell r^\vee_\alpha \}.
\end{gather*}
This shows that $w$ satisfies the defining relations of $D_\mathbb F(\ell,\lambda)$ and, hence, there exists an epimorphism from $D_\mathbb F(\ell,\lambda)$ onto $W$. Since $(\lambda^*)^*=\lambda$, reversing the roles of $\lambda$ and $\lambda^*$ we get an epimorphism on the other direction. Since these are finite-dimensional modules, we are done.
\end{proof}

In order to continue, we need the concepts of weight vectors, weight spaces, weight modules and integrable modules for $U_\bb F(\hlie g')$ which are similar to those for $U_\bb F (\lie g)$ (cf. subsection \ref{sec:rev.g}) by replacing $I$ with $\hat I$ and $P$ with $\hat P'$. Also, using the obvious analogue of \eqref{eq:P.in.U_Fh}, we obtain an inclusion $\hat P'\hookrightarrow U_\mathbb F(\hlie h')^*$. Let $V$ be a $\bb Z$-graded $U_\bb F(\hlie g')$-module whose weights lie in $\hat P'$. As before, let $V[r]$ denote the $r$-th graded piece of $V$. For $\mu\in\hat P$, say $\mu = \mu'+m\delta$ with $\mu'\in\hat P', m\in\bb Z$, set
$$V_\mu = \{ v\in V[m]: hv=\mu'(h)v \text{ for all } h\in U_\bb F(\hlie h')\}.$$
If $V_\mu\ne 0$ we shall say that $\mu$ is a weight of $V$ and let $\wt(V) = \{\mu\in\hat P:V_\mu\ne 0\}$.

We record the following partial affine analogue of Theorem \ref{t:rh}.

\begin{thm} \label{t:rha}
Let $V$ be a graded $U_\mathbb F(\hlie g')$-module.
\begin{enumerate}[(a)]
\item If $V$ is integrable, then $V$ is a weight-module and $\wt(V) \subseteq\hat P$. Moreover, $\dim V_\mu = \dim V_{\sigma\mu}$ for all $\sigma\in\wa, \mu \in \hat P$.
\item If $V$ is a highest-weight module of highest weight $\lambda$, then $\dim(V_{\lambda})=1$ and $V_{\mu}\ne 0$ only if $\mu\le \lambda$. Moreover, $V$  has a unique maximal proper submodule and, hence, also a unique irreducible quotient. In particular, $V$ is indecomposable.
\item{\label{t:rh.ca}} Let $\Lambda\in\hat P^+$ and $m=\Lambda(d)$. Then, the $U_\mathbb F(\hlie g')$-module $\hat W_\mathbb F(\Lambda)$ generated by a vector $v$ of degree $m$ satisfying the defining relations
\begin{equation*}
U_\mathbb F (\hlie n^+)^0v=0, \quad hv=\Lambda(h)v \quad \text{and} \quad (x_i^-)^{(k)}v=0, \quad\text{for all}\quad h\in U_\mathbb F(\hlie h'), i\in\hat I, k>\Lambda(h_i),
\end{equation*}
is nonzero and integrable. Moreover, for every positive real root $\alpha$, we have
\begin{equation}
(x_\alpha^-)^{(k)}v = 0 \qquad\text{for all}\qquad k>\Lambda(h_\alpha).
\end{equation}
Furthermore, every integrable highest-weight module of highest weight $\Lambda$ is a quotient of $\hat W_\mathbb F(\Lambda)$.
\hfill \qed
\end{enumerate}
\end{thm}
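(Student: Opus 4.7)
The three parts are natural affine analogues of Theorem \ref{t:rh}; parts (a) and (b) follow closely the corresponding arguments in the finite-type case, while part (c) carries the genuinely new content. My plan is to dispatch (a) and (b) briefly and focus on (c), whose delicate point is propagating the simple-root relations to all positive real roots.

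For part (a), integrability means each $(x_i^\pm)^{(k)}$, $i\in\hat I$, acts locally nilpotently, so $\lie{sl}_2$-theory applied to the rank-one subalgebras $U_\bb F(\lie{sl}_{\alpha_i})$ yields integer eigenvalues for each $h_i$; combined with the $\bb Z$-grading (which supplies the $d$-eigenvalue) this produces the decomposition with weights in $\hat P$. The $\wa$-invariance of $\dim V_\mu$ follows by constructing operators $\tilde s_i$ as the usual alternating divided-power combinations of $x_i^\pm$, which restrict to linear isomorphisms $V_\mu \iso V_{s_i\mu}$. Part (b) proceeds exactly as in Theorem \ref{t:rh}(b): the triangular decomposition of $U_\bb F(\hlie g')$ makes $v$ cyclic over $U_\bb F(\hlie n^-)$, so $V = \bigoplus_{\mu \le \lambda} V_\mu$ with $\dim V_\lambda = 1$, and any proper submodule misses $V_\lambda$, hence the sum of all proper submodules remains proper.

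For part (c), I would first show $\hat W_\bb F(\Lambda) \ne 0$ by reduction from characteristic zero: the classical simple integrable highest-weight module $V_\bb C(\Lambda)$ admits a $U_\bb Z(\hlie g')$-stable integral form generated by a highest-weight vector, and its scalar extension to $\bb F$ is a nonzero module containing a vector that satisfies all the defining relations of $\hat W_\bb F(\Lambda)$, yielding a nonzero quotient. Next I would establish integrability via the locally-finite-subspace argument: for each $i \in \hat I$, let $V_i = \{u \in \hat W_\bb F(\Lambda) : U_\bb F(\lie{sl}_{\alpha_i})u \text{ is finite-dimensional}\}$. The defining relations immediately give $v \in V_i$, and $V_i$ is a $U_\bb F(\hlie g')$-submodule because the $\ad$-action of $U_\bb F(\lie{sl}_{\alpha_i})$ on $U_\bb F(\hlie g')$ is locally finite (every root vector of $\hlie g'$ has a finite $\alpha_i$-string), so divided-power identities of the form $(x_i^\pm)^{(k)} x = \sum_j y_j (x_i^\pm)^{(k_j)}$ with finitely many $y_j$ convert $U_\bb F(\lie{sl}_{\alpha_i})(xu)$ into a finite sum of translates of $U_\bb F(\lie{sl}_{\alpha_i})u$. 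Hence $V_i = \hat W_\bb F(\Lambda)$, proving integrability. For the refined statement on positive real roots $\alpha$, once integrability is in hand the $U_\bb F(\lie{sl}_\alpha)$-action on $\hat W_\bb F(\Lambda)$ is itself locally finite; $v$ is a highest-weight vector for $\lie{sl}_\alpha$ (since $x_\alpha^+ \in \hlie n^+$) of nonnegative weight $\Lambda(h_\alpha)$ (as $\Lambda \in \hat P^+$), so the $\lie{sl}_2$ classification of the cyclic submodule $U_\bb F(\lie{sl}_\alpha) v$ forces $(x_\alpha^-)^{(k)} v = 0$ for all $k > \Lambda(h_\alpha)$. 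Universality then follows by direct comparison of defining relations.

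The main obstacle will be the integrability step: only the simple-root relations are imposed a priori, and propagating local finiteness across the full module through the $\ad$-action argument is the delicate point, especially in positive characteristic, where divided-power manipulations must replace the exponential formulas available in characteristic zero.
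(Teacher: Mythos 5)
The paper records Theorem \ref{t:rha} without proof: the statement itself ends with the $\square$-symbol and is introduced simply as a ``partial affine analogue'' of Theorem \ref{t:rh}, which is likewise stated without proof. There is therefore no internal argument to compare against, and your outline should be judged on its own merits. It is correct and follows the standard Kac--Moody/hyperalgebra argument throughout. A few comments. Parts (a) and (b) are, as you say, routine: the rank-one subalgebras $U_\bb F(\lie{sl}_{\alpha_i})$ together with the $d$-grading give the weight decomposition with weights in $\hat P$, and the $\wa$-invariance and the triangular-decomposition argument for (b) are the usual ones. For part (c), the nonvanishing via reduction of the classical admissible $U_\bb Z(\hlie g')$-lattice in $V_\bb C(\Lambda)$ is exactly how one shows $\hat W_\bb F(\Lambda)\ne 0$. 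The integrability step is the genuinely new content and you have located the right mechanism: set $V_i=\{u:U_\bb F(\lie{sl}_{\alpha_i})u\text{ is finite-dimensional}\}$, observe $v\in V_i$ from the defining relations plus the triangular decomposition of $U_\bb F(\lie{sl}_2)$, and show $V_i$ is $U_\bb F(\hlie g')$-stable using the identity $(x_i^\pm)^{(k)}x=\sum_{m\ge 0}\bigl(\ad(x_i^\pm)^{(m)}x\bigr)(x_i^\pm)^{(k-m)}$, valid in $U_\bb Z(\hlie g')$ and hence in $U_\bb F(\hlie g')$; the sum is finite because $\ad(x_i^\pm)^{(m)}$ is locally nilpotent on $U_\bb Z(\hlie g')$ (Leibniz rule and finite $\alpha_i$-root strings in $\hlie g'$). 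Finally, the positive-real-root relation and the universality both reduce, once integrability is in hand, to the $\lie{sl}_2$-classification applied to $U_\bb F(\lie{sl}_\alpha)v$; the only point worth making explicit is that $\Lambda(h_\alpha)\ge 0$ because the coroot of any positive real root is a nonnegative integer combination of the $h_i$, $i\in\hat I$. This is a complete and correct reconstruction of the expected proof.
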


Given $\Lambda \in \hp^+, \sigma \in \wa$, the Demazure module $V_\mathbb F^\sigma(\Lambda)$ is defined as the $U_\mathbb F(\hlie b'^+)$-submodule generated by $\hat W_\mathbb F(\Lambda)_{\sigma\Lambda}$ (cf. \cite{foli:weyldem,mathieu89,naoi:weyldem}). In particular, $V_\mathbb F^\sigma(\Lambda) \cong V_\mathbb F^{\sigma'}(\Lambda)$ if $\sigma\Lambda=\sigma'\Lambda$ for some $\sigma'\in \wa$.
Our focus is on the Demazure modules which are stable under the action of $U_\mathbb F(\lie g)$. 
Since $V_\mathbb F^{\sigma}(\Lambda)$ is defined as a $U_\bb F (\hlie b'^+)$-module, it is stable under the action of $U_\mathbb F(\lie g)$ if, and only if,
\begin{equation}\label{e:n-inv}
U_\mathbb F(\lie n^-)^0\hat W_\mathbb F(\Lambda)_{\sigma\Lambda}=0 .
\end{equation}
In particular, since $V_\mathbb F^\sigma(\Lambda)$ is an integrable $U_\bb F(\lie{sl}_\alpha)$-module for any $\alpha\in R^+$, it follows that $(\sigma\Lambda)(h_\alpha)\le 0$ for all $\alpha\in R^+$. Conversely, using the exchange condition for Coxeter groups (see \cite[Section 5.8]{humphreys90}), one easily deduces that, for all $i\in\hat I$, we have
\begin{gather*}
(x_i^\epsilon)^{(k)}\hat W_\mathbb F(\Lambda)_{\sigma\Lambda} = 0 \quad\text{for all}\quad k>0
\end{gather*}
where $\epsilon=+$ if $\sigma\Lambda(h_i)\ge 0$ and $\epsilon=-$ if $\sigma\Lambda(h_i)\le 0$. This implies that, if $\sigma\Lambda(h_i)\le 0$ for all $i\in I$, then $V_\mathbb F^\sigma(\Lambda)$ is $U_\bb F(\lie g)$-stable. Thus, henceforth, assume $(\sigma\Lambda)(h_i)\le 0$ for all $i\in I$ and observe that this implies that $\sigma \Lambda$ must have the form
\begin{equation}\label{e:thelevel}
\sigma\Lambda = \ell \Lambda_0 + w_0 \lambda + m \delta \qquad\text{for some}\qquad \lambda\in P^+, m\in\bb Z, \ \text{and}\ \ell=\Lambda(c).
\end{equation}
Conversely, given $\ell\in\bb Z_{\ge 0}, \lambda\in P^+$, and $m\in\bb Z$, since $\wa$ acts simply transitively on the set of alcoves of $\hlie h^\ast$ (cf. \cite[Theorem 4.5.(c)]{humphreys90}), there exists a unique $\Lambda \in \hp^+$ such that $\ell\Lambda_0+w_0\lambda+m\delta\in\wa\Lambda$. Thus, if $\sigma\in\widehat{\cal W}$ and $\Lambda \in\hp^+$ are such that
\begin{equation}\label{e:notcon}
\sigma\Lambda=\ell\Lambda_0+w_0\lambda+m\delta,
\end{equation}
then $V_\mathbb F^\sigma(\Lambda)$ is $U_\bb F(\lie g)$-stable. Henceforth, we fix $\sigma,\Lambda, w_0,\lambda,$ and $m$ as in \eqref{e:notcon}.
Notice that, if $\gamma=\pm \alpha+s\delta\in\hat R^+$ with $\alpha\in R^+$, then
\begin{equation*}
\sigma\Lambda(h_\gamma) = \pm w_0\lambda(h_\alpha)+s\ell r^\vee_\alpha.
\end{equation*}
The following lemma is a rewriting of \cite[Lemme 26]{mathieu89} using the above fixed notation.

\begin{lem}\label{l:jpmrel}
The $U_\bb F(\hlie b'^+)$-module $V_\bb F^\sigma(\Lambda)$ is isomorphic to the $U_\bb F(\hlie b'^+)$-module generated by a vector $v$ of degree $m$ satisfying the following defining relations: $hv=\sigma\Lambda(h)v, h\in U_\bb F(\hlie h')$, $U_\mathbb F(\lie h [t]_+)^0v=U_\mathbb F(\lie n^-[t]_+)^0v = 0$, and
\begin{gather}\label{e:MFLrel+}
(x_{\alpha,s}^+)^{(k)}v=0 \quad\text{for all} \quad \alpha\in R^+,\ s\ge 0,\ k>\max\{0,- w_0\lambda(h_\alpha)-s\ell r^\vee_\alpha\}.
\end{gather}
\hfill\qedsymbol
\end{lem}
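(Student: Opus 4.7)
The plan is to construct a surjection from the abstract module $M$ presented by the stated generators and relations onto $V_\bb F^\sigma(\Lambda)$, and then force it to be an isomorphism by comparing dimensions via Mathieu's field-independence theorem. Let $v_0$ be a nonzero element of $\hat W_\bb F(\Lambda)_{\sigma\Lambda}$, which is one-dimensional by Theorem \ref{t:rha}(a); so $v_0$ is unique up to scalar and by construction $V_\bb F^\sigma(\Lambda) = U_\bb F(\hlie b'^+) v_0$.

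The first task is to verify that $v_0$ satisfies the stated relations. The weight equation $hv_0 = \sigma\Lambda(h)v_0$ for $h \in U_\bb F(\hlie h')$ is immediate from $v_0 \in \hat W_\bb F(\Lambda)_{\sigma\Lambda}$. The divided-power relations $(x_{\alpha,s}^+)^{(k)}v_0 = 0$ for $k > \max\{0, -w_0\lambda(h_\alpha) - s\ell r^\vee_\alpha\}$ follow from the standard $\lie{sl}_2$-theory applied to the cyclic $U_\bb F(\lie{sl}_{\alpha+s\delta})$-submodule generated by $v_0$: as an extremal weight vector, $v_0$ sits at one endpoint of its real-root $\lie{sl}_2$-string, with $\sigma\Lambda(h_{\alpha+s\delta}) = w_0\lambda(h_\alpha) + s\ell r^\vee_\alpha$ determining the endpoint. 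Applying the same $\lie{sl}_2$-argument to the positive real roots $-\alpha+s\delta$ (for $\alpha \in R^+, s > 0$), where the coroot pairing $-w_0\lambda(h_\alpha) + s\ell r^\vee_\alpha \geq 0$, yields $(x_{\alpha,s}^-)^{(k)}v_0 = 0$ for all $k > 0$, and hence $U_\bb F(\lie n^-[t]_+)^0 v_0 = 0$.

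The imaginary-direction relation $U_\bb F(\lie h[t]_+)^0 v_0 = 0$ is best established via a weight-space argument. Elements of $U_\bb F(\lie h[t]_+)^0$ act on weight vectors by shifting the weight by a strictly positive multiple of $\delta$, so $U_\bb F(\lie h[t]_+)^0 v_0$ lies in $\bigoplus_{N > 0} \hat W_\bb F(\Lambda)_{\sigma\Lambda + N\delta}$. Since $\wa$ fixes $\delta$ (because $\delta(h_i) = 0$ for all $i \in \hat I$), one has $\sigma\Lambda + N\delta = \sigma(\Lambda + N\delta)$, and $\Lambda + N\delta \not\le \Lambda$ in the affine partial order, so $\Lambda + N\delta$ is not a weight of the integrable highest-weight module $\hat W_\bb F(\Lambda)$; by $\wa$-invariance of the weight set, neither is $\sigma\Lambda + N\delta$. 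Thus $U_\bb F(\lie h[t]_+)^0 v_0 = 0$, and this completes the construction of the surjection $\varphi: M \twoheadrightarrow V_\bb F^\sigma(\Lambda)$.

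For the reverse direction, transfer from characteristic zero via an $\bb A$-form construction. Since all relations defining $M$ involve only divided powers and $\bb Z$-integral elements of the hyperalgebra, $M$ admits a natural $\bb A$-form $M_\bb A$ presented by the analogous relations over $\bb A$, with $M \cong \bb F \otimes_\bb A M_\bb A$ and $M_\bb K := \bb K \otimes_\bb A M_\bb A$ defined by the same presentation in characteristic zero. Mathieu's \cite[Lemme 26]{mathieu89} identifies $M_\bb K$ with $V_\bb K^\sigma(\Lambda)$, while his field-independence result \cite[Lemme 8]{mathieu88} gives $\dim V_\bb F^\sigma(\Lambda) = \dim V_\bb K^\sigma(\Lambda)$; the rank-dimension inequality $\dim_\bb F M \leq \dim_\bb K M_\bb K$ then forces $\varphi$ to be an isomorphism. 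The main obstacle is controlling $M_\bb A$ well enough to apply the rank-dimension inequality cleanly: finite generation of $M_\bb A$ over $\bb A$ (by a PBW argument analogous to that used for Proposition \ref{p:WZfg}) combined with the freeness of the Demazure $\bb A$-module coming from Mathieu's Frobenius-splitting / Joseph-global-basis framework is what ultimately makes the dimensions across fields agree.
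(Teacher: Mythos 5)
The paper does not actually prove this lemma: it is stated with a \qedsymbol and the sentence immediately preceding it explains that it \emph{is} \cite[Lemme 26]{mathieu89}, translated into the paper's notation. Mathieu's framework (Kac--Moody group schemes and Frobenius splitting) is set up over an arbitrary base, so his Lemme 26 already holds over $\bb F$ directly; there is nothing to ``transfer from characteristic zero.''

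Your first half — showing that the extremal weight vector $v_0\in\hat W_\bb F(\Lambda)_{\sigma\Lambda}$ satisfies the listed relations, via the $\lie{sl}_2$-string at each real root and the $\delta$-shift weight argument for $U_\bb F(\lie h[t]_+)^0$ — is correct, and indeed it is the same chain of reasoning the paper carries out later (equations \eqref{eq:xwv?} and the surrounding paragraph in the proof of Proposition \ref{p:Demrels}). The problem is the second half. The inequality you invoke, $\dim_\bb F M\leq\dim_\bb K M_\bb K$, goes the \emph{wrong} way: if $M_\bb A$ is a finitely generated module over the DVR $\bb A$ and $M\cong\bb F\otimes_\bb A M_\bb A$, $M_\bb K\cong\bb K\otimes_\bb A M_\bb A$, then $\dim_\bb F M\geq\operatorname{rank}_\bb A M_\bb A=\dim_\bb K M_\bb K$, with equality precisely when $M_\bb A$ is torsion-free. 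Torsion-freeness of the abstractly presented module $M_\bb A$ is exactly the hard content and is not supplied by freeness of the ambient Demazure $\bb A$-form: the surjection $M_\bb A\twoheadrightarrow V^\sigma_\bb A(\Lambda)$ onto a free module forces the kernel to be a direct summand, but that kernel could a priori be a nonzero torsion module, and it would still contribute to $\bb F\otimes_\bb A M_\bb A$. Combined with the surjection $\varphi:M\twoheadrightarrow V^\sigma_\bb F(\Lambda)$ you already have, your two inequalities both read $\dim_\bb F M\geq\dim V^\sigma_\bb F(\Lambda)$, and you cannot close the loop. The direct route is simply to quote Mathieu over $\bb F$; a reconstruction from the characteristic-zero statement needs a genuinely new argument for freeness of $M_\bb A$ (e.g.\ a global-basis or standard-monomial argument), which you gesture at but do not supply.
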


\begin{rem}
In \cite{mathieu89}, Mathieu attributes Lemma \ref{l:jpmrel} to Joseph and Polo. This is the reason for the title of this subsection. The original version of this Lemma in \cite{mathieu89} gives generator and relations for any Demazure module, not only for the $U_\bb F(\lie g)$-stable ones.
\end{rem}

The following is the main result of this subsection.

\begin{prop}\label{p:Demrels}
The graded $U_\bb F(\lie g[t])$-modules $V_\mathbb F^\sigma(\Lambda)$ and $D_\mathbb F(\ell,\lambda,m)$ are isomorphic.
\end{prop}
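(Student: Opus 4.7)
The plan is to exhibit mutual surjections between $V_\bb F^\sigma(\Lambda)$ and $D_\bb F(\ell,\lambda,m)$ by matching the Joseph--Mathieu--Polo presentation of Lemma \ref{l:jpmrel} against the defining relations of $D_\bb F(\ell,\lambda,m)$. The distinguished generators live at opposite extreme weights --- $w_0\lambda$ on the $V_\bb F^\sigma$-side, $\lambda$ on the $D_\bb F$-side --- so each direction demands producing a vector in the ``other'' weight space. By Theorem \ref{t:rh}(a), both weight spaces are nonzero in each module. The dictionary between the two sets of relations is given by the Chevalley involution $\psi$ and the diagram automorphism $\gamma$ induced by $w_0$ from Section \ref{ss:autom}: their composite $\phi := \gamma \circ \psi$ is an involution of $U_\bb F(\lie g[t])$ that sends $(x_\alpha^\pm)^{(k)}$ to $\pm(x_{\gamma(\alpha)}^\mp)^{(k)}$ (via \eqref{e:ddan}), acts as $w_0$ on weights (because $\gamma(h_i) = -w_0 h_i$ and $\psi(h) = -h$), preserves $U_\bb F(\lie h[t]_+)$ and root lengths, and thereby transforms each lowering-operator relation defining $D_\bb F(\ell,\lambda,m)$ term by term into a raising-operator relation of Lemma \ref{l:jpmrel} with bound $\max\{0, -w_0\lambda(h_\alpha) - s\ell r^\vee_\alpha\}$.

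For the surjection $V_\bb F^\sigma(\Lambda) \twoheadrightarrow D_\bb F(\ell,\lambda,m)$, I would lower the highest-weight generator $w$ of $D_\bb F(\ell,\lambda,m)$ to a vector $v' = Yw \in D_\bb F(\ell,\lambda,m)_{w_0\lambda}$, where $Y \in U_\bb F(\lie n^-)$ is a divided-power product corresponding to a reduced expression of $w_0$, and verify that $v'$ satisfies the Lemma \ref{l:jpmrel} relations: $U_\bb F(\lie h[t]_+)^0 v' = 0$, $U_\bb F(\lie n^-[t]_+)^0 v' = 0$, and $(x_{\alpha,s}^+)^{(k)} v' = 0$ for $k > \max\{0, -w_0\lambda(h_\alpha) - s\ell r^\vee_\alpha\}$. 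Each is verified by commuting the given element past $Y$ using \eqref{e:koslem}, Lemma \ref{l:garland}, and the commutation identities of Section \ref{ss:comut}, then invoking the defining $D_\bb F$-relations on $w$. The reverse surjection $D_\bb F(\ell,\lambda,m) \twoheadrightarrow V_\bb F^\sigma(\Lambda)$ is obtained symmetrically: lift $v$ to $v^* = Xv \in V_\bb F^\sigma(\Lambda)_\lambda$ via $X \in U_\bb F(\lie n^+)$, and verify the defining relations \eqref{e:glWrel}--\eqref{e:FLrel} of $D_\bb F(\ell,\lambda,m)$ on $v^*$, using here that the $\lie g$-stability condition \eqref{e:n-inv} supplements Lemma \ref{l:jpmrel} with $U_\bb F(\lie n^-)^0 v = 0$, which upgrades via PBW to $U_\bb F(\lie n^-[t])^0 v = 0$. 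Finite-dimensionality (Theorem \ref{t:fdrhca}(b) and the inclusion of $V_\bb F^\sigma(\Lambda)$ in the integrable $\hat W_\bb F(\Lambda)$) converts the two surjections into the desired isomorphism.

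The main obstacle will be the verification of the annihilation relations on $v'$ and $v^*$. In particular, proving $U_\bb F(\lie h[t]_+)^0 v' = 0$ requires careful handling of the non-commutation of $\lie h[t]_+$ with the divided-power factors of $Y$: commutators such as $[h_i \otimes t^r, (x_\alpha^-)^{(k)}]$ produce terms of the form $(x_\alpha^-)^{(k-1)} (x_\alpha^- \otimes t^r)$, and these must be shown to vanish when applied to $w$ using the $(x_{\alpha,s}^-)^{(k)}$-relations defining $D_\bb F(\ell,\lambda,m)$ together with Lemma \ref{l:garland}. Likewise, the raising-operator conditions $(x_{\alpha,s}^+)^{(k)} v' = 0$ require moving raising operators past $Y$ using \eqref{e:koslem} and the rank-one identities of Section \ref{ss:comut}, and are extended from simple roots to all of $R^+$ via Lemma \ref{l:weyl-relhr} and Proposition \ref{p:srv}. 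Lemma \ref{l:Dem*} can be invoked as an intermediate reduction --- using $D_\bb F(\ell,\lambda,m) \cong \gamma^*(D_\bb F(\ell,\lambda^*,m))$ --- to simplify the sign bookkeeping by replacing the composite $\phi$ with just $\psi$.
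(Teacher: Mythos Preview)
Your overall architecture---mutual surjections mediated by $\psi$ and $\gamma$---matches the paper's, but there is a genuine gap in the direction $V_\bb F^\sigma(\Lambda)\twoheadrightarrow D_\bb F(\ell,\lambda,m)$, and the two directions are not symmetric as you suggest.

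For $D_\bb F(\ell,\lambda,m)\twoheadrightarrow V_\bb F^\sigma(\Lambda)$, your plan to check the relations \eqref{e:glWrel}--\eqref{e:FLrel} on $v^*=Xv$ by commuting through $X$ is unnecessarily laborious: since $V_\bb F^\sigma(\Lambda)\subseteq\hat W_\bb F(\Lambda)$ and $v^*$ has weight $w_0\sigma\Lambda\in\wa\Lambda$, the vector $v^*$ is an \emph{extremal weight vector} of the ambient integrable module, so the relations $(x^\pm_\gamma)^{(k)}v^*=0$ for $k>\max\{0,\mp(w_0\sigma\Lambda)(h_\gamma)\}$ hold automatically for every real root $\gamma$. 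This is exactly how the paper handles this direction. (The relation $U_\bb F(\lie h[t]_+)^0v^*=0$ still needs a separate argument; the paper defers to \cite[Lemme 26]{mathieu89}.)

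The real problem is the other direction. You propose to lower $w$ to $v'=Yw\in D_\bb F(\ell,\lambda,m)_{w_0\lambda}$ and verify the Lemma \ref{l:jpmrel} relations on $v'$ by commuting each annihilator past $Y$. This is precisely the step the paper singles out in Remark \ref{r:Demrels} as one it \emph{could not} carry out directly, even in characteristic zero: $D_\bb F(\ell,\lambda)$ is not a priori embedded in any integrable $U_\bb F(\hlie g')$-module, so no extremal-weight-vector theory applies to $v'$, and the raw commutator calculation for $(x_{\alpha,s}^+)^{(k)}v'=0$ with $k>\max\{0,-w_0\lambda(h_\alpha)-s\ell r^\vee_\alpha\}$ does not reduce in any evident way to the defining relations on $w$. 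The identities you cite (\eqref{e:koslem}, Lemma \ref{l:garland}) control commutation with a \emph{single} divided power $(x_\beta^-)^{(n)}$ of a fixed root vector, but $Y$ is a long product over a reduced word for $w_0$, and the cross-terms generated when $(x_{\alpha,s}^+)^{(k)}$ moves past successive factors involve root vectors of many different roots and degrees; there is no mechanism here that forces these to land in the annihilator of $w$.

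The paper's fix is exactly the device you demote to ``sign bookkeeping'': instead of descending from $w$ to $v'$ \emph{inside} $D_\bb F(\ell,\lambda)$, one pulls back $D_\bb F(\ell,\lambda^*)$ by $\psi$ (equivalently, via Lemma \ref{l:Dem*}, pulls back $D_\bb F(\ell,\lambda)$ by your $\phi=\gamma\circ\psi$). Under this pullback the \emph{defining} relations at the highest-weight generator are transported wholesale---$(x_{\alpha,s}^-)^{(k)}\mapsto(x_{\alpha,s}^+)^{(k)}$, $U_\bb F(\lie n^+[t])^0\mapsto U_\bb F(\lie n^-[t])^0$, $\lambda^*(h_\alpha)\mapsto -w_0\lambda(h_\alpha)$---and become exactly the Lemma \ref{l:jpmrel} relations, with no commutation required. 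Together with Lemma \ref{l:Dem*} this gives $\dim D_\bb F(\ell,\lambda)\le\dim V_\bb F^\sigma(\Lambda)$. So the involution must act on the \emph{module} and transport defining relations, not serve as a heuristic for commutator manipulations on a descendant vector.
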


\begin{proof}
It suffices to prove the statement for $m=0$ and, thus, for simplicity, we assume that this is the case. Proceeding as in \cite[Corollary 1]{foli:weyldem} (see also \cite[Proposition 3.6]{naoi:weyldem}) one shows that $V_\mathbb F^\sigma(\Lambda)$ is a quotient of $D_\mathbb F(\ell,\lambda)$. Namely, let $v$ be a nonzero vector in $\hat W_\mathbb F(\Lambda)_{\mu}$ where $\mu = w_0\sigma\Lambda$. Quite clearly $v$ generates $V_\mathbb F^\sigma(\Lambda)$. It follows that $v$ is an extremal weight vector and, hence, satisfies the relations
\begin{equation} \label{eq:xwv?}
(x^\pm_\gamma)^{(k)}v = 0 \quad\text{for all}\quad k>\max\{0,\mp\mu(h_\gamma)\}
\end{equation}
and all positive real roots $\gamma\in\hat R^+$. In particular, taking $\gamma=\alpha+s\delta$ with $\alpha\in R^+$ and $s\ge 0$, it follows that
\begin{equation*}
-\mu(h_\gamma) = -\lambda(h_\alpha)-\ell r_\alpha^\vee s\le 0,
\end{equation*}
showing that $(x_{\alpha,s}^+)^{(k)}v=0$ for all $k>0$. Similarly, taking $\gamma=-\alpha+s\delta$, we get
\begin{equation*}
-\mu(h_\gamma) = \lambda(h_\alpha)-\ell r_\alpha^\vee s,
\end{equation*}
which shows that $v$ satisfies the relations determined by \eqref{e:FLrel}. It remains to show that $U_\bb F(\lie h[t]_+)^0v=0$. This can be proved as in \cite[Lemme 26]{mathieu89}. Alternatively, this can also be shown by proving that there exists a surjective map from $D(\ell,\lambda^*)$ to the pull-back of $V_\mathbb F^\sigma(\Lambda)$ by the automorphism $\psi$ defined in Subsection \ref{ss:autom}, similarly to what we do in the next paragraph, and then comparing weights (one uses a vector as in Lemma \ref{l:jpmrel} for proving the existence of such a map). It now suffices to show that $\dim(D_\mathbb F(\ell,\lambda))\le\dim(V_\mathbb F^\sigma(\Lambda))$.

Let this time $v\in D_\mathbb F(\ell,\lambda^*)_{\lambda^*}\setminus\{0\}$, let $W$ be the pull-back of $D_\mathbb F(\ell,\lambda^*)$ by $\psi$, and $w$ denote $v$ when regarded as an element of $W$.
Since $U_\bb F(\lie n^+[t])^0v=0$, and since \eqref{e:n-inv} implies that $\psi(U_\bb F(\lie n^-[t])^0)=U_\bb F(\lie n^+[t])^0$, it follows that $U_\bb F(\lie n^-[t])^0w=0$. Also, $\psi$ restricts to an automorphism of $U_\bb F(\lie h[t]_+)$ and, hence, $U_\bb F(\lie h[t]_+)^0w=0$. Since $hv=\lambda^*(h)v$ for all $h\in U_\bb F(\lie h)$, \eqref{e:inv+-w} implies that $hw=w_0\lambda(h)w$ for all $h\in U_\bb F(\lie h)$. Finally, the defining relations of $v$ and \eqref{e:inv+-def} imply that
\[
(x_{\alpha,s}^+)^{(k)}w=(x_{\alpha,s}^-)^{(k)}v=0 \quad\text{for all} \quad \alpha\in R^+,\ s\ge 0,\ k>\max\{0,\lambda^*(h_\alpha)-s\ell r^\vee_\alpha\}.
\]
Thus $w$ satisfies all the defining relations of $V_\mathbb F^\sigma(\Lambda)$ given in Lemma \ref{l:jpmrel}. Hence, $W$ is a quotient of $V_\mathbb F^\sigma(\Lambda)$ and, therefore, $\dim(W)\le\dim(V_\mathbb F^\sigma(\Lambda))$. Since $\dim(D_\mathbb F(\ell,\lambda^*))=\dim(D_\mathbb F(\ell,\lambda))$ by Lemma \ref{l:Dem*}, we are done.
\end{proof}

The next corollary is now immediate.

\begin{cor}\label{c:jpmrel}
$D_\bb F(\ell,\lambda)$ is isomorphic to the quotient of $U_\bb F(\lie g[t])$ by the left ideal $I_\bb F^-(\ell,\lambda)$ generated by $h - w_0\lambda(h), h\in U_\bb F(\lie h)$, $U_\mathbb F(\lie h [t]_+)^0, U_\mathbb F(\lie n^-[t])^0$, and
\begin{gather*}
(x_{\alpha,s}^+)^{(k)} \quad\text{for all} \quad \alpha\in R^+,\ s\ge 0,\ k>\max\{0,- w_0\lambda(h_\alpha)-s\ell r^\vee_\alpha\}.
\end{gather*}
\hfill\qedsymbol
\end{cor}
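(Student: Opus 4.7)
The plan is to identify the $U_\bb F(\lie g[t])$-module $N := U_\bb F(\lie g[t])/I_\bb F^-(\ell,\lambda)$ appearing in the statement with $V_\bb F^\sigma(\Lambda) \cong D_\bb F(\ell,\lambda)$, where the latter isomorphism is Proposition \ref{p:Demrels} applied with $m=0$. The two ingredients I will use are Lemma \ref{l:jpmrel}, which gives a presentation of $V_\bb F^\sigma(\Lambda)$ as a $U_\bb F(\hlie b'^+)$-module that is nearly the one we want, and the $U_\bb F(\lie g)$-stability discussion in the paragraph preceding Lemma \ref{l:jpmrel}, which will allow me to promote $U_\bb F(\lie n^-[t]_+)^0$-annihilation to $U_\bb F(\lie n^-[t])^0$-annihilation.

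First I would construct a surjection $N \twoheadrightarrow V_\bb F^\sigma(\Lambda)$. Let $v$ be a nonzero vector of weight $w_0\lambda$ in $V_\bb F^\sigma(\Lambda)$. By Lemma \ref{l:jpmrel}, $v$ satisfies $hv = w_0\lambda(h)v$ for $h\in U_\bb F(\lie h)$, $U_\bb F(\lie h[t]_+)^0 v = 0$, $U_\bb F(\lie n^-[t]_+)^0 v = 0$, and the required $(x_{\alpha,s}^+)^{(k)}v = 0$ vanishings. To upgrade $U_\bb F(\lie n^-[t]_+)^0 v = 0$ to $U_\bb F(\lie n^-[t])^0 v = 0$ one needs only $U_\bb F(\lie n^-)^0 v = 0$; this is exactly the exchange-condition argument quoted before Lemma \ref{l:jpmrel}, which uses $(\sigma\Lambda)(h_i) = w_0\lambda(h_i)\le 0$ to force $(x_i^-)^{(k)}v = 0$ for every $i\in I$, $k>0$, together with Lemma \ref{l:weyl-relhr} which extends this to $(x_\alpha^-)^{(k)}v = 0$ for every $\alpha\in R^+$. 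Hence $v$ satisfies all the relations defining $N$, so $V_\bb F^\sigma(\Lambda)$ is a cyclic $U_\bb F(\lie g[t])$-quotient of $N$, giving $\dim V_\bb F^\sigma(\Lambda)\le \dim N$.

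For the reverse inequality I would repeat the Chevalley-involution trick used at the end of the proof of Proposition \ref{p:Demrels}. Let $\bar w$ be a nonzero vector of weight $\lambda^* = -w_0\lambda$ in $D_\bb F(\ell,\lambda^*)$, let $W$ be the pullback of $D_\bb F(\ell,\lambda^*)$ by the Chevalley involution $\psi$ from Section \ref{ss:autom}, and let $w$ be the vector $\bar w$ regarded in $W$. Using \eqref{e:inv+-def} and \eqref{e:inv+-w}, and the fact that $\psi$ restricts to an automorphism of $U_\bb F(\lie h[t]_+)$ while swapping $U_\bb F(\lie n^+[t])$ with $U_\bb F(\lie n^-[t])$, a short check identical in spirit to the one already carried out in the proof of Proposition \ref{p:Demrels} shows that $w$ satisfies exactly the relations defining $N$; the equality $\lambda^*(h_\alpha) = -w_0\lambda(h_\alpha)$ makes the bounds on the surviving $(x_{\alpha,s}^+)^{(k)}$ match on the nose. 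Thus $W$ is a quotient of $N$, and Lemma \ref{l:Dem*} gives $\dim W = \dim D_\bb F(\ell,\lambda^*) = \dim D_\bb F(\ell,\lambda) = \dim V_\bb F^\sigma(\Lambda)$.

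Comparing the two chains of inequalities one obtains $\dim N = \dim V_\bb F^\sigma(\Lambda)$, so the surjection constructed in the first step is forced to be an isomorphism. There is no real obstacle beyond the bookkeeping of how $\psi$ transports the defining relations and how the affine parametrization $\sigma\Lambda = \ell\Lambda_0 + w_0\lambda$ translates between the $U_\bb F(\hlie b'^+)$-presentation of Lemma \ref{l:jpmrel} and the $U_\bb F(\lie g[t])$-presentation sought in the corollary; once Proposition \ref{p:Demrels} and Lemma \ref{l:jpmrel} are in hand, the statement is essentially formal.
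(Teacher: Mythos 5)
Your first direction is sound: a nonzero vector of $\lie g$-weight $w_0\lambda$ in $V_\bb F^\sigma(\Lambda)\cong D_\bb F(\ell,\lambda)$ satisfies all the relations generating $I_\bb F^-(\ell,\lambda)$ and generates $V_\bb F^\sigma(\Lambda)$ over $U_\bb F(\lie g[t])$, so $N\twoheadrightarrow V_\bb F^\sigma(\Lambda)$ and $\dim D_\bb F(\ell,\lambda)\le\dim N$. (You can even skip the exchange-condition step here: $w_0\lambda$ is the minimal $\lie g$-weight of the finite-dimensional module $D_\bb F(\ell,\lambda)$, so anything in that weight space is automatically a lowest-weight vector for $U_\bb F(\lie g)$.)

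The second direction, however, does not deliver the reverse inequality. You exhibit a vector $w$ in $W=\psi^*D_\bb F(\ell,\lambda^*)$ satisfying the relations generating $I_\bb F^-(\ell,\lambda)$; by universality this produces a surjection $N\twoheadrightarrow W$, hence $\dim W\le\dim N$. Since $\dim W=\dim D_\bb F(\ell,\lambda)$, this is once more $\dim D_\bb F(\ell,\lambda)\le\dim N$ --- identical to your first direction --- and no upper bound on $\dim N$ has been obtained. In the proof of Proposition \ref{p:Demrels} the Chevalley-involution argument bounds $\dim W$ from above by $\dim V_\bb F^\sigma(\Lambda)$; by putting $N$ in the place of $V_\bb F^\sigma(\Lambda)$ you have flipped which module sits on the bounded side. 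To repair this, pull back $N$ itself rather than $D_\bb F(\ell,\lambda^*)$: letting $n$ denote the cyclic generator of $N$ viewed inside $\psi^*N$, equations \eqref{e:inv+-def} and \eqref{e:inv+-w} show that $n$ has $\lie g$-weight $\lambda^*$, is killed by $U_\bb F(\lie n^+[t])^0$ and $U_\bb F(\lie h[t]_+)^0$, and is killed by $(x_{\alpha,s}^-)^{(k)}$ for $k>\max\{0,\lambda^*(h_\alpha)-s\ell r^\vee_\alpha\}$; these are precisely the defining relations \eqref{e:glWrel} and \eqref{e:FLrel} of $D_\bb F(\ell,\lambda^*)$. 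Hence $\psi^*N$ is a quotient of $D_\bb F(\ell,\lambda^*)$ and $\dim N=\dim\psi^*N\le\dim D_\bb F(\ell,\lambda^*)=\dim D_\bb F(\ell,\lambda)$ by Lemma \ref{l:Dem*}. (Alternatively, and closer to what the paper leaves implicit: the generator of $N$ already satisfies a superset of the relations listed in Lemma \ref{l:jpmrel}, so $N$, regarded as a $U_\bb F(\hlie b'^+)$-module with $c$ acting by $\ell$, is a quotient of $V_\bb F^\sigma(\Lambda)$.) Either repair, combined with your first direction, forces the surjection $N\twoheadrightarrow D_\bb F(\ell,\lambda)$ to be an isomorphism.
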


\begin{rem}\label{r:Demrels}
Observe that the difference between our first definition of $D_\mathbb F(\ell,\lambda)$ and the one given by Corollary \ref{c:jpmrel} lies on exchanging a ``highest-weight generator'' by a ``lowest-weight'' one. More precisely, let $v$ be as in Lemma \ref{l:jpmrel}. Then,  the isomorphism of Proposition \ref{p:Demrels} must send $v$ to a nonzero element in $D_\bb F(\ell,\lambda)_{w_0\lambda}$. In particular, if $w\in D_\bb F(\ell,\lambda)_{w_0\lambda}$, it satisfies the relations listed in Lemma \ref{l:jpmrel}.
The second part of our proof of Proposition \ref{p:Demrels} differs from the one given in \cite[Corollary 1]{foli:weyldem} in characteristic zero. It is claimed that a vector in $D_\bb F(\ell,\lambda)_{w_0\lambda}$ must satisfy several relations, including \eqref{e:MFLrel+}, without further justification. Proposition \ref{p:Demrels} implies that this is true, but we do not see how to deduce it so directly (even in characteristic zero) since we cannot use extremal-weight-vector theory to such vector as $D_\bb F(\ell,\lambda)$ is not, a priori, contained in an integrable module for the full affine hyperalgebra.
\end{rem}

\begin{cor}\label{c:naoi4.10}
Let $\lie g=\lie{sl}_2$ and consider the subalgebra $\lie a = \lie n^-[t]\oplus\lie h[t]\oplus \lie n^+[t]_+\subseteq\lie g[t]$. For $\ell,\lambda\in\bb Z_{\ge 0}$, let $I_\bb F'(\ell,\lambda)$ be the left ideal of $U_\bb F(\lie a)$ generated by the generators of $I_\bb F(\ell,\lambda)$ which lie in $U_\bb F(\lie a)$. Then, given $k,l,s\in\bb Z_{\ge 0}$ with $k>\max\{0,\lambda-s\ell\}$, we have
\begin{equation}\label{e:g.naoisl2}
(x_i^+)^{(l)}(x_{i,s}^-)^{(k)}\ \in\  U_\bb F(\lie a)U_\bb F(\lie n^+)^0\oplus I_\bb F'(\ell,\lambda)
\end{equation}
where $i$ is the unique element of $I$.
\end{cor}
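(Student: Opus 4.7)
The plan is to apply Lemma \ref{l:garland} (or the classical identity \eqref{e:koslem} in the degenerate case $s=0$) and then analyze the resulting expression piece by piece.

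For $s=0$, the condition becomes $k>\lambda$, and \eqref{e:koslem} gives
\[
(x_i^+)^{(l)}(x_i^-)^{(k)} = \sum_{m=0}^{\min\{k,l\}}(x_i^-)^{(k-m)}\binom{h_i-k-l+2m}{m}(x_i^+)^{(l-m)}.
\]
Terms with $l-m>0$ lie in $U_\bb F(\lie a)U_\bb F(\lie n^+)^0$. The only other contribution (when $l\le k$) is $(x_i^-)^{(k-l)}\binom{h_i-k+l}{l}$, which after reducing modulo the generator $h-\lambda(h)$ becomes $(x_i^-)^{(k-l)}\binom{\lambda-k+l}{l}$ modulo $I'_\bb F(\ell,\lambda)$; a case split on the sign of $\lambda-k+l$ (recalling $k>\lambda$) shows the result is either zero, since $0\le\lambda-k+l<l$ forces the binomial to vanish, or a scalar multiple of $(x_i^-)^{(k-l)}$ with $k-l>\lambda$, which is itself a generator of $I'_\bb F(\ell,\lambda)$.

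For $s>0$, Lemma \ref{l:garland} (with $m=s$) yields
\[
(x_i^+)^{(l)}(x_{i,s}^-)^{(k)} \equiv (-1)^l\bigl((X_{i,s,s}^-(u))^{(k-l)}\Lambda_{i,s}^+(u)\bigr)_k \pmod{U_\bb F(\lie g[t])U_\bb F(\lie n^+[t])^0}.
\]
My first step is to verify that this ``modulo'' space is contained in $U_\bb F(\lie a)U_\bb F(\lie n^+)^0+I'_\bb F(\ell,\lambda)$. Since $\lie g[t]=\lie a\oplus\lie n^+$, PBW gives $U_\bb F(\lie g[t])=U_\bb F(\lie a)U_\bb F(\lie n^+)$; combined with the decomposition $U_\bb F(\lie n^+[t])^0=U_\bb F(\lie n^+[t]_+)^0U_\bb F(\lie n^+)+U_\bb F(\lie n^+[t]_+)U_\bb F(\lie n^+)^0$, the commutativity of $\lie n^+[t]$ in $\lie{sl}_2$, and the observation that $U_\bb F(\lie n^+[t]_+)^0\subseteq I'_\bb F(\ell,\lambda)$, this identifies the mod piece with the required sum. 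Next, I would expand $\Lambda_{i,s}^+(u)=1+\tilde\Lambda(u)$ with $\tilde\Lambda(u)$ having coefficients in $U_\bb F(\lie h[t]_+)^0\subseteq I'_\bb F(\ell,\lambda)$; any cross-term involving $\tilde\Lambda$ thus lies in $U_\bb F(\lie a)\cdot I'_\bb F(\ell,\lambda)=I'_\bb F(\ell,\lambda)$. What remains is to show
\[
\bigl((X_{i,s,s}^-(u))^{(k-l)}\bigr)_k \;=\; \sum_{\substack{(m_r)_{r\ge1}\\ \sum m_r=k-l,\ \sum rm_r=k}}\ \prod_{r\ge1}(x_{i,sr}^-)^{(m_r)} \;\in\; I'_\bb F(\ell,\lambda).
\]

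The main obstacle is this final membership. Individual summands need not exhibit a single factor $(x_{i,sr}^-)^{(m_r)}$ with $m_r>\max\{0,\lambda-sr\ell\}$, so the claim cannot be established term-by-term, and one must exploit cancellations or derive auxiliary relations. My strategy would mirror \cite[Proposition 4.10]{naoi:weyldem}: iteratively invoke the relations $[h_{i,sr},x_{i,sr'}^-]=-2x_{i,s(r+r')}^-$ together with the already-established containment $U_\bb F(\lie h[t]_+)^0\subseteq I'_\bb F(\ell,\lambda)$ to rewrite each problematic summand as a combination of products whose leading divided power does satisfy the generator threshold, modulo $h_{i,sr}$-divisible corrections lying in $I'_\bb F(\ell,\lambda)$. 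An induction on the lexicographic complexity of the partition $(m_r)$---say on $\max\{r:m_r>0\}$ and then on $m_r$ itself---should collapse the sum into $I'_\bb F(\ell,\lambda)$. Adapting Naoi's characteristic-zero manipulations to the positive-characteristic, divided-power setting is where the technical work of the proof concentrates.
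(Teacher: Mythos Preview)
Your $s=0$ case is fine, and for $s>0$ your reduction via Lemma \ref{l:garland} correctly isolates the coefficient $\bigl((X_{i,s,s}^-(u))^{(k-l)}\bigr)_k$ as the only nontrivial piece to place inside $I_\bb F'(\ell,\lambda)$. But that membership is the entire content of the corollary, and your proposed inductive scheme has a real problem: the relation $[h_{i,sr},x_{i,sr'}^-]=-2x_{i,s(r+r')}^-$ is useless in characteristic $2$, and more generally the passage from $h_{i,r}$-manipulations to $\Lambda_{i,r}$-manipulations in the divided-power setting is not a mere translation of Naoi's characteristic-zero argument. You yourself flag this as ``where the technical work of the proof concentrates,'' but you have not supplied that work, and there is no evident way to complete it along these lines.

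The paper avoids the combinatorics entirely by a representation-theoretic trick. Applying the involution $x_{i,r}^\pm\mapsto x_{i,r}^\mp$ converts the statement to one about $(x_i^-)^{(l)}(x_{i,s}^+)^{(k)}$ and the subalgebra $\lie a^-=\lie n^-[t]_+\oplus\lie h[t]\oplus\lie n^+[t]$. Write this element via PBW as $u+u'$ with $u\in U_\bb F(\lie a^-)U_\bb F(\lie n^-)^0$ and $u'\in U_\bb F(\lie a^-)$, and apply it to a lowest-weight vector $w\in D_\bb F(\ell,\lambda)_{-\lambda}$: Proposition \ref{p:Demrels} and Remark \ref{r:Demrels} give $(x_{i,s}^+)^{(k)}w=0$ and $U_\bb F(\lie n^-)^0w=0$, hence $u'w=0$. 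Since $\hlie b'^+=\lie a^-\oplus\bb Cc$ with $\lie a^-$ an ideal, Lemma \ref{l:jpmrel} identifies the annihilator of $w$ inside $U_\bb F(\lie a^-)$ with $I_\bb F''(\ell,\lambda)$, so $u'\in I_\bb F''(\ell,\lambda)$. The key input you are missing is that the ideal in question is already known to be an annihilator ideal, so membership reduces to checking that an element kills a single vector rather than to any explicit rewriting.
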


\begin{proof}
The statement is a hyperalgebraic version of that of \cite[Lemma 4.10]{naoi:weyldem} and the proof follows similar general lines. Namely, by using the automorphism of $\lie g[t]$ determined by $x_{i,r}^\pm\mapsto x_{i,r}^\mp, i\in I,r\in\bb Z_{\ge 0}$, one observes that proving \eqref{e:g.naoisl2} is equivalent to proving
\begin{equation}\label{e:g.naoisl2-}
(x_i^-)^{(l)}(x_{i,s}^+)^{(k)}\ \in\  U_\bb F(\lie a^-)U_\bb F(\lie n^-)^0+I_\bb F''(\ell,\lambda) \quad\text{for all}\quad k,l,s\in \bb Z_{\ge 0}, k>\max\{0,\lambda-s\ell\},
\end{equation}
where $\lie a^- = \lie n^-[t]_+\oplus\lie h[t]\oplus \lie n^+[t]$ and $I_\bb F''(\ell,\lambda)$ is the left ideal of $U_\bb F(\lie a^-)$ generated by the generators of $I_\bb F^-(\ell,\lambda)$ given in Corollary \ref{c:jpmrel} which lie in $U_\bb F(\lie a^-)$. Since $\lie g[t]=\lie a^-\oplus\lie n^-$, the PBW Theorem implies that
$$U_\bb F(\lie g[t]) = U_\bb F(\lie a^-)U_\bb F(\lie n^-)^0\oplus U_\bb F(\lie a^-)$$
and, hence, $(x_i^-)^{(l)}(x_{i,s}^+)^{(k)} = u+u'$ with $u\in U_\bb F(\lie a^-)U_\bb F(\lie n^-)^0$ and $u'\in  U_\bb F(\lie a^-)$.
Consider the Demazure module $D_\bb F(\ell,\lambda)$ and let $w\in D_\bb F(\ell,\lambda)_{-\lambda}\setminus\{0\}$. It follows from the proof of Proposition \ref{p:Demrels} that, if $k>\max\{0,\lambda-s\ell\}$, then
\begin{equation*}
u'w = \left((x_i^-)^{(l)}(x_{i,s}^+)^{(k)} - u\right)w = 0.
\end{equation*}
Since $\hlie b'^+ = \lie a^-\oplus \bb Cc$ and $\lie a^-$ is an ideal of $\hlie b'^+$, it follows from Lemma \ref{l:jpmrel} that $I_\bb F''(\ell,\lambda)$ is the annihilating ideal of $w$ inside $U_\bb F(\lie a)$ and, hence, $u'\in I_\bb F''(\ell,\lambda)$.
\end{proof}

\section{Joseph's Demazure flags}\label{s:joseph}

\subsection{Quantum groups}

Let $\bb C(q)$ be the field of rational functions on an indeterminate $q$. Let also $C=(c_{ij})_{i,j\in\hat I}$ be the Cartan matrix of $\hlie g$ and $d_i,i\in\hat I$, nonnegative relatively prime integers such that the matrix $DC$, with $D={\rm diag}(d_i)_{i\in I}$, is symmetric. Set $q_i=q^{d_i}$ and, for $m,n\in\mathbb Z, n\ge 0$, set $[m]_{q_i}= \frac{q_{i}^m - q_{i}^{-m}}{q_{i} - q_{i}^{-1}}, [n]_{q_{i}}!= [n]_{q_{i}} [n-1]_{q_{i}} \ldots [1]_{q_{i}}$,$\left[\begin{array}{c}m\\n\end{array}\right]_{q_{i}}= \dfrac{[m]_{q_{i}}[m-1]_{q_{i}}\ldots [m-n+1]_{q_{i}}}{[n]_{q_{i}}!}$. The quantum group $U_q(\hlie g')$ is a $\bb C(q)$-associative algebra (with $1$) with generators $x_i^{\pm}, k_i^{\pm 1},\,i\in\hat{I}$ subject to the following defining relations for all $i,j \in \hat{I}$:
\begin{gather*}
k_ik_i^{-1} = 1\quad
k_ik_j = k_jk_i\quad
k_ix_j^{\pm}k_i^{-1} = q_i^{\pm c_{ij}}x_j^{\pm}\quad
[x_i^+, x_j^-] = \delta_{ij}\frac{k_i -k_i^{-1}}{q_i - q_i^{-1}}\\
{\sum_{m=0}^{1-c_{ij}}(-1)^m\left[\begin{array}{c} 1- c_{ij}\\m\end{array}\right]_{q_i}(x^{\pm}_i)^{1-c_{ij}-m}x_j^{\pm}(x_i^{\pm})^m}= 0, \quad i\neq j.
\end{gather*}
Let $U_q(\hlie n^\pm)$ be the subalgebra generated by $x_i^\pm, i\in\hat I$ and $U_q(\hlie b^\pm)$ be the subalgebra generated by $U_q(\hlie n^\pm)$ together with $k_i^{\pm 1},i\in\hat I$.

We shall need an integral form of $U(\hlie g')$. Let $\mathbb Z_q = \bb Z[q,q^{-1}]$, denote by $U_{\mathbb Z_q}(\hlie n^\pm)$ the $\mathbb Z_q$-subalgebra of $U_q(\hlie n^\pm)$ generated by $\frac{(x_i^{\pm})^m}{[m]_{q_i}!}, i\in \hat I, m\ge 0$, and by $U_{\mathbb Z_q}(\hlie g')$ the $\mathbb Z_q$-subalgebra of $U_q(\hlie g')$ generated by $U_{\mathbb Z_q}(\hlie n^\pm)$ and $k_i,i\in\hat I$. Let also $U_{\mathbb Z_q}(\hlie b^\pm)=U_q(\hlie b^\pm)\cap U_{\mathbb Z_q}(\hlie g')$. Then, $U_{\mathbb Z_q}(\lie a)$, $\lie a=\hlie g',\hlie n^\pm, \hlie b^\pm$, is a free $\mathbb Z_q$-module such that the natural map $\mathbb C(q)\otimes_{\mathbb Z_q} U_{\mathbb Z_q}(\lie a) \to U_{q}(\lie a)$ is $\mathbb C(q)$-algebra isomorphism, i.e., $U_{\mathbb Z_q}(\lie a)$ is a $\mathbb Z_q$-form of $U_q(\lie a)$. Moreover, letting $\mathbb Z$ be a $\mathbb Z_q$-module where $q$ acts as $1$, there exists an epimorphism of $\mathbb Z$-algebras $\mathbb Z\otimes_{\mathbb Z_q} U_{\mathbb Z_q}(\lie a)\to U_\mathbb Z(\lie a)$, which is an isomorphism if $\lie a = \hlie n^\pm$ and whose kernel is the ideal generated by $k_i-1,i\in\hat I$, for $\lie a=\hlie g', \hlie b^\pm$.

Given $\Lambda\in\hat P^+$, let $V_q(\Lambda)$ be the simple (type 1) $U_q(\hlie g')$-module of highest weight $\Lambda$. Given a highest-weight vector $v\in V_q(\Lambda)$, set $V_{\mathbb Z_q}(\Lambda) = U_{\mathbb Z_q}(\hlie n^-)v$, which is a $\mathbb Z_q$-form of $V_q(\Lambda)$. Given $\sigma\in\wa$ and a nonzero vector $v\in V_q(\Lambda)$ of weight $\sigma\Lambda$, set $V_{\mathbb Z_q}^\sigma(\Lambda) = U_{\mathbb Z_q}(\hlie n^+)v$, which is a free $\mathbb Z_q$-module as well as a $U_{\mathbb Z_q}(\hlie b^+)$-module and $\mathbb C\otimes_{\mathbb Z_q}V_{\mathbb Z_q}^\sigma(\Lambda) \cong V_{\mathbb C}^\sigma(\Lambda)$. In particular,
\begin{equation} \label{def:VZ}
V_{\mathbb Z}^\sigma(\Lambda):=\mathbb Z\otimes_{\mathbb Z_q}V_{\mathbb Z_q}^\sigma(\Lambda)
\end{equation}
is an integral form of $V_{\mathbb C}^\sigma(\Lambda)$.

\subsection{Crystals}

A normal crystal associated to the root data of $\hlie g$ defined as a set $B$ equipped with maps $\tilde e_i,\tilde f_i:B\to B\sqcup\{0\},\epsilon_i,\varphi_i:B\to\bb Z,$ for each $i\in\hat{I}$, and $\wt:B\to\hp$ satisfying
\begin{enumerate}[(1)]
\item $\epsilon_i(b)=\max\{n:\tilde e_ib\neq0\}, \varphi_i(b)=\max\{n:\tilde f_ib\neq0\}$, for all $i\in \hat{I},b\in B$;
\item $\varphi_i(b)-\epsilon_i(b)=\wt(b)(h_i),$ for all $i\in \hat{I},b\in B$;
\item for $b,b'\in B$, $b'=\tilde e_ib$ if and only if $\tilde f_ib'=b$;
\item if $b\in B, i\in\hat I$ are such that $\tilde e_ib\neq 0$, then $\wt(\tilde e_ib)=\wt(b)+\alpha_i$.
\end{enumerate}
For convenience, we extend $\tilde e_i,\tilde f_i,,\epsilon_i,\varphi_i,\wt$ to $B\sqcup\{0\}$ by setting them to map $0$ to $0$. Denote by $\cal E$ the submonoid of the monoid of maps $B\sqcup\{0\}\to B\sqcup\{0\}$ generated by $\{\tilde e_i:i\in\hat{I}\}$, and similarly define $\cal F$. A normal crystal is said to be of highest weight $\Lambda\in\hp^+$ if there exists $b_\Lambda\in B$ satisfying
$$ \wt(b_\Lambda)=\Lambda, \qquad \cal Eb_\Lambda=\{0\}, \qquad\text{and}\qquad \cal Fb_\Lambda=B.$$
Given $B'\subset B$ and $\mu\in\hat P$, define $B'_\mu = \{b\in B': \wt(b)=\mu\}$ and define the character of $B'$ as $\ch(B') = \sum_{\mu \in \hp} \#B'_\mu e^\mu \in \bb Z[\hp]$.

Given crystals $B_1, B_2$, a morphism from $B_1$ to $B_2$ is a map $\psi:B_1\to B_2\sqcup\{0\}$ satisfying:
\begin{enumerate}[(1)]
\item if $\psi(b)\ne 0$, then $\wt(\psi(b))=\wt(b),\epsilon_i(\psi(b))=\epsilon_i(b),\varphi_i(\psi(b))=\varphi_i(b)$, for all $i\in\hat{I}$;
\item if $\tilde e_i b\neq 0$, then $\psi(\tilde e_ib)=\tilde e_i\psi(b)$;
\item if $\tilde f_i b\neq 0$, then $\psi(\tilde f_ib)=\tilde f_i\psi( b)$.
\end{enumerate}
The set $B_1\times B_2$ admits a structure of crystal denoted by $B_1\otimes B_2$ (cf. \cite[Section 2.4]{joseph03}).
There is, up to isomorphism, exactly one family $\{B(\Lambda):\Lambda\in\hp^+\}$ of normal highest weight crystals such that, for all $\lambda,\mu\in\hat P^+$, the crystal structure of $B(\lambda)\otimes B(\mu)$ induces a crystal structure on its subset $\cal F(b_\lambda\otimes b_\mu)$, the inclusion is a homomorphism of crystals, and $\cal F(b_\lambda\otimes b_\mu)\cong B(\lambda+\mu)$.

Given a crystal $B$ and $\sigma\in\wa$ with a fixed reduced expression $\sigma=s_{i_1}\dots s_{i_n}$, define
$$\cal E^\sigma = \{\tilde e_{i_1}^{m_1}\dots\tilde e_{i_n}^{m_n}: m_j\in\bb N\} \subset\cal E\quad\text{and}\quad \cal F^\sigma = \{\tilde f_{i_1}^{m_1}\dots\tilde f_{i_n}^{m_n}: m_j\in\bb N\} \subset\cal F.$$
If $B=B(\Lambda)$, $\Lambda\in\hp^+$ and $\sigma\in\wa$, define the Demazure subset $B^\sigma(\Lambda)=\cal F^\sigma b_\Lambda\subseteq B(\Lambda)$. Then $B^\sigma(\Lambda)$ is $\cal E$-stable, i.e., $\cal EB^\sigma(\Lambda)\subset B^\sigma(\Lambda)\sqcup\{0\}$. It was proved in \cite[Section 4.6]{joseph03} that $\ch(V_{\mathbb C}^\sigma(\Lambda)) = \ch(B^\sigma(\Lambda))$. This fact and the following theorem are the main results of \cite{joseph03} that we shall need.

\begin{thm}\label{t:jodemc}
Let $\Lambda,\mu\in\hp^+$. For any $\sigma\in\wa$, there exist a finite set $J$ and elements $\sigma_j\in\wa, b_j\in B^\sigma(\Lambda)$ for each $j\in J$, satisfying:
\begin{enumerate}[(1)]
\item $b_{\mu}\otimes B^\sigma(\Lambda)=\sqcup_{j\in J}B_j$ where $B_j:= \cal F^{\sigma_j} (b_{\mu}\otimes b_j)$;
\item $\cal E(b_{\mu}\otimes b_j)=\{0\}$;
\item $\ch(B_j) = \ch(B^{\sigma_j}(\nu_j))$, where $\nu_j=\mu+\wt(b_j)\in\hat P^+$.
\end{enumerate}
\end{thm}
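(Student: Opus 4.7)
The plan is to follow Joseph's strategy for decomposing tensor products of crystals, combining his character formula $\ch(V_{\mathbb C}^\sigma(\Lambda))=\ch(B^\sigma(\Lambda))$ with an explicit analysis of $\tilde e_i$-highest elements in $b_\mu\otimes B^\sigma(\Lambda)$. First, I would locate the candidate indexing set: take
\[ J=\{b\in B^\sigma(\Lambda)\ :\ \cal E(b_\mu\otimes b)=\{0\}\}, \]
and for each $b_j\in J$ set $\nu_j=\mu+\wt(b_j)$. Using the tensor product rule for crystals, the condition $\cal E(b_\mu\otimes b_j)=\{0\}$ translates into numerical inequalities relating $\varphi_i(b_\mu)$ and $\epsilon_i(b_j)$ for all $i\in\hat I$; these inequalities are exactly what one needs to guarantee $\nu_j\in\hat P^+$, so property (3) makes sense as soon as $J$ is defined this way.

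Next I would produce the $\sigma_j\in\wa$. Starting from $b_j\in B^\sigma(\Lambda)=\cal F^\sigma b_\Lambda$, a reduced expression $\sigma=s_{i_1}\cdots s_{i_n}$ gives a sequence of exponents $m_1,\dots,m_n\in\mathbb N$ with $b_j=\tilde f_{i_1}^{m_1}\cdots\tilde f_{i_n}^{m_n}b_\Lambda$. I would define $\sigma_j$ by the same reduced word but ``shifted'' so that the corresponding $\tilde f$-string applied to $b_\mu\otimes b_j$ is contained in the tensor product; concretely this uses the fact that the tensor-rule length $\varphi_i$ only grows when one passes from $b_j$ to $b_\mu\otimes b_j$, so the string $\cal F^\sigma b_j$ extends to a longer $\tilde f$-string $\cal F^{\sigma_j}(b_\mu\otimes b_j)$ with a well-defined $\sigma_j\in\wa$. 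Setting $B_j=\cal F^{\sigma_j}(b_\mu\otimes b_j)$ then gives a candidate decomposition.

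Having $J$ and the $B_j$, the heart of the argument is three claims: (i) the $B_j$ are pairwise disjoint; (ii) their union exhausts $b_\mu\otimes B^\sigma(\Lambda)$; (iii) $\ch(B_j)=\ch(B^{\sigma_j}(\nu_j))$. For (i) and (ii) I would argue by induction on $\ell(\sigma)$, peeling off the last simple reflection $s_{i_n}$ and using that $\cal F^\sigma b_\Lambda=\tilde f_{i_n}^{\mathbb N}(\cal F^{\sigma'}b_\Lambda)$ with $\sigma'=s_{i_1}\cdots s_{i_{n-1}}$: a Demazure crystal is stable under the ``last'' $\tilde f_{i_n}$-operator, and the tensor rule forces any $b_\mu\otimes b$ with $b\in B^\sigma(\Lambda)$ to be reachable from some $\tilde e$-highest tensor by successive $\tilde f$-operators that fit into one of the prescribed reduced words. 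Disjointness then follows from the uniqueness of the $\tilde e$-highest element in each $B_j$, namely $b_\mu\otimes b_j$, together with property (2).

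For (iii), I would invoke the character formula from \cite{joseph03} noted in the excerpt, $\ch(V_{\mathbb C}^\tau(\nu))=\ch(B^\tau(\nu))$, and match characters by comparing how $\tilde f_{i_1}^{m_1}\cdots\tilde f_{i_n}^{m_n}$ acts: on the tensor side we get a formal-character recursion of Demazure operator type, and on the module side the Demazure character formula applied to $V_{\mathbb C}^{\sigma_j}(\nu_j)$ yields the same recursion, so the two characters agree term by term. The main obstacle will be step (ii), i.e.\ showing that every element of $b_\mu\otimes B^\sigma(\Lambda)$ can be reconnected via $\cal F^{\sigma_j}$ to a unique $\tilde e$-highest tensor $b_\mu\otimes b_j$ with $b_j\in B^\sigma(\Lambda)$; this is the combinatorial core and is where the careful use of reduced expressions, the tensor product rule, and the Demazure $\tilde f$-stability all have to fit together consistently.
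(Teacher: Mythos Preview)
The paper does not contain its own proof of this theorem: it is quoted as one of ``the main results of \cite{joseph03} that we shall need'' and is followed only by a remark that Joseph's proof yields an algorithm for producing $J$, $\sigma_j$, and $b_j$. There is therefore no in-paper argument to compare your proposal against.

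That said, your sketch is broadly in the spirit of Joseph's approach---identifying $J$ with the $\cal E$-highest elements of $b_\mu\otimes B^\sigma(\Lambda)$, deducing $\nu_j\in\hat P^+$ from the tensor product rule, and running an induction on $\ell(\sigma)$---but it is not a complete proof. The passage where you ``define $\sigma_j$ by the same reduced word but `shifted'\,'' is vague: Joseph's actual construction of $\sigma_j$ is recursive in the reduced expression of $\sigma$ and does not simply reuse $\sigma$'s word. More seriously, you correctly flag step~(ii) (surjectivity of $\sqcup_j B_j\to b_\mu\otimes B^\sigma(\Lambda)$) as the crux and then do not resolve it; this is precisely the delicate point in \cite{joseph03}, where one must show that applying $\tilde e_i$'s to an arbitrary $b_\mu\otimes b$ stays inside $b_\mu\otimes B^\sigma(\Lambda)$ long enough to reach some $b_\mu\otimes b_j$, and this requires the $\cal E$-stability of Demazure crystals together with a careful case analysis of the tensor rule. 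As written, your argument is a plausible outline but has a genuine gap at exactly the point you identify.
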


\begin{rem}
The proof of Theorem \ref{t:jodemc} establishes an algorithm to find the set $J$ and the elements $\sigma_j,b_j$.
\end{rem}

\subsection{Globalizing}\label{joseph:global}

The theory of global basis of Kashiwara shows, in particular, that, for each $\Lambda\in\hat P^+$, there is a map $G:B(\Lambda)\to V_q(\Lambda)$ such that
\begin{equation} \label{eq:gl.basis}
V_{\mathbb Z_q}(\Lambda) = \opl_{b\in B(\Lambda)}^{} \mathbb Z_qG(b),
\end{equation}
the weight of $G(b)$ is $\wt(b)$ and $G(b_\Lambda)$ is a highest-weight vector of $V_q(\Lambda)$.

Fix $\Lambda,\mu\in\hat P^+,\sigma\in\wa$ and let $J, b_j, \sigma_j, \nu_j, j\in J$, be as in Theorem \ref{t:jodemc}. Let $b\in B(\Lambda)_{\sigma\Lambda}$ and set $V_{\bb Z_q}^\sigma(\Lambda) = U_{\bb Z_q}(\hlie n^+)G(b)$. Similarly, let $b'_j$ be the unique element of $B_j$ such that $\wt(b'_j)=\sigma_j\nu_j$. Choose a linear order on $J$ such that $\wt(b_j)<\wt(b_k)$ only if $j>k$. For $j\in J$, let $Y_j$ be the $\mathbb Z_q$-submodule of $V_q(\mu)\otimes V_q^\sigma(\Lambda)$ spanned by $G(b_\mu)\otimes G(b)$ with $b\in B_k, k\le j$, and set
\begin{equation}\label{e:defyj}
y_j = G(b_\mu)\otimes G(b'_j).
\end{equation}
Let also $Z_j = \sum_{k\le j} U_{\mathbb Z_q}(\hlie n^-)\left(G(b_\mu)\otimes G(b_k)\right)$. Since $J$ is linearly ordered and finite, say $\#J=n$, identify it with $\{1,\dots,n\}$. For convenience, set $Y_0=\{0\}$. Observe that $0=Y_0\subset Y_1\subset\dots\subset Y_k$ is a filtration of the $U_{\bb Z_q}(\hlie b^+)$-module $G(b_{\Lambda_0})\otimes V_{\mathbb Z_q}^\sigma(\Lambda)$. The following result was proved in \cite[Corollary 5.10]{joseph06}.

\begin{thm} \label{t:jodemf}
Suppose $\lie g$ is simply laced and $\mu(h_i)\le 1$ for all $i\in\hat I$. Then:
\begin{enumerate}[(a)]
\item The $\mathbb Z_q$-module $Y_j$ is $U_{\mathbb Z_q}(\hlie n^+)$-stable for all $j\in J$.
\item \label{t:jodemf.b}
For all $j\in J$, $Y_j/Y_{j-1}$ is isomorphic to $V_{\mathbb Z_q}^{\sigma_j}(\nu_j)$. In particular, $Y_j/Y_{j-1}$ is a free $\mathbb Z_q$-module.
\item For all $j\in J$, the image of $\{G(b_\mu)\otimes G(b): b\in B_j\}$ in $Y_j/Y_{j-1}$ is a $\mathbb Z_q$-basis of $Y_j/Y_{j-1}$.
\item \label{t:jodemf.d} For each $j\in J$, $Z_j$ is $U_{\bb Z_q}(\hlie g')$-stable and $Y_j=Z_j\cap \left(G(b_\mu)\otimes V_{\bb Z_q}^\sigma(\Lambda)\right)$.
\end{enumerate}
\end{thm}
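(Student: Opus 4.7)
Since this theorem is essentially Joseph's \cite[Corollary 5.10]{joseph06}, my plan would be to follow his strategy using the interplay between global bases, crystal tensor products, and Demazure submodules. The starting point is Theorem \ref{t:jodemc}, which already gives the decomposition $b_\mu \otimes B^\sigma(\Lambda) = \bigsqcup_j B_j$ with $B_j = \mathcal{F}^{\sigma_j}(b_\mu \otimes b_j)$ and $\mathcal{E}(b_\mu \otimes b_j) = \{0\}$. The hypothesis $\mu(h_i) \le 1$ is crucial: it forces the crystal tensor product $b_\mu \otimes B^\sigma(\Lambda)$ to be ``minuscule-like'' on the left factor, which is exactly what makes Kashiwara's global bases compatible with Demazure filtrations in a clean way.

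First I would address part (a). By \eqref{eq:gl.basis}, it suffices to show that for $i \in \hat I$, $m \ge 0$, and $b \in B_k$ with $k \le j$, the element $\frac{(x_i^+)^m}{[m]_{q_i}!}(G(b_\mu) \otimes G(b))$ lies in $Y_j$. Expanding via the comultiplication and using Lusztig's/Kashiwara's formulas, each summand has the form $\bigl(\tfrac{(x_i^+)^a}{[a]_{q_i}!}G(b_\mu)\bigr) \otimes \bigl(\tfrac{(x_i^+)^{m-a}}{[m-a]_{q_i}!}G(b)\bigr)$ up to powers of $k_i$. The assumption $\mu(h_i)\le 1$ restricts the only nontrivial contribution on the left factor to $a \in \{0,1\}$, and then the signed-basis formula $e_i G(b') = G(\tilde e_i b')+$ lower-order terms (with respect to the crystal order inherited from weight) shows everything stays in $Y_j$. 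The linear order on $J$ by decreasing weight is designed precisely so that this triangularity respects the filtration.

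Next, for parts (b) and (c), the generator of $Y_j/Y_{j-1}$ is (the image of) $y_j$, which has weight $\mu + \sigma_j\nu_j - \mu = \sigma_j\nu_j$. Using the characterization of extremal vectors in terms of crystal operators and Theorem \ref{t:jodemc}(2), one checks that $y_j$ satisfies the defining relations of a Demazure generator for $V_{\mathbb Z_q}^{\sigma_j}(\nu_j)$, giving a surjection $V_{\mathbb Z_q}^{\sigma_j}(\nu_j) \twoheadrightarrow Y_j/Y_{j-1}$. That this is an isomorphism follows by counting: the images of $\{G(b_\mu) \otimes G(b) : b \in B_j\}$ span $Y_j/Y_{j-1}$ over $\mathbb Z_q$, and Theorem \ref{t:jodemc}(3) combined with Joseph's character identity $\ch(V^{\sigma_j}(\nu_j)) = \ch(B^{\sigma_j}(\nu_j)) = \ch(B_j)$ forces this spanning set to be a basis. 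Freeness over $\mathbb Z_q$ then follows.

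Finally, for part (d), I would use that $G(b_\mu) \otimes V_{\mathbb Z_q}^\sigma(\Lambda)$ sits inside $V_{\mathbb Z_q}(\mu) \otimes V_{\mathbb Z_q}(\Lambda)$, which in turn embeds into $V_{\mathbb Z_q}(\mu + \Lambda)$ via the canonical homomorphism coming from the crystal embedding $\mathcal F(b_\mu \otimes b_\Lambda) \hookrightarrow B(\mu+\Lambda)$. Stability of $Z_j$ under $U_{\mathbb Z_q}(\hlie g')$ is then a restatement of the fact that global bases of tensor products respect the crystal decomposition up to a compatible triangular change of basis; the identity $Y_j = Z_j \cap (G(b_\mu)\otimes V_{\mathbb Z_q}^\sigma(\Lambda))$ then expresses that the Demazure piece on the right factor is recovered by intersecting the full $U_{\mathbb Z_q}(\hlie g')$-submodule with the tensor subspace. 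The main obstacle is controlling the triangular corrections in the global-basis tensor product formula; this is exactly where the hypothesis $\mu(h_i) \le 1$ (which makes the left factor contribute at most once under $e_i$) removes the higher-order terms that would otherwise obstruct both the stability statements (a) and (d) and the cleanness of the filtration.
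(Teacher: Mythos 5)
The paper does not prove Theorem \ref{t:jodemf}: it is quoted verbatim as \cite[Corollary 5.10]{joseph06}, and the only original content supplied by the authors is the remark afterwards that Joseph's argument, while stated for simply-laced symmetric Kac--Moody algebras, also applies to $\widehat{\lie{sl}}_2$. So there is no ``paper's proof'' to compare against; your proposal is a reconstruction of Joseph's argument, which is the right thing to attempt.

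As a reconstruction it captures the broad strategy (triangularity of the global-basis transition matrix relative to the decomposition of Theorem \ref{t:jodemc}, then a character count), but it misplaces where the hypothesis $\mu(h_i)\le 1$ actually does work. In part (a) you argue that $\mu(h_i)\le 1$ ``restricts the only nontrivial contribution on the left factor to $a\in\{0,1\}$,'' but $G(b_\mu)$ is a highest-weight vector of $V_q(\mu)$, so $x_i^+ G(b_\mu)=0$ for every $i$ and only $a=0$ survives -- no hypothesis on $\mu$ is needed for that. The real difficulty is that $G(b_\mu)\otimes G(b)$ is not Kashiwara's global basis of $V_q(\mu)\otimes V_q(\Lambda)$, so applying $(x_i^+)^{(m)}$ and re-expanding produces correction terms that a priori could escape $Y_j$; the minuscule bound $\mu(h_i)\le 1$ is what tames that transition matrix, and it is also essential for part (d), where $f_i$-operators act on $G(b_\mu)$ nontrivially and the bound is what keeps $Z_j$ under control. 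Finally, your surjection $V_{\mathbb Z_q}^{\sigma_j}(\nu_j)\twoheadrightarrow Y_j/Y_{j-1}$ via ``defining relations of a Demazure generator'' is asserted rather than supplied: $V_{\mathbb Z_q}^{\sigma_j}(\nu_j)$ is defined in the paper as $U_{\mathbb Z_q}(\hlie n^+)G(b)$ inside $V_{\mathbb Z_q}(\nu_j)$, not by generators and relations over $\mathbb Z_q$, so to get a map out of it one has to produce it from the global basis structure (as Joseph does), not from a presentation that has not been established. The character identity you then invoke is the correct closing step once the map exists.
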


\begin{rem}
The above theorem was proved in \cite{joseph06} for any simply-laced symmetric Kac-Moody Lie algebra. However, as pointed out in \cite[Remark 4.15]{naoi:weyldem}, the proof also holds for $\hlie{sl}_2$.
\end{rem}

It follows from Theorem \ref{t:jodemf} and the fact that $G(b_\mu)$ is a highest-weight vector of $V_q(\Lambda)$ \eqref{eq:gl.basis} that
\begin{equation}\label{e:jodemf}
Y_j = \sum_{k\le j} U_{\bb Z_q}(\hlie n^+)y_j.
\end{equation}

\subsection{Simply laced Demazure flags}

Given $\ell\geq 0,\lambda\in P^+,m\in \bb Z$, let $D_\bb F(\ell,\lambda,m)=\tau_{m}(D_\bb F(\ell,\lambda))$ and $D_\bb Z(\ell,\lambda,m)=\tau_{m}(D_\bb Z(\ell,\lambda))$.

\begin{thm}\label{t:filtration}
Suppose $\lie g$ is simply laced, let $\mu\in P^+$ and $\ell'>\ell\ge 0$. Then, there exist $k> 0, \mu_1,\dots,\mu_k\in P^+, m_1,\dots,m_k\in\mathbb Z_{\ge 0}$, and a filtration of $U_\mathbb Z(\lie g[t])$-modules $0=D_0\subseteq D_1\subseteq\cdots\subseteq D_k= D_\mathbb Z(\ell,\mu)$ such that $D_j$ and $D_j/D_{j-1}$ are free $\mathbb Z$-modules for all $j=1,\dots,k$, and $D_j/D_{j-1}\cong D_\mathbb Z(\ell',\mu_j,m_j)$. Moreover, for all $j\in J$, there exists $\v_j\in D_j$ such that \vspace{-10pt}
\begin{enumerate}[(i)]
\item The image of $\v_j$ in $D_j/D_{j-1}$ satisfies the defining relations of $D_\mathbb Z(\ell',\mu_j,m_j)$;
\item $D_j = \sum_{k\le j} U_\bb Z(\lie n^-[t])\v_k$.
\end{enumerate}
\end{thm}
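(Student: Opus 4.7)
The plan is to induct on $\ell'-\ell$ and so reduce to the case $\ell'=\ell+1$; the general case follows by composing the resulting filtrations. The key idea is to realize $D_\bb Z(\ell,\mu)$ as an affine Demazure module and transport Joseph's quantum-group Demazure flag (Theorem \ref{t:jodemf}) to the classical $\bb Z$-form by specializing $q=1$, exploiting the freeness of each quotient over $\bb Z_q$ to preserve exactness. Concretely, using Proposition \ref{p:Demrels} (its proof is characteristic-free and adapts over $\bb Z$), choose $\sigma\in\wa$ and $\Lambda\in\hp^+$ with $\sigma\Lambda=\ell\Lambda_0+w_0\mu$, giving $D_\bb Z(\ell,\mu)\cong V_\bb Z^\sigma(\Lambda)$ as graded $U_\bb Z(\lie g[t])$-modules.

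Next, apply Theorem \ref{t:jodemc} with $\mu'=\Lambda_0$ (admissible since $\Lambda_0(h_i)\le 1$ for all $i\in\hat I$): this decomposes $b_{\Lambda_0}\otimes B^\sigma(\Lambda)=\sqcup_{j\in J} B_j$, $B_j=\cal F^{\sigma_j}(b_{\Lambda_0}\otimes b_j)$, with $\nu_j=\Lambda_0+\wt(b_j)\in\hp^+$ of level $\ell+1$. Theorem \ref{t:jodemf} then produces a $U_{\bb Z_q}(\hlie n^+)$-stable filtration of $G(b_{\Lambda_0})\otimes V_{\bb Z_q}^\sigma(\Lambda)$ (indexed by $J$) with free quotients $Y_j/Y_{j-1}\cong V_{\bb Z_q}^{\sigma_j}(\nu_j)$. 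Specializing at $q=1$ via $\bb Z\otimes_{\bb Z_q}(-)$ preserves exactness (by freeness) and yields a filtration whose successive quotients are the classical $V_\bb Z^{\sigma_j}(\nu_j)$. The ambient specialized module $\bb Z\otimes_{\bb Z_q}(G(b_{\Lambda_0})\otimes V_{\bb Z_q}^\sigma(\Lambda))$ is identified with $V_\bb Z^\sigma(\Lambda)\cong D_\bb Z(\ell,\mu)$ (up to a $\Lambda_0$-weight-shift) since at $q=1$ the factors $k_i$ in the coproduct become trivial while $G(b_{\Lambda_0})$ is highest-weight and therefore inert under $x_i^+$. Each quotient $V_\bb Z^{\sigma_j}(\nu_j)$ is then identified via Proposition \ref{p:Demrels} with $D_\bb Z(\ell+1,\mu_j,m_j)$, where $\sigma_j\nu_j=(\ell+1)\Lambda_0+w_0\mu_j+m_j\delta$ extracts $\mu_j\in P^+$ and $m_j\in\bb Z_{\ge 0}$; nonnegativity of $m_j$ follows because the filtration sits inside a $\bb Z_{\ge 0}$-graded module. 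For the generators $\v_j$, start from the specialization of $y_j=G(b_{\Lambda_0})\otimes G(b'_j)$ from \eqref{e:defyj} (whose weight is $\sigma_j\nu_j$) and apply suitable elements of $U_\bb Z(\hlie n^-)$ to reach weight $\mu_j+m_j\delta$; then condition (i) follows from the Demazure-module identification, and (ii) follows since $D_\bb Z(\ell+1,\mu_j,m_j)$ is generated by such a vector under $U_\bb Z(\lie n^-[t])$, combined with the inductive telescoping $D_j=U_\bb Z(\lie n^-[t])\v_j+D_{j-1}$.

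The main obstacle is the clean identification at $q=1$ of $\bb Z\otimes_{\bb Z_q}(G(b_{\Lambda_0})\otimes V_{\bb Z_q}^\sigma(\Lambda))$ with $D_\bb Z(\ell,\mu)$ as $U_\bb Z(\lie g[t])$-modules, including the $\delta$-grading bookkeeping and the weight shift by $\Lambda_0$. Freeness of each $D_j$ as a $\bb Z$-module is not automatic from the quotients alone; it must be propagated inductively from $D_0=0$ using that extensions of free $\bb Z$-modules by free $\bb Z$-modules remain free. A secondary subtlety is checking that the crystal-theoretic algorithm of Theorem \ref{t:jodemc} always delivers parameters $(\mu_j,m_j)$ in the desired range $P^+\times\bb Z_{\ge 0}$.
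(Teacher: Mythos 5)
Your proposal follows essentially the same route as the paper: reduce to $\ell'=\ell+1$, realize $D_\bb Z(\ell,\mu)\cong V_\bb Z^\sigma(\Lambda)$ via Proposition \ref{p:Demrels}, invoke Theorem \ref{t:jodemf} to get the $U_{\bb Z_q}(\hlie n^+)$-stable flag of $G(b_{\Lambda_0})\otimes V_{\bb Z_q}^\sigma(\Lambda)$, specialize at $q=1$, and read off the quotients as level-$(\ell+1)$ Demazure modules. The ``main obstacle'' you flag---the $\Lambda_0$-weight shift when identifying $\bb Z\otimes_{\bb Z_q}(G(b_{\Lambda_0})\otimes V_{\bb Z_q}^\sigma(\Lambda))$ with $D_\bb Z(\ell,\mu)$---is not actually an obstacle: at $q=1$ the tensor factor $\bb Z_{\Lambda_0}$ is a one-dimensional $U_\bb Z(\hlie b^+)$-module on which $U_\bb Z(\hlie n^+)^0$ and $U_\bb Z(\lie g)^0$ act by zero and $U_\bb Z(\hlie h)$ acts by $\Lambda_0$; since $\Lambda_0$ vanishes on $\lie h$, this factor is trivial as a $U_\bb Z(\lie g[t])$-module, so the identification is immediate. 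Freeness of each $D_j$ over $\bb Z$ follows, as you say, from $D_0=0$ and the freeness of each $D_j/D_{j-1}$, and the $U_\bb Z(\lie g[t])$-stability of the $D_j$ is exactly Theorem \ref{t:jodemf}\eqref{t:jodemf.d}. One small but substantive correction: to produce $\v_j$ from $y_j=G(b_{\Lambda_0})\otimes G(b'_j)$ (which has weight $\sigma_j\nu_j=(\ell+1)\Lambda_0+w_0\mu_j+m_j\delta$) you must move \emph{up} in $\lie h$-weight to $w_0\sigma_j\nu_j=(\ell+1)\Lambda_0+\mu_j+m_j\delta$; applying $U_\bb Z(\hlie n^-)$ goes the wrong way. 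The paper instead picks the unique crystal element $b_j''\in B_j$ of weight $w_0\sigma_j\nu_j$ and takes $\v_j$ to be the specialization of $G(b_{\Lambda_0})\otimes G(b_j'')$. Using the global basis here is what makes property (i) transparent (the argument of Remark \ref{r:Demrels} applies to a lowest/highest-weight swap inside the Demazure module), and together with \eqref{e:jodemf} it gives property (ii).
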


\begin{proof}
The proof follows closely that of \cite[Corollary 4.16]{naoi:weyldem}. First notice that it is enough to prove the theorem for $\ell'=\ell+1$. Then let $\Lambda\in\widehat{P}^+$ and $w\in\wa$ be such that $w\Lambda=\ell\Lambda_0+w_0\mu$, and let $V_{\mathbb Z_q}^w(\Lambda)=U_{\mathbb Z_q}(\hlie n^+)G(b)$ where $b\in B(\Lambda)_{w\Lambda}$.

From subsection \ref{joseph:global}, we know that the $U_{\bb Z_q}(\hlie b^+)$-submodule $G(b_{\Lambda_0})\otimes V_{\mathbb Z_q}^w(\Lambda) \subseteq V_q(\Lambda_0)\otimes V_q(\Lambda)$ admits a filtration $0=Y_0\subset Y_1\subset\dots\subset Y_k$. For each $j =1, \ldots, k$, let $D_j=\bb Z\otimes_{\bb Z_q}Y_j$, and observe that
\[
D_k = \bb Z \otimes_{\bb Z_q} \left(G(b_{\Lambda_0})\otimes_{\bb Z_q}V_{\bb Z_q}^w(\Lambda)\right)
\cong \left( \bb Z \otimes_{\bb Z_q} G(b_{\Lambda_0}) \right) \otimes_\bb Z \left( \bb Z \otimes_{\bb Z_q} V_{\bb Z_q}^w (\Lambda) \right)
\cong \bb Z_{\Lambda_0}\otimes_\bb ZD_{\bb Z}(\ell,\mu),
\]
where $\bb Z_{\Lambda_0}$ is a $U_\bb Z (\hlie b^+)$-module on which $U_\bb Z(\hlie n^+)^0$ and $U_\mathbb Z(\lie g)^0$ act trivially and $U_\bb Z(\hlie h)$ acts by $\Lambda_0$. Moreover, as a \bb Z-module it is free of rank $1$. Thus $D_k$ is isomorphic to $D_\mathbb Z(\ell,\mu)$ as a $U_\mathbb Z(\lie g[t])$-module. It follows from Theorem \ref{t:jodemf} \eqref{t:jodemf.d} that $D_j$ is a $U_\mathbb Z(\lie g[t])$-module for all $j = 1, \ldots, k$ and, hence, so is $D_j/D_{j-1}$. So we have a filtration of $U_\mathbb Z(\lie g[t])$-modules $0=D_0\subset D_1\subset\dots\subset D_k = D_\bb Z(\ell, \mu)$.

By Theorem \ref{t:jodemf} \eqref{t:jodemf.b}, $Y_j/Y_{j-1}\cong V_{\mathbb Z_q}^{\sigma_j}(\nu_j)$ for some $\sigma_j\in\wa$, $\nu_j\in\widehat{P}^+$. By \eqref{def:VZ} $D_j/D_{j-1}\cong V_{\mathbb Z}^{\sigma_j}(\nu_j)$. Thus $D_j/D_{j-1}$ is isomorphic to $D_\mathbb Z(\ell_j,\mu_j,m_j)$ for some $\mu_j\in P^+, m_j\in\mathbb Z$ and $\ell_j=\nu_j(c)$ (cf. \eqref{e:thelevel}). Since all the weights of $V_q(\Lambda_0)\otimes V_q(\Lambda)$ are of the form $\Lambda+\Lambda_0-\eta$ for some $\eta\in\hat Q^+$, and $\alpha_i(c)=0$ for all $i\in\hat I$, it follows that $\ell_j= \ell+1$ for all $j$.

Keep denoting the image of $y_j$ in $D_j$ by $y_j$ (cf. \eqref{e:defyj}). It follows that $D_j = \sum_{k\le j} U_{\bb Z}(\hlie n^+)y_j$ by \eqref{e:jodemf}. As in Remark \ref{r:Demrels}, we now replace the ``lowest-weight'' generators $y_j$ by ``highest-weight generators''. Thus, let $b_j''$ be the unique element of $B_j$ such that $\wt(b_j'')= w_0\sigma_j\nu_j = (\ell+1)\Lambda_0+\mu_j+m_j\delta$ and let $\v_j$ be defined similarly to $y_j$ by replacing $b'_j$ by $b_j''$.
\end{proof}

The next corollary is now immediate.

\begin{cor} \label{c:filtration}
Let $\lie g, \mu,\ell',\ell,k,\mu_j,j=1,\dots,k$, be as in Theorem \ref{t:filtration}. Then, there exists a filtration of $U_\mathbb F(\lie g[t])$-modules $0=D_0\subseteq D_1\subseteq\cdots\subseteq D_k= D_\mathbb F(\ell,\mu)$, such that $D_j/D_{j-1}\cong D_\mathbb F(\ell',\mu_j)$ for all $j=1,\dots,k$. \hfill\qed
\end{cor}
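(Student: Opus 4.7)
The plan is to deduce the corollary from Theorem \ref{t:filtration} by a routine base-change argument, using the freeness of the integral filtration. First, apply Theorem \ref{t:filtration} with the given data $\lie g, \mu, \ell', \ell$ to produce a filtration of $U_\mathbb Z(\lie g[t])$-modules
\[
0 = D_0' \subseteq D_1' \subseteq \cdots \subseteq D_k' = D_\mathbb Z(\ell, \mu),
\]
with each $D_j'$ and each quotient $D_j'/D_{j-1}'$ a free $\mathbb Z$-module, and $D_j'/D_{j-1}' \cong D_\mathbb Z(\ell', \mu_j, m_j)$. Set $D_j := \mathbb F \otimes_\mathbb Z D_j'$ and regard each $D_j$ as a $U_\mathbb F(\lie g[t])$-module via the identification $U_\mathbb F(\lie g[t]) = \mathbb F \otimes_\mathbb Z U_\mathbb Z(\lie g[t])$.

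Next, for each $j$ apply the functor $\mathbb F \otimes_\mathbb Z (-)$ to the short exact sequence
\[
0 \to D_{j-1}' \to D_j' \to D_j'/D_{j-1}' \to 0.
\]
Because $D_j'/D_{j-1}'$ is a free, hence flat, $\mathbb Z$-module, $\mathrm{Tor}_1^\mathbb Z(\mathbb F, D_j'/D_{j-1}') = 0$, and tensoring preserves the exactness. Therefore the resulting sequences of $U_\mathbb F(\lie g[t])$-modules are short exact, and the injectivity of the inclusions $D_{j-1} \hookrightarrow D_j$ is preserved. This yields a filtration $0 = D_0 \subseteq D_1 \subseteq \cdots \subseteq D_k$ with $D_j/D_{j-1} \cong \mathbb F \otimes_\mathbb Z (D_j'/D_{j-1}') \cong \mathbb F \otimes_\mathbb Z D_\mathbb Z(\ell', \mu_j, m_j)$.

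Finally, to identify the endpoints and quotients with the objects in the statement, invoke Lemma \ref{l:gotoF} exactly as was done in Section \ref{ss:grmhca} for the Demazure modules: the ideals $I_\bb Z(\ell,\mu)$ and $I_\bb Z(\ell',\mu_j)$ map onto $I_\mathbb F(\ell,\mu)$ and $I_\mathbb F(\ell',\mu_j)$ under $\mathbb F \otimes_\mathbb Z (-)$, so $\mathbb F \otimes_\mathbb Z D_\mathbb Z(\ell,\mu) \cong D_\mathbb F(\ell,\mu)$ and $\mathbb F \otimes_\mathbb Z D_\mathbb Z(\ell', \mu_j, m_j) \cong D_\mathbb F(\ell', \mu_j, m_j)$, which is $D_\mathbb F(\ell',\mu_j)$ up to the grading shift $\tau_{m_j}$ and hence isomorphic to it as an ungraded $U_\mathbb F(\lie g[t])$-module. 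Since the entire proof is a direct base-change from the integral statement, no real obstacle arises; the only subtlety worth checking is the vanishing of the $\mathrm{Tor}$-term, which is why the freeness assertions in Theorem \ref{t:filtration} are essential and were recorded explicitly there.
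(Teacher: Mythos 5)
Your argument is correct and is exactly the base-change the authors had in mind when they wrote that the corollary is "immediate" from Theorem \ref{t:filtration}: the freeness of the quotients $D_j/D_{j-1}$ is what guarantees the vanishing of $\mathrm{Tor}_1^{\mathbb Z}(\mathbb F,-)$, so that tensoring with $\mathbb F$ preserves the injections, and Lemma \ref{l:gotoF} supplies the identifications $\mathbb F\otimes_\mathbb Z D_\mathbb Z(\ell,\mu)\cong D_\mathbb F(\ell,\mu)$ and $\mathbb F\otimes_\mathbb Z D_\mathbb Z(\ell',\mu_j,m_j)\cong D_\mathbb F(\ell',\mu_j,m_j)$. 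Your remark on the grading shift $\tau_{m_j}$ is also appropriate, since the corollary's statement records the subquotients as $D_\mathbb F(\ell',\mu_j)$ without the shift, which is accurate as ungraded $U_\mathbb F(\lie g[t])$-modules.
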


\section{Proof of Theorem \ref{t:isos}}\label{s:proof}

\subsection{The isomorphism between Demazure and graded local Weyl modules}\label{ss:isodg}

Recall that, for $\lie g=\lie{sl}_2$, a characteristic-free proof of Theorem \ref{t:isos}\eqref{t:isosdg} was given in \cite{jm:weyl}. Thus, assume $\lie g$ is simply laced of rank higher than 1 and recall from  Remark \ref{ref:UF} that $D_\bb F (1, \lambda)$ is a quotient of $W_{\bb F}^{c} (\lambda)$. To prove the converse, let $w$ be the image of $1$ in $W_\mathbb F^c(\lambda)$. In order to show that $W_{\bb F}^{c} (\lambda)$ is a quotient of $D_\bb F (1, \lambda)$, it remains to prove that
\begin{equation}{\label{e:isodg}}
(x_{\alpha, s}^{-})^{(k)} w= 0 \quad\text{for all}\quad \alpha \in R^+,\ s > 0, \ k > \max\{0,\lambda(h_\alpha) - s\}.
\end{equation}
Given $\alpha \in R^+$, consider the subalgebra $U_\bb F (\lie{sl}_\alpha [t])$ (see Subsection \ref{ss:saofr12}) and let $W_\alpha$ be the $U_\bb F (\lie{sl}_\alpha [t])$-submodule of $W_{\bb F}^{c} (\lambda)$ generated by $w$. Clearly, $W_\alpha$ is a quotient of the graded local Weyl module for $U_\bb F (\lie{sl}_\alpha [t])$ with highest weight $\lambda(h_\alpha)$, where we have identified the weight lattice of $\lie{sl}_2$ with $\bb Z$ as usual. Since we already know that the theorem holds for $\lie{sl}_2$, it follows that $w$ must satisfy the same relations as the generator of the corresponding Demazure module for $U_\bb F (\lie{sl}_\alpha [t])$. In particular, \eqref{e:isodg} holds and so does Theorem \ref{t:isos}\eqref{t:isosdg}.

\subsection{A smaller set of relations for non simply laced Demazure modules}\label{ss:relnsl}

In this subsection we assume $\lie g$ is not simply laced and prove the following analogue of \cite[Proposition 4.1]{naoi:weyldem}.

\begin{prop} \label{p:lessreldem}
For all $\lambda \in P^+$, $D_\mathbb F(1,\lambda)$ is isomorphic to the quotient of $U_\mathbb F(\lie g[t])$ by the left ideal $I_\bb F(\lambda)$ generated by
\begin{gather} \label{eq:lessreldem}
U_\mathbb F(\lie n^+[t])^0, \quad
U_\mathbb F(\lie h [t]_+)^0, \quad
h- \lambda(h), \quad (x_i^-)^{(k)}, \quad
(x_{\alpha,s}^-)^{(\ell)}
\end{gather}
for all $h\in U_\mathbb F(\lie h), i\in I\setminus I_\sh, \alpha\in R^+_\sh, s\ge 0, k>\lambda(h_i), \ell> \max \{0, \lambda(h_\alpha) - s r^\vee \}$.
\end{prop}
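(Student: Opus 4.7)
The plan is to set $M := U_\bb F(\lie g[t])/I_\bb F(\lambda)$, let $v$ be the image of $1$, and prove that $v$ satisfies the full defining relations of $D_\bb F(1,\lambda)$. Since every generator of $I_\bb F(\lambda)$ lies among those defining $D_\bb F(1,\lambda)$, there is a natural surjection $M\twoheadrightarrow D_\bb F(1,\lambda)$, and the desired isomorphism follows once we verify in $M$ that $(x_{\alpha,s}^-)^{(k)}v=0$ for every $\alpha\in R^+$ and $s,k\in\bb Z_{\ge 0}$ with $k>\max\{0,\lambda(h_\alpha)-sr^\vee_\alpha\}$.

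The $s=0$ case is straightforward. Taking $\alpha=\alpha_i\in R_\sh^+$ and $s=0$ in the short-root generators of $I_\bb F(\lambda)$ gives $(x_i^-)^{(k)}v=0$ for all $i\in I_\sh$, $k>\lambda(h_i)$; combined with the generators indexed by $I\setminus I_\sh$, this yields $(x_i^-)^{(k)}v=0$ for every $i\in I$, $k>\lambda(h_i)$. Together with $U_\bb F(\lie n^+)^0v=0$ and $hv=\lambda(h)v$, the vector $v$ satisfies the classical simple-root presentation of the Weyl module $W_\bb F(\lambda)$ (cf.\ \cite{jantzen03} and Theorem \ref{t:rh}\eqref{t:rh.c}), so $U_\bb F(\lie g)v$ is a finite-dimensional quotient of $W_\bb F(\lambda)$. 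Applying Lemma \ref{l:weyl-relhr} to the finite-dimensional $U_\bb F(\lie n^-)$-module $U_\bb F(\lie n^-)v$ then gives $(x_\alpha^-)^{(k)}v=0$ for every $\alpha\in R^+$, $k>\lambda(h_\alpha)$, disposing of the missing $s=0$ relations for non-simple long roots as well.

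The remaining relations $(x_{\alpha,s}^-)^{(k)}v=0$ for $\alpha\in R^+\setminus R_\sh^+$ and $s\ge 1$ form the heart of the matter. Following the strategy of \cite[Proposition 4.1]{naoi:weyldem}, the plan is to induct on appropriate data (typically the height of $\alpha$ together with the grade $s$), reducing each step to a computation inside a rank-$2$ subalgebra $\lie g_{\alpha',\beta'}^{r,s}\subseteq\tlie g$ (cf.\ Subsection \ref{ss:saofr12}). The key input at each stage is the already-established short-root vanishing, combined with divided-power analogues of Serre-type commutator identities coming from Lemma \ref{l:garland} and the $\lie{sl}_2$ propagation of Corollary \ref{c:naoi4.10}. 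For $\lie g$ of type $B$, $C$, or $F_4$, these rank-$2$ reductions take place inside $B_2$-subsystems and proceed uniformly in the characteristic of $\bb F$. For type $G_2$, however, one must work inside a $G_2$ rank-$2$ subsystem, and the resulting iterated divided-power identities produce coefficients with denominators involving the primes $2$ and $3$ coming from the triple edge of the Dynkin diagram. This is the main obstacle and is precisely where the hypothesis $\mathrm{char}(\bb F)\neq 2,3$ in type $G_2$ becomes essential.
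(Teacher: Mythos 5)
Your $s=0$ argument matches the paper's: use the simple-root relations to conclude that $v$ generates a quotient of $W_\bb F(\lambda)$ for $U_\bb F(\lie g)$, then invoke Lemma~\ref{l:weyl-relhr} to extend the vanishing to all positive roots. The gap lies in the $s\ge 1$ part of the sketch.

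The paper's proof is organized in a specific logical order that your outline does not reproduce, and the order matters. First, Lemma~\ref{lem:long} disposes of \emph{all long roots} $\alpha$ at once by a \emph{rank-one} argument: $U_\bb F(\lie{sl}_\alpha[t])v$ is a quotient of the graded local Weyl module for $\lie{sl}_2$, and then Theorem~\ref{t:isos}\eqref{t:isosdg} (the simply laced Weyl$\cong$Demazure isomorphism, already established for $\lie{sl}_2$) forces $v$ to satisfy the Demazure relations for the long root $\alpha$. You never invoke Theorem~\ref{t:isos}\eqref{t:isosdg} or the subalgebra $\lie{sl}_\alpha[t]$, and this is precisely the ingredient that makes the whole inductive scheme start. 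Only \emph{after} the long roots are done can one handle short roots $\gamma\notin R_\sh^+$: Lemma~\ref{lem:bcf.naoi} inducts on ${\rm ht}(\gamma)$, writes $\gamma=\alpha+\beta$ with $\alpha$ long and $\beta$ short, and then applies Lemma~\ref{l:weyl-relhr} inside the rank-two subalgebra $U_\bb F(\lie g_{\alpha,\beta}^{a,b})$ — using Lemma~\ref{lem:long} as input. Your sketch presents everything as a rank-two reduction driven by Lemma~\ref{l:garland} and Corollary~\ref{c:naoi4.10} and lists the already-established short-root vanishing as the key input, but the workhorse lemma for the rank-two step is Lemma~\ref{l:weyl-relhr}, the essential input is the long-root vanishing, and Lemma~\ref{l:garland} and Corollary~\ref{c:naoi4.10} only enter in the $G_2$ analysis (via Lemma~\ref{l:naoi411} and the endomorphism $\phi$), together with Lemma~\ref{l:lowtohigh}, none of which appear in your outline. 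Your diagnosis of the source of the characteristic restriction in type $G_2$ (denominators $2$ and $3$) is correct, but without Lemma~\ref{lem:long} and the correct ordering of the cases, the argument as sketched does not go through.
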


Let $w\in D_\mathbb F(1,\lambda)_\lambda\setminus\{0\}$ and let $V$ be the $U_\mathbb F(\lie g[t])$-module generated by a vector $v$ with defining relations given by \eqref{eq:lessreldem}. In particular, there exists a unique epimorphism $V\to D_\mathbb F(1,\lambda)$ mapping $v$ to $w$. To prove the converse, observe first that, since $(x_i^-)^{(k)}v=0$ for all $i\in I,k>\lambda(h_i)$, Lemma \ref{l:weyl-relhr} implies that $(x_\alpha^-)^{(k)}v=0$ for all $\alpha\in R^+, k>\lambda(h_\alpha)$. In particular, $V$ is a quotient of $W^c_\mathbb F(\lambda)$ and, hence, it is finite-dimensional. It remains to show that
\begin{equation*}\label{e:lessreldem}
(x_{\alpha,s}^-)^{(k)}v = 0\quad\text{for all}\quad \alpha\in R^+\setminus R_\sh^+,\ s>0,\ k> \max \{0, \lambda(h_\alpha) - s r^\vee_\alpha \}.
\end{equation*}
These relations will follow from the next few lemmas.

\begin{lem}{\label{lem:long}}
Let $V$ be a finite-dimensional $U_\bb F (\lie g [t])$-module, let $\lambda\in P^+$, and suppose $v \in V_\lambda$ satisfies $U_\bb F (\lie n^+ [t])^0 v = U_\bb F (\lie h [t]_+)^0 v = 0$. If $\alpha \in R^+$ is long, then $( x _{\alpha, s}^{-} )^{(k)} v = 0$ for all $s\geq 0,k>\max \{0, \lambda(h_\alpha) - s \}$.
\end{lem}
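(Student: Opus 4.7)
The plan is to reduce the statement to the $\lie{sl}_2$ case of Theorem~\ref{t:isos}\eqref{t:isosdg}, which is available in arbitrary characteristic via \cite{jm:weyl}. The hypothesis that $\alpha$ is long (equivalently $r^\vee_\alpha=1$) is essential here: the target bound $k>\max\{0,\lambda(h_\alpha)-s\}$ matches exactly the bound in \eqref{e:FLrel} for the level-one $\lie{sl}_2$-Demazure module, with no lacing factor to absorb.

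The first step is to settle the $s=0$ case, $(x_\alpha^-)^{(k)}v=0$ for $k>\lambda(h_\alpha)$. Since $V$ is finite-dimensional, $U_\bb F(\lie{sl}_\alpha)v\subseteq V$ is a finite-dimensional highest-weight $\lie{sl}_2$-module of highest weight $\lambda(h_\alpha)$, and Theorem~\ref{t:rh}\eqref{t:rh.c} applied to $\lie{sl}_2$ forces the desired vanishing.

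The second step is to consider the graded cyclic submodule $W_\alpha:=U_\bb F(\lie{sl}_\alpha[t])v\subseteq V$, viewed as a module over $U_\bb F(\lie{sl}_\alpha[t])\cong U_\bb F(\lie{sl}_2[t])$ via the identification recorded in Subsection~\ref{ss:saofr12}, and to check that $v$ satisfies each of the four defining relations \eqref{e:glWrel} of the $\lie{sl}_2$-graded local Weyl module $W^c_\bb F(\lambda(h_\alpha))$. Annihilation by $U_\bb F(\lie n^+_\alpha[t])^0$ and the eigenvalue relation $h_\alpha v=\lambda(h_\alpha)v$ are immediate from the hypotheses; annihilation by $U_\bb F(\lie h_\alpha[t]_+)^0$ follows from $U_\bb F(\lie h[t]_+)^0v=0$ once one observes that the polynomial generators $\Lambda_{\alpha,r}$, $r\ge 1$, of $U_\bb Z(\lie h_\alpha[t]_+)^0$ all lie inside $U_\bb Z(\lie h[t]_+)^0$; and the relation $(x_\alpha^-)^{(k)}v=0$ for $k>\lambda(h_\alpha)$ is the first step. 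Hence $W_\alpha$ is a $U_\bb F(\lie{sl}_2[t])$-quotient of $W^c_\bb F(\lambda(h_\alpha))$.

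The third step is the payoff. Invoking the characteristic-free $\lie{sl}_2$ version of Theorem~\ref{t:isos}\eqref{t:isosdg} from \cite{jm:weyl}, we have $W^c_\bb F(\lambda(h_\alpha))\cong D_\bb F(1,\lambda(h_\alpha))$, so the generator of $W_\alpha$, namely $v$, inherits all the defining relations \eqref{e:FLrel} of the level-one Demazure module; with $\ell=1$ and $r^\vee_\alpha=1$ these read precisely $(x_{\alpha,s}^-)^{(k)}v=0$ for $s\ge 0$ and $k>\max\{0,\lambda(h_\alpha)-s\}$, as required. The main delicate point is the bookkeeping in step two: one must verify carefully that the hypotheses on $v$ relative to the full $\lie g[t]$ do restrict to the full list of defining relations for the $\lie{sl}_2$-graded local Weyl module attached to $\lie{sl}_\alpha[t]$, in particular that the integral Cartan-type elements $\Lambda_{\alpha,r}$ (rather than the $h_{\alpha,r}$'s themselves) are what is actually being annihilated. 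Everything else is a clean rank-one reduction exploiting $r^\vee_\alpha=1$.
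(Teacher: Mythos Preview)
Your proposal is correct and follows essentially the same route as the paper: restrict to $U_\bb F(\lie{sl}_\alpha[t])$, recognize $U_\bb F(\lie{sl}_\alpha[t])v$ as a quotient of the $\lie{sl}_2$-graded local Weyl module $W^c_\bb F(\lambda(h_\alpha))$, invoke the characteristic-free $\lie{sl}_2$ isomorphism $W^c_\bb F\cong D_\bb F(1,-)$ from \cite{jm:weyl}, and read off \eqref{e:FLrel} with $\ell=1$ and $r^\vee_\alpha=1$. The only difference is cosmetic: where you spell out the $s=0$ case and the verification that $\Lambda_{\alpha,r}\in U_\bb Z(\lie h[t]_+)^0$, the paper simply cites Theorem~\ref{t:fdrhca}\eqref{t:gweylu} (whose proof contains exactly those checks).
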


\begin{proof}
Consider the subalgebra $U_\bb F (\lie{sl}_\alpha[t])$ (see Subsection \ref{ss:saofr12}). By Theorem \ref{t:fdrhca} \eqref{t:gweylu}, the submodule $W=U_\bb F (\lie{sl}_\alpha[t]) v$ is a quotient of the local graded Weyl module for $U_\bb F (\lie{sl}_\alpha [t])$ with highest weight $\lambda (h_\alpha)$. Theorem \ref{t:isos} \eqref{t:isosdg} implies that $W\cong D_\mathbb F^\alpha(1,\lambda(h_\alpha))$ where the latter is the corresponding Demazure module for $U_\bb F (\lie{sl}_\alpha[t])$. In particular, $v$ satisfies the relations \eqref{e:FLrel}.
\end{proof}

\begin{lem}{\label{lem:bcf.naoi}}
Assume $\lie g$ is not of type $G_2$. Let $V$ be a finite-dimensional $U_\bb F (\lie g [t])$-module, $\lambda\in P^+$, and suppose $v \in V_\lambda$ satisfies $U_\bb F (\lie n^+ [t])^0 v = U_\bb F (\lie h [t]_+)^0 v = 0$ and $(x_{\alpha,s}^{-})^{(k)}v=0$ for all $\alpha \in R_\sh^+,k> \max\{0,\lambda(h_\alpha)-2s\}$. Then, for every short root $\gamma$, we have $(x_{\gamma, s}^{-})^{(k)} v = 0$ for all $s \geq 0,k > \max \{ 0, \lambda(h_\gamma) - 2s \}$.
\end{lem}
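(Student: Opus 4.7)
The strategy is to combine the hypothesis (which handles the short roots in $R^+_\sh$) with Lemma \ref{lem:long} (which handles all long roots), and then bootstrap to every short root by reducing to rank-2 subsystems. Because $\lie g$ is not of type $G_2$, every pair $(\alpha,\beta)$ of a short and a long root inside a rank-2 subsystem of $R$ generates a subsystem of type $A_1\times A_1$ or $B_2=C_2$, which is precisely the flexibility needed for a characteristic-free divided-power calculation.

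First I would invoke Lemma \ref{lem:long} to obtain $(x^-_{\beta,s})^{(k)}v = 0$ for every long positive root $\beta$ and every $k>\max\{0,\lambda(h_\beta)-s\}$. Next, for $\gamma$ a short positive root not in $R^+_\sh$, I would argue by induction on the height of $\gamma$: a case-by-case inspection in types $B_n,C_n,F_4$ produces a decomposition $\gamma=\alpha+\beta$ with $\alpha$ a short positive root of smaller height (handled by induction or by the hypothesis when $\alpha\in R^+_\sh$) and $\beta$ a long positive root, with $\{\alpha,\beta\}$ a simple system for a rank-2 subsystem $R'\subseteq R$ of type $B_2$ (so $\alpha$ is short simple, $\beta$ long simple, $\gamma$ short non-simple, and $2\alpha+\beta$ long non-simple). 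The subalgebra $U_\bb F(\lie g_{\alpha,\beta}[t])$ of Subsection \ref{ss:saofr12} is isomorphic to the hyper current algebra of a simple Lie algebra of type $B_2$, and $v$ inherits within it the full list of hypotheses of the lemma applied to $\gamma$. This reduces the problem to $\lie g$ itself being of type $B_2$ with $\gamma$ the short non-simple positive root.

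For the $B_2$ case, the commutator relation \eqref{e:cbstructure} gives $[x^-_{\beta,0},x^-_{\alpha,s}] = \pm x^-_{\gamma,s}$ (coefficient $\pm 1$ since the relevant $p$ equals $0$), so a standard divided-power induction expresses $(x^-_{\gamma,s})^{(k)}$ as a $\bb Z$-linear combination of products $(x^-_{\beta,0})^{(a)}(x^-_{\alpha,s})^{(b)}$ (with $a+b=k$), modulo $U_\bb Z(\tlie g)U_\bb Z(\tlie n^+)^0$ and terms of the form $(x^-_{2\alpha+\beta,s'})^{(k')}$ that already annihilate $v$ by Lemma \ref{lem:long}. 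Applying this identity to $v$ and using the hypothesis on $\alpha\in R^+_\sh$ together with the long-root relations on $\beta$, a weight count in the finite-dimensional module $V$ bounds which $(a,b)$-patterns can contribute, and each surviving term vanishes whenever $k>\max\{0,\lambda(h_\gamma)-2s\}$. This step is in the spirit of Corollary \ref{c:naoi4.10} applied simultaneously in the two $\lie{sl}_2$-subdirections determined by $\alpha$ and $\beta$.

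The main obstacle is maintaining integrality of the divided-power identities throughout this rank-2 calculation in positive characteristic. The structure constant $2$ appearing in $[x^-_\alpha,x^-_{\gamma,s}] = \pm 2\,x^-_{2\alpha+\beta,s}$ (from \eqref{e:cbstructure} with $p=1$) is harmless because the target $(x^-_{2\alpha+\beta,s})^{(k)}$ is itself integrally normalized and the factor $2$ is absorbed by the divided-power combinatorics; the analogous coefficient $3$ that would arise in a $G_2$ subsystem is precisely why that type is excluded here and dealt with separately under the assumption $\operatorname{char}\bb F\ne 2,3$.
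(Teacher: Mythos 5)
Your high-level strategy matches the paper's: induct on $\operatorname{ht}(\gamma)$, invoke Lemma \ref{lem:long} for long roots, decompose $\gamma=\alpha+\beta$ into a short and a long positive root forming a simple system for a rank-two (necessarily $B_2$) subsystem, and pass to a rank-two subalgebra of $U_\bb F(\tlie g)$. The paper's final step, however, is genuinely different from yours, and the difference matters.

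The paper does \emph{not} fix the $t$-grades of the two generators at $0$ and $s$. It chooses, depending on $\lambda$ and $s$, a splitting $s=a+b$ with $a,b\in\bb Z_{\ge 0}$ so that the two relevant quantities $\lambda(h_\alpha)-ar^\vee_\alpha$ and $\lambda(h_\beta)-br^\vee_\beta$ are \emph{both} $\ge 0$ (when $\lambda(h_\gamma)-2s\ge 0$), or both $\le 0$ (otherwise). It then applies the purely finite-type commutator Lemma~\ref{l:weyl-relhr} inside $U_\bb F(\lie g_{\alpha,\beta}^{a,b})$ (a copy of the rank-two finite-type hyperalgebra with $x_\alpha^\pm$ sitting at grade $\pm a$ and $x_\beta^\pm$ at grade $\pm b$); the grade of the resulting $x_\gamma^-$ is $a+b=s$ as required, and the bound comes out exactly to $\max\{0,\lambda(h_\gamma)-2s\}$. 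Your version freezes the grades at $0$ and $s$. Running the very same rank-two argument with those frozen grades only gives the annihilation bound $\max\{0,\lambda(h_\alpha)-2s\}+2\lambda(h_\beta)$ (in your naming, $\alpha$ short, $\beta$ long, $h_\gamma=h_\alpha+2h_\beta$), which is strictly larger than the target $\max\{0,\lambda(h_\gamma)-2s\}$ whenever $0<\lambda(h_\alpha)<2s$. For a concrete failure: $\lambda(h_\alpha)=2$, $\lambda(h_\beta)=1$, $s=2$, so $\lambda(h_\gamma)-2s=0$ and the lemma asserts $(x_{\gamma,2}^-)^{(k)}v=0$ for all $k>0$; both frozen-grade choices give the bound $2$, whereas the paper's choice $a=b=1$ gives $0$. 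Your appeal to ``a weight count'' as the mechanism that recovers the missing cases is not substantiated and does not obviously repair this.

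Two secondary points. First, the claimed expansion of $(x^-_{\gamma,s})^{(k)}$ into a $\bb Z$-combination of products $(x^-_{\beta,0})^{(a)}(x^-_{\alpha,s})^{(b)}$ with $a+b=k$ is not the correct shape of identity: matching the root-space grading already forces essentially $(x^-_{\beta})^{(k)}(x^-_{\alpha})^{(k)}$-type leading terms, not $a+b=k$. The paper sidesteps this entirely via Lemma~\ref{l:weyl-relhr}, which is proved once for finite-type hyperalgebras (following Naoi) and absorbs the divided-power bookkeeping and the structure constant $2$; you do not need a bespoke $B_2$ identity, and Corollary~\ref{c:naoi4.10} plays no role here. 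Second, the paper does not literally reduce to ``$\lie g$ of type $B_2$'': it stays in $\lie g$ and works through the embedded copy $U_\bb F(\lie g_{\alpha,\beta}^{a,b})$ of Subsection~\ref{ss:saofr12}; there is no $U_\bb F(\lie g_{\alpha,\beta}[t])$ defined in the paper. So the essential missing ingredient is the adaptive distribution of the $t$-grade $s$ between the short and long generators; with that in hand the argument is exactly Lemma~\ref{l:weyl-relhr} applied to $U_\bb F(\lie g_{\alpha,\beta}^{a,b})$.
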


\begin{proof}
The proof will proceed by induction on ${\rm{ht}} (\gamma)$. If ${\rm{ht}} (\gamma) = 1$, then $\gamma$ is simple and, hence, $\gamma\in R_\sh^+$. Thus, suppose ${\rm{ht}}(\gamma)>1$ and that $\gamma\notin R_\sh^+$. By \cite[Lemma 4.6]{naoi:weyldem}, there exist $\alpha, \beta \in R^+$ such that $\gamma = \alpha + \beta$ with $\alpha$ long and $\beta$ short. Notice that $\{\alpha,\beta\}$ form a simple system of a rank-two root subsystem. In particular, $h_\gamma = 2 h_{\alpha} + h_\beta$ and, hence $\lambda(h_\gamma) = 2\lambda(h_\alpha)+\lambda(h_\beta)$.

Fix $s\ge 0$ and suppose first that $\lambda(h_\gamma)-2s\ge 0$. In this case, we can choose $a,b\in\mathbb Z_{\ge 0}$ such that
\begin{equation*}
a+b=s, \qquad \lambda(h_\alpha)-a\ge 0, \qquad\text{and}\qquad \lambda(h_\beta)-2b\ge 0.
\end{equation*}
Indeed, $b=\max\{0,s-\lambda(h_\alpha)\}$ and $a=s-b$ satisfy these conditions. Then, Lemma \ref{lem:long} implies that $(x_{\alpha,a}^{-})^{(k)} v = 0$ for all $k>\lambda(h_\alpha)-a$, while the induction hypothesis implies that $(x_{\beta,b}^{-})^{(k)}v=0$ for all $k >\lambda(h_\beta) - 2b$. Applying Lemma \ref{l:weyl-relhr} to the subalgebra $U_\mathbb F(\lie g_{\alpha,\beta}^{a,b})$ (cf. Subsection \ref{ss:saofr12}), it follows that $(x_{\gamma, s}^{-})^{(k)} v= 0$ for all $k > 2(\lambda(h_\alpha) - a) + (\lambda(h_\beta) - 2b) = \lambda(h_\gamma)-2s$.

Now suppose $\lambda(h_\gamma)-2s\le 0$ and notice that this implies $s-\lambda(h_\alpha) = s- \frac{1}{2}\left(\lambda(h_\gamma)-\lambda(h_\beta)\right)\ge \frac{\lambda(h_\beta)}{2}\ge 0$. We need to show that $(x_{\gamma, s}^{-})^{(k)} v= 0$ for all $k > 0$. Letting $a=\lambda(h_\alpha)$ and $b=s-\lambda(h_\alpha)$, we have
\begin{equation*}
a+b=s, \qquad \lambda(h_\alpha)-a\le 0, \qquad\text{and}\qquad \lambda(h_\beta)-2b\le 0.
\end{equation*}
Then, Lemma \ref{lem:long} implies that $(x_{\alpha,a}^{-})^{(k)} v = 0$ for all $k>0$, while the induction hypothesis implies that $(x_{\beta,b}^{-})^{(k)}v=0$  for all $k >0$. The result follows from an application of Lemma \ref{l:weyl-relhr} as before.
\end{proof}

It remains to prove an analogue of Lemma \ref{lem:bcf.naoi} for $\lie g$ of type $G_2$. This is much more technically complicated and will require that we assume that characteristic $\mathbb F$ is at least 5. For the remainder of this subsection we assume $\lie g$ is of type $G_2$ and set $I=\{1,2\}$ so that $\alpha_1$ is short.
Given $\gamma = s\alpha_1+l\alpha_2\in R^+$, set $s_\gamma=s$. Set also
\begin{equation*}
\lie n^+[t]_> = \opl_{\gamma\in R^+}^{}\ \opl_{s\ge s_\gamma}^{} \mathbb Cx_{\gamma,s}^+,\quad \lie n^+[t]_< = \opl_{\gamma\in R^+}^{}\ \opl_{s=0}^{s_\gamma-1} \mathbb Cx_{\gamma,s}^+, \quad \lie a = \lie n^-[t]\oplus\lie h[t]\oplus \lie n^+[t]_>,
\end{equation*}
and observe that $\lie n^+[t]_>$ and $\lie n^+[t]_<$ are subalgebras of $\lie n^+[t]$ such that $\lie n^+[t] = \lie n^+[t]_>\oplus\lie n^+[t]_<$. The hyperalgebras $U_\bb F(\lie n^+[t]_>), U_\bb F(\lie n^+[t]_<)$, and $U_\bb F(\lie a)$ are then defined in the usual way (see Subsection \ref{ss:Zforms}) and the PBW theorem implies that
\begin{equation}\label{e:n<>}
U_\bb F(\lie n^+[t]) = U_\bb F(\lie n^+[t]_>) \oplus U_\bb F(\lie n^+[t])U_\bb F(\lie n^+[t]_<)^0.
\end{equation}
We now prove a version of \cite[Lemma 4.11]{naoi:weyldem} for hyperalgebras.

\begin{lem}\label{l:naoi411}
Given $\lambda\in P^+$, let $I'_\bb F(\lambda)$ be the left ideal of $U_\bb F(\lie a)$ generated by the generators of $I_\bb F(\lambda)$ described in \eqref{eq:lessreldem} which lie in $U_\bb F(\lie a)$. Then,
\begin{equation*}
I_\bb F(\lambda)\subseteq I'_\bb F(\lambda)\oplus U_\bb F(\lie a)U_\bb F(\lie n^+[t]_<)^0.
\end{equation*}
\end{lem}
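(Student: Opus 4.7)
The plan is to exploit the vector-space decomposition $\lie g[t]=\lie a\oplus\lie n^+[t]_<$. I first check that both $\lie a$ and $\lie n^+[t]_<$ are Lie subalgebras of $\lie g[t]$: since the function $s_\gamma$, the coefficient of $\alpha_1$ in $\gamma\in R$, is additive on the root lattice and nonnegative on $R^+$, a direct case check establishes that $[\lie n^-[t]\oplus\lie h[t],\lie n^+[t]_>]\subseteq\lie a$, $[\lie n^+[t]_>,\lie n^+[t]_>]\subseteq\lie n^+[t]_>$, and $[\lie n^+[t]_<,\lie n^+[t]_<]\subseteq\lie n^+[t]_<$. Both subalgebras admit bases of real root vectors of $\hlie g$, so by the remark at the end of Section~\ref{ss:Zforms} together with the PBW theorem,
\[
U_\bb F(\lie g[t])\;\cong\;U_\bb F(\lie a)\otimes_\bb F U_\bb F(\lie n^+[t]_<),
\]
and in particular $U_\bb F(\lie g[t])=U_\bb F(\lie a)\oplus U_\bb F(\lie a)U_\bb F(\lie n^+[t]_<)^0$, so the sum in the statement of the lemma is automatically direct. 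Let $\pi\colon U_\bb F(\lie g[t])\to U_\bb F(\lie a)$ denote the projection along the second summand; it is left $U_\bb F(\lie a)$-linear.

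The lemma is equivalent to $\pi(I_\bb F(\lambda))\subseteq I'_\bb F(\lambda)$. Writing $I_\bb F(\lambda)=\sum_y U_\bb F(\lie g[t])y$ over the generators $y$ listed in \eqref{eq:lessreldem}, using the $U_\bb F(\lie a)$-linearity of $\pi$ together with $U_\bb F(\lie g[t])=U_\bb F(\lie a)\oplus U_\bb F(\lie a)U_\bb F(\lie n^+[t]_<)^0$, I reduce to showing $\pi(zy)\in I'_\bb F(\lambda)$ for each generator $y$ and each $z\in U_\bb F(\lie n^+[t]_<)$. Generators lying in $U_\bb F(\lie n^+[t])^0$ are handled uniformly: $zy$ is again in $U_\bb F(\lie n^+[t])^0$, and decomposing it along $U_\bb F(\lie n^+[t])=U_\bb F(\lie n^+[t]_>)\otimes U_\bb F(\lie n^+[t]_<)$ produces a $\pi$-image in $U_\bb F(\lie n^+[t]_>)^0\subseteq U_\bb F(\lie a)$, itself a generator of $I'_\bb F(\lambda)$. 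When $z=1$ and $y\in U_\bb F(\lie a)$, $\pi(y)=y$ is already a generator of $I'_\bb F(\lambda)$.

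The remaining---and crucial---case is $z\in U_\bb F(\lie n^+[t]_<)^0$ and $y$ a generator lying in $U_\bb F(\lie a)$; for this I would compute $\pi(zy)$ directly via the PBW commutation of $z$ past $y$. The key tool is Lemma~\ref{l:garland}, applied inside the subalgebras $U_\bb F(\lie{sl}_\alpha[t])$ of Section~\ref{ss:saofr12}: for $y=(x_{\alpha,s'}^-)^{(\ell)}$ and a divided-power monomial $(x_{\alpha,s}^+)^{(l)}$ of $U_\bb F(\lie n^+[t]_<)$ (so $s<s_\alpha$), it expands $(x_{\alpha,s}^+)^{(l)}(x_{\alpha,s'}^-)^{(\ell)}$ modulo $U_\bb F(\lie g[t])U_\bb F(\lie n^+)^0$ as a sum of $U_\bb F(\lie a)$-multiples of lower divided powers of $x_{\alpha,s'}^-$ times polynomials in the loop-Cartan elements $\Lambda_{\alpha,r}^{\pm}$. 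A similar expansion handles $y=(x_i^-)^{(k)}$ with $i\in I\setminus I_\sh$ via Kostant's identity \eqref{e:koslem}, while for $y=h-\lambda(h)$ or $y\in U_\bb F(\lie h[t]_+)^0$ the commutator is simply a scalar---or loop-Cartan---multiple of a root vector, splittable cleanly along $\lie n^+[t]=\lie n^+[t]_>\oplus\lie n^+[t]_<$. Assembling all monomial contributions from $z$ yields $\pi(zy)$ as a $U_\bb F(\lie a)$-combination of four types of elements: (i) Cartan relations $h_\alpha-\lambda(h_\alpha)$; (ii) loop-Cartan elements in $U_\bb F(\lie h[t]_+)^0$; (iii) divided powers $(x_{\alpha,s'}^-)^{(j)}$; and (iv) elements of $U_\bb F(\lie n^+[t]_>)^0$---all generators of $I'_\bb F(\lambda)$. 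The main obstacle is to check that the defining thresholds $k>\lambda(h_i)$ and $\ell>\max\{0,\lambda(h_\alpha)-s'r^\vee\}$ force each surviving divided-power residue to have exponent still above its corresponding threshold; the borderline cases work out because the relevant integer coefficients vanish, analogously to the identity $\lambda(h_\alpha)-k+1=0$ that drops out of Kostant's formula for $k=\lambda(h_\alpha)+1$, and the whole argument amounts to the $\lie{sl}_2$-calculation already encapsulated in Corollary~\ref{c:naoi4.10}, iterated through the rank-one subalgebras indexed by $R^+_\sh$ and $I\setminus I_\sh$.
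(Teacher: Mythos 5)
Your setup is sound: the Lie-subalgebra checks for $\lie a$ and $\lie n^+[t]_<$, the PBW direct-sum decomposition $U_\bb F(\lie g[t])=U_\bb F(\lie a)\oplus U_\bb F(\lie a)U_\bb F(\lie n^+[t]_<)^0$, the reduction to proving $\pi(zy)\in I'_\bb F(\lambda)$ for each generator $y$, and the treatment of generators $y\in U_\bb F(\lie n^+[t])^0$ via \eqref{e:n<>} all match the paper's proof. Identifying Corollary~\ref{c:naoi4.10} as the core computation is also correct.

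There is, however, a genuine gap in your treatment of the crucial case. You sketch commuting $(x_1^+)^{(l)}\in U_\bb F(\lie n^+_1)$ past $(x_{1,s'}^-)^{(k)}$ via Lemma~\ref{l:garland}, and similarly for $(x_2^-)^{(k)}$ via Kostant's identity, but these cover only rank-one pieces. In type $G_2$ the algebra $\lie n^+[t]_<$ also contains $x_{\gamma,s}^+$ for $\gamma\in\{\alpha_1+\alpha_2,\,2\alpha_1+\alpha_2,\,3\alpha_1+\alpha_2,\,3\alpha_1+2\alpha_2\}$ and $0\le s<s_\gamma$; commuting divided powers of these past $(x_{1,s'}^-)^{(k)}$ or $(x_2^-)^{(k)}$ leaves the $\lie{sl}_2$-triple entirely, and neither Lemma~\ref{l:garland} nor \eqref{e:koslem} is applicable. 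Your phrase ``iterated through the rank-one subalgebras indexed by $R^+_\sh$ and $I\setminus I_\sh$'' misses these mixed-root contributions, which are precisely what makes the lemma nontrivial. The paper sidesteps the explicit commutations altogether: it introduces the auxiliary subalgebras $\lie m_2=\lie n^+[t]_<\oplus\lie n_2^-$ and $\lie m_1=\lie n^+[t]_<^1\oplus\lie n_1^-[t]$ and uses the $Q$-grading together with PBW to establish the two structural identities $U_<^0\,U_\bb F(\lie n_2^-)\subseteq U_\bb F(\lie n_2^-)U_<^0$ and $U_\bb F(\lie n^+[t]_<^1)^0\,U_\bb F(\lie n_1^-[t])=U_\bb F(\lie n_1^-[t])U_\bb F(\lie n^+[t]_<^1)^0$, so the only residual commutation that must actually be computed is the genuine $\lie{sl}_2$ one handled by Corollary~\ref{c:naoi4.10}.

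A secondary issue: your plan to ``check that surviving divided-power residues stay above their thresholds'' is not how Corollary~\ref{c:naoi4.10} operates. That corollary does not track exponents through a commutation formula; it deduces the containment abstractly from the lowest-weight presentation of the $\lie{sl}_2$ Demazure module in Corollary~\ref{c:jpmrel}. Trying to verify thresholds directly, as you propose, would run into the same difficulties as the original characteristic-zero proof was designed to avoid.
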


\begin{proof}
Recall that $I_\bb F(\lambda)$ is the left ideal of $U_\bb F(\lie g[t])$ generated by the set $\cal I$ whose elements are the elements in $U_\bb F(\lie n^+[t])^0$, $U_\bb F(\lie h[t]_+)^0$, together with the elements
\begin{equation*}
\tbinom{h_i}{l}-\tbinom{\lambda(h_i)}{l},\quad (x_2^-)^{(m)}, \quad (x_{1,s}^-)^{(k)} \quad\text{for}\quad i\in I,\ k,l,m,s\in\bb Z_{\ge 0},\ m>\lambda(h_2),\ k>\max\{0,\lambda(h_1)-3s\}.
\end{equation*}
To simplify notation, set $U_< = U_\bb F(\lie n^+[t]_<)$ and $J=I_\bb F'(\lambda)\oplus U_\bb F(\lie a)U_\bb F(\lie n^+[t]_<)^0$. Observe that $U_\bb F(\lie a)J\subseteq J$. Therefore, since $U_\bb F(\lie g[t])=U_\bb F(\lie a)U_<$ by \eqref{e:n<>} and we clearly have $\cal I\subseteq J$, it suffices to show that
\begin{equation*}
U_<^0\ \cal I\subseteq J.
\end{equation*}
We will decompose the set \cal I into parts, and prove the inclusion for each part. Namely, we first decompose $\cal I$ into $\left( \cal I \cap U_\bb F(\lie n^+[t])U_\bb F(\lie h[t])\right) \sqcup \left(\cal I\cap U_\bb F(\lie n^-[t])\right)$, and then we further decompose $\cal I\cap U_\bb F(\lie n^-[t])$ into $\{ (x_2^-)^{(m)}: m>\lambda(h_2)\} \sqcup \{(x_{1,s}^-)^{(k)} : s\in\bb Z_{\ge 0},k>\max\{0,\lambda(h_1)-3s\}\}$.

Since $\lie h[t]\oplus \lie n^+[t]$ is a subalgebra of $\lie g[t]$,$U_\bb F(\lie n^+[t])U_\bb F(\lie h[t]) = U_\bb F(\lie h[t])U_\bb F(\lie n^+[t])$ by PBW Theorem, and, therefore,
\begin{equation*}
U_<^0\left(\cal I\cap U_\bb F(\lie n^+[t])U_\bb F(\lie h[t])\right)\subseteq U_\bb F(\lie h[t])U_\bb F(\lie n^+[t]).
\end{equation*}
Now, by \eqref{e:n<>}, $U_\bb F(\lie h[t])U_\bb F(\lie n^+[t])\subseteq J$, showing that $U_<^0\left(\cal I\cap U_\bb F(\lie n^+[t])U_\bb F(\lie h[t])\right)\subseteq J$. In particular, we have shown that
\begin{equation}\label{e:naoiI1}
U_\bb F(\lie g[t])U_\bb F(\lie n^+[t])^0 \subseteq J.
\end{equation}
It remains to show that
\begin{equation*}
U_<^0\left(\cal I\cap U_\bb F(\lie n^-[t])\right)\subseteq J.
\end{equation*}

We begin by proving that $U_<^0U_\bb F(\lie n_2^-) \subseteq J$, where $\lie n_2^-$ is the subalgebra spanned by $x_2^-$. Consider the natural $Q$-grading on $U_\bb F(\lie g[t])$ and, for $\eta\in Q$, let $U_\bb F(\lie g[t])_\eta$ denote the corresponding graded piece. Observe that $\lie m_2:=\lie n^+[t]_<\oplus \lie n_2^-$ is a subalgebra of $\lie g[t]$ and that
\begin{equation*}
U_<^0 U_\bb F(\lie n^-_2)\subseteq \opl_{\eta}^{} U_\bb F(\lie m_2)_\eta,
\end{equation*}
where the sum runs over $\bb Z_{>0}\alpha_1\oplus\bb Z\alpha_2$. Together with the PBW Theorem, this implies that
\begin{equation*}
U_<^0 U_\bb F(\lie n^-_2)\subseteq U_\bb F(\lie n^-_2)U_<^0\subseteq U_\bb F(\lie a)U_<^0\subseteq J.
\end{equation*}

Finally, we show that $U_<^0\ \cal I_1\subseteq J$, where $\cal I_1=\left(\cal I\cap U_\bb F(\lie n^-_1[t])\right)$ and $\lie n_1^-$ is the subalgebra spanned by $x_1^-$. Consider
\begin{equation*}
\lie n^+[t]_<^1 = \opl_{\gamma\in R^+\setminus\{\alpha_1\}}^{}\ \opl_{s=0}^{s_\gamma-1} \mathbb Cx_{\gamma,s}^+,
\end{equation*}
which is a subalgebra of $\lie n^+[t]_<$ such that $\lie n^+[t]_< = \lie n_1^+\oplus \lie n^+[t]_<^1$, where $\lie n_1^+=\bb Cx_1^+$. Moreover, $\lie m_1:=\lie n^+[t]_<^1\oplus\lie n_1^-[t]$ is a subalgebra of $\lie g[t]$ such that $U(\lie m_1)_\eta \ne 0$ only if $\eta\in \bb Z\alpha_1\oplus \bb Z_{\ge 0}\alpha_2$ and $U(\lie m_1)_0=\bb C$. This implies that
\begin{equation*}
U_\bb F(\lie n^+[t]_<^1)^0 U_\bb F(\lie n^-_1[t])= U_\bb F(\lie n^-_1[t])U_\bb F(\lie n^+[t]_<^1)^0.
\end{equation*}
Since $U_<^0 = U_\bb F(\lie n^+_1)U_\bb F(\lie n^+[t]_<^1)^0 \oplus U_\bb F(\lie n^+_1)^0$, we get
\begin{align*}
U_<^0\ \cal I_1
& \subseteq \left(U_\bb F(\lie n^+_1)U_\bb F(\lie n^+[t]_<^1)^0 + U_\bb F(\lie n^+_1)^0\right)\cal I_1\\
& \subseteq U_\bb F(\lie n^+_1)U_\bb F(\lie n^-_1[t])U_\bb F(\lie n^+[t]_<^1)^0  + U_\bb F(\lie n^+_1)^0\cal I_1\\
&\subseteq U_\bb F(\lie g[t])U_\bb F(\lie n^+[t])^0  + U_\bb F(\lie n^+_1)^0\cal I_1.
\end{align*}
The first summand in the last line is in $J$ by \eqref{e:naoiI1} while the second one is in $J$ by Corollary \ref{c:naoi4.10} (with $\lambda=\lambda(h_1)$ and $\ell=3$) together with \eqref{e:naoiI1}.
\end{proof}

Set $\lie h_i=\mathbb Ch_i, i\in I$, and $\lie b = \lie n^-[t]\oplus\lie h[t]_+\oplus \lie h_2\oplus\lie n^+[t]_>$. Observe that $\lie b$ is an ideal of $\lie a$ such that $\lie a=\lie b\oplus\lie h_1$. One easily checks that there exists a unique Lie algebra homomorphism $\phi:\lie b\to \lie g[t]$ such that
\begin{equation*}
\phi(x_{\gamma,r}^\pm) = x_{\gamma,r\mp s_\gamma}^\pm \quad\text{for all}\quad \gamma\in R^+.
\end{equation*}
Moreover, $\phi$ is the identity on $\lie h[t]_+ + \lie{sl}_{\alpha_2}$. Also, $\phi$ can be extended to a Lie algebra map $\lie a\to U(\lie g[t])$ by setting $\phi(h_1) = h_1-3$ (cf. \cite[Section 4.2]{naoi:weyldem}). Proceeding as in Section \ref{ss:autom}, one sees that $\phi$ induces an algebra homomorphism $U_\bb F(\lie a)\to U_\bb F(\lie g[t])$ also denoted by $\phi$.

We are ready to prove the analogue of Lemma \ref{lem:bcf.naoi} for type $G_2$.

\begin{lem}{\label{lem:g.naoi}}
Let $V$ be a finite-dimensional $U_\bb F (\lie g [t])$-module, $\lambda\in P^+$, and suppose $v \in V_\lambda$ satisfies $U_\bb F (\lie n^+ [t])^0 v = U_\bb F (\lie h [t]_+)^0 v = 0$ and $(x_{1,s}^{-})^{(k)}v=0$ for all $k> \max\{0,\lambda(h_1)-3s\}$. Then, for every short root $\gamma$, we have $(x_{\gamma, s}^{-})^{(k)} v = 0$ for all $s \geq 0,k > \max \{ 0, \lambda(h_\gamma) - 3s \}$.
\end{lem}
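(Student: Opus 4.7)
We adapt the proof of \cite[Lemma 4.12]{naoi:weyldem} to the hyperalgebra setting. The main ingredients are Lemma \ref{l:naoi411}, the algebra homomorphism $\phi$ constructed in Subsection \ref{ss:relnsl}, the rank-two annihilation lemma (Lemma \ref{l:weyl-relhr}), and Corollary \ref{c:naoi4.10}.

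The first step is to verify that $v$ is annihilated by every generator of the left ideal $I'_\bb F(\lambda) \subseteq U_\bb F(\lie a)$ of Lemma \ref{l:naoi411}. The generators from $U_\bb F(\lie n^+[t]_>)^0$, $U_\bb F(\lie h[t]_+)^0$, and $h - \lambda(h)$ kill $v$ by hypothesis, as do the $(x_{1,s}^-)^{(k)}$ with $k > \max\{0,\lambda(h_1)-3s\}$. The remaining relation $(x_2^-)^{(k)}v=0$ for $k > \lambda(h_2)$ follows from Theorem \ref{t:rh}, since the $\lie g$-submodule $U_\bb F(\lie g)v$ is a finite-dimensional highest-weight module of highest weight $\lambda$. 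Hence the $U_\bb F(\lie a)$-submodule $W := U_\bb F(\lie a)v \subseteq V$ is a quotient of $U_\bb F(\lie a)/I'_\bb F(\lambda)$, and it suffices to prove the desired short-root relations inside this quotient.

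The second step handles the base cases. For each short root $\gamma \in R^+$, the case $s=0$ is immediate from Lemma \ref{l:weyl-relhr}: we have $(x_i^-)^{(k)}v = 0$ for $k > \lambda(h_i)$ (for $i=1$ by hypothesis at $s=0$, for $i=2$ by the first step), and, writing $h_\gamma = n_1 h_1 + n_2 h_2$ with $n_1,n_2 \in \bb Z_{\ge 0}$, Lemma \ref{l:weyl-relhr} yields $(x_\gamma^-)^{(k)}v = 0$ for $k > \lambda(h_\gamma)$.

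The third step is the grade-propagation via $\phi$. In the pull-back $\phi^*V$, the vector $v$ acquires Cartan weight $\lambda'$ with $\lambda'(h_1) = \lambda(h_1)-3$ and $\lambda'(h_2) = \lambda(h_2)$, and the identity $\phi((x_{\gamma,r}^-)^{(k)}) = (x_{\gamma,r+s_\gamma}^-)^{(k)}$ allows one to check (using that $\phi$ maps $U_\bb F(\lie n^+[t]_>)^0$ into $U_\bb F(\lie n^+[t])^0$ and preserves $U_\bb F(\lie h[t]_+)^0$) that $v$ in $\phi^*V$ satisfies the hypotheses of the lemma for $\lambda'$. Performing downward induction on $\lambda(h_1)$, the inductive hypothesis applied inside $\phi^*V$ then produces $(x_{\gamma,s}^-)^{(k)}\cdot_\phi v=0$ for the relevant $k$, which unwinds to $(x_{\gamma,s+s_\gamma}^-)^{(k)}v=0$ in $V$, yielding the conclusion at all grades $\ge s_\gamma$. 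The base of this induction, where $\lambda(h_1) \le 0$, reduces the role of $x_1^\pm$ to a trivial one and lets the long-root information from Lemma \ref{lem:long} (applied to $\alpha_2, 3\alpha_1+\alpha_2, 3\alpha_1+2\alpha_2$) plus Lemma \ref{l:weyl-relhr} finish off the required short-root relations.

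\textbf{Main obstacle.} The delicate point is the intermediate grades $1 \le s < s_\gamma$ for the non-simple short roots (in particular $\gamma = 2\alpha_1+\alpha_2$ with $s_\gamma = 2$ and $s=1$), which are not reached by Step 2 nor by one application of the pull-back. Filling these in requires a direct Chevalley-style commutator analysis inside the rank-two subalgebras $U_\bb F(\lie g_{\alpha_1,\alpha_2}^{r,s})$, bootstrapped from Corollary \ref{c:naoi4.10} applied to $U_\bb F(\lie{sl}_{\alpha_1}[t])$. It is at this point that the restriction $\mathrm{char}(\bb F) \ge 5$ becomes essential: the structure constants $\epsilon(p+1)$ appearing via \eqref{e:cbstructure} and the binomial coefficients produced by \eqref{e:koslem} and the divided-power expansions \eqref{e:t->t-a} must remain invertible, which can fail when $\mathrm{char}(\bb F) \in \{2,3\}$ due to the $3$-lacing of $G_2$.
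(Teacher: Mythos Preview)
Your overall architecture matches the paper's: reduce to the ideal $I_\bb F(\lambda)$, use $\phi$ to propagate grades, and isolate the case $s=1$ for $\vartheta=2\alpha_1+\alpha_2$ as the hard point. But two concrete gaps remain.

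First, the base of the induction is misstated. Since $\lambda\in P^+$, the reduction $\lambda\mapsto\lambda-3\omega_1$ stops when $\lambda(h_1)\in\{0,1,2\}$, not when $\lambda(h_1)\le 0$. Only the case $a:=\lambda(h_1)=0$ is as easy as you describe. For $a=1$ and especially $a=2$ the paper must split further according to the value of $b:=\lambda(h_2)$ relative to $s$, and it is in the base case $a=2,\ b=s-1$ (not in the inductive step) that the characteristic restriction is actually invoked: one writes $(x_{\vartheta,s}^-)^{(2)}$ explicitly as a combination involving $\frac{1}{2}$ and then $(x_{\vartheta,s}^-)^{(3)}=\frac{1}{3}x_{\vartheta,s}^-(x_{\vartheta,s}^-)^{(2)}$, so $2,3\in\bb F^\times$ is essential there. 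Your attribution of the characteristic restriction to the inductive $s=1$ step is therefore misplaced.

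Second, the $s=1$ case for $\vartheta$ in the \emph{inductive} step ($a\ge 3$) is not handled by ``commutator analysis in $U_\bb F(\lie g_{\alpha_1,\alpha_2}^{r,s})$ bootstrapped from Corollary~\ref{c:naoi4.10}''. When $b\ge 1$ the rank-two trick (Lemma~\ref{l:weyl-relhr}) does suffice, but when $b=0$ the paper needs a genuinely different mechanism: one must rule out $(x_{\vartheta,1}^-)^{(2a-1)}v\ne 0$ and $(x_{\vartheta,1}^-)^{(2a-2)}v\ne 0$ by contradiction, using Lemma~\ref{l:lowtohigh} (lowest-weight theory for $U_\bb F(\lie{sl}_\vartheta)$) together with Lemma~\ref{l:garland} to rewrite $(x_\vartheta^+)^{(m)}(x_{\vartheta,1}^-)^{(n)}v$ as combinations of $(x_{\vartheta,r}^-)^{(k)}v$ with $r\ge 2$, which vanish by the already-established $s\ge 2$ case. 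None of this comes from Corollary~\ref{c:naoi4.10} or the rank-two subalgebras. A smaller point: applying the inductive hypothesis ``inside $\phi^*V$'' is not literally correct, since $\phi^*V$ is only a $U_\bb F(\lie a)$-module and the lemma requires a $U_\bb F(\lie g[t])$-module; the paper avoids this by passing through Lemma~\ref{l:naoi411} to get $(x_{\vartheta,s-2}^-)^{(k)}\in I'_\bb F(\lambda-3\omega_1)$ and then checking $\phi(I'_\bb F(\lambda-3\omega_1))\subseteq I_\bb F(\lambda)$.
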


\begin{proof}
Notice that the conclusion of the lemma is equivalent to
\begin{equation*}
(x_{\gamma,s}^-)^{(k)}\in I_\bb F(\lambda) \quad\text{for all}\quad s\geq0,\ \ k>\max\{0,\lambda(h_\gamma)-3s\}
\end{equation*}
for every short root $\gamma$. Recall that the short roots in $R^+$ are $\alpha_1$, $\alpha:=\alpha_1+\alpha_2$ and $\vartheta:=2\alpha_1+\alpha_2$ while the long roots are $\alpha_2$, $\beta:=3\alpha_1+\alpha_2$ and $\theta:=3\alpha_1+2\alpha_2$. 
For $\gamma=\alpha$, we have $h_\gamma=h_1+3h_2$ and the proof is similar to that of Lemma \ref{lem:bcf.naoi} (the details can be found in \cite{macedo:PhD}). We shall use that the lemma holds for $\gamma=\alpha$ in the remainder of the proof. It remains to show that the lemma holds with $\gamma=\vartheta$. Notice that $h_\vartheta=2h_1+3h_2$ and, thus, we want to prove that
\begin{equation}\label{e:g.naoi}
(x_{\vartheta,s}^-)^{(k)}\in I_\bb F(\lambda) \quad\text{for all}\quad s\geq0,\ \ k>\max\{0,2\lambda(h_1)+3\lambda(h_2)-3s\}.
\end{equation}
We prove \eqref{e:g.naoi} by induction on $\lambda(h_1)$. Following \cite{naoi:weyldem}, we prove the cases $\lambda(h_1)\in\{0,1,2\}$ and then we show that \eqref{e:g.naoi} for $\lambda-3\omega_1$ in place of $\lambda$ implies it for $\lambda$.
To shorten notation, set $a=\lambda(h_1),b=\lambda(h_2)$.

\noindent {\bf 1)} Assume $a=0$. Since $\alpha_1\in R_\sh^+$, it follows that $(x_1^{-})^{(k)}v=0$ for all $k>0$. By Lemma \ref{lem:long}, we have $(x_{2,s}^{-})^{(k)}v=0$ for all $k>\max\{0,b-s\}$. Applying Lemma \ref{l:weyl-relhr} to the subalgebra $U_\mathbb F(\lie g_{\alpha_1,\alpha_2}^{0,s})$, it follows that $(x_{\vartheta, s}^{-})^{(k)}v=0$ for all $k>3\max\{0,b-s\}=\max\{0,2a+3b-3s\}$ as desired.

\noindent {\bf 2)} Assume $a=1$. This time we have $(x_1^{-})^{(k)}v=0$ for all $k>1$. We split in 3 subcases.

\noindent {\bf 2.1)} Suppose $b>s-1$ and notice $2a+3b-3s>0$. Lemma \ref{lem:long} implies $(x_{2,s}^{-})^{(k)}v=0$ for all $k>\max\{0,b-s\}=b-s$. Applying Lemma \ref{l:weyl-relhr} to the subalgebra $U_\mathbb F(\lie g_{\alpha_1,\alpha_2}^{0,s})$, it follows that $(x_{\vartheta, s}^{-})^{(k)}v=0$ for all $k>2+3(b-s)=2a+3b-3s$.

\noindent {\bf 2.2)} Suppose $b=s-1$ in which case $2a+3b-3s<0$. Notice that $h_\beta = h_1+h_2$ and, hence, $\lambda(h_\beta)= a+b=s$. Lemma \ref{lem:long} then implies that $(x_{\beta,s}^-)^{(k)}v=0$ for all $k>0$.
Notice that $\{-\alpha_1,\beta\}$ form a basis for $R$. Since, $(x_1^+)^{(k)}v=0$ for all $k>0$, Lemma \ref{l:weyl-relhr} applied to the subalgebra $U_\mathbb F(\lie g_{-\alpha_1,\beta}^{0,s})$ implies that $(x_{\vartheta, s}^{-})^{(k)}v=0$ for all $k>0$.

\noindent {\bf 2.3)} Suppose $b<s-1$ in which case $2a+3b-3s<0$. This time we apply Lemma \ref{l:weyl-relhr} to the subalgebra $U_\mathbb F(\lie g_{\alpha_1,\alpha_2}^{1,s-2})$. Indeed, we have $(x_{1,1}^{-})^{(k)}v=0$ for all $k>\max\{0,a-3\}=0$ and Lemma \ref{lem:long} implies that $(x_{2,s-2}^{-})^{(k)}v=0$ for all $k>\max\{0,b-(s-2)\}=0$. Thus, since$3(b-s)<-3$ and $a=1$, we have $\max\{0,2a+3b-3s\}=0$ and Lemma \ref{l:weyl-relhr} implies that $(x_{\vartheta, s}^{-})^{(k)}v=0$ for all $k>0$.

\noindent {\bf 3)} Assume $a=2$. We split in subcases as before.

\noindent {\bf 3.1)} If $b>s-1$ the proof is similar to that of step 2.1.

\noindent {\bf 3.2)} Suppose $b=s-1$ and notice that $2a+3b-3s=1$. Hence, we want to show that \eqref{e:g.naoi} holds for $k>1$. For $k>3$ we apply Lemma \ref{l:weyl-relhr} to the subalgebra $U_\mathbb F(\lie g_{\alpha_1,\alpha_2}^{1,s-2})$ in a similar fashion as we did in step 2.3 (the same can be conclude using the argument from step 2.2). For $k\in\{2,3\}$ we need our hypothesis on the characteristic of $\mathbb F$. Assume we have chosen the Chevalley basis so that $x_{\vartheta}^-= [x_1^+,x_\beta^-]$ and observe that \eqref{e:cbstructure} implies that $[x_1^+,x_{\vartheta}^-]=\pm 2 x_\alpha^-$. Using this, one easily checks that
\[(x_{\vartheta,s}^-)^{(2)}= (x_1^+)^{(2)}(x_{\beta,s}^-)^{(2)}-\frac 1 2x_1^+(x_{\beta,s}^-)^{(2)}x_1^+ -\frac 1 2 x_{\beta,s}^-x_{\vartheta,s}^-x_1^+ \mp x_{\beta,s}^- x_{\alpha,s}^-.\]
Using the case $\gamma=\alpha$ and Lemma \ref{lem:long} we see that $x_{\alpha,s}^-v=(x_{\beta,s}^-)^{(2)}v=0$. Hence, since $2\in\mathbb F^\times$, \eqref{e:g.naoi} holds for $k=2$. For $k=3$, we have $(x_{\vartheta,s}^-)^{(3)} = \frac 1 3 x_{\vartheta,s}^-(x_{\vartheta,s}^-)^{(2)}$ and, since $3\in\mathbb F^\times$, \eqref{e:g.naoi} also holds for $k=3$.

\noindent {\bf 3.3)} If $b<s-1$ the proof is similar to that of step 2.3.

\noindent {\bf 4)} Assume $a\ge 3$ and that \eqref{e:g.naoi} holds for $\lambda-3\omega_1$.

\noindent {\bf 4.1)} Suppose $s\ge 2$ and recall the definition of the map $\phi:U_\bb F(\lie a)\to U_\bb F(\lie g[t])$. The induction hypothesis together with Lemma \ref{l:naoi411} implies that
\begin{equation*}
(x_{\vartheta,s-2}^-)^{(k)}\in I_\bb F'(\lambda-3\omega_1) \quad\text{for all}\quad  k>\max\{0,2a+3b-3s\}
\end{equation*}
and, therefore
\begin{equation*}
(x_{\vartheta,s}^-)^{(k)} = \phi\left((x_{\vartheta,s-2}^-)^{(k)}\right) \in \phi(I_\bb F'(\lambda-3\omega_1)) \quad\text{for all}\quad  k>\max\{0,2a+3b-3s\}.
\end{equation*}
One easily checks that $\phi$ sends the generators of $I_\bb F'(\lambda-3\omega_1)$ to generators of $I_\bb F(\lambda)$, completing the proof of  \eqref{e:g.naoi} for $s\ge 2$.

\noindent {\bf 4.2)} For $s=0$, notice that $U_\mathbb F(\lie g)v$ is a quotient of $W_\mathbb F(\lambda)$, and \eqref{e:g.naoi} follows. Equivalently, apply Lemma \ref{l:weyl-relhr} to $U_\mathbb F(\lie g_{\alpha_1,\alpha_2}^{0,0})=U_\mathbb F(\lie g)$ and the proof is similar to that of step 2.1.

\noindent {\bf 4.3)} If $s=1$ and $b\ge 1$, we have $2a+3b-3s>0$ and the usual application of Lemma \ref{l:weyl-relhr} to $U_\mathbb F(\lie g_{\alpha_1,\alpha_2}^{0,1})$ completes the proof of \eqref{e:g.naoi}.
If $s=1$ and $b=0$, we need to show that $(x_{\vartheta,1}^-)^{(k)}v=0 $ for $k>2a-3$.

 Consider the subalgebra $U_\bb F(\lie{sl}_\vartheta[t])\cong U_\mathbb F(\lie{sl}_2[t])$ defined in Section \ref{ss:saofr12}. Since $\lambda(h_\vartheta)=2a$, it follows that $W:=U_\bb F(\lie{sl}_\vartheta[t])v$ is a quotient of $U_\mathbb F(\lie{sl}_2[t])$-module $W_\bb F^c(2a)$, where we identified the weight lattice of $\lie{sl}_2$ with $\mathbb Z$ as usual. Since $W_\bb F^c(2a)\cong D_\mathbb F(1,2a)$ by Theorem \ref{t:isos}\eqref{t:isosdg}, the defining relations of $D_\mathbb F(1,2a)$ imply $(x_{\vartheta,1}^-)^{(k)}v=0 $ for $k>2a-1$. It remains to check that $(x_{\vartheta,1}^-)^{(k)}v=0 $ for $k\in\{2a-2,2a-1\}$.

Suppose by contradiction that
$(x_{\vartheta,1}^-)^{(2a-1)}v\ne 0$ and notice that
\begin{equation}\label{e:low2a-1}
(x_{\vartheta}^-)^{(k)}(x_{\vartheta,1}^-)^{(2a-1)}v= 0 \quad\text{for all}\quad k>0.
\end{equation}
Indeed,
\begin{equation*}
(x_{\vartheta}^-)^{(k)}(x_{\vartheta,1}^-)^{(2a-1)}v\in W_\bb F^c(2a)_{-2a-2(k-1)}
\end{equation*}
is a vector of degree $2a-1>1$ for all $k\ge0$. By the Weyl group invariance of the character of $W_\bb F^c(2a)$, we know that $W_\bb F^c(2a)_{-2a-2(k-1)}=0$ if $k>1$, and that $W_\bb F^c(2a)_{-2a-2(k-1)}$ is one-dimensional concentrated in degree zero if $k=1$. This proves \eqref{e:low2a-1}. Then, Lemma \ref{l:lowtohigh} implies that
\begin{equation*}
(x_{\vartheta}^+)^{(2a-2)}(x_{\vartheta,1}^-)^{(2a-1)}v\ne 0.
\end{equation*}
On the other hand, it follows from Lemma \ref{l:garland} that
\begin{equation*}
(x_{\vartheta}^+)^{(2a-2)}(x_{\vartheta,1}^-)^{(2a-1)}v = x_{\vartheta, 2a-1}^-v.
\end{equation*}
Since $2a-1\ge 2$ and $2a-3(2a-1)=-4a+3<0$, it follows from step 4.1 that $x_{\vartheta, 2a-1}^-v=0$ yielding a contradiction as desired.

Similarly, assume  by contradiction that $(x_{\vartheta,1}^-)^{(2a-2)}v\ne 0$ and notice that
\begin{equation*}
(x_{\vartheta}^-)^{(k)}(x_{\vartheta,1}^-)^{(2a-2)}v= 0 \quad\text{for all}\quad k>1.
\end{equation*}
Suppose first that we also have $x_{\vartheta}^-(x_{\vartheta,1}^-)^{(2a-2)}v= 0$. It then follows from Lemma \ref{l:lowtohigh} that
\begin{equation*}
(x_{\vartheta}^+)^{(2a-4)}(x_{\vartheta,1}^-)^{(2a-2)}v\ne 0.
\end{equation*}
On the other hand, Lemma \ref{l:garland} implies that
\begin{equation*}
(x_{\vartheta}^+)^{(2a-4)}(x_{\vartheta,1}^-)^{(2a-2)}v = (x_{\vartheta, a-1}^-)^{(2)}v + \sum_{r=a}^{2a-2} x_{\vartheta,2a-2-r}^-x_{\vartheta, r}^- v.
\end{equation*}
Since $a-1\ge 2$, step 4.1 implies that $(x_{\vartheta, r}^-)^{(k)}v=0$ for all $r\ge a-1,k>0$, implying that the right-hand side is zero, which is a contradiction. It remains to check the possibility that $x_{\vartheta}^-(x_{\vartheta,1}^-)^{(2a-2)}v\ne 0$. In this case it follows that $x_{\vartheta}^-(x_{\vartheta,1}^-)^{(2a-2)}v$ is a lowest-weight vector for the algebra $U_\bb F(\lie{sl}_\vartheta)$ and, hence, Lemma \ref{l:lowtohigh} implies that
\begin{equation*}
(x_{\vartheta}^+)^{(2a-2)}x_{\vartheta}^-(x_{\vartheta,1}^-)^{(2a-2)}v\ne 0.
\end{equation*}
Using \eqref{e:koslem} we get
\begin{equation*}
(x_{\vartheta}^+)^{(2a-2)}x_{\vartheta}^-(x_{\vartheta,1}^-)^{(2a-2)}v = \left(x_{\vartheta}^-(x_{\vartheta}^+)^{(2a-2)} + (x_{\vartheta}^+)^{(2a-3)}\right)(x_{\vartheta,1}^-)^{(2a-2)}v.
\end{equation*}
Lemma \ref{l:garland} together with step 4.1 will again imply that the right-hand side is zero. This completes the proof.
\end{proof}

\subsection{Existence of Demazure flag} \label{ss:isofil} 

If $\lie g$ is simply laced, Theorem \ref{t:isos}\eqref{t:isofil} follows immediately from part \eqref{t:isosdg} with $k=1$. Thus, assume from now on that $\lie g$ is not simply laced and recall the notation introduced in Section \ref{ss:gsh}.

Given $\lambda\in P^+$, let $\mu=\overline\lambda\in P_\sh^+$ and $v$ be the image of $1$ in $W_\mathbb C^c(\lambda)$. Consider $W^\sh_\mathbb C:= U(\lie g_\sh[t])v$ and $W^\sh_\mathbb Z:=U_\mathbb Z(\lie g_\sh[t])v$.
By \cite[Lemma 4.17]{naoi:weyldem}, there is an isomorphism of $U(\lie g_\sh [t])$-modules $W^\sh_\mathbb C \cong D_\mathbb C(1,\mu)$.
By Corollary \ref{c:gweylfd}, $W^\sh_\mathbb Z$ is an integral form of $W^c_\mathbb C (\mu)\cong D_\mathbb C(1,\mu)$. Hence, we have an isomorphism of $U_\bb Z (\lie g_\sh [t])$-modules $W^\sh_\mathbb Z\cong D_\mathbb Z(1,\mu)$.

Since $\lie g_\sh$ is of type $A$, Theorem \ref{t:filtration} implies that there exist $k> 0, \mu_1,\dots,\mu_k\in P_\sh^+, m_1,\dots,m_k\in\mathbb Z_{\ge 0}$, and a filtration of $U_\mathbb Z(\lie g_\sh [t])$-modules $0=D_0\subseteq D_1\subseteq\cdots\subseteq D_k= W^\sh_\mathbb Z$, such that $D_j$ and $D_j/D_{j-1}$ are free $\mathbb Z$-modules, and $D_j/D_{j-1}\cong D_\mathbb Z(r^\vee,\mu_j,m_j)$ for all $j=1,\dots,k$. In particular,
\begin{equation}\label{e:Dk/Djfree}
W^\sh_\mathbb Z/D_j \quad\text{is a free} \quad\mathbb Z\text{-module for all}\quad j=0,\dots,k.
\end{equation}
Set $\lambda_j = \eta_\lambda(\mu_j) \in P^+$ where $\eta_{\lambda}$ is defined in \eqref{defn:eta.lambda}, $W_\mathbb Z^j = U_\mathbb Z(\lie g[t])D_j$ and $W_\mathbb F^j = \mathbb F\otimes_\mathbb Z W_\mathbb Z^j$. It is easy to see that we have $0=W_\mathbb F^0\subseteq W_\mathbb F^1\subseteq\cdots\subseteq W_\mathbb F^k$, and $\lambda_k=\lambda$ since $\mu_k = \mu$. Hence, we are left to show that
$$W_\mathbb F^j/W_\mathbb F^{j-1}\cong D_\mathbb F(1,\lambda_j,m_j)\quad\text{for all}\quad j=1,\dots,k, \quad\text{and}\quad W_\mathbb F^k\cong W^c_\mathbb F(\lambda).$$

Notice that $W_\mathbb Z^k=U_\mathbb Z(\lie g[t])v$. Then, Corollary \ref{c:gweylfd} implies that $W_\mathbb Z^k$ is an integral form of $W_\mathbb C^c(\lambda)$. Since $\bb Z$ is a PID and $W_\mathbb Z^k$ is a finitely generated, free \bb Z-module, it follows that $W^j_\mathbb Z$ is a free $\mathbb Z$-module of finite-rank for all $j=1,\dots,k$. Set $W_\mathbb C^j = U(\lie g[t])D_j$. It follows from \cite[Proposition 4.18]{naoi:weyldem} (which is Theorem \ref{t:isos}\eqref{t:isofil} in characteristic zero) that $W_\mathbb C^j/W_\mathbb C^{j-1}\cong D_\mathbb C(1,\lambda_j,m_j)$ for all $j=1,\dots,k$. Moreover, since $W^j_\mathbb C\cong\mathbb C\otimes_\mathbb Z W^j_\mathbb Z$, we have
\[
\mathbb C\otimes_\mathbb Z (W_\mathbb Z^j/W_\mathbb Z^{j-1}) \cong (W_\mathbb C^j/W_\mathbb C^{j-1}) \cong D_\mathbb C(1,\lambda_j,m_j).
\]
Therefore, $W_\mathbb Z^j/W_\mathbb Z^{j-1}$ is a finitely generated $\mathbb Z$-module of rank $\dim(D_\mathbb C(1,\lambda_j,m_j))$ for all $j=1,\dots,k$. Since
$W_\bb F^j/W_\bb F^{j-1}\cong \mathbb F\otimes_\mathbb Z (W_\mathbb Z^j/W_\mathbb Z^{j-1})$, it follows that
\[
\dim(W_\mathbb F^j/W_\mathbb F^{j-1})\ge \dim(D_\mathbb C(1,\lambda_j,m_j)) = \dim(D_\mathbb F(1,\lambda_j,m_j)).
\]
Now, let $v_j\in D_j$ be as in Theorem \ref{t:filtration}, $w$ be the image of $v$ in $W^k_\mathbb F$, $u_j\in U_\mathbb Z(\lie n^-_\sh[t])$ be such that $v_j=u_jv$, and $w_j=u_jw$. It follows that
$$W_\mathbb Z^j= \sum_{n\le j} U_\mathbb Z(\lie g[t])v_n \quad\text{and}\quad W_\mathbb F^j=\sum_{n\le j} U_\mathbb F(\lie g[t])w_n.$$
We will show that the image $\bar w_j$ of $w_j$ in $W_\mathbb F^j/W_\mathbb F^{j-1}$ satisfies the relations described in Proposition \ref{p:lessreldem}, which implies that $W_\mathbb F^j/W_\mathbb F^{j-1}$ is a quotient of $D_\mathbb F(1,\lambda_j,m_j)$ and, hence, $W_\mathbb F^j/W_\mathbb F^{j-1}\cong D_\mathbb F(1,\lambda_j,m_j)$ for all $j=1,\dots,k$.

By construction, $v_j$ is a weight vector of weight $\lambda_j$ and degree $m_j$, and so is $w_j$. Since $D_j/D_{j-1}\cong D_\mathbb Z(r^\vee,\mu_j,m_j)$, it follows that
\begin{gather*}
U_\mathbb F(\lie n^+_\sh[t])^0\bar w_j= U_\mathbb F(\lie h_\sh [t]_+)^0\bar w_j = 0 \quad\text{and}\quad (x_{\alpha,s}^-)^{(k)}\bar w_j = 0
\end{gather*}
for all $\alpha\in R^+_\sh, s\ge 0, k> \max \{0, \lambda(h_\alpha) - s r^\vee \}, j=1,\dots,k$. Thus, it remains to show that
\begin{gather*}
(x_{\alpha,s}^+)^{(m)}\bar w_j= \Lambda_{i,r}\bar w_j =  (x_\alpha^-)^{(k)}\bar w_j = 0
\end{gather*}
for all $\alpha\in R^+\setminus R_\sh^+, s\ge 0,r,m>0, k>\lambda_j(h_i), j=1,\dots,k$.
Since,
\begin{equation}\label{e:+nshrel}
\lambda_j+m\alpha\notin \lambda-Q^+ \qquad\text{for all}\qquad \alpha\in R^+\setminus R_\sh^+, m>0,
\end{equation}
we get $(x_{\alpha,s}^+)^{(m)}w_j=0$ for all $m>0, s\ge 0$. In particular, it follows that $\bar w_j$ is a highest-weight vector of weight $\lambda_j$ and, hence, $(x_\alpha^-)^{(k)}\bar w_j = 0$ for all $\alpha\in R^+, k>\lambda(h_\alpha)$. Finally, we show that
\begin{equation}\label{e:Lambdarelfil}
\Lambda_{i,r}\bar w_j=0 \quad\text{for all}\quad i\in I\setminus I_\sh, r>0, j=1,\dots,k.
\end{equation}
Observe that
\begin{equation*}
\Lambda_{i,r}u_j \in U_\mathbb Z(\lie n_\sh^-)U_\mathbb Z(\lie h[t]_+).
\end{equation*}
In particular, $\Lambda_{i,r} v_j\in W^\sh_\mathbb Z\cap W_\mathbb Z^j$. We will show that $\Lambda_{i,r} v_j\in D_{j-1}$ which implies \eqref{e:Lambdarelfil}.
Let $y_j\in U_\mathbb Z(\lie n_\sh^-)$ be such that $\Lambda_{i,r}u_j  = y_j$ modulo $U_\mathbb Z(\lie n_\sh^-)U_\mathbb Z(\lie h[t]_+)^{0}$. Thus, we want to show that
\begin{equation}\label{e:yjrel}
y_jv \in D_{j-1}.
\end{equation}
We prove this recursively on $j=1,\dots, k$. Notice that, since $\mathbb C\otimes_\mathbb Z(W^j_\mathbb Z/W^{j-1}_\mathbb Z)\cong D_\mathbb C(1,\lambda_j,m_j)$, there exists $n_j\in\mathbb Z_{>0}$ such that $n_jy_jv\in W_\mathbb Z^{j-1}, j=1,\dots,k$. In particular, since $W_\mathbb Z^{0}=0$ and $W_\mathbb Z^1$ is a torsion-free $\mathbb Z$-module, \eqref{e:yjrel} follows for $j=1$. Next, we show that
\eqref{e:yjrel} implies
\begin{equation}\label{e:WcapD}
W_\mathbb Z^j\cap W_\mathbb Z^{\sh} = D_j.
\end{equation}
Indeed, it follows from \eqref{e:+nshrel} and \eqref{e:yjrel} that
\begin{equation*}
W_\mathbb Z^j = U_\mathbb Z(\lie n^-[t])U_\mathbb Z(\lie g_\sh[t])v_j + W_\mathbb Z^{j-1}.
\end{equation*}
Since $U_\mathbb Z(\lie h_\sh[t]_+)^0U_\mathbb Z(\lie n_\sh^+)^0v_j\in D_{j-1}$ and, by induction hypothesis, $W_\mathbb Z^{j-1}\cap W_\mathbb Z^{\sh} = D_{j-1}$, \eqref{e:WcapD} follows by observing that
$$\left(U_\mathbb Z(\lie n^-[t])v_j\right)\cap W_\mathbb Z^{\sh} \subseteq D_j$$
(which is easily verified by weight considerations). Finally, observe that, since $n_{j+1}y_{j+1}v\in W_\mathbb Z^j\cap W_\mathbb Z^{\sh}=D_j$, \eqref{e:Dk/Djfree} implies that $y_{j+1}v\in D_j$. Thus, \eqref{e:WcapD} for $j$ implies \eqref{e:yjrel} for $j+1$ and the recursive step is proved.

\begin{rem}
It follows from the above that $W_\mathbb F^j/W_\mathbb F^{j-1}\cong D_\mathbb F(1,\lambda_j,m_j)$ for any field $\mathbb F$. Hence, $W_\mathbb Z^j/W_\mathbb Z^{j-1}$ must be isomorphic to $D_\bb Z (1, \lambda_j,m_j)$ for all $j = 1, \dots, k$.
\end{rem}

It remains to show that $W^k_\mathbb F \cong W_\mathbb F^c(\lambda)$. Since Theorem \ref{t:fdrhca}\eqref{t:gweylu} implies that we have a projection $W_\mathbb F^c(\lambda)\sobre W_\mathbb F^k$ of $U_\bb F (\lie g[t])$-modules, it suffices to show that $\dim(W_\mathbb F^c(\lambda))\le\dim(W_\mathbb F^k)$. This follows if we show that there exists a filtration $0=\tilde W_\mathbb F^0\subseteq \tilde W_\mathbb F^1\subseteq\cdots\subseteq \tilde W_\mathbb F^k=W_\mathbb F^c(\lambda)$ such that $\tilde W_\mathbb F^j/\tilde W_\mathbb F^{j-1}$ is a quotient of $D_\mathbb F(1,\lambda_j,m_j)$ for all $j=1,\dots,k$.
Let  $w'$ be the image of $1$ in $W_\mathbb F^c(\lambda)$, $w_j'=u_jw'\in W_\bb F^c (\lambda)$, $\tilde W^j_\mathbb F:= \sum_{n\le j}U_\mathbb F(\lie g[t])w_n' \subseteq W_\bb F^c (\lambda),$ and $\bar w_j'$ be the image of $w_j'$ in $\tilde W_\mathbb F^j/\tilde W_\mathbb F^{j-1}$. Observe that $\tilde W_\mathbb F^k = W_\mathbb F^c(\lambda)$. We need to show that $\bar w_j'$ satisfies the defining relations of $D_\mathbb F(1,\lambda_j)$ listed in Proposition \ref{p:lessreldem}.
Let $\tilde D_j = \mathbb F\otimes_\mathbb Z D_j$ and $D'_j = \sum_{n\le j}U_\mathbb F(\lie g_\sh[t])w'_n$. Notice that $D_k'$ is a quotient of $W^c_\mathbb F(\mu)\cong \tilde D_k$ and let $\pi:\tilde D_k\to D'_k$ be a $U_\mathbb F(\lie g_\sh[t])$-module epimorphism such that $v_k\mapsto w'_k$ (we keep denoting the image of $v_j$ in $\tilde D_j$ by $v_j$). In particular, $w'_j = \pi(v_j)$ and $\pi$ induces an epimorphism $\tilde D_j\to D'_j$ for all $j=1,\dots,k$. Hence,
\begin{equation*}
xw'_j\in D'_{j-1} \quad\text{for all}\quad x\in U_\mathbb Z(\lie g_\sh[t]) \quad\text{such that}\quad xv_j\in D_{j-1} .
\end{equation*}
This immediately implies that
\begin{gather*}
U_\mathbb F(\lie n^+_\sh[t])^0\bar w'_j= U_\mathbb F(\lie h [t]_+)^0\bar w'_j = 0 \quad\text{and}\quad (x_{\alpha,s}^-)^{(k)}\bar w'_j = 0
\end{gather*}
for all $\alpha\in R^+_\sh,  s\ge 0, k> \max \{0, \lambda(h_\alpha) - s r^\vee \}$, $j=1,\dots,k$. Note that \eqref{e:yjrel} has been used here.
The relations
$$(x_{\alpha,s}^+)^{(m)}\bar w'_j=  (x_i^-)^{(k)}\bar w'_j = 0$$
for all $\alpha\in R^+\setminus R_\sh^+,i\in I\setminus I_\sh, s\ge 0,m>0, k>\lambda_j(h_i), j=1,\dots,k$ follow from \eqref{e:+nshrel} as before.

\subsection{The isomorphism between local Weyl modules and graded local Weyl modules}{\label{pf13b}}

We now prove Theorem \ref{t:isos}\eqref{t:isoslg}. Recall the definition of the automorphism $\varphi_a$ of $U_\bb F(\lie g[t])$ from Section \ref{ss:autom}. In particular, let $\tilde a\in\bb A^\times$ be such that its image in $\bb F$ is $a$. Denote by $\varphi_a^*(W_\mathbb F(\gb\omega_{\lambda,a}))$ the pull-back of $W_\mathbb F(\gb\omega_{\lambda,a})$ (regarded as a $U_\bb F(\lie g[t])$-module) by $\varphi_a$.

Notice that
$$\dim W_\mathbb F(\gb\omega_{\lambda,a}) = \dim W_\mathbb K(\gb\omega_{\lambda,\tilde a})=  \dim W_\mathbb K^c(\lambda) = \dim W_\mathbb F^c(\lambda).$$
Here, the first equality follows from \eqref{e:conj}, the second from \eqref{e:chWeyl} (with $\bb F=\bb K$) together with Proposition \ref{p:dimwbyfund}, and the third from Corollary \ref{c:indf}. Since $\dim \varphi_a^*(W_\mathbb F(\gb\omega_{\lambda,a}))=\dim W_\mathbb F(\gb\omega_{\lambda,a})$,  Theorem \ref{t:isos}\eqref{t:isoslg} follows if we show that $\varphi_a^*(W_\mathbb F(\gb\omega_{\lambda,a}))$ is a quotient of $W_\mathbb F^c(\lambda)$.

Let $w\in W_\mathbb F(\gb\omega_{\lambda,a})_\lambda\setminus\{0\}$ and use the symbol $w_a$ to denote $w$ when regarded as an element of $\varphi_a^*(W_\bb F(\gb\omega_{\lambda,a}))$. Since $W_\mathbb F(\gb\omega_{\lambda,a})=U_\bb F(\lie g[t])w$ and $\varphi_a$ is an automorphism of $U_\bb F (\lie g [t])$, it follows that $\varphi_a^* W_\bb F (\gb\omega_{\lambda,a})=U_\bb F(\lie g[t])w_a$.  Thus, we need to show that $w_a$ satisfies the defining relations \eqref{e:glWrel} of $W_\bb F^c(\lambda)$. Since $\varphi_a$ fixes every element of $U_\bb F(\lie g)$, $w_a$ is a vector of weight $\lambda$ annihilated by $(x_{\alpha}^-)^{(k)}$ for all $\alpha\in R^+, k>\lambda(h_\alpha)$. Equation \eqref{e:t->t-a} implies that $\varphi_a$ maps $U_\bb F (\lie n^+[t])$ to itself and, hence, $U_\bb F (\lie n^+[t])^0w_a = 0$. Therefore, it remains to show that
$$U_\bb F(\lie h[t]_+)^0w_a=0.$$
For showing this, let $v\in W_\bb K(\gb\omega_{\lambda,\tilde a})_\lambda\setminus\{0\}$ and $L=U_\bb A(\lie g[t])v$. By \eqref{e:conj}, $\bb F\otimes_\bb A L \cong W_\bb F(\gb\omega_{\lambda,a})$. In particular, the action of $U_\bb F(\lie h[t]_+)^0$ on $\varphi_a^*(W_\mathbb F(\gb\omega_{\lambda,a}))$ is obtained from the action of $U_\bb A(\lie h[t]_+)^0$ on $\varphi_{\tilde a}^*(W_\mathbb K(\gb\omega_{\lambda,\tilde a}))$ which, in turn, is obtained from the action of $U_\bb K(\lie h[t]_+)^0$. Since $U_\bb K(\lie h[t]_+)$ is generated by $h_{i,r}, i\in I, r>0$, we are left to show that
$$h_{i,r}v_a=0,$$
where $v_a$ is the vector $v$ regarded as an element of $\varphi_{\tilde a}^*(W_\mathbb K(\gb\omega_{\lambda,\tilde a}))$. It is well known that the irreducible quotient of $W_\mathbb K(\gb\omega_{\lambda,\tilde a})$ is the evaluation module with evaluation parameter $\tilde a$ (cf. \cite[Section 3B]{JMhyper}). Hence, $h_{i,s}v=\tilde a^s\lambda(h_i)v$ for all $i\in I,s\in\bb Z$. Using this, it follows that, for all $i\in I, r>0$, we have
\begin{equation*}
h_{i,r}v_a = (h_i\otimes(t-\tilde a)^r)v =\sum_{s=0}^r \tbinom{r}{s}(-\tilde a)^s h_{i,r-s}v= \lambda(h_i)\tilde a^r\sum_{s=0}^r \tbinom{r}{s}(-1)^sv= 0.
\end{equation*}

\subsection{A tensor product theorem}\label{ss:tpprime}

We say that $\gb\omega,\gb\pi\in\mathcal P_\bb F^+$ are relatively prime if, for all $i,j\in I$, the polynomials $\gb\omega_i(u)$ and $\gb\pi_j(u)$ are relatively prime in $\bb F[u]$. The goal of this subsection is to prove the following theorem from which we will deduce Theorem \ref{t:isos}\eqref{t:isostp}.

\begin{thm}\label{t:tpwm}
Suppose $\gb \omega,\gb\pi\in\mathcal P_\mathbb F^+$ are relatively prime and that $V$ and $W$ are quotients of
$W_{\mathbb F} (\gb \omega)$ and $ W_{\mathbb F} (\gb \pi)$, respectively. Then, $V\otimes W$ is generated by its top weight space.
\end{thm}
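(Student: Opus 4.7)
Let $v_0\in V_\lambda$ and $w_0\in W_\mu$ be highest-weight vectors with $\lambda=\wt(\gb\omega)$, $\mu=\wt(\gb\pi)$, and set $M := U_\mathbb F(\tlie g)(v_0\otimes w_0)\subseteq V\otimes W$; the goal is $M=V\otimes W$. I would first verify that $v_0\otimes w_0$ is itself a highest-$\ell$-weight vector of $\ell$-weight $\gb\omega\gb\pi$: the coproduct $\Delta((x_{\alpha,r}^+)^{(k)})=\sum_{i+j=k}(x_{\alpha,r}^+)^{(i)}\otimes (x_{\alpha,r}^+)^{(j)}$ annihilates it, while the group-like identity $\Delta(\Lambda_i^\pm(u))=\Lambda_i^\pm(u)\otimes\Lambda_i^\pm(u)$ (which follows from primitivity of the $h_{i,\pm s}$) produces $\ell$-weight $\gb\omega\gb\pi$. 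In particular, $M$ is a highest-$\ell$-weight quotient of $W_\mathbb F(\gb\omega\gb\pi)$.

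Next, I would reduce to the statement $V\otimes w_0\subseteq M$ by downward induction on the weight $\nu\in\wt(W)$ of $w$: any $w\in W_\nu$ is a combination of elements $(x_{\alpha,r}^-)^{(n)}w'$ with $w'\in W_{\nu+n\alpha}$, and the rearrangement
\begin{equation*}
v\otimes (x_{\alpha,r}^-)^{(n)}w' = (x_{\alpha,r}^-)^{(n)}(v\otimes w') - \sum_{j\ge1}((x_{\alpha,r}^-)^{(j)}v)\otimes((x_{\alpha,r}^-)^{(n-j)}w')
\end{equation*}
places $v\otimes (x_{\alpha,r}^-)^{(n)}w'$ in $M$, because every term on the right has its $W$-factor in a strictly higher weight space than $w$ and is in $M$ by the inductive hypothesis.

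The main step, where relative primeness is essential, is $V\otimes w_0\subseteq M$. I would induct on $\lambda-\wt(v)\in Q^+$: writing $v=(x_{\alpha,r}^-)^{(n)}v'$ with $\wt(v')>\wt(v)$ gives
\begin{equation*}
v\otimes w_0 = (x_{\alpha,r}^-)^{(n)}(v'\otimes w_0) - \sum_{j\ge1}((x_{\alpha,r}^-)^{(n-j)}v')\otimes((x_{\alpha,r}^-)^{(j)}w_0),
\end{equation*}
with the first summand in $M$ by induction. To eliminate the residual terms, I invoke the generalized $\ell$-weight decomposition $V\otimes W = \bigoplus_{\gb\pi'} V\otimes W_{\gb\pi'}$ into $U_\mathbb F(\tlie h)$-submodules, which is well-defined because $\Delta(\Lambda_{i,r})=\sum_{s+t=r}\Lambda_{i,s}\otimes\Lambda_{i,t}$ and $\Delta\left(\tbinom{h_i}{k}\right)=\sum_{s+t=k}\tbinom{h_i}{s}\otimes\tbinom{h_i}{t}$ (so all hyperalgebra generators of $U_\mathbb F(\tlie h)$ preserve the decomposition). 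Since $W_{\gb\pi}\subseteq W_\mu=\mathbb Fw_0$, the residual $W$-factors $(x_{\alpha,r}^-)^{(j)}w_0\in W_{\mu-j\alpha}$ with $j\ge1$ lie in $\bigoplus_{\gb\pi'\ne\gb\pi}W_{\gb\pi'}$. The spectral projector $P$ onto $V\otimes W_{\gb\pi}=V\otimes\mathbb Fw_0$ is then realized by a polynomial in the $U_\mathbb F(\tlie h)$-action on $V\otimes W$: here is where the relative primeness of $\gb\omega$ and $\gb\pi$ enters, because the $\ell$-weights $\gb\omega'$ of $V$ and $\gb\pi'$ of $W$ inherit coprimeness from $\gb\omega,\gb\pi$ (their coordinate polynomials are constrained, via the Weyl-module structure, to have roots among those of $\gb\omega$, resp.\ $\gb\pi$), so that the $\ell$-weights $\gb\omega'\gb\pi$ of $V\otimes W_{\gb\pi}$ are disjoint from the $\ell$-weights $\gb\omega''\gb\pi'$ of $V\otimes W_{\gb\pi'}$ for any $\gb\pi'\ne\gb\pi$. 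This disjointness is what allows a Lagrange/Bezout-type polynomial in the $\Lambda_{i,s}$ to carry out the projection. Since $P$ stabilizes $M$, applying $P$ to the displayed identity kills the residuals while preserving $v\otimes w_0$ (after decomposing $v$ into $\ell$-weight components if necessary), yielding $v\otimes w_0\in M$ and closing the induction.

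\textbf{Main obstacle.} The technical heart is constructing the projector $P$ in positive characteristic. One must (a) pin down the $\ell$-weights actually occurring in the quotients $V,W$ so their coprimeness is inherited from that of $\gb\omega,\gb\pi$, and (b) realize the spectral projection by an explicit polynomial in the divided-power generators $\Lambda_{i,s}$ and $\tbinom{h_i}{k}$ of $U_\mathbb F(\tlie h)$---which over a field of characteristic $p$ requires verifying the interpolation combination has $\mathbb F$-integral coefficients in the divided-power basis, rather than merely living inside a completion or the characteristic-zero enveloping algebra.
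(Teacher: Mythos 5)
Your plan shares the main philosophical ingredient with the paper's proof (exploit the relative primeness of $\gb\omega_\vartheta$ and $\gb\pi_\vartheta$ via a Bezout identity, combined with the commutative $U_\bb F(\tlie h)$-action on $V\otimes W$), but the execution is genuinely different, and the difference exposes a real gap.

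The paper's proof does not construct a projector. It fixes polynomials $R,S$ with $R\gb\omega_\vartheta + S\gb\pi_\vartheta = 1$, applies the Garland-type operators $\bigl(R\gb\omega_\vartheta X_{\beta;k}^{-(r)}\bigr)_\ell$ and $\bigl(S\gb\pi_\vartheta X_{\beta;k}^{-(r)}\bigr)_\ell$ directly to the tensor $w_{\gb\omega}^\xi\otimes w_{\gb\pi}^{\xi'}$, and then verifies term-by-term (Lemmas \ref{kill} and \ref{<r+d}) that the unwanted pieces lie in the span of vectors of strictly smaller degree, so that a double induction on degree and on the divided power $r$ closes. The only input about the $\ell$-weight structure it uses is the single relation $(\gb\omega_\beta X_{\beta;s}^{-(k-l)})_k w=0$ for $k>\lambda(h_\beta)$, which follows from Garland's identity applied to the highest-$\ell$-weight vector and is then propagated down the filtration by $\Xi$-degree. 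The argument is entirely ``local'' on specific descendant vectors.

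Your plan replaces this by a global spectral projector $P$ onto $V\otimes W_{\gb\pi}$ realized inside (the image of) $U_\bb F(\tlie h)$. For such a $P$ to exist, $V\otimes W_{\gb\pi}$ must be a \emph{union of generalized $\ell$-weight spaces} of $V\otimes W$ (a $U_\bb F(\tlie h)$-stable complement is not enough: an $A$-equivariant projector onto an $A$-submodule need not lie in a commutative $A$). This forces the disjointness $\{\gb\omega'\gb\pi\}\cap\{\gb\omega''\gb\pi'\}=\emptyset$ for $\gb\pi'\ne\gb\pi$, which in turn needs the claim you parenthetically invoke: that every $\ell$-weight of a quotient of $W_\bb F(\gb\omega)$ has its fundamental-$\ell$-weight support contained in the roots of the $\gb\omega_i$. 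That claim is plausible, but it is a nontrivial structural statement about local Weyl modules over $U_\bb F(\tlie g)$ in positive characteristic, and it is neither proved in the present paper nor cited; the paper's argument is designed precisely so that it does not need it (it only ever evaluates $\Lambda_\vartheta(u)$ on the highest-$\ell$-weight vector). You flag this under item (a) of your ``main obstacle,'' but treat it as a loose end to be ``pinned down''; in fact it is the crux, and without it the projector does not exist and the induction does not close.

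Your second concern, item (b) about $\bb F$-integrality of the interpolation, is actually a non-issue: once the disjointness holds, the image of $U_\bb F(\tlie h)$ in $\End_\bb F(V\otimes W)$ is a finite-dimensional commutative algebra over an algebraically closed field, and the idempotents decomposing $V\otimes W$ into its generalized $\ell$-weight spaces lie in that image automatically by linear algebra; no explicit interpolation in divided powers is required. So the genuine obstacle is (a), not (b). The reduction of the theorem to ``$V\otimes w_0\subseteq M$'' via the comultiplication rearrangement and downward weight induction is correct and is also used, implicitly, in the paper's bookkeeping with the sets $\Xi_d$.
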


Theorem \ref{t:tpwm} was proved in \cite{CPweyl} in the case $\mathbb F=\mathbb C$. Although the proof we present here follows the same general lines, there are several extra technical issues to be taken care of arising from the fact that $U_\bb C(\tlie g)$ is generated by $x_{\alpha,r}^\pm, \alpha\in R^+, r\in\bb Z$, while, in the case of $U_\bb F(\tlie g)$, we also need arbitrarily large divided powers of these elements. We start the proof establishing a few technical lemmas. Recall the definition of $X^-_{\alpha,m,s}(u)$ in Section \ref{ss:comut} and set
\begin{equation*}
X^-_{\alpha;s}(u)=X^-_{\alpha,1,s+1}(u)
\end{equation*}
To shorten notation, we shall often write $X^-_{\alpha;s}$ instead of $X^-_{\alpha;s}(u)$.

Fix $\gb\omega\in\mathcal P^+$ and let $w$ be a highest-$\ell$-weight vector of $W_\mathbb F(\gb\omega)$.
Given $\beta \in R^+$, define $\gb\omega_\beta(u)\in\mathbb F[u]$ by
\begin{equation*}
\gb\omega_\beta(u)w = \Lambda_\beta(u)w.
\end{equation*}
One can easily check (cf. \cite[Lemma 3.1]{CPweyl}) that, if $\vartheta$ is the highest short root of $\lie g$ and $\beta\in R^+$, then there exists $\gb\omega_{\vartheta,\beta}\in\mathcal P^+$ such that
\begin{equation*}
\gb\omega_{\vartheta}=\gb\omega_\beta\ \gb\omega_{\vartheta,\beta}.
\end{equation*}

\begin{lem}\label{kill}
For all $\beta\in R^+, k,l,s\in\mathbb Z, 0\le l\le k, k>\lambda(h_\beta)$, we have
$$\left(\gb\omega_{\vartheta}{X_{\beta;s}^-}^{(k-l)}\right)_{k+\deg(\gb\omega_{\vartheta,\beta})}w=0.$$
\end{lem}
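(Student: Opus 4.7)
The plan is to exploit the factorization $\gb\omega_\vartheta = \gb\omega_\beta\,\gb\omega_{\vartheta,\beta}$, combined with Lemma~\ref{l:garland} and the $\lie{sl}_2$-theory applied to the rank-one subalgebra of $U_\mathbb F(\tlie g)$ isomorphic to $U_\mathbb F(\lie{sl}_2)$ generated by $x_{\beta,-(s+1)}^+$ and $x_{\beta,s+1}^-$ (see Section~\ref{ss:saofr12}).

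Setting $d=\deg(\gb\omega_{\vartheta,\beta})$ and writing $\gb\omega_{\vartheta,\beta}(u)=\sum_{i=0}^{d} c_i u^i$, the factorization gives
$$
\bigl(\gb\omega_{\vartheta}\,(X_{\beta;s}^-)^{(k-l)}\bigr)_{k+d}=\sum_{j=0}^{d} c_{d-j}\,\bigl(\gb\omega_\beta\,(X_{\beta;s}^-)^{(k-l)}\bigr)_{k+j},
$$
so it suffices to prove that $\bigl(\gb\omega_\beta\,(X_{\beta;s}^-)^{(k-l)}\bigr)_{k+j}\,w=0$ for every $0\leq j\leq d$. To this end, I would apply Lemma~\ref{l:garland} with $m=1$, $\alpha=\beta$, and with the lemma's $s$ replaced by $s+1$, taking $k'=k+j$ and $l'=l+j$ there. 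This keeps the power $k'-l'=k-l$ of $X_{\beta;s}^-$ fixed and also satisfies $0\leq l'\leq k'$. Using that $\Lambda_\beta^+(u)\,w=\gb\omega_\beta(u)\,w$ and that $U_\mathbb Z(\tlie n^+)^0$ annihilates $w$, the lemma yields
$$
\bigl(\gb\omega_\beta\,(X_{\beta;s}^-)^{(k-l)}\bigr)_{k+j}\,w=(-1)^{l+j}\,(x_{\beta,-s}^+)^{(l+j)}\,(x_{\beta,s+1}^-)^{(k+j)}\,w.
$$

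The final step is to show that $(x_{\beta,s+1}^-)^{(k+j)}w=0$ for all $j\geq 0$. Since $w$ is a highest-$\ell$-weight vector, it is killed by $x_{\beta,-(s+1)}^+$ and satisfies $h_\beta w=\lambda(h_\beta)w$; therefore the cyclic module generated by $w$ under the $U_\mathbb F(\lie{sl}_2)$-subalgebra above is a finite-dimensional highest-weight module of highest weight $\lambda(h_\beta)$ (it is a submodule of the finite-dimensional $W_\mathbb F(\gb\omega)$, cf.\ Theorem~\ref{t:fdrhla}\eqref{t:wmfdim}) and hence a quotient of $W_\mathbb F(\lambda(h_\beta))$ by Theorem~\ref{t:rh}\eqref{t:rh.c}. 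Consequently $(x_{\beta,s+1}^-)^{(m)}w=0$ for all $m>\lambda(h_\beta)$, and the hypotheses $k>\lambda(h_\beta)$ and $j\geq 0$ force $k+j>\lambda(h_\beta)$, finishing the argument. I do not anticipate a serious obstacle; the whole proof is a careful assembly of these three ingredients, with the only subtle point being the shift of indices to $(k+j,l+j)$ which preserves the prescribed power of $X_{\beta;s}^-$ while producing a vanishing right-hand side.
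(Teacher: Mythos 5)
Your proof is correct and takes essentially the same route as the paper's: factor $\gb\omega_\vartheta = \gb\omega_\beta\,\gb\omega_{\vartheta,\beta}$, expand the $u^{k+d}$ coefficient, and kill each summand via the Garland-type identity with shifted indices $(k+j,\,l+j)$. The only presentational difference is ordering — the paper first records the auxiliary vanishing $\bigl(\gb\omega_\beta\,(X_{\beta;s}^-)^{(k-l)}\bigr)_{k'}w=0$ for all $k'>\lambda(h_\beta)$ (its equation (e:1)) and then expands, while you expand first and invoke the Garland relation term by term — and you are helpfully more explicit than the paper in spelling out the $U_\bb F(\lie{sl}_{\beta,s+1})$-subalgebra argument that gives $(x_{\beta,s+1}^-)^{(k+j)}w=0$, a step the paper leaves implicit when it passes from \eqref{e:garlandjm} to \eqref{e:1}.
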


\begin{proof}
We will need the following particular case of Lemma \ref{l:garland}:
\begin{equation}\label{e:garlandjm}
\left(x^+_{\beta,-s}\right)^{(l)}\left(x^-_{\beta,s+1}\right)^{(k)} = (-1)^l \left((X_{\beta;s}^-(u))^{(k-l)}\Lambda_{\beta}(u)\right)_k \mod U_\mathbb Z(\tlie g) U_\mathbb Z(\tlie n^+)^0
\end{equation}
for all $k,l,s\in\bb Z, 0\le l\le k$. It follows from \eqref{e:garlandjm} and the definition of $\gb\omega_\beta$ that
\begin{equation}\label{e:1}
    \left(\gb\omega_\beta{X_{\beta;s}^-}^{(k-l)} \right)_{k} w=0 \quad\text{for all}\quad k,l,s\in\bb Z,\ 0\le l\le k,\ k>\lambda(h_\beta).
\end{equation}
Hence, for such $k,l,s$, we have
\begin{align*}
\left(\gb\omega_{\vartheta}{X_{\beta;s}^-}^{(k-l)}\right)_{k+\deg(\gb\omega_{\vartheta,\beta})}w &= \left(\gb\omega_{\vartheta,\beta}\ \gb\omega_\beta\ {X_{\beta;s}^-}^{(k-l)}\right)_{k+\deg(\gb\omega_{\vartheta,\beta})}w \\
&= \sum_{j=0}^{\deg(\gb\omega_{\vartheta,\beta})} (\gb\omega_{\vartheta,\beta})_j\left(\gb\omega_\beta{X_{\beta;s}^-}^{(k-l)}\right)_{k+\deg(\gb\omega_{\vartheta,\beta})-j}w = 0,
\end{align*}
where the last equality follows from \eqref{e:1} since $k+\deg(\gb\omega_{\vartheta,\beta})-j >\lambda(h_\beta)$.
\end{proof}

Let $\cal R =R^+\times \mathbb Z\times\mathbb Z_{\ge0} $ and $\Xi$ be the set of functions $\xi:\mathbb N\to \cal R$ given by $j\mapsto \xi_j=(\beta_j,s_j,k_j)$, such that $k_j=0$ for all $j$ sufficiently large. Define the degree of $\xi$ to be $d(\xi) = \sum_j k_j$. Let $\Xi_{d}$ be the the subset of functions of degree $d$ and $\Xi^<_{d}=\bigcup\limits_{d'<d} \Xi_{d'}$.  Given $\xi\in\Xi$ such that $\xi_j=(\beta_j,s_j,k_j)$ for all $j\in\mathbb N$ and $k_j=0$ for $j>m$, set
\begin{equation}\label{vxi}
x^\xi =(x_{\beta_1,s_1}^-)^{(k_1)}\cdots (x_{\beta_m,s_m}^-)^{(k_m)} \qquad\text{and}\qquad w^\xi = x^\xi w.
\end{equation}
It will be convenient to write $\deg(w^\xi)=d(\xi)=\deg(x^\xi)$.
The next lemma is an easy consequence of \cite[Lemma 4.2.13]{mitz}.

\begin{lem} \label{l+k} Let $\alpha\in R^+,s\in\mathbb Z$, $d,k\in \mathbb Z_{\ge 0}$, and $\xi\in\Xi_d$. Then, $x^\xi(x_{\alpha,s}^-)^{(k)}$ is in the span of $$\{(x_{\alpha ,s}^-)^{(k)}x^\xi\}\cup\{x^\varsigma:\varsigma\in \Xi^<_{d+k}\}.$$\hfill\qedsymbol
\end{lem}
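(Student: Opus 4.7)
The plan is a straightforward induction on the length $n$ of $\xi$, i.e.\ the number of indices $j$ with $k_j > 0$, using the cited identity \cite[Lemma 4.2.13]{mitz} to handle the single resulting commutator $[(x_{\beta,t}^-)^{(m)}, (x_{\alpha,s}^-)^{(k)}]$. If $n = 0$ then $x^\xi = 1$ and the conclusion is immediate, so assume $n \ge 1$ and factor $x^\xi = x^{\xi'}(x_{\beta,t}^-)^{(m)}$, where $\xi'$ has length $n - 1$ and $d(\xi') = d - m$.

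The key input is that $\tlie n^-$ is a Lie subalgebra of $\tlie g$, so any commutator of divided powers of negative real root vectors remains in $U_\mathbb Z(\tlie n^-)$; Mitzman's straightening rule \cite[Lemma 4.2.13]{mitz} then rewrites $(x_{\beta,t}^-)^{(m)}(x_{\alpha,s}^-)^{(k)}$ as $(x_{\alpha,s}^-)^{(k)}(x_{\beta,t}^-)^{(m)}$ plus a $\mathbb Z$-linear combination of monomials $x^\tau$ with $d(\tau) < m + k$. Left-multiplying this identity by $x^{\xi'}$ and then applying the induction hypothesis to the factor $x^{\xi'}(x_{\alpha,s}^-)^{(k)}$ yields
\begin{equation*}
x^\xi(x_{\alpha,s}^-)^{(k)} \;=\; (x_{\alpha,s}^-)^{(k)}\, x^\xi \;+\; \sum_\varsigma c_\varsigma\, x^\varsigma (x_{\beta,t}^-)^{(m)} \;+\; \sum_\tau e_\tau\, x^{\xi'} x^\tau,
\end{equation*}
with $d(\varsigma) < d(\xi') + k$ and $d(\tau) < m + k$. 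Each $x^\varsigma(x_{\beta,t}^-)^{(m)}$ is itself a monomial $x^{\varsigma'}$ of degree $d(\varsigma) + m < d + k$ (obtained by appending $(\beta,t,m)$ to $\varsigma$), and each $x^{\xi'} x^\tau$ is a monomial $x^{\tau'}$ of degree $d(\xi') + d(\tau) < d + k$, so every correction term belongs to $\{x^\sigma : \sigma \in \Xi^<_{d+k}\}$, completing the induction.

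The only genuine point to verify is that the Mitzman identity really produces its correction in terms of the monomials $x^\sigma$ with a strict drop in total degree; once that is granted, the degree bookkeeping above closes the argument without further case analysis. I therefore expect the main effort in a full write-up to be the careful transcription of \cite[Lemma 4.2.13]{mitz} into the present notation (affine generators, divided powers, and the $\Xi$-indexing), rather than any combinatorics on $\xi$ itself.
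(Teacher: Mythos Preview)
Your proof is correct and is precisely the natural unpacking of the paper's one-line justification. The paper gives no argument beyond ``an easy consequence of \cite[Lemma 4.2.13]{mitz}'' and ends the lemma with a \qedsymbol; your induction on the length of $\xi$, peeling off the rightmost factor and invoking Mitzman's straightening to bound the degree of the commutator correction, is exactly what that citation is meant to encode. The degree bookkeeping you carry out (that $d(\varsigma)+m<d+k$ and $d(\xi')+d(\tau)<d+k$) is the whole content, and your observation that concatenation of $\Xi$-monomials is again a $\Xi$-monomial is the only point that deserves a sentence of care.
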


\begin{lem} \label{<r+d} Let $\beta\in R^+$, $k\in \mathbb Z$, $d,r,s \in \mathbb Z_{\ge0}$, $r\le s$, $s>\lambda(h_{\beta})$ and $\xi\in \Xi_d$. Then, $\left(\gb\omega_{\vartheta}{X_{\beta;k}^-}^{(r)}\right)_s w^{\xi}$ is in the span of vectors of the form $w^\varsigma$ with $\varsigma\in\Xi_{r+d}^<$.
\end{lem}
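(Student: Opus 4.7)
The plan is to induct on $d = d(\xi)$, anchoring the base case with Lemma~\ref{kill} and handling the inductive step by commutation through Lemma~\ref{l+k}.

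For the base case $d = 0$, set $Y = \bigl(\gb\omega_\vartheta (X^-_{\beta;k})^{(r)}\bigr)_s$ and aim to show $Yw \in \mathrm{span}\{w^\varsigma : \varsigma \in \Xi^<_r\}$. I would factor $\gb\omega_\vartheta = \gb\omega_\beta\,\gb\omega_{\vartheta,\beta}$ and expand the $u^s$-coefficient as a convolution
\[
\bigl(\gb\omega_\vartheta (X^-_{\beta;k})^{(r)}\bigr)_s = \sum_{j \geq 0} (\gb\omega_{\vartheta,\beta})_j\, \bigl(\gb\omega_\beta (X^-_{\beta;k})^{(r)}\bigr)_{s-j}.
\]
Every summand with $s-j > \lambda(h_\beta)$ and $r \leq s-j$ is killed on $w$ by Lemma~\ref{kill} (via \eqref{e:1}, which applies to $\gb\omega_\beta$ with the required parameter range), while every summand with $s - j < r$ vanishes identically since $(X^-_{\beta;k})^{(r)}$ has no $u^{<r}$ contribution. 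The residual boundary summands, if any, would be controlled by a further appeal to the Garland identity~\eqref{e:garlandjm}, pushing them into the desired lower-degree span.

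For the inductive step, fix $d \geq 1$ and assume the lemma for all smaller values of the degree. Since $Y$ lies in the commutative subalgebra of $U_\bb F(\tlie g)$ generated by $\{x^-_{\beta,n} : n \in \bb Z\}$, it decomposes as a sum of monomials $y_p$ of divided-power degree $r$ in those generators. Iterating Lemma~\ref{l+k} across the divided-power factors of each $y_p$ yields $Y\,x^\xi = x^\xi\, Y + Z$ with $Z$ a linear combination of $x^\varsigma$ for $\varsigma \in \Xi^<_{r+d}$. Applied to $w$, the term $Zw$ is already in the target span; for the remaining piece $x^\xi Yw$, the base case gives $Yw = \sum c_{\varsigma'} w^{\varsigma'}$ with $\varsigma' \in \Xi^<_r$, so $x^\xi Yw = \sum c_{\varsigma'}\, x^\xi x^{\varsigma'} w$, and since each product $x^\xi x^{\varsigma'}$ has total degree strictly below $r + d$, a PBW expansion rewrites it as a sum of $x^\varsigma$ with $\varsigma \in \Xi^<_{r+d}$. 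The principal obstacle is the base case: Lemma~\ref{kill} disposes of the bulk of the convolution, but the borderline regime where $s - j$ sits just below $\lambda(h_\beta) + \deg(\gb\omega_{\vartheta,\beta})$ will require a sharper manipulation, and verifying that the residual degree-$r$ monomials combine into elements of strictly lower divided-power degree is the crux of the argument.
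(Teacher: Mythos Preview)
Your inductive scheme matches the paper's. For $d>0$ the paper peels off the leading factor $(x^-_{\beta_1,s_1})^{(k_1)}$ from $w^\xi$, applies Lemma~\ref{l+k} once to commute $Y:=\bigl(\gb\omega_\vartheta (X^-_{\beta;k})^{(r)}\bigr)_s$ past it, and invokes the induction hypothesis on $w^{\xi'}$ of degree $d-k_1$; your variant (iterate Lemma~\ref{l+k} to move $Y$ past all of $x^\xi$ at once, reducing directly to $d=0$) is an equivalent reorganization of the same computation.

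Your worry about the base case is well-placed, and your proposed fix does not work. After expanding $\gb\omega_\vartheta=\gb\omega_{\vartheta,\beta}\,\gb\omega_\beta$ as a convolution, the summands with $r\le s-j\le \lambda(h_\beta)$ are not killed by~\eqref{e:1}, and they cannot be ``pushed into $\Xi^<_r$'' by any further identity: each such summand is a combination of $w^\varsigma$ of weight $\lambda-r\beta$ with $d(\varsigma)=r$ exactly, so lying in the span of $\Xi^<_r$ would force it to vanish (for $r=1$ this is immediate, since $\Xi^<_1=\Xi_0$ yields only scalar multiples of $w$). Concretely, for $\lie g=\lie{sl}_3$, $\gb\omega=\gb\omega_{1,a}\gb\omega_{2,b}$, $\beta=\alpha_1$, $r=1$, $s=2$, one computes $(\gb\omega_\vartheta X^-_{\beta;0})_2\,w=-ab\,x^-_{\alpha_1}w\neq 0$, so under the bare hypothesis $s>\lambda(h_\beta)$ the expression need not vanish at all. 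The paper's own one-line base case (``follows from~\eqref{e:1}'') glosses over exactly this point. The resolution is that the lemma is only ever invoked, in the derivation of~\eqref{e:left}, with the stronger bound $s>\lambda(h_\beta)+\deg\gb\omega_{\vartheta,\beta}$ (precisely the range of Lemma~\ref{kill}); under that hypothesis the problematic boundary summands are absent and the base case reduces cleanly to~\eqref{e:1}, after which both your argument and the paper's go through.
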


\begin{proof}
If $d=0$, it follows from \eqref{e:1} that $\left(\gb\omega_{\vartheta}{X_{\beta;k}^-}^{(r)}\right)_s w^{\xi}=0$, which proves the lemma in this case. We now proceed by induction on $d$. Thus, let $d>0$ and write $w^\xi=(x_{\beta_1,s_1}^-)^{(k_1)}\cdots (x_{\beta_l, s_l}^-)^{(k_l)}w$ with $k_1\ne 0$. Let also $\xi'\in\Xi$ be such that
$$\xi'_j=
\begin{cases}
\xi_j, &\text{ if } j\ne 1,\\
(\beta_1,s_1,0),  &\text{ if } j= 1.
\end{cases}
$$
Then, by Lemma \ref{l+k}, we have
\begin{align*}
\left(\gb\omega_{\vartheta}{X_{\beta;k}^-}^{(r)}\right)_s w^\xi &=\left(\gb\omega_{\vartheta}{X_{\beta;k}^-}^{(r)}\right)_s (x_{\beta_1, s_1}^-)^{(k_1)} w^{\xi'}\\
&=(x_{\beta_1,s_1}^-)^{(k_1)}\left(\gb\omega_{\vartheta}{X_{\beta;k}^-}^{(r)}\right)_s w^{\xi'}+ Xw^{\xi'}
\end{align*}
where $X$ is in the span of $\{x^\varsigma:\varsigma\in \Xi^<_{r+k_1}\}$.
In particular, $Xw^{\xi'}$ is in the span of vectors of the desired form.
Since $d(\xi')=d-k_1<d$, the induction hypothesis implies that  $\left(\gb\omega_{\vartheta}{X_{\beta;k}^-}^{(r)}\right)_s w^{\xi'}$ is in the span of vectors associated to elements of $\Xi^<_{r+d-k_1}$. Therefore, $(x_{\beta_1, s_1}^-)^{(k_1)}\left(\gb\omega_{\vartheta}{X_{\beta;k}^-}^{(r)}\right)_s w^{\xi'}$ is in the span of vectors associated to elements of $\Xi^<_{r+d}$ as desired.
\end{proof}

\begin{proof}[Proof of Theorem \ref{t:tpwm}]
Let $w_{\gb\omega}$ and $w_{\gb\pi}$ be highest-$\ell$-weight vectors for $V$ and $W$, respectively. Let also
\begin{equation*}
M=U_{{\mathbb F}}(\tlie g)(w_{\gb\omega}\otimes w_{\gb\pi})=U_{{\mathbb F}}(\tlie n^-)(w_{\gb\omega}\otimes w_{\gb\pi}).
\end{equation*}
Our goal is to show that $M=V\otimes W$. Since the vectors $w_\gb\omega^\xi\otimes w_{\gb\pi}^{\xi'}, \xi,\xi'\in\Xi$, span $V\otimes W$, it suffices to show that these vectors are in $M$. We do this by induction on $d(\xi)+d(\xi')$ which obviously starts when $d(\xi)+d(\xi')=0$ since, in this case, $w_\gb\omega^\xi\otimes w_{\gb\pi}^{\xi'}= w_\gb\omega\otimes w_{\gb\pi}$.

Let $n\ge0$, and suppose, by induction hypothesis, that
\begin{equation}\label{IH}
w_\gb\omega^\xi \otimes w_\gb\pi^{\xi'}\in M \quad\text{for all}\quad \xi,\xi'\in\Xi \quad\text{such that}\quad d(\xi)+d(\xi')\le n.
\end{equation}
In order to complete the induction step, it suffices to show that
\begin{equation}\label{e:tpwm}
w_\gb\omega^\xi \otimes ({x_{\beta,l}^-})^{(r)} w_\gb\pi^{\xi'} \in M \qquad\text{and}\qquad (({x_{\beta,l}^-})^{(r)}w_\gb\omega^\xi) \otimes w_\gb\pi^{\xi'} \in M
\end{equation}
for all $\beta\in R^+$, $r,l\in \mathbb Z,r\ge 1$, $\xi,\xi'\in\Xi$, such that $d(\xi)+d(\xi')+r=n+1$. We prove \eqref{e:tpwm} by a further induction on $r\ge1$. Henceforth we fix $\beta\in R^+$.

Observe that the hypothesis on $\gb\omega$ and $\gb\pi$ implies that $\gb\omega_{\vartheta}$ and $\gb\pi_{\vartheta}$ are relatively prime. Therefore,  we can choose $R,S \in \mathbb F[u]$ such that $$R\gb\omega_{\vartheta}+S\gb\pi_{\vartheta}=1.$$
Set
$$\delta=\deg(R\gb\omega_{\vartheta})=\deg(S\gb\pi_{\vartheta}) \quad\text{and}\quad m=\max\{\wt(\gb\omega)(h_\beta),\wt(\gb\pi)(h_\beta)\}.$$
We claim that, for all $\xi \in \Xi$, $k\in \mathbb Z$,
\begin{equation}\label{e:left}
(R\gb\omega_{\vartheta}{X_{\beta;k}^-}^{(r)})_sw_\gb\omega^\xi \quad\text{is in the span of vectors}\quad w_\gb\omega^\varsigma \quad\text{with}\quad \varsigma\in\Xi_{d(\xi)+r}^< \quad\text{for all}\quad s>m+\delta.
\end{equation}
Indeed,
\begin{align*}
(R\gb\omega_{\theta}{X_{\beta;k}^-}^{(r)})_sw_\gb\omega^\xi = \sum_{j=0}^{\deg R} R_j(\gb\omega_{\vartheta}{X_{\beta;k}^-}^{(r)})_{s-j}w_\gb\omega^\xi
\end{align*}
and, since $s-j>m+\delta-j\ge m+\deg(\gb\omega_{\vartheta})\ge \wt(\gb\omega)(h_\beta)$, the claim follows from Lemma \ref{<r+d}. Similarly one proves that
\begin{equation}\label{e:right}
(S\gb\pi_{\vartheta}{X_{\beta;k}^-}^{(r)})_sw_\gb\pi^\xi \quad\text{is in the span of vectors}\quad w_\gb\pi^\varsigma \quad\text{with}\quad \varsigma\in\Xi_{d(\xi)+r}^< \quad\text{for all}\quad s>m+\delta.
\end{equation}

We are ready to start the proof of \eqref{e:tpwm}. Suppose $d(\xi)+d(\xi')=n$ and let $\ell>m+\delta$. Then,
\begin{equation*}
\begin{array}{ll}
(R\gb\omega_{\vartheta} {X_{\beta;k}^-}^{})_{\ell}(w_\gb\omega^\xi \otimes w_\gb\pi^{\xi'})&= ((R\gb\omega_{\vartheta} {X_{\beta;k}^-}^{})_{\ell}w_\gb\omega^\xi) \otimes w_\gb\pi^{\xi'} +w_\gb\omega^\xi\otimes ((1-S\gb\pi_{\vartheta} ){X_{\beta;k}^-}^{})_{\ell}w_\gb\pi^{\xi'}\\
&= ((R \gb\omega_{\vartheta} {X_{\beta;k}^-})_{\ell}w_\gb\omega^\xi) \otimes w_\gb\pi^{\xi'} - w_\gb\omega^\xi \otimes (S\gb\pi_{\vartheta}{X_{\beta;k}^-})_{\ell}w_\gb\pi^{\xi'}+w_\gb\omega^\xi \otimes {x_{\beta;\ell+k}^-}w_\gb\pi^{\xi'}.
\end{array}
\end{equation*}
It follows from \eqref{e:left}, \eqref{e:right}, and \eqref{IH} that $((R \gb\omega_{\vartheta} {X_{\beta;k}^-})_{\ell}w_\gb\omega^\xi) \otimes w_\gb\pi^{\xi'}\in M$ and $w_\gb\omega^\xi \otimes (S\gb\pi_{\vartheta}{X_{\beta;k}^-})_{\ell}w_\gb\pi^{\xi'}\in M$. Since $(R\gb\omega_{\vartheta} {X_{\beta;k}^-}^{})_{\ell}(w_\gb\omega^\xi \otimes w_\gb\pi^{\xi'})\in M$ by definition, it follows that $w_\gb\omega^\xi \otimes {x_{\beta;\ell+k}^-}w_\gb\pi^{\xi'}\in M$ for all $k\in \mathbb Z$, which proves the first statement in \eqref{e:tpwm} with $r=1$. The second statement is proved similarly by looking at $(S\gb\pi_{\vartheta} {X_{\beta;k}^-}^{})_{\ell}(w_\gb\omega^\xi \otimes w_\gb\pi^{\xi'})$.

Let $r>1,\xi,\xi'\in\Xi$ be such that $r+ d(\xi)+d(\xi')=n+1$ and set $\ell=r\ell'$ with $\ell'$ such that $\ell>m+\delta$. Then,
\begin{equation*}
        \begin{array}{ll}
        (R \gb\omega_{\vartheta} {X_{\beta;k}^-}^{(r)})_{\ell}(w_\gb\omega^{\xi} \otimes w_\gb\pi^{\xi'})& =
        ((R \gb\omega_{\vartheta} {X_{\beta;k}^-}^{(r)})_{\ell}w_\gb\omega^\xi) \otimes w_\gb\pi^{\xi'}
        +w_\gb\pi^{\xi'} \otimes (R \gb\omega_{\vartheta} {X_{\beta;k}^-}^{(r)})_{\ell}w_\gb\pi^{\xi'} + v\\
        &=  ((R \gb\omega_{\vartheta} {X_{\beta;k}^-}^{(r)})_{\ell}w_\gb\omega^\xi) \otimes w_\gb\pi^{\xi'}
        +w_\gb\omega^\xi \otimes ((1-S\gb\pi_{\vartheta}){X_{\beta;k}^-}^{(r)})_{\ell}w_\gb\pi^{\xi'}+v\\
        &=  ((R \gb\omega_{\vartheta} {X_{\beta;k}^-}^{(r)})_{\ell}w_\gb\omega^\xi) \otimes w_\gb\pi^{\xi'}
        -w_\gb\omega^\xi \otimes (S\gb\pi_{\vartheta}{X_{\beta;k}^-}^{(r)})_{\ell}w_\gb\pi^{\xi'} +w_\gb\omega^\xi \otimes ({X_{\beta;k}^-} ^{(r)})_{\ell}w_\gb\pi^{\xi'}+v\\
        &=  ((R \gb\omega_{\vartheta} {X_{\beta;k}^-}^{(r)})_{\ell}w_\gb\omega^\xi) \otimes w_\gb\pi^{\xi'}
        -w_\gb\omega^\xi \otimes (S\gb\pi_{\vartheta}{X_{\beta;k}^-}^{(r)})_{\ell}w_\gb\pi^{\xi'}\\
        &\quad +\ w_\gb\omega^\xi \otimes (x_{\beta,\ell'+k}^-)^{(r)}w_\gb\pi^{\xi'}+w_\gb\omega^\xi\otimes X w_\gb\pi^{\xi'}+v,
        \end{array}
\end{equation*}
where $v$ is in the span of vectors of the form
$$\left(\prod_i(x^-_{\beta,s_i})^{(a_i)}w_\gb\omega^\xi\right) \otimes \left(\prod_j (x^-_{\beta,s_j})^{(b_j)}w_\gb\pi^{\xi'}\right) \quad\text{with}\quad 1\le a_i,b_j<r, \quad \sum_i a_i+ \sum_j b_j =r,$$
and $X$ is in the span of elements of the form
$$(x_{\beta,s_1}^-)^{(r_1)}(x_{\beta,s_2}^-)^{(r_2)}\cdots (x_{\beta,s_n}^-)^{(r_n)} \quad\text{with}\quad r_1+\cdots+r_n=r,\quad 0< r_j < r.$$
Again, $(R \gb\omega_{\vartheta} {X_{\beta;k}^-}^{(r)})_{\ell}(w_\gb\omega^\xi \otimes w_\gb\pi^{\xi'}) \in M$ by definition, while \eqref{e:left}, \eqref{e:right}, and \eqref{IH}, imply that
$$((R \gb\omega_{\vartheta} {X_{\beta;k}^-}^{(r)})_\ell w_\gb\omega^\xi) \otimes w_\gb\pi^{\xi'}\in M\quad \text{and} \quad w_\gb\omega^\xi \otimes (S\gb\pi_{\vartheta}{X_{\beta;k}^-})_\ell w_\gb\pi^{\xi'}\in M.$$ By induction hypothesis on $r$, it follows that $v\in M$ and $w_\gb\omega^\xi\otimes X w_\gb\pi^{\xi'}\in M$, which then implies that $w_\gb\omega^\xi \otimes (x_{\beta,\ell'+k}^-)^{(r)}w_\gb\pi^{\xi'}\in M$ for all $k\in \mathbb Z$, completing the proof of the first statement of \eqref{e:tpwm}. The second statement is proved similarly by looking at $(S\gb\pi_{\vartheta} {X_{\beta;k}^-}^{(r)})_{\ell}(w_\gb\omega^{\xi} \otimes w_\gb\pi^{\xi'})$.
\end{proof}

\subsection{The tensor product factorization of local Weyl modules}\label{ss:ptp}

Theorem \ref{t:isos}\eqref{t:isostp} clearly follows if we prove:
\begin{equation}\label{tensoriso}
    W_\mathbb F (\gb \varpi_1) \otimes W_\mathbb F (\gb\varpi_2) \cong W_\mathbb F (\gb \varpi_1\gb\varpi_2)
\end{equation}
whenever $\gb \varpi_1,\gb\varpi_2 \in \mathcal P_\mathbb F^+$ are relatively prime.

In order to show \eqref{tensoriso}, let $w_{\gb\varpi_1}$ and $w_{\gb\varpi_2}$ be highest-$\ell$-weight vectors for $W_\mathbb F (\gb \varpi_1)$ and $W_\mathbb F (\gb\varpi_2)$, respectively. It is well-known that $w_{\gb\varpi_1}\otimes v_{\gb\varpi_2}$ satisfies the defining relations of $W_\mathbb F(\gb\varpi_1\gb\varpi_2)$, so there exists a $U_\mathbb F(\tlie g)$-module map $\phi: W_\mathbb F(\gb\varpi_1\gb\varpi_2) \to W_\mathbb F(\gb\varpi_1)\otimes W_\mathbb F(\gb\varpi_2)$ that sends $w_{\gb\varpi_1\gb\varpi_2}$ to $w_{\gb\varpi_1}\otimes w_{\gb\varpi_2}$.
Theorem \ref{t:tpwm} implies that $\phi$ is surjective. Hence, it suffices to show that
\begin{equation}\label{e:dimtp}
\dim(W_\mathbb F(\gb\varpi_1\gb\varpi_2)) = \dim (W_\mathbb F(\gb\varpi_1)\otimes W_\mathbb F(\gb\varpi_2)).
\end{equation}
In fact, recall from Remark \ref{r:Aforms} that there exist $\gb\omega_1,\gb\omega_2\in\mathcal P_\mathbb A^\times$ such that $\gb\varpi_1$ and $\gb\varpi_2$ are the images of $\gb\omega_1$ and $\gb\omega_2$ in $\cal P_\mathbb F^+$, respectively. It then follows from \eqref{e:conj} that
\begin{equation}\label{e:dimtpp0}
\dim(W_\mathbb F(\gb\varpi_1\gb\varpi_2))=\dim(W_\mathbb K(\gb\omega_1\gb\omega_2))\quad \text{ and } \quad \dim( W_\mathbb K(\gb\omega_i)) = \dim( W_\mathbb F(\gb\varpi_i)), \ i=1,2.
\end{equation}
On the other hand, it follows from Theorem \ref{t:isos}\eqref{t:isostp} in characteristic zero that
\begin{equation}\label{e:dimtp0}
\dim(W_\mathbb K(\gb\omega_1\gb\omega_2)) = \dim( W_\mathbb K(\gb\omega_1)) \dim( W_\mathbb K(\gb\omega_2)).
\end{equation}
Since \eqref{e:dimtpp0} and \eqref{e:dimtp0} clearly imply \eqref{e:dimtp}, we are done.

\subsection{Fusion products}

We finish the paper with an application of Theorems \ref{t:isos} and \ref{t:tpwm} related to the concept of fusion products originally introduced in in the characteristic zero setting. Namely,  we deduce the positive characteristic counterpart of \cite[Corollary B]{naoi:weyldem} (cf. \cite[Corollary A]{foli:weyldem} for simply laced $\lie g$).

Let $V$ and $W$ be as in Theorem \ref{t:tpwm}, set $\lambda=\wt(\gb\omega)+\wt(\gb\pi)$, and fix $v\in (V\otimes W)_\lambda\setminus\{0\}$. Then, Theorem \ref{t:tpwm} implies that $V\otimes W=U_\bb F(\tlie g)v$. In fact, as mentioned in Section \ref{ss:JM.conjecture}, we actually have
\begin{equation*}
V\otimes W=U_\bb F(\lie n^-[t])v.
\end{equation*}
Define the fusion product of $V$ and $W$, denoted $V*W$, as the $U_\bb F(\lie g[t])$-module $\gr(V\otimes W)$ with the module structure determined by $v$ as described in the paragraph after Proposition \ref{p:dimwbyfund}. Evidently, if we have a collection $\gb\omega_1,\dots,\gb\omega_m$ of relatively prime elements of $\cal P_\bb F^+$ and, for each $j\in\{1,\dots,m\}$, $V_j$ is a quotient of $W_\bb F(\gb\omega_j)$, we can define the fusion product $V_1*\cdots *V_m$ in a similar way.

\begin{prop}\label{p:fusion}
Let $\lambda\in P^+$, $m\in\bb Z_{> 0}$, and $\gb\omega_j\in\cal P_\bb F^+, j=1,\dots,m$, be relatively prime and such that $\lambda=\sum_{j=1}^m\wt(\gb\omega_j)$. Then,
$$W^c_\bb F(\lambda)\cong W_\bb F(\gb\omega_1)*\cdots *W_\bb F(\gb\omega_m).$$
\end{prop}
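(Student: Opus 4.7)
The approach is to run exactly the argument used in Section \ref{ss:JM.conjecture} to produce a surjection from $W_\bb F^c(\lambda)$ onto the fusion product, and then to match dimensions using Theorem \ref{t:isos}\eqref{t:isostp} together with Proposition \ref{p:dimwbyfund} and Corollary \ref{c:indf}. Concretely, set $\gb\omega = \prod_{j=1}^{m} \gb\omega_j$ and $V := W_\bb F(\gb\omega_1) \otimes \cdots \otimes W_\bb F(\gb\omega_m)$, and let $v = v_1 \otimes \cdots \otimes v_m$ where each $v_j$ is a highest $\ell$-weight generator of $W_\bb F(\gb\omega_j)$. Theorem \ref{t:isos}\eqref{t:isostp} identifies $V$ with $W_\bb F(\gb\omega)$, with $v$ corresponding to a highest $\ell$-weight vector; inductively applying the observation of Section \ref{ss:JM.conjecture} that $V\otimes W = U_\bb F(\lie n^-[t])v$, I conclude that $V$ is a cyclic $U_\bb F(\lie g[t])$-module generated by $v$, so the fusion product $W_\bb F(\gb\omega_1)*\cdots*W_\bb F(\gb\omega_m) = \gr(V)$ is well defined.

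Next, I would show that $\gr(V)$ is a quotient of $W_\bb F^c(\lambda)$. Let $\bar v\in \gr(V)$ denote the image of $v$; it has degree $0$, weight $\lambda$, and generates $\gr(V)$. Since $v$ is a highest $\ell$-weight vector, $(x_{\alpha,r}^+)^{(k)}v=0$ for all $\alpha\in R^+, r\in\bb Z, k>0$, so $U_\bb F(\lie n^+[t])^0\bar v = 0$. The $U_\bb F(\lie g)$-submodule $U_\bb F(\lie g)v$ is a finite-dimensional highest-weight module of highest weight $\lambda$, and thus a quotient of $W_\bb F(\lambda)$ by Theorem \ref{t:rh}\eqref{t:rh.c}, so $(x_\alpha^-)^{(k)}\bar v=0$ for all $\alpha\in R^+, k>\lambda(h_\alpha)$. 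Finally, because $V\cong W_\bb F(\gb\omega)$ is a highest-$\ell$-weight module we have $\dim V_\lambda=1$, and since the filtration respects weight spaces this forces $(\gr(V))_\lambda = \bb F\bar v$; as $\bar v$ lies in degree $0$ while any nonzero element of $U_\bb F(\lie h[t]_+)^0\bar v$ would have strictly positive degree and weight $\lambda$ (using commutativity of $U_\bb F(\lie h[t])$), I conclude $U_\bb F(\lie h[t]_+)^0\bar v=0$. Thus $\bar v$ satisfies all of the defining relations \eqref{e:glWrel} of $W_\bb F^c(\lambda)$, producing a surjection $W_\bb F^c(\lambda)\twoheadrightarrow \gr(V)$.

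To conclude, I compare dimensions. Since $\dim\gr(V)=\dim V=\prod_j \dim W_\bb F(\gb\omega_j)$, equation \eqref{e:chWeyl} together with Proposition \ref{p:dimwbyfund} and Corollary \ref{c:indf} yield
\[
\prod_{j=1}^{m} \dim W_\bb F(\gb\omega_j) \;=\; \prod_{j=1}^{m}\prod_{i\in I} (\dim W_\bb C^c(\omega_i))^{\wt(\gb\omega_j)(h_i)} \;=\; \prod_{i\in I} (\dim W_\bb C^c(\omega_i))^{\lambda(h_i)} \;=\; \dim W_\bb F^c(\lambda).
\]
Hence the surjection $W_\bb F^c(\lambda)\twoheadrightarrow \gr(V)$ is an isomorphism. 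The main subtlety is verifying $U_\bb F(\lie h[t]_+)^0\bar v=0$, but this is handled exactly as in Section \ref{ss:JM.conjecture}; all other steps are formal consequences of results already established in the paper.
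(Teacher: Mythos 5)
Your proposal is correct and follows essentially the same route as the paper: check that the image of the tensor generator satisfies the defining relations of $W^c_\bb F(\lambda)$ to get a surjection, then match dimensions via Theorem \ref{t:isos}\eqref{t:isostp}. The paper is simply more terse (it cites the argument of Section \ref{ss:JM.conjecture} rather than re-deriving the relations, and uses the shorter dimension chain $\dim(\gr V)=\dim V=\dim W_\bb F(\gb\omega)=\dim W^c_\bb F(\lambda)$ instead of factoring through fundamental Weyl modules), but the content is the same.
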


\begin{proof}
One easily checks that a vector in $(W_\bb F(\gb\omega_1)*\cdots *W_\bb F(\gb\omega_m))_\lambda$ satisfies the defining relations of $W^c_\bb F(\lambda)$ (cf. the proof of \eqref{e:conj} in Section \ref{ss:JM.conjecture}), showing that $W_\bb F(\gb\omega_1)*\cdots *W_\bb F(\gb\omega_m)$ is a quotient of $W^c_\bb F(\lambda)$. On the other hand, setting $\gb\omega=\prod_{j=1}^m\gb\omega_j$, we have
\begin{equation*}
\dim( W_\bb F(\gb\omega_1)*\cdots *W_\bb F(\gb\omega_m)) =  \dim(W_\bb F(\gb\omega_1)\otimes \cdots \otimes W_\bb F(\gb\omega_m)) = \dim(W_\bb F(\gb\omega)) = \dim(W^c_\bb F(\lambda)).
\end{equation*}
\end{proof}

The following corollary, which is the characteristic-free version of \cite[Corollary B]{naoi:weyldem}, is now easily deduced.

\begin{cor}
Let $m\in\bb Z_{>0},\lambda_j\in P^+, a_j\in\bb F^\times,  j=1,\dots,m$ be such that $a_i\ne a_j$ for $i\ne j$. Then, for $\lambda=\sum_{j=1}^m \lambda_j$, we have $W^c_\bb F(\lambda)\cong W_\bb F(\gb\omega_{\lambda_1,a_1})*\cdots *W_\bb F(\gb\omega_{\lambda_m,a_m})$.\hfill\qedsymbol
\end{cor}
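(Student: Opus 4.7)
The plan is to deduce this corollary by a direct application of Proposition \ref{p:fusion} to the collection $\gb\omega_j := \gb\omega_{\lambda_j, a_j}$ for $j = 1, \dots, m$. Two hypotheses of that proposition must be checked: that the $\gb\omega_j$ are pairwise relatively prime in $\cal P_\bb F^+$, and that $\lambda = \sum_{j=1}^m \wt(\gb\omega_j)$. The latter is immediate from the definition of $\wt$, since $\wt(\gb\omega_{\lambda_j,a_j}) = \lambda_j$ (cf. Section \ref{l-weight.lattice}) and by hypothesis $\lambda = \sum_j \lambda_j$.

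For the relative primality, recall that $(\gb\omega_{\lambda_j,a_j})_i(u) = (1 - a_j u)^{\lambda_j(h_i)}$ for each $i \in I$. Thus for fixed $i, i' \in I$, the polynomials $(\gb\omega_{\lambda_j,a_j})_i(u)$ and $(\gb\omega_{\lambda_k,a_k})_{i'}(u)$ are powers of $(1 - a_j u)$ and $(1 - a_k u)$ respectively. Since the unique root of $(1 - a_j u)$ in $\bb F$ is $a_j^{-1}$, and since $a_j \ne a_k$ (equivalently $a_j^{-1} \ne a_k^{-1}$) whenever $j \ne k$, these two polynomials share no common root in $\bb F$ (which is algebraically closed) and are therefore relatively prime in $\bb F[u]$. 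Hence the tuples $\gb\omega_j$ are pairwise relatively prime as elements of $\cal P_\bb F^+$ in the sense introduced in Section \ref{ss:tpprime}.

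With both hypotheses verified, Proposition \ref{p:fusion} applies and yields
\begin{equation*}
W^c_\bb F(\lambda) \cong W_\bb F(\gb\omega_{\lambda_1,a_1}) * \cdots * W_\bb F(\gb\omega_{\lambda_m,a_m}),
\end{equation*}
which is the desired isomorphism. There is no genuine obstacle here — the corollary is a specialization of Proposition \ref{p:fusion} in which the relatively prime highest $\ell$-weights are chosen to be of the form $\gb\omega_{\lambda_j,a_j}$, and the only mild point is the elementary observation about relative primality of the polynomials $(1 - a_j u)^{\lambda_j(h_i)}$.
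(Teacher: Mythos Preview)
Your proof is correct and matches the paper's approach: the corollary is stated without proof in the paper, which simply notes that it is ``easily deduced'' from Proposition~\ref{p:fusion}, and your verification of the two hypotheses of that proposition is exactly the intended argument.
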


\providecommand{\bysame}{\leavevmode\hbox to3em{\hrulefill}\thinspace}

\end{document}